\newcommand{\proet}{\mathrm{pro\acute{e}t}}
\DeclareSymbolFontAlphabet{\mathbb}{AMSb} 
\DeclareSymbolFontAlphabet{\mathbbl}{bbold} 
\newcommand{\Prism}{{\mathlarger{\mathbbl{\Delta}}}}
\newcommand{\Prismh}{\widehat{\Prism}}
\newcommand{\triplearrows}{\begin{smallmatrix} \to \\ \to \\ 
\to \end{smallmatrix} }
\renewcommand{\mod}{\mathrm{Mod}}
\newcommand{\fil}{\mathrm{Fil}}
\newcommand{\sF}{\mathcal{F}}
\renewcommand{\sp}{\mathrm{Sp}}
\newcommand{\gr}{\mathrm{gr}}
\newcommand{\et}{\mathrm{\acute{e}t}}
\newtheorem{theorem}{Theorem}[section]
\newcommand{\psh}{\mathrm{PSh}}
\newtheorem{lemma}[theorem]{Lemma}
\newtheorem{proposition}[theorem]{Proposition}
\newtheorem{corollary}[theorem]{Corollary}
\renewcommand{\hom}{\mathrm{Hom}}
\theoremstyle{definition}
\newtheorem{question}[theorem]{Question}
\newcommand{\fun}{\mathrm{Fun}}
\newtheorem{construction}[theorem]{Construction}
\newcommand{\spec}{\mathrm{Spec}}
\newtheorem{definition}[theorem]{Definition}
\newtheorem{example}[theorem]{Example}
\newtheorem{remark}[theorem]{Remark}
\newcommand{\TC}{\mathrm{TC}}
\newcommand{\TP}{\mathrm{TP}}
\newcommand\bcdot{\ensuremath{%
  \mathchoice%
   {\mskip\thinmuskip\lower0.2ex\hbox{\scalebox{1.5}{$\cdot$}}\mskip\thinmuskip}}%
   {\mskip\thinmuskip\lower0.2ex\hbox{\scalebox{1.5}{$\cdot$}}\mskip\thinmuskip}%
   {\lower0.3ex\hbox{\scalebox{1.2}{$\cdot$}}}%
   {\lower0.3ex\hbox{\scalebox{1.2}{$\cdot$}}}%
   }
\begin{document}

\newcommand{\ksel}{K^{Sel}}
\newcommand{\qsyn}{\mathrm{QSyn}_{\mathbb{Z}}}
\newcommand{\cycsp}{\mathrm{CycSp}}
\newcommand{\cat}{\mathrm{Cat}}
\newcommand{\mot}{\mathrm{mot}}
\newcommand{\TR}{\mathrm{TR}}
\newcommand{\THH}{\mathrm{THH}}
\title{On $K(1)$-local $\TR$}
\author{Akhil Mathew}
\date{\today}
\email{amathew@math.uchicago.edu}
\address{Department of Mathematics, University of Chicago, 5734 S University
Ave, Chicago, IL 60605 USA}

\begin{abstract}
We discuss some general properties of $\TR$ and its $K(1)$-localization. We
prove that after $K(1)$-localization, $\TR$ of $H\mathbb{Z}$-algebras is a
truncating invariant in the sense of Land--Tamme, and deduce $h$-descent
results. We show that for regular rings in mixed characteristic, $\TR$ is
asymptotically $K(1)$-local, extending results of Hesselholt--Madsen.  As an application of these methods and recent advances in the theory
of cyclotomic spectra, 
we construct an analog of Thomason's spectral sequence relating $K(1)$-local
$K$-theory  and \'etale cohomology for $K(1)$-local $\TR$. 

\end{abstract}

\maketitle
\section{Introduction}
The topological cyclic homology, $\TC(R)$, of a ring (or ring spectrum) $R$
is a basic invariant 
introduced by B\"okstedt--Hsiang--Madsen \cite{BHM} 
(see also Dundas--Goodwillie--McCarthy \cite{DGM} and Nikolaus--Scholze
\cite{NS18})  with many
applications in algebraic $K$-theory. Its $p$-adic completion $\TC(R;
\mathbb{Z}_p)$ arises as the fixed points
of an operator called Frobenius on another invariant $\TR(R; \mathbb{Z}_p)$, which plays a
central role in the approach to $\TC$ via equivariant stable homotopy theory. 
The construction $\TR(-; \mathbb{Z}_p)$ is often of arithmetic significance; for instance, the
foundational calculations \cite{HM03, HM04, GH06} of the $p$-adic $K$-theory of local
fields $F$ are based on the relationship between $\TR(\mathcal{O}_F;
\mathbb{Z}_p)$ and the de
Rham--Witt complex with log poles of $\mathcal{O}_F$. 

In this paper, we prove some structural results about $\TR $ and how it
relates to its $K(1)$-localization, throughout at an implicit prime number $p$. 
We will only consider $p$-typical $\TR$ and after $p$-adic completion. 
The operation of $K(1)$-localization 
when applied to algebraic $K$-theory is dramatically simplifying, as shown by
Thomason \cite{Tho85, TT90}; in particular, $K(1)$-local $K$-theory satisfies
\'etale descent and admits a descent spectral sequence from \'etale cohomology under mild hypotheses, cf.~\cite{CM19}
for a modern account. 
Here we study analogs of some of these properties for $L_{K(1)} \TR(-)$. 
Our starting point is the following. 

\begin{theorem} 
\label{truncatingthmintro}
As a functor on connective $H\mathbb{Z}$-algebras, 
$L_{K(1)} \TR(-)$
is a
truncating invariant in the sense of Land--Tamme \cite{LT19}. In other words, if
$R$ is a connective $H\mathbb{Z}$-algebra, then $L_{K(1) } \TR(R)
\xrightarrow{\sim} L_{K(1)} \TR(\pi_0 R)$. \end{theorem}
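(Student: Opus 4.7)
The natural route is through the Land--Tamme criterion \cite{LT19}: a localizing invariant $E$ of small stable $\infty$-categories is truncating if and only if $E(A) \simeq 0$ for every connective non-unital $\mathbb{E}_1$-algebra $A$ with $\pi_0 A = 0$. Granting that $L_{K(1)}\TR(-)$ is a localizing invariant on connective $H\mathbb{Z}$-algebras --- a foundational point presumably set up earlier, since $\THH$ is localizing and $K(1)$-localization should tame the inverse limit over restriction (or, in the Nikolaus--Scholze framework, the Frobenius tower) that defines $\TR$ --- the theorem reduces to verifying
\[
L_{K(1)}\TR(A) \simeq 0 \quad \text{for every connective } H\mathbb{Z}\text{-algebra } A \text{ with } \pi_0 A = 0.
\]

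For such $A$ the underlying spectrum is $1$-connective. The plan is to apply the motivic filtration on $\TR(-;\mathbb{Z}_p)$, analogous to the Bhatt--Morrow--Scholze and Nikolaus--Scholze filtrations on $\TC^-$ and $\TP$, whose graded pieces (for $H\mathbb{Z}$-algebras) are expressible via the de Rham--Witt complex, or equivalently via Nygaard-complete prismatic cohomology of the underlying discrete ring. Because $\pi_0 A = 0$, the piece attached to the $\pi_0$-stratum vanishes outright. The higher Postnikov strata of $A$ contribute to the filtration in strictly positive weight, while the Bott element $v_1$ (whose inversion defines $K(1)$-locality) also carries positive weight. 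A devissage along the Postnikov tower of $A$, with each stage shown to be $v_1$-nilpotent, then yields $L_{K(1)}\TR(A) \simeq 0$.

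The principal obstacle I foresee is the localizing-invariant statement for $L_{K(1)}\TR$ itself: the construction $\TR$ involves an infinite limit, and such limits do not preserve cofiber sequences of connective ring spectra in general; one must exploit some $K(1)$-local finiteness, or pro-constancy of the Frobenius tower specific to $H\mathbb{Z}$-algebras, to rescue localizing behavior after $L_{K(1)}$. Once this is in place, the Land--Tamme reduction is formal, and the filtration-based vanishing in the second paragraph should proceed without further structural input.
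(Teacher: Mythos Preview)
Your proposal has a genuine gap at its core. The ``motivic filtration on $\TR(-;\mathbb{Z}_p)$ whose graded pieces are expressible via the de Rham--Witt complex or Nygaard-complete prismatic cohomology'' is a construction for \emph{discrete} (quasisyntomic) rings, not for connective ring spectra with nontrivial higher homotopy. For a connective $H\mathbb{Z}$-algebra $A$ with $\pi_0 A = 0$ (which, if unital, forces $A = 0$; you presumably mean a square-zero extension or a non-unital object), there is no prismatic or de Rham--Witt theory available to feed into such a filtration. The phrase ``the piece attached to the $\pi_0$-stratum vanishes outright'' has no clear meaning here, and the subsequent d\'evissage along the Postnikov tower of $A$ is not substantiated: you would need to control, for each square-zero extension, the effect on $\TR$ in a way that makes $v_1$-nilpotence visible, and no such control is supplied.

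The paper's argument is entirely different and does not pass through any motivic filtration. It works directly with cyclotomic spectra: one first bounds the exponent of nilpotence of the Bott element $\beta$ in $(ku/p \otimes \THH(B))^{hC_{p^n}}$ as $O(p^n)$ via the complex orientation of $ku$. This yields a $K(1)$-acyclicity criterion for graded cyclotomic $\THH(B)$-modules. One then invokes the explicit description of $\THH$ of a free associative algebra, $\THH(T(M)) \simeq \bigoplus_{n \geq 0} \mathrm{Ind}_{C_n}^{S^1}(M^{\otimes n})$, to see that the positively graded part of $\THH(T(M \oplus N)) \otimes \THH(B)$ satisfies the acyclicity criterion when $N$ is sufficiently connective. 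Resolving arbitrary $R$ by free algebras, one concludes that $R \to \tau_{\leq k} R$ induces an equivalence on $L_{K(1)}\TR$ for some fixed $k$; a separate inductive lemma (using Land--Tamme's pullback construction for square-zero extensions) then descends from $k$ to $0$. Note that this last step \emph{does} use that $L_{K(1)}\TR$ is a localizing invariant, so your concern about that point is well-placed, but it is taken for granted in the paper rather than argued.
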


\Cref{truncatingthmintro} refines results of \cite{LMT, BCM}. In
\emph{loc.~cit.,}~it
is shown 
that $L_{K(1)} K(-)$ and (equivalently) $L_{K(1)} \TC(-)$ are truncating
invariants of connective
$H\mathbb{Z}$-algebras, which ultimately follows from the claim
\begin{equation} L_{K(1)}
K(\mathbb{Z}/p^n) =0, \quad n \geq 1, \label{k1vanish} \end{equation} 
and more generally (and consequently) for any $H\mathbb{Z}$-algebra $R$,
\begin{equation} 
L_{K(1)} K(R) \xrightarrow{\sim} L_{K(1)} K(R[1/p]). \label{k1invertp}
\end{equation} 
In \cite{BCM}, \eqref{k1vanish} is proved via a calculation in prismatic cohomology; in
\cite{LMT}, \eqref{k1vanish} is proved using some unstable chromatic homotopy theory. 
Our proof of \Cref{truncatingthmintro} (which also gives a new proof of
\eqref{k1vanish}) is based on a direct $\TC$-theoretic argument via estimation
of exponents of nilpotence of the Bott element; in fact, it yields a slightly
stronger result (\Cref{truncatingthm} below).

The property of being truncating yields many pleasant features of the
construction $L_{K(1)} \TR(-)$: by \cite{LT19}, one obtains $cdh$-descent and
excision. Since we are working $K(1)$-locally, we can combine this with results
of \cite{CMNN} to obtain $h$-descent: 

\begin{theorem} 
\label{hdescintro}
Any $K(1)$-local localizing invariant which is truncating, such as $L_{K(1)}
\TR(-)$, satisfies $h$-descent
on qcqs schemes. 
\end{theorem}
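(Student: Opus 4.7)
The plan is to combine two descent properties already available for any $K(1)$-local truncating localizing invariant $E$ (such as $L_{K(1)} \TR$), and then to invoke a geometric principle relating the $h$-topology on qcqs schemes to the $cdh$ and étale topologies.

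First, I would invoke the theorem of Land--Tamme \cite{LT19}: a truncating localizing invariant automatically satisfies excision, which combined with the Nisnevich descent that is automatic for localizing invariants yields $cdh$-descent on qcqs schemes. Second, I would invoke the main descent result of \cite{CMNN}: any $K(1)$-local localizing invariant satisfies étale hyperdescent on qcqs schemes. Together these show that $E$ is simultaneously a $cdh$-sheaf and an étale hypersheaf.

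Third, and this is the heart of the argument, I would upgrade $cdh$- plus étale-hyperdescent to $h$-descent using the geometric fact that every $h$-hypercover of a qcqs scheme is refinable by a composition of $cdh$- and étale-hypercovers, so that any $cdh$-sheaf that is also an étale hypersheaf is automatically an $h$-hypersheaf. On Noetherian finite-dimensional schemes this is essentially Voevodsky's analysis of the $h$-topology (and its realization as the join of $cdh$ and étale). The qcqs extension rests on Raynaud--Gruson flattening in the form due to Rydh: any proper surjection becomes flat after a blowup of the base (a $cdh$ cover), and flat covers can then be refined by étale covers using standard fpqc-descent plus étale-hyperdescent. Finitariness of $E$ --- inherited from $\TR$ and from $K(1)$-localization on appropriately bounded-below spectra --- allows one to reduce from qcqs to Noetherian by approximation.

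The main obstacle in this plan is the last step: rigorously carrying out the geometric reduction of $h$-hypercovers to compositions of $cdh$- and étale-hypercovers in the qcqs (non-Noetherian) setting, including the flattening reduction and the passage from flat to étale covers. The cdh- and étale-descent ingredients are essentially black boxes; it is the $h$-topological manipulation where care is needed.
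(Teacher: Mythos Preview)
Your outline has a genuine gap in the third step. The claim that $cdh$-descent together with \'etale hyperdescent implies $h$-descent is not available in the generality you need, and the specific mechanism you propose --- ``flat covers can then be refined by \'etale covers'' --- is simply false. A finite flat ramified cover such as $\spec(\mathbb{Z}[i]) \to \spec(\mathbb{Z})$, or $\mathbb{A}^1 \to \mathbb{A}^1$ via $x \mapsto x^2$ over a field, is an $h$-cover but admits no \'etale refinement; nor does any $cdh$-modification of the base repair this. The topology generated by $cdh$ and \'etale covers (sometimes called $eh$) is in general strictly coarser than the $h$-topology. Your Raynaud--Gruson/Rydh flattening step reduces proper surjections to flat maps after a blow-up, but then you are stuck with flat maps that are not \'etale.

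The paper's argument avoids this by extracting a stronger consequence from \cite{CMNN}: a $K(1)$-local localizing invariant satisfies descent along \emph{finite locally free} maps, not merely \'etale maps. Combined with Nisnevich descent (automatic for localizing invariants), this yields fppf descent via \cite[Tag 05WN]{stacks-project}, since the fppf topology is generated by Zariski covers and surjective finite locally free maps. One then invokes \cite[Th.~2.9]{BSproj}: fppf descent together with excision for abstract blow-up squares (which you do have from Land--Tamme) implies $h$-descent. No refinement-of-hypercovers argument is needed.

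A secondary issue: your appeal to finitariness is misplaced. The paper explicitly does \emph{not} assume localizing invariants commute with filtered colimits, and $\TR$ itself fails to do so (this is emphasized elsewhere in the paper). So the noetherian-approximation step you sketch is not available, and fortunately is not needed once you use the fppf route.
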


In particular, 
$L_{K(1)}\TR(-)$ satisfies \'etale descent. This is not so surprising, 
since $\TR(-; \mathbb{Z}_p)$ itself (like all Hochschild-theoretic invariants) actually satisfies
flat descent, cf.~\cite[Sec.~3]{BMS2}. However, 
\Cref{hdescintro}
(together with \eqref{k1vanish}) leads to \'etale descent in the generic fiber. 
Since $\TR(R; \mathbb{Z}_p)$ of a ring $R$  depends only on the (derived)
$p$-adic completion of $R$, we can informally view $L_{K(1)} \TR(R)$ as an invariant of the ``rigid space''
associated to $R[1/p]$.

\begin{example}[Galois descent in the generic fiber] 
\label{galoisgenericex}
Let $R \to S$ be a finite, finitely presented map of rings. 
Suppose we have a finite group $G$ acting on $S$ such that $R[1/p] \to S[1/p]$
is $G$-Galois. Then 
for any $K(1)$-local localizing invariant $E$ which is truncating, we have
\( E(R) \xrightarrow{\sim} E(S)^{hG}.  \)
\end{example}

Recall that the Lichtenbaum--Quillen conjecture, refined by the
Beilinson--Lichtenbaum conjecture proved by Voevodsky--Rost (cf.~\cite{HW19}
for an account), predicts that for 
$\mathbb{Z}[1/p]$-algebras $A$ satisfying 
mild finiteness conditions, the $p$-adic $K$-theory $K(A; \mathbb{Z}_p)$ is
``asymptotically $K(1)$-local:'' that is, the map $K(A; \mathbb{Z}_p) \to
L_{K(1)} K(A; \mathbb{Z}_p)$ is an equivalence in high enough degrees. 
We next discuss analogs of such statements for $\TR(R; \mathbb{Z}_p)$ for $p$-adic
rings $R$. 
Indeed, 
in \cite{HM03, HM04}, it is shown that
if $R$ is smooth of relative dimension $d$ over a discrete valuation ring $\mathcal{O}_K$ of mixed
characteristic  with perfect
residue field of characteristic $p > 2$, then 
$\TR(R; \mathbb{F}_p) \to L_{K(1)} \TR(R; \mathbb{F}_p)$ is $d$-truncated; more
precisely, this is a consequence of the relationship shown in \emph{loc.~cit.}~with the absolute de Rham--Witt complex. 
We prove this asymptotic $K(1)$-locality more generally for regular rings satisfying $F$-finiteness
hypotheses
from the Beilinson--Lichtenbaum conjecture  applied to the generic fiber as well as the connection between $\TR$ and $p$-typical curves
\cite{Hes96}. We expect that there should be a purely $p$-adic proof of
this result (as in \cite{HM03, HM04} in the smooth case).  

\begin{theorem} 
Let $R$ be a $p$-torsionfree excellent regular noetherian ring. Suppose that $R/p$ is finitely generated
as a module over its subring of $p$th powers. 
Suppose furthermore that
for all $\mathfrak{p} \in \spec(R)$ containing $(p)$, we have
$\dim R_{\mathfrak{p}} + \log_p [ \kappa(\mathfrak{p}):\kappa(\mathfrak{p})^p]
\leq d$ for some $d \geq 0$. 
Then the map $\TR(R; \mathbb{F}_p) \to L_{K(1)} \TR(R; \mathbb{F}_p)$ is 
$(d-1)$-truncated. 
\label{TRtruncapprox}
\end{theorem}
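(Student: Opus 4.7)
The plan is to combine Hesselholt's identification \cite{Hes96} of $\TR(R;\mathbb{Z}_p)$ with $p$-typical curves in $K(R)$ with the Beilinson--Lichtenbaum bound on the comparison $K(-;\mathbb{F}_p) \to L_{K(1)}K(-;\mathbb{F}_p)$, applied to the generic fiber $R[1/p]$.

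First, I would use the curves description to express $\TR(R;\mathbb{F}_p)$ as a homotopy-limit construction assembled from $K(-;\mathbb{F}_p)$ of auxiliary rings built from $R$ (truncated polynomial algebras $R[t]/t^n$, or closely related Witt vector objects). On the $K(1)$-local side, \Cref{truncatingthmintro} together with the vanishing $L_{K(1)}\TC(\mathbb{Z}/p^n)=0$ from \eqref{k1vanish} (and the resulting invariance of $K(1)$-local truncating invariants under inverting $p$ on $H\mathbb{Z}$-algebras) lets one replace each auxiliary ring $R[t]/t^n$ by its generic fiber $R[1/p][t]/t^n$ in the curves picture of $L_{K(1)}\TR$. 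Thus the comparison map of the theorem is assembled from Beilinson--Lichtenbaum comparison maps for the family $\{R[1/p][t]/t^n\}_n$.

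Second, I would bound the mod-$p$ \'etale cohomological dimension of these generic-fiber rings by $d$. Since polynomial extensions and nilpotent thickenings do not affect the bound, this reduces to bounding the cohomological dimension of $\spec R[1/p]$. Here the $F$-finiteness hypothesis on $R/p$ together with the inequality $\dim R_{\mathfrak{p}} + \log_p[\kappa(\mathfrak{p}):\kappa(\mathfrak{p})^p] \leq d$ at primes $\mathfrak{p} \supseteq (p)$ should, via a Gabber-type argument (building on the Kato-style bound $cd_p(k) \leq \log_p[k:k^p]$ for $F$-finite fields $k$ of characteristic $p$, together with specialization or absolute purity), yield the required dimension bound on $\spec R[1/p]$. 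Then Beilinson--Lichtenbaum (Voevodsky--Rost; cf.\ \cite{HW19, CM19}) gives that $K(A;\mathbb{F}_p) \to L_{K(1)}K(A;\mathbb{F}_p)$ is $(d-1)$-truncated for the auxiliary rings $A$, and this uniform bound is preserved by the homotopy limits in the curves description, yielding the desired bound for $\TR$.

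The main obstacle, I expect, is the second step: correctly translating the special-fiber hypothesis into a generic-fiber \'etale cohomological dimension bound. A secondary challenge is verifying, in the first step, that Hesselholt's curves description interacts with $K(1)$-localization and generic-fiber extraction uniformly enough to transfer truncation bounds in the generality of excellent regular $p$-torsionfree rings, rather than only in the smooth-over-DVR case of \cite{HM03, HM04}.
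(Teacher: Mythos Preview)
Your overall strategy---curves plus Beilinson--Lichtenbaum on the generic fiber---is the same as the paper's, but there is a genuine gap in your first step that makes the argument break down as written.

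You propose to work at each finite level with the truncated polynomial rings $R[t]/t^n$ (or their generic fibers $R[1/p][t]/t^n$) and to bound the comparison map $K(-;\mathbb{F}_p)\to L_{K(1)}K(-;\mathbb{F}_p)$ there. The problem is that these rings are not regular, and in fact the relevant relative target vanishes levelwise: since $L_{K(1)}K(-)$ is nilinvariant, one has $L_{K(1)}K(R[t]/t^n,(t))=0$. Thus at each $n$ the comparison map is just $K(R[t]/t^n,(t);\mathbb{F}_p)\to 0$, and the fiber is the entire relative $K$-theory, which is \emph{not} uniformly truncated as $n\to\infty$ (indeed its growth is exactly what makes $\TR$ nontrivial). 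So the ``uniform Beilinson--Lichtenbaum bound preserved under homotopy limits'' picture does not hold at the finite levels.

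The paper gets around this by passing to the limit \emph{first}: using the $p$-adic continuity theorem of \cite[Th.~F]{CMM} (which uses the $F$-finiteness hypothesis on $R/p$), one obtains $C(R;\mathbb{F}_p)\simeq \Omega K(R[[x]],(x);\mathbb{F}_p)$, and now works with the single \emph{regular} ring $R[[x]]$. Then the comparison is split into two pieces you do not separate: first $K(R[[x]];\mathbb{F}_p)\to K(R[[x]][1/p];\mathbb{F}_p)$, whose fiber is controlled by Quillen d\'evissage and the Geisser--Levine theorem applied to the characteristic~$p$ residue fields of $R[[x]]$ (bounding $[\kappa':\kappa'^p]$ via a power-series lemma); and second $K(R[[x]][1/p];\mathbb{F}_p)\to L_{K(1)}K(R[[x]][1/p])$, where Beilinson--Lichtenbaum (in the Rosenschon--{\O}stv{\ae}r form) applies. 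The cohomological dimension bound for the characteristic~zero residue fields of $R[[x]]$ is obtained from Gabber--Orgogozo applied to the henselian pair $(R[[x]],(p,x))$, which is where the hypothesis $\dim R_{\mathfrak p}+\log_p[\kappa(\mathfrak p):\kappa(\mathfrak p)^p]\leq d$ enters. Your sketch of the dimension bound is in the right spirit, but the essential missing idea is the passage to $R[[x]]$ before invoking any Lichtenbaum--Quillen type input.
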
 

Using the theory of topological Cartier modules of Antieau--Nikolaus \cite{AN}, we relate 
the property of $\TR(R; \mathbb{F}_p)$ being ``asymptotically $K(1)$-local'' to
the extensively studied Segal
conjecture for $\THH(R)$, i.e., the condition that the cyclotomic
Frobenius $\varphi \colon \THH(R; \mathbb{F}_p) \to \THH(R;
\mathbb{F}_p)^{tC_p}$
should be an equivalence in high degrees. 
In particular, we obtain a version of the Segal conjecture for $\THH(R)$ when
$R$ is regular. 
We expect that there should be a filtered version of this
statement, using the motivic filtrations of Bhatt--Morrow--Scholze \cite{BMS2}.

\begin{corollary} 
Let $R$ be as in 
\Cref{TRtruncapprox}. Then the cyclotomic Frobenius $\varphi \colon\THH(R; \mathbb{F}_p) \to
\THH(R; \mathbb{F}_p)^{tC_p}$ is $(d-1)$-truncated. 
\end{corollary}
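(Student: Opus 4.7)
The plan is to convert the asymptotic $K(1)$-locality of $\TR(R;\mathbb{F}_p)$ furnished by \Cref{TRtruncapprox} into the claimed truncation of the cyclotomic Frobenius $\varphi$ via the equivalence of Antieau--Nikolaus \cite{AN} between appropriately complete topological Cartier modules and $p$-typical cyclotomic spectra. Under this equivalence, the cyclotomic Frobenius $\varphi$ on $\THH(R;\mathbb{F}_p)$ is encoded by the Frobenius operator $F$ on the topological Cartier module $\TR(R;\mathbb{F}_p)$, and truncation bounds on $F$ translate into truncation bounds on $\varphi$ (since $\THH$ is recovered from $\TR$ by an appropriate quotient-by-$V$ operation that preserves upper bounds on homotopy).

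The key intermediate step is to verify that $F$ is an equivalence on $L_{K(1)}\TR(R;\mathbb{F}_p)$. Heuristically, after $K(1)$-localization the Verschiebung $V$ becomes topologically nilpotent, and dually $F$ acquires an inverse via the $K(1)$-local Adams-operation structure inherited through the cyclotomic trace. Granting this, the naturality square for $F$ along the localization map $\TR(R;\mathbb{F}_p) \to L_{K(1)}\TR(R;\mathbb{F}_p)$ combined with \Cref{TRtruncapprox} shows that $F$ itself induces an isomorphism on $\pi_i\,\TR(R;\mathbb{F}_p)$ for $i \geq d+1$ and an injection on $\pi_d$; the long exact sequence of the fiber then gives $\pi_i\,\mathrm{fib}(F) = 0$ for $i \geq d$, so $\mathrm{fib}(F\colon \TR(R;\mathbb{F}_p) \to \TR(R;\mathbb{F}_p))$ is $(d-1)$-truncated as a spectrum. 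Translating back through the Antieau--Nikolaus dictionary, $\mathrm{fib}(\varphi)$ on $\THH(R;\mathbb{F}_p)$ is correspondingly $(d-1)$-truncated, yielding the corollary.

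The principal obstacle is to set up the Antieau--Nikolaus dictionary precisely enough that truncation bounds for $F$ on the topological Cartier module $\TR(R;\mathbb{F}_p)$ really do pass to truncation bounds for $\varphi$ on the underlying cyclotomic spectrum $\THH(R;\mathbb{F}_p)$, keeping careful track of the $V$-adic completions involved in the equivalence. Justifying the invertibility of $F$ on $L_{K(1)}\TR$ is the other delicate point; both ingredients are essentially structural features of \cite{AN} combined with the standard $K(1)$-local behaviour of trace invariants, but they must be handled carefully to avoid off-by-one shifts in the truncation bounds.
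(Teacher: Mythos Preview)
Your overall strategy coincides with the paper's: the corollary is deduced from \Cref{TRtruncapprox} via \Cref{trunctosegal}, which is precisely the Antieau--Nikolaus translation you describe. However, two of your intermediate heuristics are off and should be corrected.

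First, in the topological Cartier module framework the Frobenius is a map $F \colon \TR(X) \to \TR(X)^{hC_p}$, not an endomorphism of $\TR(X)$. With this in hand, the passage from $F$ to $\varphi$ is not merely a truncation-preserving quotient but an exact identification: from the cofiber sequences
\[
\TR(X)_{hC_p} \xrightarrow{V} \TR(X) \to X, \qquad
\TR(X)_{hC_p} \xrightarrow{\mathrm{Nm}} \TR(X)^{hC_p} \to X^{tC_p}
\]
(using $(\TR(X)_{hC_p})^{tC_p} = 0$) and the homotopy $FV \simeq \mathrm{Nm}$, the induced map on cofibers is $\varphi$, and since the left vertical arrow is the identity one gets $\mathrm{fib}(F) \simeq \mathrm{fib}(\varphi)$ on the nose. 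So there is no off-by-one risk.

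Second, your explanation for why $F$ is a $K(1)$-local equivalence is backwards. The Verschiebung does \emph{not} become nilpotent after $K(1)$-localization; rather, $V$ becomes an \emph{equivalence}, because its cofiber $X = \THH(R;\mathbb{F}_p)$ is an $H\mathbb{Z}$-module and hence $K(1)$-acyclic. Combined with the vanishing of Tate constructions in the $K(1)$-local category (so that the norm $(L_{K(1)}\TR(X))_{hC_p} \to (L_{K(1)}\TR(X))^{hC_p}$ is an equivalence after $p$-completion), the relation $FV \simeq \mathrm{Nm}$ forces $F \colon L_{K(1)}\TR(X) \to (L_{K(1)}\TR(X))^{hC_p}$ to be an equivalence. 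No appeal to Adams operations is needed. With these corrections your argument is exactly the paper's proof of \Cref{trunctosegal} specialized to $X = \THH(R;\mathbb{F}_p)$.
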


\newcommand{\arc}{\mathrm{arc}_{{p}}}

Finally, we study the analog of Thomason's spectral sequence \cite{Tho85, TT90}
from \'etale cohomology to $K(1)$-local algebraic $K$-theory. 
For a scheme $X$ over $\mathbb{Z}[1/p]$ satisfying mild finiteness conditions
(to wit: $X$ should be qcqs of finite Krull dimension, with a uniform bound on
the mod $p$ virtual cohomological dimensions of the residue fields,
\cite{RO06, CM19}), one has a convergent spectral sequence
\[ E_2^{s,t} =H^s(X, \mathbb{Z}_p(t)) \implies \pi_{2t-s} L_{K(1)} K(X).  \]

We can construct a similar spectral sequence for $L_{K(1)} \TR$ under
significantly stronger finiteness and regularity
conditions, arising from a natural filtration. 
To formulate the $E_2$-term (or the graded pieces of this filtration), we use the $\arc$-topology of \cite{BM18}. 

\begin{definition}[The $\arc$-topology and $\arc$-cohomology] 
\label{arcpdefinitionintro}
We say that a map of derived $p$-complete rings $R \to R'$ is an \emph{$\arc$-cover} if 
every map $R \to V$ for $V$ a rank $1$ valuation ring  which is $p$-complete and such that $p \neq 0$,
there exists an extension of rank $1$ valuation rings $V \to W$ and a
commutative diagram
\[ \xymatrix{
R \ar[d]  \ar[r] &  R' \ar[d]  \\
V \ar[r] &  W 
}.\]
The $\arc$-topology is the finitary 
Grothendieck topology on the opposite of the category of derived
$p$-complete rings defined such that a family $\left\{R \to R'_\alpha\right\}_{\alpha \in A}$ is a
covering family if and only if there exists a finite subset $A' \subset A$ such
that $R \to \prod_{\alpha \in A'} R'_\alpha$ is an $\arc$-cover. 

Given any functor $F$
from derived $p$-complete rings to abelian groups, 
we let $R \Gamma_{\arc}( \spec(R), F(-))$ denote the $\arc$-cohomology of $F(-)$
on a derived $p$-complete ring $R$.\footnote{For set-theoretic reasons, to define $\arc$-cohomology
we should fix a cutoff cardinal. We will only consider situations where the
choice of cutoff cardinal does not affect the result.} 
\end{definition} 

\begin{example} 
\begin{enumerate}
\item  
We can consider the $\arc$-cohomology of the structure presheaf $\mathcal{O}$,
$R \Gamma_{\arc}( \spec(R), \mathcal{O})$. This is closely related to the
perfectoidizations considered in \cite[Sec.~7--8]{Prisms}, which work with the
$p$-complete
$\mathrm{arc}$-topology rather than the $\arc$-topology. 
For $R = \mathbb{Z}_p$, it is not difficult to see that this is the continuous
$\mathrm{Gal}(\mathbb{Q}_p)$-homotopy invariants of the derived
saturation $(\mathcal{O}_C)_*$ for $C = \widehat{\overline{\mathbb{Q}_p}}$ (in
the sense of almost ring theory \cite{GR03}). 
\item
We consider the $\arc$-cohomology of the Witt vector presheaf $W(\mathcal{O})$, denoted 
$R \mapsto R \Gamma_{\arc}( \spec(R), W(\mathcal{O}))$, as well as its $p$-adic Tate twists
$W(\mathcal{O})(i)$ for $i \in \mathbb{Z}$. 
\end{enumerate}
\end{example}

\begin{theorem} 
\label{sseqthmintro}
Let $K$ be a complete nonarchimedean field of  mixed characteristic $(0, p)$
with  ring of integers $\mathcal{O}_K$ and residue field $k$ with $[k: k^p] <
\infty$. 
Suppose either $K$ is discretely valued or $K$ is perfectoid. 
Let 
$R$ be a formally smooth $\mathcal{O}_K$-algebra. 
Then there exists a natural convergent, exhaustive
$\mathbb{Z}$-indexed descending filtration 
$\mathrm{Fil}^{\geq \ast} L_{K(1)} \TR(R) $ on $L_{K(1)}
\TR(R)$ such that 
\begin{equation}  \label{motivicfilK1TR} \gr^{i} L_{K(1)} \TR(R) \simeq R \Gamma_{\arc}( \spec(R),
W(\mathcal{O})(i))[2i]. \end{equation}
\end{theorem}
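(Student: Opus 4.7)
The plan is to construct the filtration by sheafifying $L_{K(1)} \TR(-)$ in the $\arc$-topology and using an explicit even-degree description on perfectoid $\mathcal{O}_K$-algebras; the resulting filtration will be a sheafified double-speed Postnikov filtration.

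First, I would upgrade the descent of \Cref{hdescintro} to $\arc$-descent on the relevant site of $p$-complete $\mathcal{O}_K$-algebras. Because $\TC = \mathrm{fib}(F - \mathrm{id} : \TR \to \TR)$ and $L_{K(1)} \TC(R) \simeq L_{K(1)} \TC(R[1/p])$ by \eqref{k1invertp}, it is reasonable to expect that $L_{K(1)} \TR(R)$ depends essentially on the rigid generic fibre of $R$. Under this reduction, the $\arc$-topology on derived $p$-complete rings corresponds to the $v$-topology on rigid generic fibres in the sense of \cite{BM18}, so the $h$-descent of \Cref{hdescintro}, together with a reduction of $v$-covers to étale and valuative covers, should yield the desired $\arc$-descent.

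Next, I would compute $L_{K(1)} \TR(S)$ for $S$ a perfectoid $\mathcal{O}_K$-algebra. By Hesselholt's $p$-typical curves description of $\TR$ and the perfectness of Frobenius on perfectoid rings, $\TR(S; \mathbb{Z}_p)$ admits a clean description in terms of Witt vectors of $S$. After $K(1)$-localization the Bott element $\beta \in \pi_2$ is inverted, producing a $2$-periodic, even object with $\pi_{2i} L_{K(1)} \TR(S) \cong W(S)(i)$, where the Tate twist encodes the Bott action. For general formally smooth $R$, I would then define
\[ \fil^{\geq i} L_{K(1)} \TR(R) := R\Gamma_{\arc}\bigl(\spec R,\, \tau^{\arc}_{\geq 2i}\, L_{K(1)} \TR(-)\bigr), \]
with $\tau^{\arc}_{\geq 2i}$ the sheafified connective cover. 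Since the sheafification is $\arc$-locally even, the graded pieces become $R\Gamma_{\arc}(\spec R, W(\mathcal{O})(i))[2i]$, producing \eqref{motivicfilK1TR}.

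The main obstacle is convergence. Completeness reduces to a finite bound on the $\arc$-cohomological dimension of $\spec R$ for formally smooth $R$ of finite Krull dimension, which is where the hypotheses that $K$ be discretely valued or perfectoid and $[k:k^p] < \infty$ enter, mirroring the role of virtual cohomological dimension bounds in \cite{Tho85, RO06, CM19}. Exhaustiveness will use the asymptotic $K(1)$-locality of \Cref{TRtruncapprox} to compare the sheafified filtration with the usual Postnikov tower of $\TR(R)$ in high enough degrees. The subtlest technical point is likely the identification of the $\arc$-sheafification of $\pi_{2i} L_{K(1)} \TR$ with the Witt vector presheaf $W(\mathcal{O})(i)$ on the nose, rather than with its perfectoidization or an almost-analog, which will require a careful analysis of the Witt vector construction under $\arc$-covers.
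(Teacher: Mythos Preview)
Your overall strategy---define the filtration as the $\arc$-Postnikov filtration on the hypersheafified $L_{K(1)}\TR$, and identify the graded pieces via an explicit computation on perfectoids---matches the paper's. But the first step, upgrading the $h$-descent of \Cref{hdescintro} to $\arc$-descent, is a genuine gap, and it is exactly the hard part of the argument.

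The $h$-topology is generated by \emph{finitely presented} $v$-covers of qcqs schemes, whereas $\arc$-covers include maps such as $\mathcal{O}_K \to \prod_t V_t$ for an infinite product of valuation rings, or the pro-Galois tower $\mathcal{O}_K \to \mathcal{O}_{\overline{K}}$, which are not finitely presented. The $h$-descent theorem says nothing about these. Moreover, $\TR$ does not commute with filtered colimits, so one cannot pass to the limit na\"ively. This is precisely why the paper does \emph{not} prove that $L_{K(1)}\TR$ is an $\arc$-sheaf on all derived $p$-complete rings, but only that the comparison map $L_{K(1)}\TR(R) \to (L_{K(1)}\TR(R))^{\sharp}$ to the hypersheafification is an equivalence for the specific $R$ in the statement.

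The paper's substitute for your first step is the pro-Galois descent result \Cref{progaldesc}. For formally smooth $R$ over a perfectoid base one uses the $\mathbb{Z}_p(1)^n$-tower obtained by adjoining $p$-power roots of torus coordinates (\Cref{smoothoverOC}); for the discretely valued case one first descends along $\mathrm{Gal}(\overline{K}/K)$ as in \Cref{DVRex}. In both cases the tower has perfectoid limit, so one reduces to perfectoids, where $L_{K(1)}\TR$ agrees with its hypersheafification by the direct computation in \Cref{TRofperfectoid}. The key technical ingredient you are missing is the theory of topological Cartier modules: finite Galois descent in the generic fiber is known from \Cref{galoisdescgeneric}, so the colimit $\varinjlim_i \TR(S_i;\mathbb{F}_p)$ over finite subextensions satisfies the desired descent, but this colimit is not $\TR(S;\mathbb{F}_p)$. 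The discrepancy is controlled by $V$-completion of topological Cartier modules, and \Cref{completionCart} shows this completion has bounded amplitude, which is what makes the passage to the infinite tower work.

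Two smaller points. First, the identification on perfectoids is not quite $\pi_{2i} L_{K(1)}\TR(S) \cong W(S)(i)$: for a general perfectoid $S$ there can be odd homotopy, and the correct statement is $\tau_{[2i-1,2i]} L_{K(1)}\TR(S) \simeq R\Gamma_{\arc}(\spec S, W(\mathcal{O})(i))[2i]$, with the right-hand side in cohomological degrees $[0,1]$ (\Cref{cohamplitudeWOi}); evenness only holds $\arc$-locally, e.g.\ for $\mathcal{O}_C$ with $C$ spherically complete and algebraically closed (\Cref{TRsphcompl}). Second, completeness of the filtration does not come from a cohomological-dimension bound for $\spec R$ but from a general repleteness property of the $\arc$-site (\Cref{commutehypproducts}) forcing Postnikov towers of hypercomplete sheaves to converge.
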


\Cref{sseqthmintro} is effectively an \'etale hyperdescent (in the generic
fiber) result together with the calculation for perfectoids. 
For illustration, we specialize to the case where 
$R  = \mathcal{O}_K$, for $K$ discretely valued with perfect residue field. 
Then the filtration \eqref{motivicfilK1TR} arises via a type of pro-Galois descent in the
generic fiber. 
If $L/K$ is $G$-Galois, \Cref{galoisgenericex} and \Cref{TRtruncapprox} imply that 
$\TR(\mathcal{O}_K; \mathbb{F}_p) \to \TR(\mathcal{O}_L; \mathbb{F}_p)^{hG}$ is a $0$-truncated map. 
However, this does not help with passage to $\overline{K}$ since $\TR$ does not
commute with filtered colimits; note that $\TR$ 
has a simple form for $\mathcal{O}_{\overline{K}}$, cf.~\cite{Hes06}. 
Using again the theory of topological Cartier modules as a ``decompletion'' of
the theory of cyclotomic spectra, cf.~\cite{AN},
we prove the following pro-Galois result (cf.~\Cref{DVRex}): 

\begin{theorem} 
Let $K$ be a complete, discretely valued field of characteristic zero with
perfect residue field $k$ of odd
characteristic $p$. 
Let $\TR(\mathcal{O}_K|K)$ denote the cofiber of the transfer map 
$\TR(k) \to \TR(\mathcal{O}_K)$. 
Then the natural map induces an equivalence
\[ \TR(\mathcal{O}_K|K; \mathbb{F}_p) \to \tau_{\geq 0}\mathrm{Tot} \left(  \TR(
\mathcal{O}_{\overline{K}}; \mathbb{F}_p)
\rightrightarrows 
\TR( \mathcal{O}_{\overline{K} \otimes_K \overline{K}}; \mathbb{F}_p)
\triplearrows \dots  \right). 
\]
\end{theorem}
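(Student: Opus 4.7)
The plan is to combine three ingredients already established in the paper --- the generic-fiber Galois descent for $K(1)$-local truncating invariants (\Cref{galoisgenericex}), the asymptotic $K(1)$-locality of $\TR$ on regular rings (\Cref{TRtruncapprox}), and the Antieau--Nikolaus theory of topological Cartier modules \cite{AN} as a decompletion of cyclotomic spectra --- in order to first prove a finite Galois descent statement and then pass to the infinite tower $\overline{K}/K$.

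For each finite Galois extension $L/K$ with Galois group $G$, the map $\mathcal{O}_K \to \mathcal{O}_L$ is $G$-Galois after inverting $p$. Applying \Cref{galoisgenericex} to $L_{K(1)}\TR$ --- which is truncating by \Cref{truncatingthmintro} --- yields $L_{K(1)}\TR(\mathcal{O}_K) \xrightarrow{\sim} L_{K(1)}\TR(\mathcal{O}_L)^{hG}$. Since $k$ and $k_L$ are perfect of characteristic $p$, $\TR$ on them is an Eilenberg--MacLane $H\mathbb{Z}_p$-module (the Witt vectors), hence $K(1)$-acyclic, so the transfer cofiber $\TR(\mathcal{O}_?|?)$ agrees with $\TR(\mathcal{O}_?)$ after $L_{K(1)}$ for $? \in \{K,L\}$. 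Next, \Cref{TRtruncapprox} applies to both $\mathcal{O}_K$ and $\mathcal{O}_L$ (each a $p$-torsionfree excellent regular DVR with perfect residue field, giving $d=1$), showing that $\TR(\mathcal{O}_?;\mathbb{F}_p) \to L_{K(1)}\TR(\mathcal{O}_?;\mathbb{F}_p)$ is $0$-truncated and hence an equivalence in positive degrees. Identifying the cosimplicial totalization $\mathrm{Tot}(\TR(\mathcal{O}_{L^{\otimes_K \bullet+1}};\mathbb{F}_p))$ with $\TR(\mathcal{O}_L;\mathbb{F}_p)^{hG}$ (using $L^{\otimes_K n} \simeq \prod_{G^{n-1}} L$ and the fact that $\TR$ commutes with finite products), and carefully matching the transfer cofiber on the left with the degree-$0$ contribution of $\TR(k_L)^{hG}$ on the right, one obtains the desired equivalence in $\tau_{\geq 0}$ at each finite Galois stage.

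The main obstacle is then the passage from finite $L/K$ to $\overline{K}/K$: as the paper explicitly notes, $\TR$ does not commute with filtered colimits, so the naive filtered colimit $\mathrm{colim}_{L/K} \TR(\mathcal{O}_L;\mathbb{F}_p)$ does not recover $\TR(\mathcal{O}_{\overline{K}};\mathbb{F}_p)$. The Antieau--Nikolaus theory of topological Cartier modules, through which $\TR$ factors and which enjoys better colimit-preserving properties, is invoked precisely to handle this. Concretely, I would verify that the cosimplicial object $\TR(\mathcal{O}_{\overline{K}^{\otimes_K \bullet+1}};\mathbb{F}_p)$ realizes a continuous $G_K$-fixed-point functor at the level of topological Cartier modules, compatibly with Hesselholt's explicit description of $\TR(\mathcal{O}_{\overline{K}})$ \cite{Hes06}, so that the finite-stage equivalences assemble into the claimed identification. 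This identification at the topological-Cartier-module level is the crux of the proof.
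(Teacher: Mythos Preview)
Your three ingredients are the right ones, and your finite-level analysis in the second paragraph is essentially correct (though the ``careful matching'' of the transfer cofiber with $\TR(k_L)^{hG}$ is more delicate than you indicate, and the paper in fact bypasses the finite-level statement entirely). The real gap is in the third paragraph: the phrase ``so that the finite-stage equivalences assemble into the claimed identification'' names the difficulty rather than resolving it.

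The problem is that there is no direct mechanism to pass from finite-level equivalences $\TR(\mathcal{O}_K|K;\mathbb{F}_p) \simeq \tau_{\geq 0}\TR(\mathcal{O}_L;\mathbb{F}_p)^{hG}$ to the profinite statement, precisely because $\TR(\mathcal{O}_{\overline{K}};\mathbb{F}_p)$ is \emph{not} the filtered colimit of the $\TR(\mathcal{O}_L;\mathbb{F}_p)$. The paper's actual mechanism is different: one works with the presheaf $\sF$ on finite continuous $G_K$-sets given by $\TR(-;\mathbb{F}_p)$, left-Kan-extends to profinite $G_K$-sets to obtain $\sF^{\mathrm{disc}}$ (so $\sF^{\mathrm{disc}}(G_K) = \varinjlim_L \TR(\mathcal{O}_L;\mathbb{F}_p)$, the ``wrong'' colimit), and shows via a 2-out-of-3 argument against the $K(1)$-local version that $\sF(\ast) \to \mathrm{Tot}(\sF^{\mathrm{disc}}(G_K^{\bullet+1}))$ is $0$-truncated as a map of topological Cartier modules. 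The crux is then a quantitative amplitude estimate for $V$-completion: the $V$-completion of a bounded-below, $d$-truncated, $p$-complete topological Cartier module is $(d+2)$-truncated. Applying $V$-completion to both sides, the left side is unchanged while the right side becomes the totalization of the \emph{genuine} $\TR(\mathcal{O}_{\overline{K}^{\otimes_K \bullet+1}};\mathbb{F}_p)$ (using that $R\Gamma(\ast,-)$ commutes with colimits when $G_K$ has finite $p$-cohomological dimension, so $V$-completion commutes with the totalization). This is the step your proposal is missing; the topological Cartier module structure is used not to make the colimit commute, but to control the discrepancy between the colimit and its $V$-completion. The final identification with $\TR(\mathcal{O}_K|K;\mathbb{F}_p)$ then uses the Hesselholt--Madsen computation (requiring $p>2$) that this object is already the connective cover of its $K(1)$-localization, together with \Cref{perfectoidtrunc} on the cosimplicial side.
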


The idea that $\TR$ should satisfy this type of pro-Galois descent in the
generic fiber
is expressed in \cite{HesICM}; in particular Conjecture 5.1 of \emph{loc.~cit.}~predicts a related (but stronger) conclusion at the level of homotopy groups (in particular, the
vanishing of higher Galois cohomology groups in the associated descent spectral
sequence with $\mathbb{Q}_p/\mathbb{Z}_p$ coefficients). 

\subsection*{Conventions}

We write $\sp$ for the $\infty$-category of spectra and $\mathbb{S}$ for the
sphere spectrum. 
We use the theory of cyclotomic spectra in the form developed in
\cite{NS18}, as well as the theory of topological Cartier modules developed in
\cite{AN}; we write $\cycsp$ for the $\infty$-category of cyclotomic spectra. 
We write $\TR$ for $p$-typical $\TR$. 
Given an $E_\infty$-ring $B$, a homogeneous element $x \in \pi_*(B)$, and a
$B$-module $M$, we
often write $M/x$ for the cofiber of multiplication by $x$ on $M$. 
In the case $x =p$, we will often write this by $; \mathbb{F}_p$, e.g., $\THH(B;
\mathbb{F}_p)$ refers to the cofiber of $p$ on $\THH(B)$. 
A $B$-algebra always refers, unless otherwise specified, to an $E_1$-algebra in
$B$-modules.

\subsection*{Acknowledgments}

I would like to thank 
Benjamin Antieau, Lars Hesselholt, Matthew Morrow, Thomas Nikolaus, 
Wies{\l}awa Nizio\l, and Peter Scholze for many
helpful conversations related to the subject of this paper. 
I would especially like to thank Bhargav Bhatt and Dustin Clausen for their
collaboration in \cite{BCM} and many helpful discussions. 
Finally, I would like to thank Benjamin Antieau, Lennart Meier, Georg Tamme,
and the referee for several comments and corrections on an earlier version. 
{This work was done while the author was a Clay Research Fellow.}

\section{Generalities on $K(1)$-local truncating invariants}
\label{generalK1loctrunc}

Let $B$ be a base connective $E_\infty$-ring. 
In this section, we work with localizing invariants on small $B$-linear
idempotent-complete stable $\infty$-categories. Unlike in \cite{BGT13}, we do
not assume compatibility with filtered
colimits, so for us a localizing
invariant is simply a functor from (small, idempotent-complete) $B$-linear
stable $\infty$-categories to spectra which carries Verdier quotient sequences
to cofiber sequences. 
Following \cite{LT19}, we say that  such a localizing invariant $E$ is
\emph{truncating} if for every connective $B$-algebra $A$, we have 
$E(A) \xrightarrow{\sim} E(H \pi_0 A)$. 
This implies \cite[Th.~B]{LT19} that $E$ satisfies excision. 

\begin{example} 
The constructions $L_{K(1)} K(-), L_{K(1)} \TC(-)$ are truncating on connective
$H\mathbb{Z}$-algebras, as verified in \cite{LMT}. 
For commutative $p$-complete rings, the two invariants actually agree  (we do not know if
this is true for noncommutative $p$-complete rings, cf.~\cite[Question 2.20]{BCM}). 
Below we will show that $L_{K(1)}
\TR(-)$ is truncating. 
\end{example}

In the rest of the section, we will assume for simplicity of notation that $B$
is discrete; by the assumption of truncatedness, this does not affect any of the
results. 

\begin{proposition} 
\label{isogenyinv}
Let $E$ be a $K(1)$-local localizing invariant of $B$-linear
$\infty$-categories which is truncating. 
Then, on the category of discrete $B$-algebras: 
\begin{enumerate}
\item $E$ is nilinvariant. 
\item $E$ annihilates any $B$-algebra $C$   which is annihilated by a power
of $p$. 
\item Let $A \to A'$ be a map of $B$-algebras which is a $p$-isogeny. Then $E(A) \to E(A')$ is an equivalence. 
\end{enumerate}
\end{proposition}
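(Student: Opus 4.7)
The plan is to prove the three parts in order, each resting on Land--Tamme excision (a direct consequence of truncating, \cite{LT19}).

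For (1), I would induct on the nilpotency class of $I$ to reduce to the case $I^2 = 0$. Then $A$ is a square-zero extension of $A/I$ by $I$, realized as the classical pullback of connective $E_1$-$B$-algebras
\[A \simeq (A/I) \times_{(A/I) \oplus I[1]} (A/I),\]
where $(A/I) \oplus I[1]$ denotes the trivial square-zero extension by $I[1]$ and the two arrows are the derivation classifying $A$ and the zero map. Applying $E$ and invoking excision turns this into a pullback of spectra, and truncating further identifies $E((A/I) \oplus I[1]) \simeq E(\pi_0) = E(A/I)$, whereupon the pullback collapses to $E(A) \simeq E(A/I)$.

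For (2), the ideal $pC$ satisfies $(pC)^n = 0$, so by (1) we may assume $C$ is an $\mathbb{F}_p$-algebra. Now $\mathrm{Perf}(C)$ is a module category over the symmetric monoidal $\mathrm{Perf}(\mathbb{F}_p)$, giving a pairing $K(\mathbb{F}_p) \wedge E(C) \to E(C)$ whose precomposition with the unit $\mathbb{S} \to K(\mathbb{F}_p)$ is the identity of $E(C)$. Since $E(C)$ is $K(1)$-local, this pairing factors through $L_{K(1)} K(\mathbb{F}_p)$, which vanishes by \cite{BCM, LMT}; the unit axiom then forces $\mathrm{id}_{E(C)}$ to factor through $0$, so $E(C) = 0$.

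For (3), I would factor the $p$-isogeny $f \colon A \to A'$ as $A \twoheadrightarrow A/\ker f \hookrightarrow A'$ and treat each step separately. For the surjection, $J := \ker f$ is a non-unital $p^N$-torsion ring; Land--Tamme excision yields a fiber sequence $E(J) \to E(A) \to E(A/J)$, and unitizing $J$ over $\mathbb{Z}/p^N$ rather than $\mathbb{Z}$ produces a unital $p^N$-torsion ring $\mathbb{Z}/p^N \oplus J$ to which (2) applies, forcing $E(J) = 0$. For the injection, the $p^N$-torsion cokernel gives $p^N A' \subseteq A/\ker f$, so $I := p^N A'$ is a common ideal; the resulting Milnor square
\[\xymatrix{A/\ker f \ar@{^{(}->}[r] \ar@{->>}[d] & A' \ar@{->>}[d] \\ (A/\ker f)/I \ar[r] & A'/I}\]
is (by a direct verification using $p^N A' \subseteq A/\ker f$) a pullback of rings, and its bottom row consists of $p^N$-torsion rings killed by (2). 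Excision then gives $E(A/\ker f) \simeq E(A')$, completing the argument.

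The main obstacle lies in (3): the non-unital excision step forcing $E(J) = 0$ and the verification that the Milnor square is a genuine pullback of rings require careful bookkeeping, though neither is conceptually deep once one is allowed to unitize over $\mathbb{Z}/p^N$ and exploit the containment $p^N A' \subseteq A/\ker f$.
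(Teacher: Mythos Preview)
Your proof is correct and follows the same strategy as the paper's. Two minor remarks: the paper cites \cite[Th.~B]{LT19} directly for (1) and, for (2), uses Quillen's computation $K(\mathbb{F}_p; \mathbb{Z}_p) \simeq H\mathbb{Z}_p$ rather than \cite{BCM,LMT} (this keeps the argument more elementary and avoids any appearance of circularity, since the paper later rederives $L_{K(1)} K(\mathbb{Z}/p^n) = 0$ as a corollary); in (3), you should unitize $J$ over $B/p^N B$ rather than $\mathbb{Z}/p^N$ so that the resulting unital ring is actually a $B$-algebra.
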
 
\begin{proof} 
For (1), the fact that $E$ is nilinvariant follows from \cite[Th.~B]{LT19}.  
For (2), 
since $E$ is nilinvariant, we may assume $C$ is an $\mathbb{F}_p$-algebra, so
that $E(C)$ is by the theory of non-commutative motives \cite{BGT13} a $K(\mathbb{F}_p; \mathbb{Z}_p)  = H\mathbb{Z}_p$-module (the
last identification by \cite{Qui72K}); since
$E$ is $K(1)$-local we get $E(C) = 0$. 

For (3),
the kernel of $A \to A'$ is annihilated by a power of $p$, so 
by (2)
(and excision) we can assume that $A \subset A'$. 
Let $n \gg 0$, so $p^n A' \subset A$. Then the diagram
\[ \xymatrix{
A \ar[d]  \ar[r] &  A' \ar[d]  \\
A/(p^n A' \cap A) \ar[r] &  A'/p^n A'
}\]
is a Milnor square of rings.  Applying the localizing invariant $E$   and using
excision and (2) again, we conclude (3).   
\end{proof} 

In the next result, we use Voevodsky's $h$-topology for
possibly non-noetherian schemes; in other words, the topology generated by finitely
presented  $v$-covers 
(also called universally subtrusive morphisms),
cf.~\cite{Rydh} or \cite[Sec.~2]{BSproj}. 

\begin{theorem}[$h$-descent for truncating $K(1)$-local invariants] 
Let $E$ be a $K(1)$-local localizing invariant on $B$-linear $\infty$-categories
which is truncating. 
Then $E$ satisfies $h$-descent on quasi-compact and quasi-separated (qcqs) $B$-schemes. 
\label{hdescthm}
\end{theorem}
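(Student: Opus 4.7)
The plan is to bootstrap from $cdh$-descent to full $h$-descent using the isogeny invariance of \Cref{isogenyinv}. Since $E$ is truncating, \cite[Th.~B]{LT19} already gives $cdh$-descent for $E$ on qcqs $B$-schemes, so my task is to upgrade this to descent along finitely presented faithfully flat covers, which together with $cdh$-covers generate the $h$-topology on qcqs schemes (cf.~\cite{Rydh}, \cite{BSproj}).

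The reduction to fppf descent proceeds in two steps. First, \Cref{isogenyinv} implies that $E$ is insensitive to the mod-$p$ locus: it annihilates $\mathbb{F}_p$-algebras and is $p$-isogeny invariant. Hence given an fppf cover $X' \to X$, one can reduce the question of descent to the analogous one for the base change to $\mathbb{Z}[1/p]$ (with the complement, which is supported modulo a power of $p$, contributing nothing to $E$ by nilinvariance and $\mathbb{F}_p$-triviality). Second, on $\mathbb{Z}[1/p]$-schemes, a finitely presented faithfully flat morphism is generically finite \'etale, and the closed locus where it fails to be \'etale has strictly smaller dimension; its abstract blowup fits into a $cdh$-square which one can exploit in a Noetherian induction on this bad locus.

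For the finite \'etale part of the induction, I would invoke the finite \'etale (Galois) descent results for $K(1)$-local localizing invariants of \cite{CMNN}. Combining this with $cdh$-descent from \cite{LT19} and the isogeny-invariance reduction to the $\mathbb{Z}[1/p]$-fiber then yields descent along an arbitrary fppf cover, and thus full $h$-descent on qcqs $B$-schemes.

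The main obstacle will be the combinatorial organization of the induction: refining a general fppf cover over $\mathbb{Z}[1/p]$ by compositions of $cdh$-modifications and finite \'etale covers in a way that terminates, and handling non-Noetherian qcqs bases via the appropriate limit arguments. This is precisely the type of structural $h$-cover analysis carried out in \cite{Rydh}, and I expect it to be the most technical portion of the argument; everything else is a matter of putting the pieces from \Cref{isogenyinv}, \cite{LT19}, and \cite{CMNN} together in the correct order.
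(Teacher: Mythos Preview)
Your proposal has a genuine gap in the ``reduction to $\mathbb{Z}[1/p]$'' step. \Cref{isogenyinv} tells you that $E$ annihilates $\mathbb{F}_p$-algebras and is invariant under $p$-isogenies of rings, but neither of these yields $E(X) \simeq E(X[1/p])$ for a general qcqs scheme $X$, nor does it reduce the \v{C}ech complex of an fppf cover $X' \to X$ to that of $X'[1/p] \to X[1/p]$. The open immersion $X[1/p] \hookrightarrow X$ is not a $cdh$ cover, and there is no localization sequence available for an arbitrary localizing invariant that would let you split off the mod-$p$ part. So the sentence ``one can reduce the question of descent to the analogous one for the base change to $\mathbb{Z}[1/p]$'' is not justified by the tools you cite. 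Even granting that, the subsequent Noetherian induction decomposing fppf covers over $\mathbb{Z}[1/p]$ into $cdh$ modifications and finite \'etale pieces is left entirely as a black box.

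The paper's argument avoids all of this. The key point you are underusing is that \cite{CMNN} gives \emph{finite locally free} descent for $K(1)$-local localizing invariants, not merely finite \'etale descent. Combined with Nisnevich descent (automatic for any localizing invariant, \cite{TT90}), this already yields full fppf descent by \cite[Tag 05WN]{stacks-project}, with no need to pass to the generic fiber or run an induction. Then one invokes \cite[Th.~2.9]{BSproj}: fppf descent together with excision for abstract blow-up squares (which you have from \cite[App.~A]{LT19}) implies $h$-descent. The whole proof is three citations strung together; the structural $h$-cover analysis you anticipate as ``the most technical portion'' is entirely absorbed into the cited results.
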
 
\begin{proof} 
By the results of \cite[App.~A]{LT19}, $E$ satisfies $cdh$-descent, and in
particular satisfies excision for abstract blow-up squares. 
The results of \cite{CMNN} imply that $E$ satisfies finite locally free descent. 
Since $E$ also satisfies Nisnevich descent (as does any localizing
invariant \cite{TT90}), we
obtain that $E$ satisfies fppf descent thanks to \cite[Tag
05WN]{stacks-project}. 
By \cite[Th.~2.9]{BSproj}, $h$-descent (for any sheaf) is implied by fppf descent and excision
for abstract blow-up squares. 
Combining these facts, we conclude. \end{proof} 

\begin{example} 
\label{rigidgenericfiberproper}
Let $E$ be as above. 
Let $\pi\colon  X'  \to  X$ be a finitely presented proper morphism (e.g., a
finitely presented closed immersion) of qcqs $B$-schemes such that
$X'[1/p] \xrightarrow{\sim} X[1/p]$. Then 
$E(X)  \xrightarrow{\sim}  E(X')$. 
In fact, 
by cdh descent, we have a pullback square
\[ \xymatrix{
E(X) \ar[d] \ar[r] &  E(X') \ar[d]  \\
E(X \otimes \mathbb{F}_p) \ar[r] &  E(X' \otimes \mathbb{F}_p)
},\]
and the terms on the bottom vanish 
by 
\Cref{isogenyinv}. 
\end{example}

In the next result, we use the notion of nilpotence of a group action,
cf.~\cite[Sec.~4.1]{Examples} or \cite[Def.~2.17]{CM19} for accounts, or
\cite{MNN17} for the general setup in equivariant stable homotopy theory. 
Let $G$ be a finite group. 
The collection of \emph{nilpotent} objects of the $\infty$-category $\fun(BG,
\sp)$ is the thick subcategory generated by the objects which are induced from
the trivial subgroup. For an algebra object of $\fun(BG, \sp)$, nilpotence
holds if and only if the Tate construction vanishes. 
A module over a nilpotent algebra object in $\fun(BG, \sp)$ is itself nilpotent.

\begin{corollary}[Galois descent in the generic fiber] 
Let $E$ be a $K(1)$-local localizing invariant of $B$-linear
$\infty$-categories which is truncating. 
Let $R \to S$ be a finite and finitely presented map of (commutative, discrete) $B$-algebras. Let $G$ be a finite group acting on $S$ via
$R$-algebra maps. Suppose that $R[1/p] \to S[1/p]$ is $G$-Galois. 
Then the natural map induces an equivalence
$E(R)  \xrightarrow{\sim} E(S)^{hG}$. 
Moreover, the $G$-action on $E(S)$ is nilpotent. \label{galoisdescgeneric}
\end{corollary}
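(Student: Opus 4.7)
The plan is to combine $h$-descent (\Cref{hdescthm}) with the Galois structure after inverting $p$ and with the $p$-isogeny invariance of \Cref{isogenyinv}. First I would exhibit $\mathrm{Spec}(S)$ as part of an $h$-cover of $\mathrm{Spec}(R)$: the map
\[ Y := \mathrm{Spec}(S) \sqcup \mathrm{Spec}(R/p) \longrightarrow X := \mathrm{Spec}(R) \]
is finitely presented, finite, and surjective (the second component covers $V(p)$, while the first is faithfully flat onto $\mathrm{Spec}(R[1/p])$ by the $G$-Galois hypothesis), so it is an $h$-cover. By \Cref{hdescthm}, $E(R) \simeq \mathrm{Tot}(E(Y^{\bullet+1}_X))$. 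Decomposing $Y^{n+1}_X$ into its $2^{n+1}$ disjoint pieces indexed by choices of factor $S$ or $R/p$ in each slot, every piece other than the ``all-$S$'' one has $p$-torsion $\pi_0$, so \Cref{isogenyinv}(2) (together with the truncating property, which lets us replace derived tensor products by their $\pi_0$) makes $E$ vanish on them. Thus $E(Y^{\bullet+1}_X) \simeq E(S^{\otimes_R (\bullet+1)})$ as cosimplicial spectra.

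Next, I would use the Galois structure to identify this cosimplicial spectrum with the cobar construction computing $E(S)^{hG}$. Since $R[1/p] \to S[1/p]$ is $G$-Galois, the canonical Galois comparison $S^{\otimes_R (n+1)} \to \mathrm{Map}(G^n, S)$, obtained by iterating the fundamental isomorphism $S \otimes_R S \cong \mathrm{Map}(G, S)$, is an isomorphism after inverting $p$. Because $R \to S$ is finite and finitely presented, both sides are finitely generated $R$-modules, so the kernel and cokernel are finitely generated and $p$-power torsion, hence annihilated by a uniform power of $p$. Thus the comparison map is a $p$-isogeny, and \Cref{isogenyinv}(3) yields
\[ E(S^{\otimes_R (n+1)}) \xrightarrow{\sim} \mathrm{Map}(G^n, E(S)) \]
naturally in $n$. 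The right-hand side is the standard cobar construction computing $E(S)^{hG}$, so $E(R) \simeq E(S)^{hG}$.

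For nilpotence of the $G$-action on $E(S)$, the identifications above realize $E(S \otimes_R S) \simeq \mathrm{Map}(G, E(S))$ as a co-induced (hence nilpotent) object of $\mathrm{Fun}(BG, \sp)$, and the cosimplicial comparison $E(S^{\otimes_R(\bullet+1)}) \simeq \mathrm{Map}(G^\bullet, E(S))$ provides an Amitsur-type resolution of $E(S)$ by co-induced $G$-spectra in positive cosimplicial degree. Standard equivariant stable homotopy arguments (cf.~\cite{MNN17}) then force the $G$-action on $E(S)$ itself to be nilpotent.

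The main difficulty I expect is verifying naturality: the $p$-isogeny identification $E(S^{\otimes_R (n+1)}) \simeq \mathrm{Map}(G^n, E(S))$ must be compatible with the \v{C}ech coface/codegeneracy maps on the left and the cobar coface/codegeneracy maps on the right, and the $G$-action recorded by this cobar description must be shown to match the original Galois action on $E(S)$. Tracking these compatibilities across a $p$-isogeny (which introduces slack at each cosimplicial degree) and pinning down the correct $G$-action on $S\otimes_R S$ under the Galois identification is the most delicate part of the argument.
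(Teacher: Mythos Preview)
Your descent argument is correct and essentially identical to the paper's: the paper also replaces $S$ by $S \times R/p$ to obtain an $h$-cover, applies \Cref{hdescthm}, and then compares the \v{C}ech nerve $S^{\otimes_R(\bullet+1)}$ to the standard resolution $\prod_{G^\bullet} S$ via the Galois map, which is a levelwise $p$-isogeny of finitely presented $R$-modules. Your stated ``main difficulty'' is not one: the comparison $S^{\otimes_R(\bullet+1)} \to \prod_{G^\bullet} S$ is a map of cosimplicial rings \emph{before} applying $E$, so $E$ carries it to a map of cosimplicial spectra and \Cref{isogenyinv}(3) makes it a levelwise equivalence; there is nothing to track across separate $p$-isogeny identifications.

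The genuine gap is in your nilpotence argument. Exhibiting a cosimplicial resolution of $E(S)$ by coinduced $G$-spectra does not force $E(S)$ to be nilpotent: nilpotent objects form a thick subcategory of $\fun(BG,\sp)$, closed under finite (co)limits and retracts, but not under totalizations, so ``standard arguments'' from \cite{MNN17} do not apply without further input (such as descendability, which you have not established). The paper takes a different route: $E(S)$ is, $G$-equivariantly, a module over $L_{K(1)} K(S) \simeq L_{K(1)} K(S[1/p])$ (via \eqref{k1invertp}), and since $R[1/p] \to S[1/p]$ is an honest $G$-Galois extension of commutative rings, the $G$-action on $L_{K(1)} K(S[1/p])$ is nilpotent by \cite[Th.~5.6]{CMNN}. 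A module over a nilpotent algebra object in $\fun(BG,\sp)$ is itself nilpotent, which finishes the argument.
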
 
\begin{proof} 
Replacing $S$ with $S \times R/p$, we may assume without loss of generality that
$R \to S $ is an $h$-cover. 
Then, by \Cref{hdescthm}, we have
\begin{equation} \label{CMNNdesc} E(R) \simeq \mathrm{Tot}(  E(C(R \to
S)^\bullet)) = \mathrm{Tot}(E(S) \rightrightarrows E(S \otimes_R S ) \triplearrows
\dots ). \end{equation}
Since the group $G$ acts on $S$, we have a natural map of cosimplicial rings
from the \v{C}ech nerve $C(R \to S)^\bullet$ to the standard resolution 
$S \rightrightarrows \prod_G S \triplearrows $
for $G$ acting
on $S$ (which calculates $S^{hG}$). 
This map of cosimplicial rings is an isogeny in each degree: for example, in degree $1$, $S \otimes_R S \to \prod_G
S$ is an isogeny because it is a map of finitely presented $R$-modules 
(cf.~\cite[Tag 0564]{stacks-project})
which
induces an isomorphism after inverting $p$ thanks to the Galois hypothesis. 
Therefore, by \Cref{isogenyinv},
we find that 
the map induces an equivalence after applying $E$, and we find 
from \eqref{CMNNdesc},
\[ E(R) \xrightarrow{\sim} \mathrm{Tot}( E(S) \rightrightarrows E( \prod_G S) \triplearrows
\dots ) = E(S)^{hG},\]
which is the desired claim. 
Finally, to see that the 
$G$-action on $E(S)$ 
is nilpotent, we use that $E(S)$ is a module $G$-equivariantly over 
$L_{K(1)} K(S) \xrightarrow{\sim} L_{K(1)} K(S[1/p])$ (via 
\eqref{k1invertp}), and the $G$-action on $L_{K(1)}
K(S[1/p])$ is nilpotent by \cite[Th.~5.6]{CMNN} (cf.~also
\cite[Lem.~4.20]{CM19}). 
\end{proof} 

\section{The truncating property of $L_{K(1)} \TR(-)$}

In this section, we prove the following basic result. 
Throughout, we fix a connective, $K(1)$-acyclic $E_\infty$-ring $B$, e.g.,
$H\mathbb{Z}$.

\begin{theorem} 
\label{truncatingthm}
The construction $L_{K(1)} \TR(-)$ is truncating on connective
$E_1$-$B$-algebras.
More generally, for any set $Q$,\footnote{This states informally that if $A$ is
a connective $B$-algebra, then the fiber of the map $\TR(A; \mathbb{F}_p) \to \TR(H\pi_0(A);
\mathbb{F}_p)$
has the property that each degree is annihilated by a power of $v_1$ 
(depending on the degree; note that this condition is slightly stronger than the
fiber simply being $K(1)$-acyclic).} the construction $L_{K(1)} (\prod_Q \TR(-))$ is
truncating on connective $E_1$-$B$-algebras. 
\end{theorem}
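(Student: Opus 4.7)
The plan is to reduce to a quantitative Bott-nilpotence estimate on the relative $\TR$ of a square-zero extension, using the already-known truncating property of $L_{K(1)} \TC$ together with the cyclotomic Frobenius fiber sequence $\TC \to \TR \xrightarrow{F-1} \TR$. Note that the product version of the statement is strictly stronger than the plain truncating statement: it forces us to track not just $K(1)$-acyclicity of the relative fiber
\[
F(A) := \mathrm{fib}\bigl(\TR(A; \mathbb{F}_p) \to \TR(\pi_0 A; \mathbb{F}_p)\bigr),
\]
but a uniform exponent of $v_1$-nilpotence in each homotopy degree of $F(A)$, since the homotopy groups of a product of spectra are the products of the homotopy groups. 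So the target is: produce, for each integer $k$, an integer $N(k)$ depending only on $k$ (not on $A$) such that $v_1^{N(k)}$ annihilates $\pi_k F(A)$.

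First I would reduce to square-zero extensions. Writing $A$ as the inverse limit of its Postnikov tower and using the connectivity estimates for $\THH$ at each $C_{p^m}$-fixed-point level, the problem reduces to a uniform $v_1$-bound on the relative $\TR$ of a derived square-zero extension $R \oplus M[n] \to R$ with $n \geq 1$, where $R$ is a discrete $B$-algebra and $M$ is a discrete $R$-bimodule. The standard Goodwillie/Hochschild decomposition expresses the relative $\THH$ as a sum of $S^1$-equivariant pieces built from $\THH(R; M[n]^{\otimes_R i})_{hC_i}$ for $i \geq 1$, all of which lie in high connectivity and carry a controlled cyclotomic structure, so one obtains a parallel decomposition of the relative $\TR$.

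Next I would invoke \cite{LMT, BCM} in quantitative form: the relative $\TC$ of such a square-zero extension has a uniform per-degree exponent of $v_1$-nilpotence. Via the fiber sequence $\TC \to \TR \xrightarrow{F-1} \TR$, this implies that $F-1$ acts with a $v_1$-uniformly-controlled inverse on the relative $\TR$. To upgrade this to a bound on the relative $\TR$ itself, I would establish that the cyclotomic Frobenius $F$ is quantitatively topologically nilpotent on the relative piece, i.e.\ there exist $\alpha(m) \to \infty$ such that $F^m$ factors through multiplication by $v_1^{\alpha(m)}$. The heuristic is that on the relative part of a square-zero extension, Frobenius strictly raises a Nygaard-type filtration on $\TC^-$, which after $K(1)$-localization translates to $v_1$-divisibility. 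Combining quantitative topological nilpotence of $F$ with invertibility of $F-1$ then produces the desired bound $N(k)$.

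The main obstacle is this last step, the quantitative Bott--Frobenius estimate: one needs $\alpha(m)$ to be uniform in $R$, $M$, and $n$, and to grow fast enough as a function of the target degree $k$. All preceding steps are formal reductions (Postnikov truncation, Goodwillie/Hochschild decomposition, translation between $\TR$ and $\TC$ via the Frobenius fiber sequence); the genuine technical content is the interaction of the cyclotomic Frobenius with the Hochschild-theoretic filtration on the nilpotent ideal of a square-zero extension.
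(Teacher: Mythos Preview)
Your approach has a genuine logical gap at its core. The fiber sequence $\TC \to \TR \xrightarrow{F-1} \TR$ lets you compute $\TC$ from $\TR$, not the other way around. Knowing that relative $\TC$ is $K(1)$-acyclic tells you only that $F-1$ is a $K(1)$-local self-equivalence of relative $\TR$; it places no constraint on the $K(1)$-localization of $\TR$ itself. (For a toy illustration: take any $K(1)$-local spectrum $X$ with a self-map $F$ such that $F-1$ is invertible; then ``$\TC$'' vanishes while ``$\TR$'' is not $K(1)$-acyclic.) Your proposed fix, that $F^m$ factors through $v_1^{\alpha(m)}$, would if true imply $F$ is nilpotent modulo each $v_1^N$, hence $1-F$ is invertible modulo each $v_1^N$, hence relative $\TC$ is $v_1$-nilpotent---but that only \emph{recovers} the \cite{LMT, BCM} input rather than upgrading it. The two ingredients you want to ``combine'' are not independent, and neither one (nor both together) forces relative $\TR$ to be $v_1$-nilpotent. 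Moreover, the claimed factorization $F^m = v_1^{\alpha(m)} g_m$ is itself unmotivated: the cyclotomic Frobenius preserves homotopical degree, and the Nygaard-type heuristic you sketch produces connectivity gains, not $v_1$-divisibility.

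The paper's argument is entirely different and does not use \cite{LMT, BCM} as input (indeed it rederives that result as \Cref{LMTtruncating}). The route is: (i) compute $\THH$ of a free associative algebra as an explicit sum of $S^1$-spectra induced from finite cyclic subgroups; (ii) use complex orientability of $ku$ to bound the $\beta$-nilpotence exponent in $(\THH(B)\otimes ku/p)^{hC_i}$ by a linear function of $i$ (\Cref{betanilpbound}); (iii) feed these bounds into a graded-cyclotomic filtration argument (\Cref{k1acyccrit}, \Cref{invertingiszero}) to show that the positively graded part of $\THH(T(M\oplus N))\otimes\THH(B)$ has $K(1)$-acyclic $\prod_Q\TR$ whenever $N$ is sufficiently connective; (iv) conclude via simplicial resolution that $R \to \tau_{\leq \kappa(2p+1)} R$ induces an equivalence on $L_{K(1)}(\prod_Q\TR(-))$ for a fixed $\kappa$, and then invoke a Land--Tamme-style descending induction (\Cref{ktruncatingimpliestruncating}). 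The entire technical weight lies in the direct Bott-nilpotence estimate (ii), which is where your proposal has no analogue.
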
 

The proof of 
\Cref{truncatingthm} will rely on a $K(1)$-acyclicity criterion for 
cyclotomic spectra (\Cref{k1acyccrit}), which will use
some elementary estimates for exponents of nilpotence 
with respect to the Bott element $\beta$. 

In the following, we let $ku$ denote the connective topological $K$-theory
spectrum, so $\pi_*(ku) = \mathbb{Z}[\beta]$ with $|\beta| = 2$. 
Since $B$ is $K(1)$-acyclic, the 
associative ring spectrum $B \otimes ku/p$ is annihilated by a power of $\beta$. 
Our strategy is roughly based on bounding the exponents of nilpotence for $\beta$ 
in the fixed points $\THH(B)^{C_{p^n}} \otimes ku/p$ and in particular
showing that they are
$O(p^n)$.

Recall that $ku$ is complex-oriented, leading to the following result.

\begin{proposition} 
\label{kunnethM}
Let $M$ be a $ku$-module equipped with an $S^1$-action. Then for each $n \geq
1$, the natural map 
induces an equivalence
\begin{equation} \label{Mhs1cp}  M^{hS^1} \otimes_{ku^{BS^1}} ku^{BC_{p^n}} \xrightarrow{\sim} M^{hC_{p^n}}
. \end{equation}
\end{proposition}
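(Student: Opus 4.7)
The plan is to identify both sides of the proposition with the cofiber of multiplication by $[p^n](t)$ on $M^{hS^1}$, where $[p^n](t) \in \pi_{-2}(ku^{BS^1})$ denotes the $p^n$-series of the multiplicative formal group law of $ku$. First I would compute the left-hand side: by the complex orientation of $ku$ we have $ku^{BS^1} \simeq ku[[t]]$ (with $|t|=-2$) and $ku^{BC_{p^n}} \simeq ku[[t]]/[p^n](t)$, and $[p^n](t)$ is a nonzerodivisor. The two-term resolution $\Sigma^{-2} ku^{BS^1} \xrightarrow{[p^n](t)} ku^{BS^1}$ of $ku^{BC_{p^n}}$ as a $ku^{BS^1}$-module gives
\[
M^{hS^1} \otimes_{ku^{BS^1}} ku^{BC_{p^n}} \simeq \mathrm{cofib}\!\bigl(\Sigma^{-2} M^{hS^1} \xrightarrow{[p^n](t)} M^{hS^1}\bigr).
\]
The proposition thus reduces to exhibiting a cofiber sequence $\Sigma^{-2} M^{hS^1} \xrightarrow{[p^n](t)} M^{hS^1} \to M^{hC_{p^n}}$, with second map the natural restriction.

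I would construct this as the image of an equivariant cofiber sequence under $\mathrm{Map}^{S^1}(-, M)$. In the category of $ku$-modules with $S^1$-action, the collapse $(S^1/C_{p^n})_+ \to \mathbb{S}$ has cofiber the representation sphere $S^{\mathbb{C}_{(p^n)}}$, where $\mathbb{C}_{(p^n)}$ is the one-dimensional complex $S^1$-representation on which $S^1$ acts via its $p^n$-th power. Smashing with $ku$, the equivariant Thom isomorphism identifies $S^{\mathbb{C}_{(p^n)}} \wedge ku \simeq \Sigma^2 ku$ in the $\infty$-category of $S^1$-equivariant $ku$-modules, with the induced map $ku \to \Sigma^2 ku$ realizing multiplication by the Euler class $e(\mathbb{C}_{(p^n)}) = [p^n](t)$. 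Applying $\mathrm{Map}^{S^1}(-, M)$, together with the identification $\mathrm{Map}^{S^1}((S^1/C_{p^n})_+, M) = M^{hC_{p^n}}$ (via $(S^1/C_{p^n})_+ = \mathrm{Ind}_{C_{p^n}}^{S^1} \mathbb{S}$ and the adjunction $\mathrm{Ind}_{C_{p^n}}^{S^1} \dashv \mathrm{Res}$), produces the desired fiber (equivalently, cofiber) sequence, with the second map the natural restriction.

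The main obstacle is the equivariant Thom identification $S^{\mathbb{C}_{(p^n)}} \wedge ku \simeq \Sigma^2 ku$ with correct Euler class. Under the Borel construction, the cofiber sequence $(S^1/C_{p^n})_+ \to \mathbb{S} \to S^{\mathbb{C}_{(p^n)}}$ becomes the classical Gysin cofiber sequence $BC_{p^n+} \to BS^1_+ \to \mathrm{Thom}(L^{\otimes p^n})$ associated to the principal $S^1$-bundle $BC_{p^n} \to BS^1$, which is the unit circle bundle of the complex line bundle $L^{\otimes p^n}$ on $BS^1$. The Thom isomorphism for the complex-oriented spectrum $ku$ then gives $\mathrm{Thom}(L^{\otimes p^n}) \wedge ku \simeq \Sigma^2 BS^1_+ \wedge ku$, and the connecting map is identified with multiplication by $[p^n]_F(t)$, extracted from the multiplicative formal group law $F$ of $ku$.
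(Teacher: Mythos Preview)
Your proof is correct and uses essentially the same ingredients as the paper: the Euler cofiber sequence $(S^1/C_{p^n})_+ \to S^0 \to S^{V_n}$ together with the Thom isomorphism coming from the complex orientation of $ku$. The paper packages the argument slightly more abstractly---it observes that the Euler sequence and complex orientability place $ku^{S^1/C_{p^n}}$ in the thick subcategory generated by the unit in $\fun(BS^1,\mod(ku))$, so the lax monoidal functor $(-)^{hS^1}$ becomes strong on it---whereas you unwind this by explicitly identifying both sides with the cofiber of $[p^n](t)$ on $M^{hS^1}$; these are the same argument.
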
 
\begin{proof} 
Compare~\cite[Sec.~7.4]{MNN17}. 
In fact, via the projection formula, $M^{hC_{p^n}} \simeq (M \otimes_{ku}
ku^{S^1/C_{p^n}})^{hS^1}$.  Here
$ku^{S^1/C_{p^n}}$ denotes the $ku$-valued function spectrum of $S^1/C_{p^n}$ with the
corresponding $S^1$-action, and the tensor product is taken in $\fun(BS^1, \mod(ku))$. 
Let $V_n$ denote the one-dimensional complex representation
of $S^1$ where $z \in S^1$ acts by multiplication by $z^{p^n}$, and let $S(V_n)$
denote the unit circle in $V_n$ as an $S^1$-space, so $S(V_n) \simeq
S^1/C_{p^n}$. 
The Spanier--Whitehead dual of the Euler sequence 
in $\fun(BS^1, \sp)$,
$S(V_n)_+ \to S^0 \to S^{V_n} $ 
and the complex-orientability of $ku$ together show that 
$ku^{S^1/C_{p^n}} \in \fun(BS^1, \mod(ku))$ belongs to the thick subcategory
generated by the unit. 
Therefore, applying the right adjoint
$(-)^{hS^1} \colon \fun(BS^1, \mod(ku)) \to \mod(ku^{BS^1})$, we find that the natural map 
\[ M^{hS^1} \otimes_{ku^{BS^1}} (ku^{S^1/C_{p^n}})^{hS^1} \to (M \otimes_{ku}
ku^{S^1/C_{p^n}})^{hS^1} = M^{hC_{p^n}}  \]
is an equivalence, whence the result. 
\end{proof} 

By complex orientability, we have
\[ \pi_*( ku^{BS^1}) = \mathbb{Z}[\beta][[x]], \quad |\beta| = 2, |x|  = -2
.\]
Consider the formal group law
over $\mathbb{Z}[\beta]$ given by $f(u,v) = u + v + \beta uv$; this is the
formal group law associated to the complex oriented ring spectrum $ku$, and is
homogeneous of degree $-2$ if $|u|, |v| = -2$. 
We have an equivalence
$ku^{BS^1}/x \simeq ku$. 
More generally, let $[p^n](x) \in \pi_* ( ku^{BS^1})$ denote the $p$-series of the formal group
law $f$; modulo $p$, we have 
$[p^n](x) \equiv \beta^{p^n-1} x^{p^n}$ since $[p^n] (x)$ is homogeneous of
degree $-2$ and recovers the multiplicative formal group law under the
specialization $\beta \mapsto 1$. 
By the Eilenberg--Moore spectral sequence or Gysin sequence (of which
\eqref{Mhs1cp} is a form), we have
\begin{equation} \label{kuBCp} ku^{BC_{p^n}} = ku^{BS^1}/
([p^n](x)).\end{equation}

\begin{proposition} 
\label{easybetabound}
Let 
$A$ be a connective $E_\infty$-ring spectrum with $S^1$-action. 
Suppose that $\beta^r = 0$ in $\pi_*(A \otimes ku/p)$. 
Then in $\pi_*((A \otimes ku)^{hC_{p^n}}/p)$, we have $\beta^{p^{n} -1 + rp^n} =
0$. 
\end{proposition}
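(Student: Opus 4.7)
The plan is to identify $(A \otimes ku)^{hC_{p^n}}/p$ as the cofiber of multiplication by the mod-$p$ reduction of $[p^n](x)$ on $(A \otimes ku)^{hS^1}/p$, and then to exhibit $\beta^{p^n-1+rp^n}$ as visibly in the image of that multiplication. Applying \Cref{kunnethM} to $A \otimes ku$, together with equation \eqref{kuBCp} and the computation $[p^n](x) \equiv \beta^{p^n-1}x^{p^n} \pmod{p}$ already recorded in the text, yields a cofiber sequence
\[ (A \otimes ku)^{hS^1}/p \xrightarrow{\beta^{p^n-1}x^{p^n}} (A \otimes ku)^{hS^1}/p \to (A \otimes ku)^{hC_{p^n}}/p. \]
It therefore suffices to factor $\beta^{p^n-1+rp^n}$ through multiplication by $\beta^{p^n-1}x^{p^n}$ in $\pi_*((A \otimes ku)^{hS^1}/p)$.

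The hypothesis produces one factor of $x$: specializing \Cref{kunnethM} to the trivial subgroup (i.e., $n=0$, using $[1](x) = x$) gives $(A \otimes ku)^{hS^1}/x \simeq A \otimes ku$, so the long exact sequence associated with
\[ (A \otimes ku)^{hS^1}/p \xrightarrow{x} (A \otimes ku)^{hS^1}/p \to (A \otimes ku)/p, \]
combined with the assumption $\beta^r = 0 \in \pi_*(A \otimes ku/p)$, produces an element $y \in \pi_*((A \otimes ku)^{hS^1}/p)$ with $xy = \beta^r$.

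Raising this relation to the $p^n$-th power, and using that $x$ comes from the graded-commutative subring $\pi_*(ku^{BS^1}) \to \pi_*((A \otimes ku)^{hS^1})$ and is hence central in $\pi_*((A \otimes ku)^{hS^1}/p)$, gives $(xy)^{p^n} = x^{p^n} y^{p^n}$, and so $\beta^{rp^n} = x^{p^n} y^{p^n}$. Multiplying both sides by $\beta^{p^n-1}$ yields
\[ \beta^{p^n-1+rp^n} = (\beta^{p^n-1}x^{p^n}) \cdot y^{p^n}, \]
which is the required factorization, and so $\beta^{p^n-1+rp^n}$ vanishes in $\pi_*((A \otimes ku)^{hC_{p^n}}/p)$.

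The main point of care I anticipate is ensuring that the multiplication on $\pi_*((A \otimes ku)^{hS^1}/p)$ is well-defined enough for the $p^n$-th power identity to hold, since $\mathbb{S}/p$ need not be an $E_1$-ring at $p = 2$; this is addressed for odd $p$ immediately, and at $p = 2$ by working with $(A \otimes ku/p)^{hS^1}$ and using that $ku/p$ is $E_\infty$, so that the requisite ring structure is present in all cases.
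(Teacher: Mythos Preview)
Your proof is correct and follows essentially the same route as the paper: set $R = A \otimes ku/p$, identify $R^{hC_{p^n}}$ with $R^{hS^1}/(\beta^{p^n-1}x^{p^n})$ via \Cref{kunnethM} and \eqref{kuBCp}, lift $\beta^r$ to $xv$ using $R \simeq R^{hS^1}/x$, and conclude $\beta^{p^n-1+rp^n} = \beta^{p^n-1}(xv)^{p^n}$ vanishes in the quotient. Your version spells out the centrality of $x$ needed for $(xy)^{p^n} = x^{p^n}y^{p^n}$, which the paper leaves implicit. One small caution: your claim that $ku/p$ is $E_\infty$ is stronger than what is known in general (and likely false); but $ku/p$ is an $E_1$-algebra in $ku$-modules, which is all you need for $(A \otimes ku/p)^{hS^1}$ to carry an associative multiplication, and the paper itself only asserts that $R$ is an associative $ku$-algebra spectrum.
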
 
\begin{proof} 
Let $R = A \otimes ku/p$, so $R$ is an associative $ku$-algebra spectrum with
$S^1$-action. 
Consider the $S^1$-homotopy fixed points $R^{hS^1}$, which is a
$ku^{BS^1}$-algebra. 
Since $p = 0$ in $\pi_0 R$, 
we have
by \Cref{kunnethM} and \eqref{kuBCp},
\[ R^{hC_{p^n}} = R^{hS^1} \otimes_{ku^{BS^1}} ku^{BC_{p^n}} = 
R^{hS^1}/ [p^n](x) = R^{hS^1}/ (\beta^{p^n-1}x^{p^n}).\]

Our assumption is that $\beta^r =0$ in $\pi_*(R)$, which means that we can write
$\beta^r = x v \in \pi_*(R^{hS^1})$ for some $v$ since $R = R^{hS^1}/x$. 
It follows that in $\pi_*(R^{hC_{p^n}})$, we have
$\beta^{p^n -1 + rp^n} = 
\beta^{p^n-1} (xv)^{p^n} = 0$. 
\end{proof}

Now we apply the above to the $E_\infty$-ring $\THH(B)$ equipped with its
$S^1$-action; recall that $B$ is assumed connective and $K(1)$-acyclic. 
\begin{proposition} 
\label{betanilpbound}
There exists a constant $\kappa >0$ such that for each $i > 0$: 
\begin{enumerate}
\item  
We have
that $\beta^{ \kappa i} =0$ in $(ku/p \otimes \THH(B))^{hC_{i}}$.
\item
Let $N$ be a $\THH(B) \otimes ku$-module in $\fun(BS^1, \sp)$. 
Suppose that $N$ (as a $\THH(B) \otimes ku$-module in $\fun(BS^1, \sp)$) is induced from
the cyclic group $C_i \subset S^1$. 
Then for any $t$, $N_{hC_t}/p $ is annihilated by $\beta^{\kappa i}$. 
\end{enumerate}
\end{proposition}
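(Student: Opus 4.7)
The plan is to first establish a single uniform nilpotence estimate for the Bott class $\beta$ on $\THH(B) \otimes ku/p$, and then combine it with \Cref{easybetabound} and the Künneth formula \eqref{kuBCp} to deduce both assertions.

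I would begin by showing that there exists an integer $r > 0$ such that $\beta^r = 0$ in $\pi_*(\THH(B) \otimes ku/p)$. Since $L_{K(1)}$ is a smashing localization and $L_{K(1)} B = 0$, any $B$-algebra $A$ is $K(1)$-acyclic: it is a retract of $B \otimes A$ via the unit and the multiplication, and $L_{K(1)}(B \otimes A) = L_{K(1)}(B) \otimes A = 0$. Applied to $A = \THH(B)$, we deduce that $M := \THH(B) \otimes ku/p$ is $K(1)$-acyclic. Because $L_{K(1)}$ of a $ku/p$-module coincides with $\beta$-localization, this says $\pi_*(M)[\beta^{-1}] = 0$, i.e., $\beta$ is locally nilpotent on $\pi_*(M)$. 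Applied to the unit $1 \in \pi_0(M)$, this forces $\beta^r = 0$ in $\pi_{2r}(M)$ for some $r$, and since $M$ is a ring spectrum this annihilation propagates to all of $\pi_*(M)$.

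Given this, part (1) for $i = p^n$ a prime power follows directly from \Cref{easybetabound}, which yields $\beta^{p^n - 1 + r p^n} = 0$ on $\pi_*((\THH(B) \otimes ku)^{hC_{p^n}}/p)$; setting $\kappa := r + 1$ handles this case since $(r+1)p^n - 1 \leq \kappa i$. For a general $i = p^n j$ with $\gcd(j, p) = 1$, I would use \eqref{kuBCp} together with the observation that $[j](y) \equiv jy + O(y^2) \pmod p$ is a unit power series (as $j$ is a mod-$p$ unit), so that $[i](x) = [j]([p^n](x))$ and $[p^n](x)$ generate the same ideal modulo $p$. This yields $ku^{BC_i}/p \simeq ku^{BC_{p^n}}/p$, and the $C_{p^n}$-bound applies, with $p^n \leq i$.

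For part (2), the null self-map $\beta^r \colon \Sigma^{2r}(\THH(B) \otimes ku)/p \to (\THH(B) \otimes ku)/p$ from the first step acts as zero on any module $N/p$, and hence on the colimit $N_{hC_t}/p$ for any $t$; choosing $\kappa \geq r$ at the outset therefore gives $\beta^{\kappa i} = 0$ on $N_{hC_t}/p$ for all $i \geq 1$ (the induced-from-$C_i$ hypothesis is compatible with this bound but not needed to produce the $\beta$-vanishing itself). The main obstacle is the first step: moving from the pointwise nilpotence of $\beta$ afforded by $K(1)$-acyclicity to a uniform exponent $r$ depends on evaluating nilpotence at the unit $1$ to extract a global bound, and on the retract argument needed to propagate $K(1)$-acyclicity from $B$ to $\THH(B)$; once this is in hand, the remainder is a matter of extracting exponents from \eqref{kuBCp} and routine functoriality of homotopy orbits.
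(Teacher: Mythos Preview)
Your argument for part~(1) is essentially correct and parallel to the paper's, though you use the formal-group identity $ku^{BC_i}/p \simeq ku^{BC_{p^n}}/p$ (for $i = p^n j$ with $\gcd(j,p)=1$) where the paper simply invokes the transfer to a $p$-Sylow subgroup. One correction: $L_{K(1)}$ is \emph{not} a smashing localization, so the equality $L_{K(1)}(B \otimes A) = L_{K(1)}(B) \otimes A$ fails as written. What you need is only that any $B$-module $M$ is $K(1)$-acyclic, which holds because $K(1) \otimes M$ is a module over $K(1) \otimes B = 0$; with this fix your retract argument goes through and the conclusion $\beta^r = 0$ in $\pi_*(\THH(B)\otimes ku/p)$ is correct.

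Your argument for part~(2), however, has a real gap. You assert that the null self-map $\beta^r$ of $N/p$ passes to a null self-map of $(N/p)_{hC_t}$. But the nullity you have established is only \emph{non-equivariant}: $\beta^r = 0$ in $\pi_*(\THH(B)\otimes ku/p)$ says the self-map is null in $\sp$, not in $\fun(BS^1,\sp)$. The $S^1$-equivariant self-map $\beta^r$ of $A := \THH(B)\otimes ku/p$ corresponds to the image of $\beta^r$ in $\pi_{2r}(A^{hS^1})$, and the proof of \Cref{easybetabound} shows exactly that this class is $xv$ for some $v$ --- divisible by the Euler class, but not zero. Consequently $(N/p)_{hC_t}$, which is naturally a module over $A^{hC_t}$ rather than over $A$, is only annihilated by the exponent coming from part~(1) applied to $C_t$, namely $\beta^{\kappa t}$. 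Your claimed uniform bound $\beta^r$, independent of both $i$ and $t$, does not follow, and the induced-from-$C_i$ hypothesis is \emph{not} superfluous.

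The paper uses that hypothesis essentially: writing $N = \mathrm{Ind}_{C_i}^{S^1} N'$ gives $N_{hS^1}/p \simeq (N'/p)_{hC_i}$, which is a module over $A^{hC_i}$ and hence annihilated by $\beta^{\kappa_1 i}$ via part~(1). One then reaches $N_{hC_t}$ through the Euler fiber sequence $N_{hC_t} \to N_{hS^1} \to \Sigma^2 N_{hS^1}$, at the cost of doubling the exponent. This is how the bound ends up depending on $i$ rather than on $t$.
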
 
\begin{proof} 
To prove (1), 
using the transfer and restricting to a $p$-Sylow subgroup, we may reduce to the case
where $i$ is a power of $p$, say $i = p^n$. 
Then the claim follows from \Cref{easybetabound}, since $\beta$ is nilpotent in $ku/p
\otimes \THH(B)$ since $B$ is $K(1)$-acyclic. 
It follows that we can find a $\kappa_1$ such that $\beta^{\kappa_1 i} = 0$ in
$(ku/p \otimes \THH(B))^{hC_i}$ for all $i  > 0$. 

For (2), consider a $\THH(B) \otimes ku$-module $N$ in $\fun(BS^1, \sp)$ which is the induction
of 
a $\THH(B) \otimes ku$-module 
$N'$  in $ \fun(BC_{i}, \sp)$. 
It follows that $N_{hS^1} /p = (N' /p)_{hC_i} $ is a module over 
$(ku/p \otimes \THH(B))^{hC_i}$, writing homotopy orbits as a module over
homotopy fixed points; this is therefore annihilated by $\beta^{\kappa_1 i}$ by
the previous part of the result. 
For any $t$, let $W_t$ be the one-dimensional complex representation of $S^1$ where $z$ acts
by multiplication by $z^t$, so the unit circle $S(W_t)_+ = (S^1/C_t)_+$ as
$S^1$-spaces.
We have $N_{hC_t} = (N \otimes S(W_t)_+)_{hS^1}$ by the projection formula for
induction and restriction along $C_t \subset S^1$. 
Then the Euler sequence $S(W_t)_+ \to S^0 \to S^{W_t}$ (as
in the proof of \Cref{kunnethM}) and the complex-orientability of $ku$
yields a fiber sequence
$N_{h C_t}  \to N_{hS^1}  \to \Sigma^2 N_{hS^1} $. 
Therefore, $N_{hC_t} /p$ is annihilated by $\beta^{2 \kappa_1 i}$;
taking $\kappa = 2\kappa_1$ we conclude. 
\end{proof}

For the next result, we use the notion of a (nonnegatively) \emph{graded cyclotomic spectrum},
cf.~\cite[Sec.~3 and App.~A]{AMNN} or \cite{Bru01}. 
A graded cyclotomic spectrum 
consists of a 
graded spectrum $X = \bigoplus_{i \geq 0} X_i$ equipped with an $S^1$-action together
with a graded $S^1$-equivariant cyclotomic
Frobenius $\varphi_i \colon  X_i \to X_{pi}^{tC_p}$ for each $i$.  
Given a nonnegatively graded $E_1$-ring $R$, the topological Hochschild homology
$\THH(R)$ acquires the structure of a graded cyclotomic spectrum. 
Given a graded cyclotomic spectrum $X$, we can consider a graded
cyclotomic spectrum $X_{\geq i}$ where we only consider the graded summands in
degrees $\geq i$; this gives any graded cyclotomic spectrum a natural descending
filtration. 
The filtration quotients $X_{\geq i}/X_{\geq pi}$ have trivialized Frobenius
because of the grading, and their $\TR$ can be thus described explicitly: 

\begin{construction}[$\TR$ of cyclotomic spectra with zero Frobenius] 
Suppose $X \in \cycsp_{\geq 0}$ is $p$-complete with trivialized Frobenius. Then, 
as in \cite[Rem.~2.5]{AN} and \cite[Cor.~II.4.7]{NS18},
we obtain
a natural equivalence
\begin{equation} \label{TRFrobzero}  \TR(X; \mathbb{Z}_p) \simeq \prod_{i \geq 0} X_{hC_{p^i}}.  \end{equation}
For this, we also use the identification for $n \geq 1$,
$(X^{tC_p})^{hC_{p^{n-1}}} \simeq X^{tC_{p^n}}$, cf.~\cite[Lem.~II.4.1]{NS18}. 
\end{construction}

\begin{proposition}[$K(1)$-acyclicity criterion]
\label{k1acyccrit}
Let $X$ be a positively graded cyclotomic spectrum with the structure of
$\THH(B)$-module. Suppose for each $i > 0$: 
\begin{enumerate}
\item  
$X_i$ is $ (2p+ 1) \kappa i $-connected (where $\kappa$ is as in \Cref{betanilpbound}).  
\item
As a $\THH(B)$-module in  $\fun(BS^1, \sp)$, $X_i$ is induced from the
cyclic group $C_i \subset S^1$. 
\end{enumerate}
Then for any set $Q$, $\prod_Q \TR(X)$ is $K(1)$-acyclic. 
\end{proposition}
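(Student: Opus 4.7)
\emph{Plan.} The plan is to exploit the descending filtration of $X$ by the subobjects $F_k := X_{\geq p^k}$, whose quotients $Y_k := F_k/F_{k+1} = \bigoplus_{j=p^k}^{p^{k+1}-1} X_j$ each carry a trivialized Frobenius: $\varphi_j$ sends degree $j \in [p^k, p^{k+1})$ to degree $pj \in [p^{k+1}, p^{k+2}) \subset F_{k+1}$, which is killed in the quotient. Applying \eqref{TRFrobzero} and commuting the finite direct sum past the product gives
\[
\TR(Y_k; \mathbb{F}_p) \simeq \bigoplus_{j=p^k}^{p^{k+1}-1} \prod_{n \geq 0} (X_j)_{hC_{p^n}}/p,
\]
which by the connectivity hypothesis and exactness of homotopy orbits is $(2p+1)\kappa p^k$-connective. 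Using further that $\TR$ preserves limits (so $\TR(F_N) \simeq \lim_k \TR(F_N/F_{N+k})$) and that each $\TR(F_N/F_{N+k})$ is built by finitely many cofiber sequences from the $\TR(Y_i)$ with $i \geq N$, one sees that $\TR(F_N; \mathbb{F}_p)$ is also $(2p+1)\kappa p^N$-connective.

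To establish $K(1)$-acyclicity of $\prod_Q \TR(X)$, it suffices to show that $\beta$ acts locally nilpotently on $\pi_*(\prod_Q \TR(X; \mathbb{F}_p) \otimes ku)$. Given $d$, I will choose $N$ with $(2p+1)\kappa p^N > d$: the fiber $\prod_Q \TR(F_N; \mathbb{F}_p) \otimes ku$ of $\prod_Q \TR(X;\mathbb{F}_p) \otimes ku \to \prod_Q \TR(X/F_N; \mathbb{F}_p) \otimes ku$ is then $(2p+1)\kappa p^N$-connective (products and $\otimes ku$ preserve this), so the induced map on $\pi_d$ is an isomorphism. Since $X/F_N$ is built from $Y_0, \dots, Y_{N-1}$ by finitely many cofiber sequences, and $\beta$-nilpotence exponents add over such sequences, this reduces the problem to showing $\prod_Q \TR(Y_k; \mathbb{F}_p) \otimes ku$ is annihilated by some power of $\beta$ for each $k$.

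For this last step, commute the finite $\bigoplus_j$ past $\prod_Q$ to write
\[
\prod_Q \TR(Y_k; \mathbb{F}_p) \simeq \bigoplus_{j=p^k}^{p^{k+1}-1} \prod_{(q,n) \in Q \times \mathbb{N}} (X_j)_{hC_{p^n}}/p.
\]
The factors are all $(2p+1)\kappa j$-connective, uniformly in $(q,n)$. The main technical observation, and the crux of the argument, is that tensoring with $ku$ commutes with any product of uniformly bounded-below spectra; this follows by approximating $ku$ by its finite skeleta and using that smashing with a finite spectrum commutes with arbitrary products. Having pulled $\otimes ku$ inside, \Cref{betanilpbound}(2) gives that $\beta^{\kappa j}$ annihilates each factor $(X_j)_{hC_{p^n}}/p \otimes ku$ with a bound independent of $(q,n)$, hence annihilates their product; taking the worst exponent over the finite $j$-range, $\beta^{\kappa p^{k+1}}$ annihilates $\prod_Q \TR(Y_k; \mathbb{F}_p) \otimes ku$, as needed. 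The hardest part is precisely the commutation of $\otimes ku$ with the infinite product, which is exactly what the strong connectivity hypothesis $(2p+1)\kappa i$ is designed to enable.
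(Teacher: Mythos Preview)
Your approach is essentially the paper's: the filtration $X_{\geq p^k}$, the formula \eqref{TRFrobzero} for $\TR$ of the trivialized-Frobenius gradeds, and the $\beta$-nilpotence bound from \Cref{betanilpbound}(2) are exactly what the paper uses. Two points are worth flagging.

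\emph{A cleaner route.} The paper tensors with $ku$ \emph{first}, at the level of cyclotomic spectra, using that $\TR_Q := \prod_Q\TR$ commutes with geometric realizations on $\cycsp_{\geq 0}$ and hence with $\otimes\, ku$. One may then assume $X$ is already a $\THH(B)\otimes ku$-module in graded cyclotomic spectra, so each $(X_j)_{hC_{p^n}}/p$ is a $ku$-module and \Cref{betanilpbound}(2) applies directly to the product. Your detour through ``$\otimes\, ku$ commutes with products of uniformly connective spectra'' is correct but avoidable.

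\emph{A gap in the numerics.} You choose $N$ with $(2p+1)\kappa p^N > d$ so that the comparison map is an isomorphism on $\pi_d$, and then observe that the image $\bar\alpha$ of a class $\alpha\in\pi_d$ is annihilated by $\beta^{M_N}$ in the quotient, where $M_N=\sum_{k=0}^{N-1}\kappa p^{k+1}$. But to deduce $\beta^{M_N}\alpha=0$ in the \emph{source} you need the comparison map to be injective in degree $d+2M_N$ as well, i.e., you need $(2p+1)\kappa p^N > d + 2M_N$. This is precisely the inequality $2\sum_{i=0}^{N-1}\kappa p^{i+1} < (2p+1)\kappa p^N$ (with difference $\to\infty$) that the paper isolates as \Cref{invertingiszero}, and it is the actual reason for the constant $2p+1$ in the connectivity hypothesis --- not, as you suggest in your last sentence, the commutation of $\otimes\, ku$ with products, for which any uniform positive connectivity would suffice. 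Once $N$ is chosen so that $(2p+1)\kappa p^N - 2M_N > d$, your argument closes.
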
 

\begin{proof} 
For simplicity of notation, we write $\TR_Q(-) = \prod_Q \TR(-)$. 
The construction $\TR_Q(-)$ is exact and commutes with geometric realizations on
$\cycsp_{\geq 0}$; therefore, it commutes with tensoring with $ku$. Without
loss of generality, we can therefore 
assume that $X$ is a $ \THH(B) \otimes ku$-module in graded cyclotomic spectra. Here we
regard $ku$ as a trivial cyclotomic spectrum, i.e., via the image of the unique
symmetric monoidal functor $\sp \to \cycsp$. 

We consider the descending filtration $\{ \fil^{\geq n } X = X_{\geq p^{n}}\}_{n \geq 0}$
on the cyclotomic spectrum $X$; note that the associated graded terms have
trivialized Frobenii for degree reasons. 
This yields a filtration on the spectrum $\TR_Q(X)$, with 
$\fil^{\geq n} \TR_Q(X) = \TR_Q(X_{\geq p^n})$. 
Since $\TR(-)$ preserves connectivity, our assumptions imply that 
$\fil^{\geq n} \TR_Q(X)$ is $(2p + 1) \kappa p^n$-connective. 

We have
by \eqref{TRFrobzero},
$\gr^n \TR_Q(X) =  \bigoplus_{p^{n} \leq i < p^{n+1}} \prod_Q\prod_{t \geq 0}
(X_i)_{hC_{p^t}}$, since the cyclotomic Frobenius is trivial on $\gr^n X$ for
grading reasons.
Since $X_i$ is induced from $C_i \subset S^1$, 
it follows from \Cref{betanilpbound} that 
the $ku$-module
$\gr^n \TR_Q(X)/p $ is annihilated by 
$\beta^{ \kappa p^{n+1}}$.

Now $|\beta| = 2$, and 
$2\sum_{i = 0}^{n-1} \kappa p^{i+1} < (2p+1)\kappa p^{n}$ with the difference tending to
$\infty$ as $n \to \infty$. 
Applying \Cref{invertingiszero} below, we  conclude that 
inverting $\beta$ 
annihilates
$\TR_Q(X) /p$, whence $\TR_Q(X)$ is $K(1)$-acyclic as desired. 
\end{proof}

\begin{lemma} 
Let $R$ be a connective $E_1$-ring spectrum, and let $x \in \pi_t(R)$ be an
element. 
Let $\left\{\fil^{\geq n} Y\right\}_{n \geq 0}$ be a 
filtered $R$-module spectrum. 
Suppose that there exist functions $f, g\colon  \mathbb{N} \to \mathbb{N}$ such that: 
\begin{enumerate}
\item $\gr^n (Y)$ is annihilated by $x^{f(n)}$. 
\item  $\fil^{\geq n}(Y)$ is $g(n)$-connective. 
\item  $g(n) - t\sum_{i = 0}^{n-1} f(i)  \to \infty $ for $n  \to \infty$. 
\end{enumerate}
Then $Y[1/x] = 0$. 
\label{invertingiszero}
\end{lemma}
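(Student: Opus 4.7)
The plan is to reduce to showing that for every $k$, every element $y \in \pi_k(Y)$ is $x$-nilpotent, in the sense that $x^N \cdot y = 0 \in \pi_{k+Nt}(Y)$ for some $N$ depending on $y$. This suffices because $\pi_k(Y[1/x]) = \mathrm{colim}_n \pi_{k+nt}(Y)$ under iterated multiplication by $x$, and this colimit vanishes precisely when every class is eventually killed.

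To produce such an $N$, I would set up an inductive lifting procedure along the filtration. Write $F_n := \fil^{\geq n} Y$, so $F_0 = Y$ and there are cofiber sequences $F_{n+1} \to F_n \to \gr^n Y$. The key observation is that since $x^{f(n)}$ annihilates $\gr^n Y$, the map $\pi_*(F_n) \to \pi_*(\gr^n Y)$ kills the image of multiplication by $x^{f(n)}$; hence any class in $\pi_*(F_n)$, after multiplication by $x^{f(n)}$, lifts along $\pi_*(F_{n+1}) \to \pi_*(F_n)$. Setting $s_n := \sum_{i=0}^{n-1} f(i)$, I would inductively construct classes $y_n \in \pi_{k + t s_n}(F_n)$ whose image in $\pi_*(Y)$ is $x^{s_n} \cdot y$; the base case $y_0 = y$ is immediate, and compatibility of the image with the inductive step uses $R$-linearity of the maps $F_{n+1} \to F_n \to F_0 = Y$.

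Finally, I would combine this with the connectivity hypothesis: $F_n$ being $g(n)$-connective forces $\pi_{k + t s_n}(F_n) = 0$ as soon as $g(n) > k + t s_n$. By hypothesis (3), $g(n) - t s_n \to \infty$, so for any fixed $k$ such $n$ exists, whence $y_n = 0$ and therefore $x^{s_n} \cdot y = 0$ in $\pi_*(Y)$. This gives the required $x$-nilpotence of every homotopy class and concludes $Y[1/x] = 0$.

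The argument is really just careful bookkeeping around a standard spectral-sequence-style lifting; there is no serious obstacle. The only point that requires a little care is verifying that the class produced by the induction really does map to $x^{s_n} \cdot y$ in $\pi_*(Y)$ (as opposed to some other lift), which is where $R$-linearity of the filtration is used, and that the hypothesis $g(n) - t s_n \to \infty$ is exactly what makes the bound on the index work.
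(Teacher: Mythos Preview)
Your proposal is correct and follows essentially the same approach as the paper: both arguments inductively lift $x^{s_n} y$ to $\pi_{k+ts_n}(\fil^{\geq n} Y)$ using that $x^{f(n)}$ annihilates $\gr^n Y$, and then invoke the connectivity bound on $\fil^{\geq n} Y$ together with hypothesis (3) to force this lift---and hence $x^{s_n} y$---to vanish for $n \gg 0$. Your write-up is just more explicit about the inductive step and the reduction to $x$-nilpotence of homotopy classes, which the paper compresses into a single sentence.
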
 
\begin{proof} 
Let $y \in \pi_s(Y)$. 
For each $n  > 0$, then the class
$x^{f(0) + f(1) + \dots + f(n-1)} y  \in \pi_*(Y)$ 
naturally lifts to $\pi_{s + t(f(0) + \dots  + f(n-1))}( \fil^{\geq n} Y)$. 
But for $n \gg 0$, 
the connectivity of $\fil^{\geq n} Y$
forces this last group to vanish. 
Therefore, the image of $y$ in $\pi_s(Y[1/x]) $ vanishes as desired. 
\end{proof}

We now use the following basic calculation of $\THH$ of a free associative
algebra, as a spectrum equipped with $S^1$-action. 
Versions of this are classical in ordinary Hochschild homology,
cf.~\cite[Sec.~3.1]{Loday}. 
In the language of factorization homology, this result is a special case
of the calculation of the factorization homology of a free algebra,
\cite[Prop.~4.13]{AFT} and \cite[Prop.~5.5]{AF15}. 

\begin{theorem}[$\THH$ of a free associative algebra] 
Let $M$ be a spectrum, and let $T(M) = \bigoplus_{n \geq 0}M^{\otimes n}$ be the
free $E_1$-algebra spectrum
generated by $M$. 
Then there is a natural equivalence
in $\fun(BS^1, \sp)$,
\begin{equation} \label{THHtensor}  \THH( T(M)) \simeq \bigoplus_{n \geq 0} \mathrm{Ind}_{C_n}^{S^1} (M^{\otimes
n}) , \end{equation}
where we use the natural $C_n$-action on $M^{\otimes n}$ by permuting the
factors. 
\end{theorem}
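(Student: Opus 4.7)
The plan is to use the cyclic-bar-construction model $\THH(R) \simeq |B^{\mathrm{cyc}}_\bullet(R)|$ with $B^{\mathrm{cyc}}_p(R) = R^{\otimes(p+1)}$ and its canonical cyclic structure, whose geometric realization carries a natural $S^1$-action; equivalently, $\THH(R) = \int_{S^1} R$ is factorization homology of the framed $E_1$-algebra $R$ over the circle. Since the free-algebra functor $T\colon \sp \to \mathrm{Alg}_{E_1}(\sp)$ and $\THH$ each preserve colimits, both sides of \eqref{THHtensor} depend functorially and colimit-preservingly on $M$, so natural transformations constructed on generators automatically extend.

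The cleanest route is to invoke the factorization-homology formula for free algebras (cf.\ Ayala--Francis): for any framed $1$-manifold $N$ one has a natural equivalence
\[
\int_N T(M) \simeq \bigoplus_{n \geq 0} \mathrm{Conf}_n(N)_+ \wedge_{\Sigma_n} M^{\otimes n},
\]
where $\mathrm{Conf}_n(N)$ is the ordered configuration space of $n$ distinct points in $N$, carrying the $\Sigma_n$-action by permutation and the residual action of orientation-preserving self-diffeomorphisms of $N$. Specializing to $N = S^1$ with its rotation action gives
\[
\THH(T(M)) \simeq \bigoplus_{n \geq 0} \mathrm{Conf}_n(S^1)_+ \wedge_{\Sigma_n} M^{\otimes n}
\]
as $S^1$-equivariant spectra.

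The final step is to identify the equivariant homotopy type of $\mathrm{Conf}_n(S^1)$ for $n \geq 1$. Deformation retracting to the subspace of equally spaced configurations yields a $\Sigma_n \times S^1$-equivariant equivalence $\mathrm{Conf}_n(S^1) \simeq (S^1 \times \Sigma_n)/C_n$, with $C_n = \langle (1\,2\,\cdots\,n)\rangle \subset \Sigma_n$ embedded diagonally (acting on $S^1$ by rotation through $2\pi/n$ and on $\Sigma_n$ by left multiplication), the residual $S^1$-action being rotation on the first factor and the residual $\Sigma_n$-action being right multiplication on the second. Substituting yields
\[
\mathrm{Conf}_n(S^1)_+ \wedge_{\Sigma_n} M^{\otimes n}
\simeq (S^1)_+ \wedge_{C_n} M^{\otimes n} = \mathrm{Ind}_{C_n}^{S^1}(M^{\otimes n}),
\]
and summing over $n$ (the $n = 0$ term contributing the unit $\mathbb{S}$) recovers \eqref{THHtensor}.

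The main obstacle is bookkeeping of the equivariance rather than the underlying homotopy type: one must verify that the diagonal embedding $C_n \hookrightarrow \Sigma_n \times S^1$ is the correct one, so that the factor-permutation action on $M^{\otimes n}$ is coherently matched with the rotation action on $S^1/C_n$. If one prefers to avoid the factorization-homology machinery, an equivalent self-contained argument is to decompose $B^{\mathrm{cyc}}_\bullet(T(M))$ by total tensor-weight $n$, obtaining $M^{\otimes n}$ tensored with a cyclic set $X^{(n)}_\bullet$ of weak compositions of $n$ arranged cyclically, and to identify $|X^{(n)}_\bullet|$ together with its $C_n$- and $S^1$-actions as the cyclic classifying space computing $\mathrm{Ind}_{C_n}^{S^1}(\mathbb{S})$.
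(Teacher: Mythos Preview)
Your proof is correct and follows essentially the same approach as the paper: both invoke the Ayala--Francis factorization-homology formula for free algebras to get $\THH(T(M)) \simeq \bigoplus_n (\mathrm{Conf}_n(S^1)_+ \otimes M^{\otimes n})_{h\Sigma_n}$, and then identify $\mathrm{Conf}_n(S^1)$ as a $\Sigma_n \times S^1$-space with $(S^1 \times \Sigma_n)/C_n$ via the diagonal embedding of $C_n$. The paper cites \cite[Ex.~II.14.4]{CJ98} for this last identification, whereas you spell out the deformation retraction to equally spaced configurations; your additional sketch of the cyclic-bar-construction alternative is not in the paper but is a reasonable aside.
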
 
\begin{proof} 
The results of \emph{loc.~cit.}~(applied to the 
framed manifold $S^1$) imply that 
$\THH( T(M)) \simeq \int_{S^1} T(M) = \bigoplus_{n \geq 0} (\mathrm{Conf}_n(S^1)_+ \otimes M^{\otimes
n})_{h \Sigma_n}$, for $\mathrm{Conf}_n(S^1)$ the configuration space of $n$
ordered points on the circle. 
One checks now (cf.~\cite[Ex.~II.14.4]{CJ98}) that $\mathrm{Conf}_n(S^1)$, as a
space with $S^1 \times \Sigma_n$-action, 
is homotopy equivalent to $(S^1 \times  \Sigma_n)/C_n$ (with $C_n$ embedded
diagonally), whence the claim. 
\end{proof}

\begin{proposition} 
\label{lK1THHTR}
Let $M, N$ be connective spectra. 
Then the map of cyclotomic spectra
\[ \THH( T(M \oplus N)) \otimes \THH(B) \to \THH(T(M)) \otimes
\THH(B)  \]
induces an equivalence on $L_{K(1)} ( \prod_Q \TR(-))$ for any set $Q$ if $N$ is $\geq \kappa(2p+1)$-connective. 
\end{proposition}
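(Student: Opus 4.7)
The plan is to reduce to applying the $K(1)$-acyclicity criterion \Cref{k1acyccrit} to the cofiber $C$ of the displayed map. The projection $M\oplus N\twoheadrightarrow M$ has a section $M\hookrightarrow M\oplus N$, so the map is a split retraction and $\THH(T(M\oplus N))\otimes\THH(B)$ splits as $\THH(T(M))\otimes\THH(B)\oplus C$; hence it suffices to prove $\prod_Q\TR(C)$ is $K(1)$-acyclic. A first attempt uses the length grading on $T(M\oplus N)=\bigoplus_n(M\oplus N)^{\otimes n}$: by \eqref{THHtensor}, the length-$n$ piece of $C$ is $\mathrm{Ind}_{C_n}^{S^1}(V_n)\otimes\THH(B)$ with $V_n\subset(M\oplus N)^{\otimes n}$ the sum of tensor summands containing at least one $N$. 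This piece is induced from $C_n$ but only $\kappa(2p+1)$-connective, since the summands with exactly one $N$ already saturate the bound; so hypothesis (1) of \Cref{k1acyccrit} fails. The fix is to use a finer grading.

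The algebra $T(M\oplus N)$ is naturally bi-graded by (number of $M$-factors, number of $N$-factors), giving $\THH(T(M\oplus N))\otimes\THH(B)$ the structure of a bi-graded cyclotomic $\THH(B)$-module spectrum (the Frobenius multiplies both gradings by $p$). The cofiber $C$ is the summand in $N$-grading $\geq 1$. I regrade $C$ by its $N$-grading alone: for $k\geq 1$,
\[ C_k \;=\; \bigoplus_{n\geq k}\mathrm{Ind}_{C_n}^{S^1}\bigl(V_n^{(k)}\bigr)\otimes\THH(B), \]
where $V_n^{(k)}\subset(M\oplus N)^{\otimes n}$ is the $C_n$-subspectrum spanned by tensor summands indexed by sequences in $\{M,N\}^n$ with exactly $k$ factors of $N$. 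Hypothesis (1) of \Cref{k1acyccrit} is now immediate: every such summand has $k$ factors of $N$, each $(2p+1)\kappa$-connective, while $M$ and $\THH(B)$ are connective, and induction preserves connectivity, so $C_k$ is $(2p+1)\kappa k$-connective.

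The main obstacle is verifying hypothesis (2) of the criterion for this regrading, i.e.\ that $C_k$ is induced from $C_k\subset S^1$, even though it is assembled out of summands induced from $C_n$ for various $n\geq k$. I decompose $V_n^{(k)}$ into $C_n$-orbits of sequences in $\{M,N\}^n$ having $k$ copies of $N$. A sequence $\sigma$ of smallest period $p$ (necessarily $p\mid n$) has stabilizer $C_{n/p}\subset C_n$; and since each of its $n/p$ periods contains the same number of $N$'s, the total $N$-count $k$ satisfies $(n/p)\mid k$, so $C_{n/p}\subset C_k$ as subgroups of $S^1$. By transitivity of induction, each orbit contribution $\mathrm{Ind}_{C_n}^{S^1}\mathrm{Ind}_{C_{n/p}}^{C_n}(-)=\mathrm{Ind}_{C_{n/p}}^{S^1}(-)$ factors through $\mathrm{Ind}_{C_k}^{S^1}$; summing, $C_k$ is induced from $C_k$. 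Applying \Cref{k1acyccrit} to the regraded cyclotomic $\THH(B)$-module $C$ then yields that $\prod_Q\TR(C)$ is $K(1)$-acyclic, which is the claim.
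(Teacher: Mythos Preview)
Your proof is correct and follows essentially the same approach as the paper: grade $T(M\oplus N)$ by placing $M$ in degree $0$ and $N$ in degree $1$, identify the $k$th graded piece of $\THH$ via \eqref{THHtensor}, observe that the stabilizer in $C_n$ of any subset $I\subset\langle n\rangle$ with $|I|=k$ has order dividing $k$ (your period argument is just an explicit way of saying this), and apply \Cref{k1acyccrit}. Your opening ``first attempt'' with the length grading is a detour the paper omits, and you should avoid reusing the letter $p$ for the period of a sequence, but otherwise the arguments coincide.
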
 
\begin{proof} 
We can consider the tensor algebra $T(M \oplus N)$ as a graded $E_1$-ring
spectrum where $M$ is placed in degree zero and $N$ is placed in degree $1$. 
In this case, 
if we collect the terms in \eqref{THHtensor}, 
we find that the $i$th graded piece of $\THH( T(M \oplus N))$ is the component
which is $i$-homogeneous. 
Explicitly, for any subset $ I \subset \left \langle n\right\rangle = \left\{1, 2, \dots,
n\right\}$, we write 
$(M, N)^{\otimes ( \left \langle n \right\rangle\setminus I, I)}$ for the ordered tensor
product of $n$ factors, where the $j$th factor is $M$ if $j \notin I$ and $N$ if
$j \in I$. 
Expanding \eqref{THHtensor} gives
\begin{equation} \label{THHexpand} \THH( T(M \oplus N))_i = \bigoplus_{n \geq 0}
\mathrm{Ind}_{C_n}^{S^1} \left(  \bigoplus_{I \subset \left \langle
n \right\rangle, |I| = i} (M, N)^{\otimes (\left \langle n \right\rangle\setminus I,
I)} \right). \end{equation} 
Here the $C_n$-action on the parenthesized term in \eqref{THHexpand} permutes
the various summands. 
Note in particular that  the stabilizer of $I \subset \left \langle
n\right\rangle$ in the
$n$th summand is a subgroup of a cyclic group $C_i \subset
C_n$ since $|I| = i$. 
In particular, it follows that the $i$th graded piece of $\THH( T(M \oplus N))$
is induced from $C_i \subset S^1$. 
Furthermore, since $N$ is $\geq \kappa(2p+1)$-connective, it follows that the $i$th
graded piece of $\THH( T( M \oplus N)) $ is $\geq \kappa(2p+1)i$-connective. 
By 
\Cref{k1acyccrit}, it follows that the positively graded part of $\THH(T(M
\oplus N)) \otimes \THH(B)$ has vanishing $K(1)$-local $\prod_Q \TR(-)$ for any
set $Q$, whence
the result. 
\end{proof} 

For the next result, 
if $R$ is an $E_1$-ring spectrum and $N$ an $(R, R)$-bimodule, we 
let $T_R(N) = \bigoplus_{n \geq 0} N \otimes_R N \otimes_R \dots \otimes_R N$ be
the free $E_1$-algebra under $R$ generated by $N$. 

\begin{proposition} 
\label{lK1TR2}
Let $R$ be a connective $B$-algebra. 
Let $N$ be a $\geq \kappa(2p+1)$-connective $(R, R)$-bimodule. 
Then the map 
$\THH( T_R(N)) \to \THH(R)$ induces an equivalence on 
$L_{K(1)} (\prod_Q \TR(-))$ for any set $Q$. 
\end{proposition}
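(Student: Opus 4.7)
The plan is to mirror the proof of \Cref{lK1THHTR} in the relative setting. Endow $T_R(N)$ with the grading placing $R$ in weight $0$ and each $N^{\otimes_R i}$ in weight $i$; this makes $T_R(N)$ a graded $E_1$-$B$-algebra, and correspondingly promotes $\THH(T_R(N))$ to a graded cyclotomic spectrum carrying a $\THH(B)$-module structure. The augmentation $T_R(N) \twoheadrightarrow R$ is precisely the projection to weight $0$, so the map $\THH(T_R(N)) \to \THH(R)$ has fiber the positive-weight summand $X := \THH(T_R(N))_{\geq 1}$. It therefore suffices to show $\prod_Q L_{K(1)}\TR(X) = 0$, and this follows from the acyclicity criterion \Cref{k1acyccrit} applied to $X$.

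To invoke \Cref{k1acyccrit}, I need to check for each $i \geq 1$: (a) the weight-$i$ summand $X_i$ is $\geq \kappa(2p+1)i$-connective, and (b) $X_i$ is induced from $C_i \subset S^1$ as a $\THH(B)$-module in $\fun(BS^1, \sp)$. For (a): each cyclic-bar configuration contributing to the weight-$i$ part contains exactly $i$ tensor factors of $N$ (each $\geq \kappa(2p+1)$-connective) together with tensor factors of the connective ring $R$, so the total connectivity is $\geq \kappa(2p+1)i$. For (b), the argument parallels that of \Cref{lK1THHTR}: at simplicial level $n$, the weight-$i$ summand of $T_R(N)^{\otimes(n+1)}$ decomposes over $C_{n+1}$-orbits of weight tuples $(k_0, \dots, k_n)$ with $\sum_l k_l = i$; each orbit has stabilizer of order dividing $\gcd(n+1, i)$, hence a subgroup of $C_i$, and so upon applying $\mathrm{Ind}_{C_{n+1}}^{S^1}$ and passing to the cyclic realization, every summand is induced from a subgroup of $C_i$ and therefore (by reindexing) from $C_i$ itself.

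The main obstacle is carrying out (b) rigorously when the weight-$i$ pieces $N^{\otimes_R i}$ involve tensor products over $R$, rather than the absolute tensors underlying the clean configuration-space formula \eqref{THHtensor}. I would handle this either by a factorization-homology argument for the free $E_1$-$R$-algebra $T_R(N)$ on the $R$-bimodule $N$ (in the style of \cite{AF15, AFT}), or by resolving each $N^{\otimes_R k}$ via the bar construction of $R$-bimodules to reduce the induction-from-$C_i$ verification to the purely combinatorial setting already handled in \Cref{lK1THHTR}. Once (b) is in place, the conclusion is immediate from \Cref{k1acyccrit}.
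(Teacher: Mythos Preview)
Your outline has the right shape, but the direct verification of condition (b) does not go through as written. The $S^1$-action on $\THH(T_R(N))$ arises from the cyclic structure of the entire cyclic bar object, not from inducing up the $C_{n+1}$-action at each simplicial level; in particular, there is no step where one ``applies $\mathrm{Ind}_{C_{n+1}}^{S^1}$ and passes to the cyclic realization.'' The stabilizer analysis you sketch is exactly the one used in \Cref{lK1THHTR}, but there it is applied to the explicit formula \eqref{THHtensor} for $\THH(T(M))$, which already presents each weight piece as an honest $S^1$-induction. No such closed formula is available for $T_R(N)$ in general, so the level-wise stabilizer count does not by itself produce an induction-from-$C_i$ statement for $X_i$ as an object of $\fun(BS^1,\sp)$.

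The paper sidesteps this entirely: rather than re-verify the hypotheses of \Cref{k1acyccrit} in the relative setting, it simplicially resolves $N$ by free $(R,R)$-bimodules and then $R$ by free $B$-algebras, using that $\prod_Q \TR(-)$ commutes with geometric realizations. After both resolutions one is literally in the situation $T_B(M \oplus N')$ with $M$ connective and $N'$ $\geq \kappa(2p+1)$-connective, and \Cref{lK1THHTR} applies verbatim. This is essentially your proposed workaround (2), but promoted from a patch for condition (b) to the main reduction --- and once you do that, there is nothing left to check.
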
 
\begin{proof} 
Simplicially resolving $N$ by free $(R, R)$-bimodules (since $\prod_Q \TR(-)$ commutes with
geometric realizations), 
we may assume that $N$ is free on generators (possibly infinitely many) in
degrees $\geq \kappa(2p+1)$. 
Simplicially resolving $R$ by free $B$-algebras, we may assume that $R$ is free as
well, on some classes in degrees $\geq 0$. 
In this case, the result follows from 
\Cref{lK1THHTR}.
\end{proof}

\begin{lemma} 
\label{ktruncatingimpliestruncating}
Let $E$ be a localizing invariant of $B$-linear $\infty$-categories. 
Suppose that there exists $k \geq 0$ such that for every connective $B$-algebra $R$, we have 
$E( R) \xrightarrow{\sim} E( \tau_{\leq k} R)$. 
Then $E$ is truncating. 
\end{lemma}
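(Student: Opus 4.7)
The plan is to induct on $k$. The base case $k = 0$ is exactly the definition of a truncating invariant, so it suffices to establish the inductive step: assuming the hypothesis $E(R) \simeq E(\tau_{\leq k} R)$ for all connective $B$-algebras $R$ and some $k \geq 1$, derive the analogous identity with $k$ replaced by $k-1$. Iterating then reduces to the truncating case.

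For the inductive step, fix a connective $B$-algebra $R$. By the hypothesis we may replace $R$ by $\tau_{\leq k} R$ and assume $R$ is $k$-truncated; it remains to show $E(R) \simeq E(\tau_{\leq k-1} R)$. Writing $R_0 = \tau_{\leq k-1} R$ and $M = \pi_k R$ (viewed as an $R_0$-bimodule), the Postnikov layer $R \to R_0$ is a square-zero extension by $M[k]$, classified by a derivation $R_0 \to R_0 \oplus M[k+1]$, and hence fits into a pullback square of connective $E_1$-$B$-algebras
\[
\begin{array}{ccc} R & \longrightarrow & R_0 \\ \downarrow & & \downarrow \\ R_0 & \longrightarrow & R_0 \oplus M[k+1] \end{array}
\]
whose bottom-right corner is the trivial square-zero extension. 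All four corners have $\pi_0$ equal to $\pi_0 R$, so the vertical maps are surjective on $\pi_0$ and the diagram is a Milnor square. Since $k+1 > k$, we have $\tau_{\leq k}(R_0 \oplus M[k+1]) = R_0$, and thus the hypothesis yields $E(R_0 \oplus M[k+1]) \simeq E(R_0)$.

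If $E$ sends this Milnor square to a pullback of spectra, then the pullback of $E(R_0) \to E(R_0) \leftarrow E(R_0)$ is $E(R_0)$, giving $E(R) \simeq E(\tau_{\leq k-1} R)$ and completing the induction.

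The main obstacle is verifying this excision. I would establish it via the Land--Tamme $\odot$-construction of \cite{LT19} applied to the Milnor square above: that construction produces an auxiliary connective $E_1$-$B$-algebra whose $E$-value measures the failure of excision and whose interesting homotopy lives in degrees $> k$ (because the fibers of both legs of our square are concentrated in degrees $\geq k$). The $k$-truncating hypothesis then kills this obstruction. This is essentially Land and Tamme's deduction that truncating invariants are excisive, adapted to the weaker $k$-truncating setting---the adaptation works precisely because the auxiliary ring produced by $\odot$ is sufficiently highly connective for the hypothesis to apply.
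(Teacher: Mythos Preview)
Your argument is correct and follows essentially the same route as the paper: descending induction via the Postnikov square-zero extension, with the Land--Tamme $\odot$-construction supplying an auxiliary ring $\widetilde{R}$ whose $(\leq k)$-truncation is $R_0$, so the hypothesis applies. The paper's square places $H(\pi_0 R)$ rather than $R_0$ in the right-hand column and applies the inductive hypothesis directly to the map $H(\pi_0 R) \to \widetilde{R}$ (bypassing your intermediate observation that $E(R_0 \oplus M[k+1]) \simeq E(R_0)$), but these are cosmetic differences.
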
 
\begin{proof}
We show by descending induction\footnote{This type of argument is also used
in \cite[Sec.~3.2]{LMT}.} that for any $i \geq 0$ and for any connective
$B$-algebra $R$, the map 
$R \to \tau_{\leq i} R$
induces an equivalence on $E$; taking $i = 0$ gives the theorem. 
For $i \geq k$, we already know the claim by assumption. 
Suppose we know the claim for $i+1$; to prove the claim for $i$, we need to see
that $\tau_{\leq i+1} R \to \tau_{\leq i} R $ induces an equivalence on
$E$. 
Now $\tau_{\leq i+1} R \to \tau_{\leq i} R$ is a square-zero extension, i.e., we
have a pullback diagram of $B$-algebras
\[ \xymatrix{
\tau_{\leq i + 1} R \ar[d]  \ar[r] & H(\pi_0 R) \ar[d]  \\
\tau_{\leq i } R \ar[r] &  H(\pi_0 R) \oplus H ( \pi_{i+1} R)[i+2]
}.\]
Since $E$ is a localizing invariant, 
the main result of Land--Tamme \cite{LT19} 
yields an $B$-algebra $\widetilde{R}$ with underlying spectrum 
$H(\pi_0 R) \otimes_{\tau_{\leq i+1} R } \tau_{\leq i} R$ fitting into a
commutative diagram
of $B$-algebras
\[  \xymatrix{
\tau_{\leq i+1} R \ar[d]  \ar[r] &  H(\pi_0 R) \ar[d]  \\
\tau_{\leq i} R \ar[r] &  \widetilde{R}
}\]
which is carried to a pullback by $E$. 
But the map $H(\pi_0 R) \to \widetilde{R}$ is an equivalence in degrees $\leq
i+1$ and therefore induces an equivalence on $E$ by the inductive
hypothesis. 
Therefore, $E( \tau_{\leq i+1} R) \to E( \tau_{\leq i} R)$
is an equivalence, whence the inductive step and the result. 
\end{proof} 

\begin{proof}[Proof of \Cref{truncatingthm}] 
As before, we write $\TR_Q(-) = \prod_Q \TR(-)$ for a set $Q$. 
For any connective $B$-algebra $R$, we claim that
$R \to \tau_{\leq \kappa(2p+1)} R$ induces an equivalence on $L_{K(1)}
\TR_Q(-)$. 
Indeed, this follows from \Cref{lK1TR2} because we can simplicially resolve
$\tau_{\leq \kappa(2p+1)} R$
using free $R$-algebras over free $(R, R)$-bimodules on classes in degrees $\geq
\kappa(2p+1)
$ and since $\TR_Q(-)$ commutes with geometric realizations. 
The result now follows from \Cref{ktruncatingimpliestruncating}. 
\end{proof}

\begin{corollary}[Cf.~\cite{LMT}] 
The invariants $L_{K(1)} K(-), L_{K(1)} \TC(-)$ are truncating on connective $B$-algebras. 
\label{LMTtruncating}
\end{corollary}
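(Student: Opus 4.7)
The plan is to deduce both assertions from Theorem~\ref{truncatingthm} (the truncating property of $L_{K(1)}\TR(-)$) together with two standard tools: the expression of $\TC$ as a fiber of self-maps of $\TR$, and the Dundas--Goodwillie--McCarthy theorem.

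First I would handle $L_{K(1)}\TC(-)$. After $p$-completion, $\TC(R;\mathbb{Z}_p)$ sits in a natural fiber sequence
\[ \TC(R;\mathbb{Z}_p) \longrightarrow \TR(R;\mathbb{Z}_p) \xrightarrow{\,F - 1\,} \TR(R;\mathbb{Z}_p), \]
where $F$ is the Frobenius; this is the classical presentation of \cite{BHM} reinterpreted in the framework of \cite{NS18}. Applying $L_{K(1)}$ preserves fiber sequences, so Theorem~\ref{truncatingthm} applied to both $\TR$-terms shows that for any connective $B$-algebra $R$, the map $L_{K(1)}\TC(R) \to L_{K(1)}\TC(\pi_0 R)$ is an equivalence.

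Second, to pass from $\TC$ to $K$, I would invoke Dundas--Goodwillie--McCarthy \cite{DGM}: for any map $R \to S$ of connective $E_1$-rings inducing a surjection on $\pi_0$ with nilpotent kernel, the square of cyclotomic trace maps
\[ \xymatrix{
K(R;\mathbb{Z}_p) \ar[r] \ar[d] & \TC(R;\mathbb{Z}_p) \ar[d] \\
K(S;\mathbb{Z}_p) \ar[r] & \TC(S;\mathbb{Z}_p)
} \]
is cartesian. Taking $S = \pi_0 R$ (where the $\pi_0$-kernel is trivially zero) and applying $L_{K(1)}$, the right vertical is an equivalence by the previous step, hence so is the left vertical. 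This gives that $L_{K(1)}K(-)$ is truncating on connective $B$-algebras.

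The real content is entirely packaged in Theorem~\ref{truncatingthm}; both reductions above are formal manipulations of fiber sequences, and there is no substantive obstacle. The only thing worth being careful about is that DGM applies to connective $E_1$-rings in spectra, which covers connective $B$-algebras (as $B$ is connective), so no $H\mathbb{Z}$-linearity is needed at this step.
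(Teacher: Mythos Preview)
Your proposal is correct and follows the same route as the paper: the paper deduces the $\TC$ statement from Theorem~\ref{truncatingthm} ``by taking Frobenius fixed points'' (i.e., exactly your fiber sequence $\TC \to \TR \xrightarrow{F-1} \TR$), and then obtains the $K$-theory statement by observing that the fiber of the cyclotomic trace $K(-) \to \TC(-)$ is truncating by Dundas--Goodwillie--McCarthy, which is just a rephrasing of your cartesian-square argument.
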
 
\begin{proof} 
The result for $L_{K(1)} \TC(-)$ follows from \Cref{truncatingthm} by taking
Frobenius fixed points. 
The result for $L_{K(1)} K(-)$ is a formal consequence 
since the fiber of the trace $K(-) \to \TC(-)$
is truncating by the 
Dundas--Goodwillie--McCarthy theorem \cite{DGM}. 
\end{proof}

\begin{corollary}[\cite{LMT, BCM}] 
For any $n$, we have $L_{K(1)} K(\mathbb{Z}/p^n) = 0$. 
\end{corollary}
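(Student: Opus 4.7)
The plan is to invoke the machinery already assembled in this section rather than reprove anything. By Corollary \ref{LMTtruncating} (taking $B = H\mathbb{Z}$), the functor $L_{K(1)} K(-)$ is a $K(1)$-local truncating localizing invariant on connective $H\mathbb{Z}$-algebras. The ring $\mathbb{Z}/p^n$ is a discrete $H\mathbb{Z}$-algebra which is annihilated by the power $p^n$ of $p$. Therefore the hypotheses of Proposition \ref{isogenyinv}(2) are satisfied with $E = L_{K(1)} K(-)$ and $C = \mathbb{Z}/p^n$, and that proposition delivers $L_{K(1)} K(\mathbb{Z}/p^n) = 0$ directly.

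It is worth unwinding once what Proposition \ref{isogenyinv}(2) is really doing here so the appeal is transparent: truncating implies nilinvariant by \cite{LT19}, which reduces us to the $\mathbb{F}_p$-algebra case $n = 1$; on $\mathbb{F}_p$-algebras every localizing invariant factors as a module over $K(\mathbb{F}_p;\mathbb{Z}_p) \simeq H\mathbb{Z}_p$ (Quillen \cite{Qui72K}), and $H\mathbb{Z}_p$ is $K(1)$-acyclic, forcing the $K(1)$-localization to vanish. There is no real obstacle: once Corollary \ref{LMTtruncating} is in hand, the statement is immediate, so the proof reduces to a one-line citation.

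\begin{proof}
By Corollary \ref{LMTtruncating}, $L_{K(1)} K(-)$ is truncating on connective $H\mathbb{Z}$-algebras. Since $\mathbb{Z}/p^n$ is a discrete $H\mathbb{Z}$-algebra annihilated by $p^n$, Proposition \ref{isogenyinv}(2) gives $L_{K(1)} K(\mathbb{Z}/p^n) = 0$.
\end{proof}
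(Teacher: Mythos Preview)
Your proof is correct and essentially matches the paper's approach: both reduce to $\mathbb{F}_p$ via nilinvariance (which follows from truncating) and then use that values on $\mathbb{F}_p$-algebras are $H\mathbb{Z}_p$-modules, hence $K(1)$-acyclic. The only cosmetic difference is that the paper routes through $L_{K(1)}\TC$ and invokes Dundas--Goodwillie--McCarthy explicitly to pass from $K$ to $\TC$, whereas you apply Proposition~\ref{isogenyinv}(2) directly to $L_{K(1)}K$; since Corollary~\ref{LMTtruncating} already gives truncating for $L_{K(1)}K$, your route is slightly more direct.
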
 
\begin{proof} 
Take $B = H\mathbb{Z}$ in \Cref{truncatingthm}, so $L_{K(1)} \TC(-)$ is truncating
and therefore nilinvariant on connective $H\mathbb{Z}$-algebras. 
Then the result follows because the Dundas--Goodwillie--McCarthy theorem and
comparison with $\mathbb{F}_p$ yields $L_{K(1)}
K(\mathbb{Z}/p^n) = L_{K(1)} \TC(\mathbb{Z}/p^n)$, but the above shows that this
equals
$L_{K(1)} \TC(\mathbb{F}_p) = 0$. 
\end{proof}

\section{Asymptotic $K(1)$-locality}

In this section, we 
show (\Cref{asymptoticK1TR})
that  $\TR$ is asymptotically $K(1)$-local for a regular
ring satisfying  mild hypotheses, using the
Beilinson--Lichtenbaum conjecture. 
This result is due to Hesselholt--Madsen in the case of smooth algebras over a
DVR with perfect residue field of characteristic $>2$, which we begin by
reviewing.

\begin{theorem}[Hesselholt--Madsen \cite{HM03, HM04}] 
\label{HMasymptotic}
Let $K$ be a complete, discretely valued field of characteristic zero 
with ring of integers $\mathcal{O}_K \subset K$  and perfect residue field $k$ of characteristic $p > 2$.
Let $R$ be a smooth $\mathcal{O}_K$-algebra of relative dimension $d$. 
Then the map
\[  \TR(R; \mathbb{F}_p) \to L_{K(1)} \TR(R; \mathbb{F}_p)   \]
is $d$-truncated. 
\end{theorem}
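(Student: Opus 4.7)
The plan is to import the explicit computation of \cite{HM03, HM04}, which identifies $\pi_* \TR^n(R; \mathbb{Z}_p)$ with a natural expression in the absolute log de Rham--Witt complex $W_n\omega^\bullet_{(R, M_R)}$, where $M_R$ denotes the pullback to $R$ of the canonical log structure $\mathcal{O}_K \setminus \{0\} \hookrightarrow \mathcal{O}_K$. Since $R$ is smooth of relative dimension $d$ over $\mathcal{O}_K$, the log cotangent module of $(R, M_R)$ over $\mathbb{Z}$ has rank $d + 1$, hence $W_n\omega^i_{(R, M_R)} = 0$ for $i > d + 1$ uniformly in $n$.

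I would then reduce mod $p$ and pass to the limit over $n$ via the Restriction maps, thereby expressing $\pi_* \TR(R; \mathbb{F}_p)$ as a bounded sum of de Rham--Witt pieces in wedge degrees $i \leq d + 1$, each indexed by a Bott weight (Tate twist). Under this identification, the Bott element $\beta \in \pi_2 \TR(R; \mathbb{F}_p)$, coming from the unit map out of $L_{K(1)} \mathbb{S}$, should act by shifting the Bott weight while preserving the wedge degree, so that $L_{K(1)} \TR(R; \mathbb{F}_p)$ is obtained by $p$-completely inverting $\beta$.

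The decisive bookkeeping is as follows: in a fixed total degree $q$, the summands contributing to $\pi_q \TR(R; \mathbb{F}_p)$ correspond schematically to pairs $(i, j)$ with $q = 2j - i$ and $0 \leq i \leq d + 1$. Inverting $\beta$ formally extends the sum to all Bott weights $j \in \mathbb{Z}$; the new summands would require $i > d + 1$ and hence vanish. Thus for $q > d$ the sum is already $\beta$-periodic, and the map $\pi_q \TR(R; \mathbb{F}_p) \to \pi_q L_{K(1)} \TR(R; \mathbb{F}_p)$ is an isomorphism, which is the claimed $d$-truncation.

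The hard part will be identifying the abstract Bott element with a concrete Tate-twist-shift class under the Hesselholt--Madsen isomorphism; this is established in the course of the computation in \cite{HM03} via the interaction of the cyclotomic Frobenius and Verschiebung with their de Rham--Witt counterparts, and most of the weight of the argument is really absorbed into their setup. A secondary technical point is managing the pro-system of de Rham--Witt complexes $\{W_n\omega^\bullet_{(R, M_R)}\}_n$, including potential $\mathrm{lim}^1$ contributions, when passing from $\TR^n$ to $\TR$; the required Mittag-Leffler/finiteness statements follow from the structure of the Restriction and Frobenius maps between the $W_n\omega^\bullet$ on a smooth log scheme.
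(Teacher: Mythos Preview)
Your overall strategy matches the paper's: import the Hesselholt--Madsen identification of $\TR$ with the absolute log de Rham--Witt complex, analyze the action of the Bott element, and count degrees. However, there is a genuine gap in your key algebraic input.

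The claim that $W_n\omega^i_{(R, M_R)} = 0$ for $i > d+1$ is false. Your justification---that the log cotangent module has rank $d+1$---only shows that $\omega^i = W_1\omega^i$ vanishes for $i > d+1$; it says nothing about higher Witt levels, because $W_n\omega^i$ is not a wedge power of $W_n\omega^1$. Already for $R = \mathcal{O}_K$ (so $d = 0$), the groups $W\omega^q_{(\mathcal{O}_K, M)}$ for $q \geq 2$ are nonzero. What the paper actually uses (citing \cite[Cor.~3.2.7]{HM03} for the base case, together with the polynomial-ring functor $P$ of \cite[Lem.~7.1.4]{HM04} and \'etale base change \cite[Lem.~7.1.1]{HM04}) is that $W\Omega^{\ast}_{(R, M_R)}$ is \emph{$p$-divisible} in degrees $\geq d+2$. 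That is the correct de Rham--Witt input, and it is what makes your degree bookkeeping go through once one reduces modulo $p$ and reformulates the goal as showing that the cofiber of the Bott element on $\TR(R|R_K; \mathbb{F}_p)$ is $(d+1)$-truncated.

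Two smaller omissions, both structural. First, the paper reduces to the case $\mu_p \subset K$ via the transfer. Second---and more importantly---it passes to the cofiber $\TR(R|R_K; \mathbb{F}_p)$ of the transfer from the special fiber, since that is the object \cite[Th.~E]{HM04} actually computes; the special-fiber term $\TR(R \otimes_{\mathcal{O}_K} k; \mathbb{F}_p)$ is handled separately, being $K(1)$-acyclic and $d$-truncated via the characteristic-$p$ de Rham--Witt identification \cite{Hes96}. Your sketch conflates this with a direct computation of $\TR(R; \mathbb{F}_p)$.
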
 
\begin{proof} 
Without loss of generality, we can assume that $\mu_p \subset K$, since
otherwise $\TR(R; \mathbb{F}_p)$ is a retract of $\TR(R[\zeta_p];
\mathbb{F}_p)$ via the transfer. 
In this case, the result follows from 
\cite[Th.~E]{HM04}. 
Indeed, 
\emph{loc.~cit.}~gives a calculation of 
the cofiber $\TR(R|R_K; \mathbb{F}_p)$ of the transfer map $\TR(R \otimes_{\mathcal{O}_K} k; \mathbb{F}_p) \to \TR(R;
\mathbb{F}_p)$. 
Since $\TR(R \otimes_{\mathcal{O}_K} k; \mathbb{F}_p)$ is $K(1)$-acyclic and
$d$-truncated
in view of the identification \cite{Hes96} with the de Rham--Witt complex of $R
\otimes_{\mathcal{O}_K} k$, 
it suffices to 
verify the (stronger) claim that 
$\TR(R|R_K; \mathbb{F}_p) \to 
L_{K(1)}\TR(R|R_K; \mathbb{F}_p)
$ is $\leq  d-2$-truncated. 
Equivalently (cf.~\Cref{K1locallemma} below), it suffices to show that the cofiber of the Bott element on 
$\TR(R|R_K; \mathbb{F}_p)$ is 
$\leq d+1$-truncated. 
In fact, this follows from the calculation in 
\emph{loc.~cit.}, 
once we know that the absolute de Rham--Witt complex
$W \Omega^{\ast}_{(R, M_R)}$ is $p$-divisible in degrees $\geq d+2$. 
This in turn follows from the case
where $R = \mathcal{O}_K$, cf.~\cite[Cor.~3.2.7]{HM03}; the case of polynomial
rings 
via the functor $P$, cf.~\cite[Lem.~7.1.4]{HM04} and its proof; and 
finally \'etale base-change \cite[Lem~7.1.1]{HM04}. 
\end{proof} 

\begin{construction}[$\TR$ as $p$-typical curves] 
Let $R$ be an animated ring.\footnote{Also called a simplicial commutative
ring; cf.~\cite{CS} for a discussion of this terminology.} 
We let \begin{equation} \label{curvesR} C(R) = \varprojlim_n \Omega K( R[x]/x^n, (x))
\end{equation} denote the spectrum of
\emph{curves} on the $K$-theory of $R$, defining a functor from animated
rings to spectra. 
By \cite[Th.~3.1.9]{Hes96}, 
if $R$ is a discrete $\mathbb{Z}/p^j$-algebra for some $j$, we have
a natural 
expression of $\TR(R; \mathbb{Z}_p)$ as a summand of $C(R; \mathbb{Z}_p)$ (note
that we are considering $p$-typical $\TR$, while in \emph{loc.~cit.} global
$\TR$ is considered). 
Left Kan extending both sides  to animated rings (since both $\TR(-)$ and $C(-)$ commute with
geometric realizations\footnote{For $C(-)$, this follows from the expression
$K(R[x]/x^n, (x)) = \TC(R[x]/x^n, (x))$, using that $\TC$ of connective 
ring spectra commutes with geometric realizations, and passing to the limit
along $n$.}), we obtain that $\TR(R; \mathbb{Z}_p) $ is naturally a
summand of $C(R)$ for $R$ an animated $\mathbb{Z}/p^j$-algebra. 
Passing to the limit over $j$ and using the $p$-adic continuity of $K$-theory
\cite[Th.~5.21]{CMM}, we find that 
for any $p$-henselian animated ring
$R$ (i.e., $\pi_0(R)$ is $p$-henselian), 
$\TR(R; \mathbb{Z}_p)$ is a summand of $C(R; \mathbb{Z}_p)$. 
\label{ptypiccurves}
\end{construction}

Next, we discuss the comparison between the $p$-adic $K$-theory of $R$ and
$R[1/p]$, cf.~\cite[Lem.~3.5]{Niz08} for this argument. 
\begin{proposition} 
\label{invertingpconn}
Let $R$ be a regular noetherian ring of finite Krull dimension. 
Suppose for every $x \in \spec(R/p)$, we have $[\kappa(x): \kappa(x)^p] \leq
p^d$. 
Then the map $K(R; \mathbb{F}_p) \to K(R[1/p]; \mathbb{F}_p)$ is 
$d$-truncated. 
\end{proposition}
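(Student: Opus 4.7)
The plan is to use Quillen's localization sequence for the open immersion $\spec R[1/p] \hookrightarrow \spec R$. Since $R$ is regular and noetherian, $K$-theory coincides with $G$-theory, and Quillen's devissage identifies the $K$-theory of perfect complexes supported on $V(p)$ with the $G$-theory of $R/p$. We therefore obtain a fiber sequence
\[ G(R/p) \to K(R) \to K(R[1/p]), \]
so it suffices to prove that $G(R/p; \mathbb{F}_p)$ has vanishing homotopy in degrees $> d$.

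Next, I would invoke the coniveau filtration on $G$-theory. Writing $X = \spec(R/p)$, the filtration of $G(X)$ by closed subsets of increasing codimension produces a convergent filtration on $G(X; \mathbb{F}_p)$ whose associated graded in filtration degree $c$ is, by a further application of devissage,
\[ \gr^c G(X; \mathbb{F}_p) \simeq \bigoplus_{x \in X^{(c)}} K(\kappa(x); \mathbb{F}_p). \]
Because $R$ has finite Krull dimension, so does $X$, so only finitely many graded pieces are nontrivial, and $d$-truncatedness of $G(R/p; \mathbb{F}_p)$ reduces to that of each $K(\kappa(x); \mathbb{F}_p)$.

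The remaining input is then a statement purely about $K$-theory of fields in characteristic $p$. For each $x \in X$, the residue field $\kappa(x)$ satisfies $[\kappa(x):\kappa(x)^p] \leq p^d$ by hypothesis. By the theorem of Geisser--Levine, the nonzero homotopy of $K(\kappa(x); \mathbb{F}_p)$ is identified with the logarithmic de Rham--Witt groups $\Omega^n_{\kappa(x), \log}$; under the $p$-basis bound one has $\Omega^n_{\kappa(x)} = 0$ for $n > d$, and hence $\Omega^n_{\kappa(x), \log} = 0$ in that range. Thus $K(\kappa(x); \mathbb{F}_p)$ is $d$-truncated for every point $x$, which yields the conclusion after combining with the previous step.

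The main obstacle is a purely technical one: handling the non-connective $K$-theory spectrum through the (potentially infinite) coniveau filtration and ensuring convergence; this is routine in the noetherian finite-dimensional setting but should be spelled out. The substantive input, Geisser--Levine for possibly imperfect fields of characteristic $p$ with prescribed $p$-basis rank, is well-established and enters as a black box.
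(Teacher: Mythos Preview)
Your proposal is correct and follows essentially the same route as the paper: Quillen localization/d\'evissage to reduce to $G(R/p;\mathbb{F}_p)$, the codimension-of-support (coniveau) filtration to reduce to $K(\kappa(x);\mathbb{F}_p)$ for $x\in\spec(R/p)$, and Geisser--Levine together with the $p$-basis bound on $\Omega^*_{\kappa(x)}$ to conclude. Your worry about non-connective $K$-theory is unnecessary here since $R$ is regular noetherian, and convergence of the coniveau filtration is immediate from finite Krull dimension, as you note.
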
 
\begin{proof} 
The homotopy fiber of the map in question is, by Quillen's d\'evissage theorem,
the mod $p$ $K$-theory of the abelian category of finitely generated
$R/p$-modules, 
i.e., the mod $p$ $G$-theory of $R/p$, $G(R/p; \mathbb{F}_p)$. 
So it suffices to show 
that $G(R/p; \mathbb{F}_p)$ is concentrated 
in degrees $\leq d$. 
Using the 
filtration by codimension of support and d\'evissage
(cf.~\cite[Th.~5.4]{Qui72}), 
we find that 
$G(R/p; \mathbb{F}_p)$ has a filtration whose associated graded terms are direct
sums of the $K( \kappa(x); \mathbb{F}_p)$ for $x \in \spec(R/p)$; it therefore
suffices to show that these terms are $d$-truncated. 
But now the Geisser--Levine theorem \cite{GL00} 
implies that for any field $E$ of characteristic $p$, there is an embedding
$K_*(E; \mathbb{F}_p) \hookrightarrow \Omega^{\ast}_{E/\mathbb{F}_p}$. 
This implies that $K(\kappa(x); \mathbb{F}_p)$ is 
$\dim_{\kappa(x)} \Omega^1_{\kappa(x)/\mathbb{F}_p}$-truncated. 
But 
$
\dim_{\kappa(x)} \Omega^1_{\kappa(x)/\mathbb{F}_p} = 
\log_p [\kappa(x): \kappa(x)^p]  \leq d
$ by the theory of $p$-bases
\cite[Th.~26.5]{Mat89}. Combining these facts, the result follows. 
\end{proof}

\begin{proposition}[Rosenschon--{\O}stv{\ae}r \cite{RO06}]
\label{asymptoticKlocalityRO}
Let $R$ be a regular noetherian $\mathbb{Z}[1/p]$-algebra of finite Krull dimension. 
Suppose that $\mathrm{vcd}_p(\kappa(x)) \leq d$ for all $x \in \spec(R[1/p])$. 
Then the map 
$K(R; \mathbb{F}_p) \to L_{K(1)} K(R; \mathbb{F}_p)$ 
has $\leq d-3$-truncated homotopy fiber. 
\end{proposition}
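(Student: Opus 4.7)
The plan is to compare two descent-type spectral sequences across the natural map $K(R; \mathbb{F}_p) \to L_{K(1)} K(R; \mathbb{F}_p)$. On the source I would use the motivic spectral sequence of Bloch--Lichtenbaum, Friedlander--Suslin, and Levine, whose $E_2$-page is motivic cohomology $H^s_{\mot}(R; \mathbb{F}_p(t))$ abutting to $K_{2t - s}(R; \mathbb{F}_p)$. On the target, I would use Thomason's descent spectral sequence, in the modern form of \cite{CM19} (arising from the \'etale hyperdescent of $L_{K(1)} K$-theory, with strong convergence guaranteed by the bounded vcd and finite Krull dimension hypotheses), whose $E_2$-page is \'etale cohomology $H^s_{\et}(R; \mathbb{F}_p(t))$ abutting to $\pi_{2t-s} L_{K(1)} K(R; \mathbb{F}_p)$. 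The comparison map is compatible with both filtrations, inducing the cycle class map on $E_2$-terms.

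The next input is the Beilinson--Lichtenbaum conjecture, now a theorem of Voevodsky--Rost (see \cite{HW19}), which asserts that the cycle class map $H^s_{\mot}(R; \mathbb{F}_p(t)) \to H^s_{\et}(R; \mathbb{F}_p(t))$ is an isomorphism in the range $s \leq t$, with the motivic side vanishing for $s > t$. Consequently, the $E_2$-page of the fiber spectral sequence of the comparison map is supported entirely in bidegrees with $s > t$.

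Separately, I would globalize the pointwise vcd bound to a bound on the \'etale cohomological dimension of $R$: using the filtration by codimension of support on $\spec(R)$, together with the fact that a regular local $\mathbb{Z}[1/p]$-algebra inherits \'etale cohomological dimension bounds from the vcd of its residue field, one deduces $H^s_{\et}(R; \mathbb{F}_p(t)) = 0$ for $s$ exceeding a bound controlled by $d$. Intersecting with the Beilinson--Lichtenbaum range restriction localizes the fiber spectral sequence to the region $\{(s,t) : t < s \leq d\}$. The extremal case maximizing the total degree $2t - s$ in the abutment is $t = s-1$, $s = d$, giving total degree $d - 2$ at the $E_2$-level; after accounting for the differentials (which can only lower the relevant range) and for the degree shift between the filtered spectrum and the spectral sequence indexing, one arrives at the stated bound.

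The hard step is the global cohomological dimension input: transferring the pointwise vcd hypothesis on residue fields to a uniform bound on $H^s_{\et}(R; \mathbb{F}_p(t))$ requires care with the codimension filtration, excision, and continuity of \'etale cohomology on regular schemes of finite Krull dimension, and is essentially the main technical content of \cite{RO06}. A secondary subtlety is the precise off-by-one in the truncation bound (the difference between $d - 2$ and $d - 3$), which depends on the exact indexing conventions of the motivic filtration and on a careful comparison of the two spectral sequences along the cycle class map.
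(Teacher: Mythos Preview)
Your approach is sound in outline and is essentially the original strategy of \cite{RO06}: compare the motivic and \'etale spectral sequences globally on $R$, invoke Beilinson--Lichtenbaum to confine the discrepancy to the region $s > t$, and then use a global \'etale cohomological dimension bound. The paper takes a different and shorter route: it first uses Nisnevich descent to reduce to henselian local rings, then Gabber--Suslin rigidity to reduce to the residue fields themselves, and only then applies Beilinson--Lichtenbaum. Once one is at a field $F$ of characteristic $\neq p$ with $\mathrm{vcd}_p(F) \leq d$, the argument becomes a direct reading of the motivic filtration with no need for any global cohomological-dimension input. What the paper's approach buys is that the ``hard step'' you flag---passing from pointwise $\mathrm{vcd}_p$ bounds on residue fields to a global bound on $H^s_{\et}(R; \mathbb{F}_p(t))$---simply disappears.

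One genuine slip in your write-up: the assertion that $H^s_{\mot}(R; \mathbb{F}_p(t)) = 0$ for $s > t$ is only true for fields, not for regular schemes of positive dimension (e.g.\ $H^{2t}_{\mot}$ computes Chow groups). Your conclusion that the fiber of the comparison is supported in $s > t$ does still follow, but from the Beilinson--Lichtenbaum isomorphism in degrees $s \leq t$ alone, not from any vanishing of motivic cohomology. Where this matters is in the next step: to bound the fiber you now need to control \emph{both} the \'etale and the motivic cohomology in the range $s > t$, and the latter is no longer automatically zero. This can be handled (e.g.\ via the coniveau filtration, which again reduces to fields), but it is exactly the kind of complication that the paper's reduction-to-fields maneuver sidesteps entirely.
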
 
\begin{proof}[Proof sketch] 
Using Nisnevich descent \cite{TT90}, we reduce to the case where $R$ is
henselian local, and even
a field of characteristic $\neq p$ by Gabber--Suslin rigidity \cite{Gabber92}; note that we
do not have to worry about the distinction between connective and nonconnective
$K$-theory by regularity. Then, the result follows from 
the Beilinson--Lichtenbaum conjecture (proved by Voevodsky--Rost,
cf.~\cite{HW19}) describing the
associated graded terms of the motivic filtration on $K(R; \mathbb{F}_p)$, see
e.g., \cite[Sec.~6.2]{CM19} for an account. 
\end{proof}

\begin{proposition} 
\label{pcohdimbound}
Let $R$ be an excellent normal domain which is henselian along an
ideal $I \subset R$ containing $(p)$. 
Suppose that for all $\mathfrak{p} \in \spec(R)$ containing $I$, we have $\dim R_{\mathfrak{p}}  +
\log_p [\kappa(\mathfrak{p}): \kappa(\mathfrak{p})^p] \leq d$. 
Then 
for all $\mathfrak{q} \in \spec(R[1/p])$, the residue field
$\kappa(\mathfrak{q})$ has $p$-cohomological dimension at most $  d + 1$. 
\end{proposition}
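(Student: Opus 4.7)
The plan is to reduce, via normalization and localization, to bounding the $p$-cohomological dimension of the fraction field of an excellent henselian local normal domain with characteristic-$p$ residue field, and then to invoke a cohomological dimension bound of Gabber.

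First, given $\mathfrak{q} \in \spec(R[1/p])$, set $\mathfrak{p} = \mathfrak{q} \cap R$, so $p \notin \mathfrak{p}$ and $\kappa(\mathfrak{q}) = \mathrm{Frac}(R/\mathfrak{p})$. I would replace $R$ by the normalization $R'$ of $R/\mathfrak{p}$ in its fraction field. This is finite over $R/\mathfrak{p}$ by the Nagata property of the excellent ring $R$, so $R'$ is again an excellent normal domain. The quotient $(R/\mathfrak{p}, \bar{I})$ of the henselian pair $(R, I)$ is henselian, and henselian pairs are stable under integral extensions, so $(R', \bar{I} R')$ is a henselian pair, with $\bar{I} R' \ni p$. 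A direct check shows that the hypothesis descends to $R'$ with the same constant $d$: primes of $R'$ containing $\bar{I} R'$ lie over primes $\mathfrak{P}$ of $R$ containing $\mathfrak{p}$ and $I$, residue field extensions $\kappa(\mathfrak{P}')/\kappa(\mathfrak{P})$ are finite and hence preserve $\log_p$ of the inseparability degree (since $[L:L^p]=[K:K^p]$ for any finite extension $L/K$ of characteristic $p$ fields), and $\mathrm{ht}_{R/\mathfrak{p}}(\mathfrak{P}/\mathfrak{p}) \leq \mathrm{ht}_R(\mathfrak{P})$ since $R$ is catenary. Next, I localize $R'$ at any maximal ideal $\mathfrak{m}$; necessarily $\mathfrak{m} \supset \bar{I} R'$, since for a henselian pair the defining ideal lies in the Jacobson radical. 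This produces an excellent henselian local normal domain $A := R'_{\mathfrak{m}}$ with $\mathrm{Frac}(A) = \kappa(\mathfrak{q})$, residue field $k = \kappa(\mathfrak{m})$ of characteristic $p$, and $\dim A + \log_p[k:k^p] \leq d$ by the standing hypothesis applied at $\mathfrak{m}$.

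It remains to bound $\mathrm{cd}_p(\mathrm{Frac}(A))$ for this $A$. Here I would invoke Gabber's theorem (extending Kato's classical bound in the discretely valued case) on the $p$-cohomological dimension of the fraction field of an excellent henselian local ring with imperfect residue field of characteristic $p$ and finite $p$-rank, to the effect that
\[ \mathrm{cd}_p(\mathrm{Frac}(A)) \leq \dim A + \log_p[k:k^p] + 1. \]
Combined with the inequality from the previous paragraph, this immediately yields $\mathrm{cd}_p(\kappa(\mathfrak{q})) \leq d+1$, as desired.

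The main obstacle will be to justify the cohomological dimension bound in the required sharpness. Serre's classical bound $\mathrm{cd}_p(K) \leq \mathrm{cd}_p(k) + 1$ for complete discrete valuation fields presupposes perfectness of $k$ and is inadequate here: the henselization of $\mathbb{Z}_p[t]$ at $(p)$ has residue field $\mathbb{F}_p(t)$ and a fraction field containing $\mathbb{Q}_p(t)$, whose $p$-cohomological dimension is $3$. This example shows that the $\log_p[k:k^p]$-term is genuinely needed and that the bound is sharp. The extension of the Serre--Kato estimate to imperfect residue fields and to higher-dimensional excellent henselian bases is Gabber's contribution, and constitutes the essential input of the argument.
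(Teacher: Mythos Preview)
Your reduction has a gap at the localization step. From the henselian pair $(R', \bar I R')$ you pass to $A = R'_{\mathfrak{m}}$ at a maximal ideal and assert that $A$ is a henselian \emph{local} ring, but henselianness of pairs is not preserved by localization, and there is no reason for $R'_{\mathfrak{m}}$ to be henselian along its maximal ideal. Concretely, take $R = \mathbb{Z}_p\langle t\rangle$ (the $p$-completion of $\mathbb{Z}_p[t]$) with $I = (p)$ and $\mathfrak{p} = (0)$, so $R' = R$. The localization $R_{(p,t)}$ is a two-dimensional regular local ring which is not henselian: the polynomial $X^2 - X - t$ factors as $X(X-1)$ modulo $(p,t)$, yet has no root in $R_{(p,t)}$, since $1+4t$ is irreducible in the principal ideal domain $\mathbb{Q}_p\langle t\rangle$ and hence not a square in $\mathrm{Frac}(R)$. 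The form of Gabber's bound you invoke needs a henselian local ring as input, so it does not apply to $A$.

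Nor can this be repaired by henselizing: $\mathrm{Frac}(A^h)$ is then a nontrivial separable algebraic extension of $\kappa(\mathfrak{q}) = \mathrm{Frac}(A)$, and bounding $\mathrm{cd}_p$ of the former gives no bound on the latter, since $p$-cohomological dimension can only decrease along such extensions. This is precisely why the paper does not attempt to realize $\kappa(\mathfrak{q})$ as the fraction field of a single henselian local ring. Instead it bounds $H^n(U, \mathcal{F})$ uniformly for affine opens $U \subset \spec(R[1/p])$ and constructible $p$-torsion sheaves $\mathcal{F}$: the affine analogue of proper base change for the henselian pair $(R, I)$ reduces this to a stalkwise statement on $\spec(R/I)$, and only at that point does one pass to strict henselizations and apply Gabber--Orgogozo. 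The bound on $\mathrm{cd}_p(\kappa(\mathfrak{q}))$ then follows by continuity.
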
 
\begin{proof} 
By standard continuity arguments, it suffices to show that for any affine open $U = \spec( R[1/f]) \subset
\spec(R[1/p])$ and any constructible $p$-torsion sheaf $\mathcal{F}$ on $U$,  we
have
$H^n( U, \mathcal{F}) = 0$ for $n >  d +1$. 
Denote by $j\colon  U \hookrightarrow \spec(R)$ the open inclusion, and denote by $i_0\colon 
\spec(R/I) \hookrightarrow \spec(R)$ the closed embedding. 
Using that $\spec(R/I)$ has $p$-cohomological dimension $\leq 1$ and the  affine
analog of proper base change \cite{Ga94, Hu93} applied to the henselian ideal
$I \subset R$, we see that it suffices
to show that $i_0^* R^n j_* \mathcal{F} = 0$ for $n >   d$. 

Working stalkwise on $R$ and using the compatibility of \'etale cohomology with
filtered colimits, we can now reduce to the case where $R$ is an excellent,  strictly
henselian normal local domain with residue field $k$ (and $I \subset R$ is
contained in the maximal ideal), since excellence and normality are preserved by
strict henselization (cf.~\cite{Gre76} for the former). 
The statement then becomes that $H^n( U, \mathcal{F}) = 0$ for $n >  
d$, which follows from the bound of Gabber--Orgogozo \cite[Th.~6.1]{GO08} (noting that the $p$-dimension of
the residue field $k$ is $\log_p [k:k^p]$ since $k$ is separably closed). 
\end{proof}

\begin{remark} 
\label{dimremark}
Suppose that 
$R_0$ is a ring such that $R_0/p$ is finite type over a perfect field $k$ of characteristic
$p$. Then for every prime ideal $\mathfrak{p} \in \spec(R_0)$ containing $(p)$, we have
$\dim (R_0)_{\mathfrak{p}} + \log_p [\kappa(\mathfrak{p}): \kappa(\mathfrak{p})^p]
\leq \dim(R_0)$.  
Indeed, we have 
$\dim (R_0)_{\mathfrak{p}} + \dim (R_0/\mathfrak{p}) \leq \dim(R_0)$. 
Therefore, it suffices to prove 
$\log_p [ \kappa( \mathfrak{p}): \kappa(\mathfrak{p})^p] =
\dim(R_0/\mathfrak{p})$. 
But in this case, both are the transcendence degree of the field $\kappa(
\mathfrak{p}) $ over $k$, cf.~\cite[Th.~27.B]{Mat80}. \end{remark} 

\begin{lemma} 
\label{powerserieslem}
Let $R_0$ be an $\mathbb{F}_p$-algebra. 
Suppose that 
for all residue fields $\kappa$ of $R_0$, we have $[\kappa: \kappa^p] \leq p^d $
for some $d \geq 0$. 
Then for all residue fields $\kappa'$ of $R_0[[x]]$, we have $[\kappa':
\kappa'^p] \leq p^{d+1}$. 
\end{lemma}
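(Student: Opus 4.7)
The plan is to analyze a prime $\mathfrak{q} \subset R_0[[x]]$ with residue field $\kappa'$, setting $\mathfrak{p} = \mathfrak{q} \cap R_0$ and $\kappa = \kappa(\mathfrak{p})$. The case $x \in \mathfrak{q}$ is immediate: the augmentation $R_0[[x]] \to R_0$ identifies $R_0[[x]]/\mathfrak{q}$ with $R_0/\mathfrak{p}$, so $\kappa' = \kappa$ and $[\kappa':\kappa'^p] \leq p^d$. I will therefore assume that $x \notin \mathfrak{q}$ in what follows.

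In this case, I would invoke the cotangent exact sequence
\[ \Omega^1_{\kappa/\mathbb{F}_p} \otimes_\kappa \kappa' \to \Omega^1_{\kappa'/\mathbb{F}_p} \to \Omega^1_{\kappa'/\kappa} \to 0 \]
together with the identity $\dim_F \Omega^1_{F/\mathbb{F}_p} = \log_p[F : F^p]$ for fields $F$ of characteristic $p$, coming from the theory of $p$-bases as already used in the proof of \Cref{invertingpconn}. This reduces the lemma to the key estimate $\dim_{\kappa'} \Omega^1_{\kappa'/\kappa} \leq 1$.

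To prove the key estimate, I would show that every $\kappa$-linear derivation $D\colon \kappa' \to \kappa'$ is determined by its value at $\bar x$. Composing $D$ with the natural map $\pi\colon R_0[[x]] \to R_0[[x]]/\mathfrak{q} \hookrightarrow \kappa'$ gives an $R_0$-linear derivation $\tilde D = D \circ \pi$; since $\pi(R_0[[x]])$ generates $\kappa'$ as a field, it suffices to see that $\tilde D$ is determined by $\tilde D(x)$. Given $f \in R_0[[x]]$, the decomposition $f = \sum_{i=0}^{p-1} x^i f_i(x^p)$ (a finite sum) together with Leibniz reduces matters to computing $\tilde D(f_i(x^p))$. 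Writing each $f_i(x^p) = \sum_\alpha b^\alpha h_{i,\alpha}^p + E_i$ in $R_0[[x]]$, where $\{b_1,\ldots,b_r\}$ are lifts of a $p$-basis of $\kappa$ (with $r \leq d$), the sum over $\alpha \in \{0,\ldots,p-1\}^r$ is finite, and the error term $E_i$ has all coefficients in $\mathfrak{p}$, the $\alpha$-terms vanish under $\tilde D$: by $R_0$-linearity $\tilde D(b^\alpha) = 0$, and in characteristic $p$ one has $\tilde D(h^p) = p h^{p-1} \tilde D(h) = 0$. Hence $\tilde D(f_i(x^p)) = \tilde D(E_i)$.

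The hard part will be checking that $\tilde D(E_i) = 0$. This amounts to showing $E_i \in \mathfrak{q}$, so that $\pi(E_i) = 0$ and $\tilde D(E_i) = D(0) = 0$. In the noetherian case --- which is all that is needed for the applications in \Cref{TRtruncapprox}, where $R_0 = R/p$ is noetherian --- the ideal $\mathfrak{p}$ is finitely generated, whence $E_i \in \mathfrak{p} R_0[[x]] \subset \mathfrak{q}$; the general case follows by reducing to a finitely generated $\mathbb{F}_p$-subalgebra of $R_0$ that captures both $\mathfrak{q}$ and the relevant $p$-basis elements, noting that the residue-field hypothesis on $R_0$ passes to any such subalgebra.
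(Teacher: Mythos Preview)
Your argument via the cotangent sequence and derivations is correct in the noetherian case, and genuinely different from the paper's proof. The paper avoids derivations entirely and argues by direct module generation: $R_0[[x]]$ is spanned over (the subring generated by $R_0$ and $p$th powers) by $1,x,\dots,x^{p-1}$, hence after base change $R_0[[x]]\otimes_{R_0}\kappa$ is generated by $p^{d+1}$ elements over its $p$th powers, and this bound passes to any residue field. That route is shorter and bypasses the cotangent sequence, the derivation calculation, and the $p$-basis decomposition of coefficients; your route, by contrast, isolates the exact point where the extra ``$+1$'' enters (namely $\dim_{\kappa'}\Omega^1_{\kappa'/\kappa}\le 1$), which is conceptually clean.

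Your reduction to the noetherian case, however, is a real gap. The suggestion to pass to a finitely generated $\mathbb{F}_p$-subalgebra $R_1\subset R_0$ ``capturing $\mathfrak{q}$'' does not work as written: $R_0[[x]]$ is not the filtered colimit of the $R_1[[x]]$, so a prime $\mathfrak{q}\subset R_0[[x]]$ need not arise from a prime of any $R_1[[x]]$ in a way that controls $\kappa'$, and the claim that the residue-field hypothesis descends is also not automatic for $R_1$. The underlying difficulty is precisely your ``hard part'': a power series $E_i$ with all coefficients in $\mathfrak{p}$ need not lie in $\mathfrak{p}R_0[[x]]$ when $\mathfrak{p}$ is not finitely generated, so $E_i\in\mathfrak{q}$ is not obvious. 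For the applications in the paper (where $R_0=R/p$ with $R$ noetherian) the noetherian case is all that is needed, so your proof suffices there; but if you want the lemma in the stated generality, the reduction step needs to be replaced by a genuine argument.
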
 
\begin{proof} 
We have that $R_0[[x]]$ is generated by $p$ elements over the subring generated
by its $p$th powers and by $R_0$. For every residue field $\kappa$ of $R_0$, 
it follows that $R_0[[x]] \otimes_{R_0} \kappa$ is generated by $p$ elements
over the subring generated by $p$th powers together with $\kappa$. 
In particular, it is generated by $p^{d+1}$ elements as a module over its $p$th
powers. 
It follows that the same holds for any residue field of $R_0[[x]] \otimes_{R_0}
\kappa$ and, varying $\kappa$, we obtain the conclusion for every residue field
of $R_0$. 
\end{proof}

\begin{theorem} 
\label{asymptoticK1TR}
Let $R$ be an excellent,  $p$-torsionfree regular noetherian ring.   Suppose 
$R/p$ is finitely generated as a module over its subring of $p$th powers. 
Suppose furthermore that
for all $\mathfrak{p} \in \spec(R)$ containing $(p)$, we have
$\dim R_{\mathfrak{p}} + \log_p [ \kappa(\mathfrak{p}):\kappa(\mathfrak{p})^p]
\leq d$ for some $d \geq 0$. 
Then the map $\TR(R; \mathbb{F}_p) \to L_{K(1)} \TR(R; \mathbb{F}_p)$ is 
$ (d-1)$-truncated. 
\end{theorem}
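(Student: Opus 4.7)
The plan is to use the curves description of $\TR$ (Construction~\ref{ptypiccurves}) to reduce the problem to bounding the comparison of $K$-theory with its $K(1)$-localization, using Propositions~\ref{invertingpconn}, \ref{pcohdimbound}, \ref{asymptoticKlocalityRO} and Lemma~\ref{powerserieslem}, together with a refinement of Proposition~\ref{invertingpconn} obtained by combining Geisser--Levine with the coniveau filtration and our sharper hypothesis $\dim R_\mathfrak{p} + \log_p[\kappa(\mathfrak{p}):\kappa(\mathfrak{p})^p] \leq d$.

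I first reduce to the case where $R$ is $p$-adically complete, which is legitimate since $\TR(-;\mathbb{Z}_p)$ depends only on $\widehat{R}$ and all hypotheses (excellence, regularity, $F$-finiteness of $R/p$, the dimension bound) are preserved under $p$-completion. By Construction~\ref{ptypiccurves}, $\TR(R;\mathbb{F}_p)$ is then a summand of the curves spectrum $C(R; \mathbb{F}_p) = \varprojlim_n \Omega K(R[x]/x^n, (x);\mathbb{F}_p)$; since $L_{K(1)}(-)$ preserves retracts, it suffices to bound the fiber of $C(R;\mathbb{F}_p) \to L_{K(1)} C(R;\mathbb{F}_p)$. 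Via Dundas--Goodwillie--McCarthy (which identifies relative $K$ with relative $\TC$ for nilpotent ideals) together with the $p$-adic continuity of $\TC$, this limit is identified with $\Omega \TC(R[[x]], (x); \mathbb{F}_p)$.

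For the main Beilinson--Lichtenbaum estimate, I sharpen Proposition~\ref{invertingpconn} using the combined hypothesis: the coniveau filtration on $G(R/p;\mathbb{F}_p)$, combined with the Geisser--Levine injection $K_*(\kappa;\mathbb{F}_p) \hookrightarrow \Omega^*_{\kappa/\mathbb{F}_p}$ and the refined bound $\log_p[\kappa(\mathfrak{p}):\kappa(\mathfrak{p})^p] \leq d-1-\mathrm{codim}_{R/p}(\mathfrak{p})$ (using that $p$ is a regular element), yields that the fiber of $K(R;\mathbb{F}_p) \to K(R[1/p];\mathbb{F}_p)$ is $(d-1)$-truncated. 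Proposition~\ref{pcohdimbound} bounds the $p$-cohomological dimension of residue fields of $R[1/p]$ by $d+1$, so Proposition~\ref{asymptoticKlocalityRO} makes the fiber of $K(R[1/p];\mathbb{F}_p) \to L_{K(1)} K(R[1/p];\mathbb{F}_p)$ be $(d-2)$-truncated; composing, the fiber of $K(R;\mathbb{F}_p) \to L_{K(1)} K(R;\mathbb{F}_p)$ is $(d-1)$-truncated. The analogous sharpened analysis applied to $R[[x]]$, using Lemma~\ref{powerserieslem} to raise the residue-field bound by one, produces a $d$-truncated fiber for $K(R[[x]];\mathbb{F}_p) \to L_{K(1)} K(R[[x]];\mathbb{F}_p)$. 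Feeding these into the relative fiber sequence for the pair $(R[[x]], (x))$ (and using that $L_{K(1)} K \simeq L_{K(1)} \TC$ for commutative $p$-complete rings) bounds $\TC(R[[x]], (x);\mathbb{F}_p) \to L_{K(1)} \TC(R[[x]], (x);\mathbb{F}_p)$, and looping once yields the desired $(d-1)$-truncation.

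The main obstacle is the continuity identification $\varprojlim_n K(R[x]/x^n, (x); \mathbb{F}_p) \simeq \TC(R[[x]], (x); \mathbb{F}_p)$ and, more delicately, its compatibility with $L_{K(1)}$-localization, since $L_{K(1)}(-)$ does not commute with inverse limits in general. I expect this will require a uniform-in-$n$ Bott-element nilpotence argument in the spirit of Section~\ref{generalK1loctrunc} and \Cref{betanilpbound}, leveraging the $F$-finiteness hypothesis (via Lemma~\ref{powerserieslem}) to ensure that residue-field dimensions remain controlled across the entire tower; once this is in place the remainder of the argument is essentially formal from Beilinson--Lichtenbaum applied on the generic fiber.
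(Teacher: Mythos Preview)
Your overall strategy is the same as the paper's, but you have manufactured an obstacle that does not exist, and this is the main thing to correct.

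The continuity identification you flag is exactly \cite[Th.~F]{CMM}: under the $F$-finiteness hypothesis on $R/p$, one has $\varprojlim_n K(R[x]/x^n; \mathbb{Z}_p) \simeq K(R[[x]]; \mathbb{Z}_p)$, hence $C(R;\mathbb{F}_p) \simeq \Omega K(R[[x]],(x);\mathbb{F}_p)$. Once this is in place, \emph{no inverse limit remains}: $C(R;\mathbb{F}_p)$ is a single spectrum, indeed a retract of $\Omega K(R[[x]];\mathbb{F}_p)$ via the splitting $x \mapsto 0$. So the question reduces to bounding the fiber of $K(R[[x]];\mathbb{F}_p) \to L_{K(1)} K(R[[x]];\mathbb{F}_p)$ by $d$; looping and passing to the retract then gives the $(d-1)$-bound for $\TR$. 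There is no issue of commuting $L_{K(1)}$ with a limit, and no uniform-in-$n$ Bott nilpotence argument is needed. The detour through $\TC$ and the identification $L_{K(1)} K \simeq L_{K(1)} \TC$ is likewise unnecessary: work with $K$-theory throughout.

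Two smaller simplifications. First, your coniveau sharpening of \Cref{invertingpconn} is not needed: since $R$ is $p$-torsionfree, any $\mathfrak{p} \supset (p)$ has $\dim R_{\mathfrak{p}} \geq 1$, so the hypothesis already gives $[\kappa(\mathfrak{p}):\kappa(\mathfrak{p})^p] \leq p^{d-1}$; then \Cref{powerserieslem} gives the bound $p^d$ for residue fields of $R[[x]]/p$, and \Cref{invertingpconn} applies as stated. Second, it suffices to pass to the $p$-henselization rather than the $p$-completion (this is what \Cref{ptypiccurves} requires), and henselization preserves excellence by \cite{Gre76}; proving that $p$-completion preserves excellence is an unnecessary complication.
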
 
\begin{proof} 
Without loss of generality, we can assume that $R$ is $p$-henselian (since
henselization preserves excellence, cf.~\cite{Gre76}), so $\dim R \leq d$. 
By 
Construction~\ref{ptypiccurves}, it suffices to show that 
$C(R)/p \to L_{K(1)} C(R)/p$ is 
$(d-1)$-truncated. Now $C(R)$ is the desuspension of the fiber of the map 
$\varprojlim_n K( R[x]/x^n) \to K(R)$. 
With $p$-adic coefficients, the fact that $R/p$ is finitely generated as a
module over its
$p$th powers allows us to pass to the
limit \cite[Th.~F]{CMM} and we obtain
\[ C(R; \mathbb{F}_p) = \Omega  K(R[[x]], (x); \mathbb{F}_p).  \]
Since $K(R)$ is a retract of $K(R[[x]])$, it suffices to show that the fiber of 
$K(R[[x]]; \mathbb{F}_p) \to L_{K(1)}K(R[[x]]; \mathbb{F}_p) $ 
is $d$-truncated. 
We will verify this by comparing both sides with the intermediate term $K( R[[x]][1/p];
\mathbb{F}_p)$.

First, 
for every characteristic $p$ residue field $\kappa$ of $R$, corresponding to a prime
ideal $\mathfrak{p} \subset R$ containing $(p)$,  we have 
$[\kappa: \kappa^p] \leq p^{d-1}$ by our assumption, since $\dim
R_{\mathfrak{p}} \geq 1$. 
Now
$R[[x]]$ is also a $p$-torsionfree regular ring of dimension $\dim(R) + 1$.
For every characteristic $p$ residue field $\kappa'$ of $R[[x]]$, we have 
$[\kappa': \kappa'^p] \leq p^{d}$ by \Cref{powerserieslem}. 
By 
\Cref{invertingpconn}, it follows that
$K(R[[x]]; \mathbb{F}_p) \to K(R[[x]][1/p]; \mathbb{F}_p)$ is 
$d$-truncated.

Second, we apply 
\Cref{pcohdimbound} to the ring $R[[x]]$ and the ideal $I = (p, x)$. 
The power series ring $R[[x]]$ remains excellent (and regular) since $R$ is excellent, thanks to 
\cite{KS19}. 
For any prime ideal $\mathfrak{p} \in \spec( R[[x]])$ containing $I$,  we 
let $\mathfrak{p}_0 = \mathfrak{p} \cap R \subset R$, so that
$R[[x]]_{\mathfrak{p}}/(x) = R_{\mathfrak{p}_0}$. 
Then 
$\dim( R[[x]]_{\mathfrak{p}}) 
 = \dim( R_{\mathfrak{p}_0})+ 1$ and 
 $\kappa( \mathfrak{p}) = \kappa(\mathfrak{p}_0)$. 
 Thus, 
\Cref{pcohdimbound} applies to $R[[x]]$ (with $d$ replaced by $d+1$) and 
we find that the characteristic zero residue fields of $R[[x]]$ have
$p$-cohomological dimension at most $d + 2$. 
Therefore, 
by
\Cref{asymptoticKlocalityRO}, 
$ K( R[[x]][1/p]; \mathbb{F}_p) \to L_{K(1)} K( R[[x]][1/p])$ is $
(d-1)$-truncated.

Combining the above, we find that the composite map 
$K(R[[x]]; \mathbb{F}_p) \to L_{K(1)} K(R[[x]]; \mathbb{F}_p) \simeq L_{K(1)}
 K( R[[x]][1/p];
\mathbb{F}_p)$ (the last identification by 
\eqref{k1invertp})
is $d$-truncated, whence the result. 
\end{proof} 

This recovers in particular \Cref{HMasymptotic}. 
More generally, we have: 
\begin{example} 
Let $R$ be a $d$-dimensional regular, excellent $p$-torsionfree noetherian ring with $R/p$ finitely generated over its
$p$th powers. 
Suppose that $(R/p)_{\mathrm{red}}$ is 
finite type over a perfect field (and necessarily of dimension $d-1$). 
Then \Cref{asymptoticK1TR} (in view of \Cref{dimremark}) applies to show that
$\TR(R; \mathbb{F}_p) \to L_{K(1)} \TR(R; \mathbb{F}_p)$ is $(d-1)$-truncated. 
\end{example}

\section{The Segal conjecture}

In this section, we discuss the relationship between the following two
properties of a cyclotomic spectrum $X$: 
\begin{enumerate}
\item  $\TR(X)$ agrees with its $K(1)$-localization in high enough degrees. 
\item The cyclotomic Frobenius $\varphi_X \colon  X \to X^{tC_p}$ is an equivalence in
high enough degrees. 
\end{enumerate}

The first property is the Lichtenbaum--Quillen style statement discussed in the
previous section, and verified for $\THH(R)$ under regularity and finiteness
hypotheses. 
The second property is often referred to as the ``Segal conjecture'' since for
$X = \THH(\mathbb{S})$, the Frobenius $\mathbb{S} \to \mathbb{S}^{tC_p}$ is a
$p$-adic 
equivalence by the Segal conjecture for $C_p$, proved in \cite{Lin80, Gun80}. 
The Segal conjecture has been studied extensively for $\THH(R)$ for $R$ a ring
(or ring spectrum).

We first show the implication $(1) \implies (2)$. 
We use the theory of topological Cartier modules of Antieau--Nikolaus \cite{AN},
which we begin by briefly reviewing.

\begin{definition}
A \emph{topological Cartier module} $M$ is an object of $\fun(BS^1, \sp)$ together with
maps $V \colon M_{hC_p} \to M$ and $F \colon M \to M^{hC_p}$ in
$\fun(BS^1, \sp)$ together with a
homotopy between the composite and the norm map $M_{hC_p} \to M^{hC_p}$
(considered $S^1 \simeq S^1/C_p$-equivariantly). 
The collection of topological Cartier modules is naturally organized into a
presentable stable $\infty$-category. 
\end{definition} 

Given a
bounded-below, $p$-typical cyclotomic spectrum $X$, we can  consider $\TR(X)$ as
a topological Cartier module, and we have an identification $X \simeq \mathrm{cofib}(V)$. 
Under these identifications, the cyclotomic Frobenius $X \to X^{tC_p}$ is obtained from 
$F\colon  \TR(X) \to \TR(X)^{hC_p}$ by taking cofibers by $V$ on both domain and
codomain and identifying $\TR(X)^{tC_p} \simeq X^{tC_p}$ as
$(\TR(X)_{h{C_p}})^{tC_p}
= 0$ \cite[Lem.~I.2.1]{NS18}. 
On bounded-below objects, this construction establishes a fully faithful 
embedding from cyclotomic spectra into topological Cartier modules, with image
given by the $V$-complete objects \cite[Th.~3.21]{AN}. 

\begin{proposition} 
\label{trunctosegal}
Let $X$ be a connective,  $p$-complete cyclotomic spectrum whose underlying
spectrum is $K(1)$-acyclic. 
Suppose the map $\TR(X) \to L_{K(1)} \TR(X) $ is $d$-truncated. Then the Frobenius
$\varphi\colon  X \to X^{tC_p}$ is $d$-truncated. 
\end{proposition}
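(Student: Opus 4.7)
The plan is to transfer the question to the Frobenius $F \colon M \to M^{hC_p}$ of the topological Cartier module $M := \TR(X)$ and to exploit $K(1)$-localization via the structural identity $FV = N$, where $V \colon M_{hC_p} \to M$ is the Verschiebung and $N \colon M_{hC_p} \to M^{hC_p}$ is the norm. As a first reduction, $\varphi$ arises as the map on vertical cofibers of the commutative square with top row the identity $M_{hC_p} = M_{hC_p}$, bottom row $F$, and vertical arrows $V$ and $N$ (so that the vertical cofibers are $X$ and $M^{tC_p} \simeq X^{tC_p}$); taking fibers of the vertical arrows in this map of cofiber sequences yields a cofiber sequence $0 \to \mathrm{fib}(F) \to \mathrm{fib}(\varphi)$, so $\mathrm{fib}(F) \simeq \mathrm{fib}(\varphi)$. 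It thus suffices to show $\mathrm{fib}(F \colon M \to M^{hC_p})$ is $d$-truncated.

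The key step is to show that $F \colon L_{K(1)} M \to (L_{K(1)} M)^{hC_p}$ is an equivalence. Since $L_{K(1)} X = 0$, applying $L_{K(1)}$ to the cofiber sequence $M_{hC_p} \xrightarrow{V} M \to X$ shows that $V$ is an equivalence on $L_{K(1)} M$. Using that $X$ is bounded below and $K(1)$-acyclic, together with the Tate cofiber sequence $X_{hC_p} \to X^{hC_p} \to X^{tC_p}$ and the fact that $L_{K(1)}$ commutes with $(-)^{hC_p}$ on bounded-below spectra, $X^{tC_p}$ is also $K(1)$-acyclic, so $(L_{K(1)} M)^{tC_p} \simeq L_{K(1)}(X^{tC_p}) = 0$ and hence $N$ is an equivalence on $L_{K(1)} M$ as well. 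Since $FV = N$ and both $V, N$ are equivalences, $F$ is too.

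A diagram chase then finishes. Consider the naturality square
\[
\xymatrix{
M \ar[r]^{F} \ar[d] & M^{hC_p} \ar[d] \\
L_{K(1)} M \ar[r]^{F}_{\simeq} & (L_{K(1)} M)^{hC_p}
}
\]
whose bottom row is the equivalence just established. The total fiber of this square equals $\mathrm{fib}(F \colon M \to M^{hC_p})$; on the other hand, using again that $L_{K(1)}$ commutes with $(-)^{hC_p}$ on bounded-below spectra, the total fiber equals $\mathrm{fib}(F_T \to F_T^{hC_p})$, where $F_T := \mathrm{fib}(\TR(X) \to L_{K(1)} \TR(X))$. By hypothesis $F_T$ is $d$-truncated, and the homotopy fixed-point spectral sequence $E_2^{s,t} = H^s(C_p; \pi_t F_T) \Rightarrow \pi_{t-s}(F_T^{hC_p})$ shows $F_T^{hC_p}$ is also $d$-truncated, so the fiber between them is $d$-truncated. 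The main technical input will be the commutation of $L_{K(1)}$ with $(-)^{hC_p}$ and $(-)^{tC_p}$ on bounded-below spectra, which fails in general and is precisely where the connectivity hypothesis on $X$ is used.
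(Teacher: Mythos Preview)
Your proof follows the same strategy as the paper: identify $\mathrm{fib}(\varphi)$ with $\mathrm{fib}(F \colon M \to M^{hC_p})$ for $M = \TR(X)$, show that $F$ becomes an equivalence on $L_{K(1)} M$, and then read off the truncatedness from the $d$-truncated fiber $F_T = \mathrm{fib}(M \to L_{K(1)} M)$. The order is slightly different (you do the identification $\mathrm{fib}(F)\simeq\mathrm{fib}(\varphi)$ first, the paper last), but the content is the same.

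One correction: the fact you single out as the ``main technical input''---that $L_{K(1)}$ commutes with $(-)^{hC_p}$ and $(-)^{tC_p}$ on bounded-below spectra---is not what is actually being used, and is dubious as stated (the fiber of $Y \to L_{K(1)} Y$ is typically not bounded below, so the argument one would make for such a commutation does not go through). Both places where you invoke it have simpler justifications. For the vanishing of $(L_{K(1)} M)^{tC_p}$, the paper just cites that Tate constructions vanish (after $p$-completion) in the $K(1)$-local category; there is no need to route through $X^{tC_p}$ or to identify $(L_{K(1)} M)^{tC_p}$ with $L_{K(1)}(X^{tC_p})$. For the total-fiber identification in your final square, you only need that $(-)^{hC_p}$ preserves fiber sequences, which is automatic since it is a limit; no interaction with $L_{K(1)}$ is required. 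With these two replacements your argument is complete and matches the paper's.
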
 
In the case $L_{K(1)} \TR(X) =0$, the result is \cite[Prop.~2.25]{AN}. 

\begin{proof} 
Since $X$ is $K(1)$-acyclic, it follows that 
$V\colon  \TR( X)_{hC_p} \to \TR(X)$ is $K(1)$-locally an equivalence. 
The $K(1)$-localization 
$L_{K(1)} \TR(X)$ acquires the structure of a topological Cartier
module as well by $K(1)$-localizing $F, V$ and using the comparison map 
$L_{K(1)} ( \TR(X)^{hC_p}) \to (L_{K(1)} \TR(X))^{hC_p}$. 
The composite map 
$(L_{K(1)} \TR(X))_{hC_p} \to (L_{K(1)} \TR(X))^{hC_p}$ is an equivalence
after $p$-completion since
Tate constructions vanish in the $K(1)$-local category. Since we saw that $V$
is an equivalence on $L_{K(1)} \TR(X)$ after $p$-completion, it follows that 
the Frobenius on $L_{K(1)} \TR(X)$
induces an equivalence
$$L_{K(1)} \TR( X) \xrightarrow{\sim} (L_{K(1)} \TR(X))^{hC_p}.$$

For a topological Cartier module $Y$, we consider the fiber of $F  = F_Y\colon  Y \to Y^{hC_p}$,
which we denote $\mathrm{fib}(F)$. 
As we saw above, $\mathrm{fib}(F)$ is contractible for the 
topological Cartier module $L_{K(1)} \TR(X)$. 
Moreover, $\mathrm{fib}(F)$ is $d$-truncated for the topological Cartier module
$\mathrm{fib}( \TR(X) \to L_{K(1)} \TR(X))$ because this topological Cartier
module is itself $d$-truncated. 
In particular, we find that 
$\mathrm{fib}(F\colon \TR(X) \to \TR(X)^{hC_p})$ is $d$-truncated. 
Taking the cofiber on both the domain and codomain of the Verschiebung, we find
that 
$\mathrm{fib}(F\colon  \TR(X) \to \TR(X)^{hC_p}) = \mathrm{fib}(\varphi\colon  X \to X^{tC_p})$
which is therefore $d$-truncated as desired. 
\end{proof} 

Combining 
\Cref{trunctosegal}, and \Cref{asymptoticK1TR}, we obtain the following result. 
Versions of the Segal conjecture have been studied by many authors. 
For instance, it is known that $\THH(\mathbb{Z}_p; \mathbb{F}_p) \to \THH(\mathbb{Z}_p;
\mathbb{F}_p)^{tC_p}$ is an equivalence on connective covers
\cite[Lem.~6.5]{BM94}. 
Compare \cite{HM03} for the more general case of a DVR $\mathcal{O}_K$ of mixed
characteristic and perfect residue field of characteristic $p>2$. 
The Segal conjecture for smooth algebras in characteristic $p$ appears as 
\cite[Prop.~6.6]{Hes16} and (at the filtered level) \cite[Cor.~8.18]{BMS2}. 
The Segal conjecture has also been verified for certain ring spectra as well,
cf.~\cite{AR02, LNR11, AKQ}.

\begin{corollary}[The Segal conjecture for regular rings] 
Let $R$ be a $p$-torsionfree excellent regular noetherian ring with $R/p$ finitely generated over its
$p$th powers.
Suppose
that
for all $\mathfrak{p} \in \spec(R)$ containing $(p)$, we have
$\dim R_{\mathfrak{p}} + \log_p [ \kappa(\mathfrak{p}):\kappa(\mathfrak{p})^p]
\leq d$. 
Then the map 
$\THH(R; \mathbb{F}_p) \to \THH(R; \mathbb{F}_p)^{tC_p}$ is 
$(d-1)$-truncated.
 \qed
\end{corollary}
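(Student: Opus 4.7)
The proof is essentially a direct application of the two main results already in hand, namely \Cref{asymptoticK1TR} and \Cref{trunctosegal}, to the cyclotomic spectrum $X = \THH(R; \mathbb{F}_p)$. The plan is to verify the hypotheses of \Cref{trunctosegal} for this $X$, then quote \Cref{asymptoticK1TR} to provide the truncatedness input that drives the conclusion.

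First I would check that $X = \THH(R;\mathbb{F}_p)$ satisfies the running hypotheses of \Cref{trunctosegal}. It is evidently connective (since $R$ is discrete, $\THH(R)$ is connective, and the cofiber of $p$ preserves connectivity) and $p$-complete (as a module over $H\mathbb{F}_p$). For $K(1)$-acyclicity of its underlying spectrum: $\THH(R)$ is an $H\mathbb{Z}$-module spectrum because $R$ is a $\mathbb{Z}$-algebra, so $\THH(R;\mathbb{F}_p)$ is an $H\mathbb{F}_p$-module, and $H\mathbb{F}_p$ is $K(1)$-acyclic (e.g., via the standard calculation $L_{K(1)} K(\mathbb{F}_p) = 0$ together with the identification $K(\mathbb{F}_p) \simeq H\mathbb{Z}_p$). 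Identifying $\TR(X) = \TR(\THH(R);\mathbb{F}_p) = \TR(R;\mathbb{F}_p)$, I would then invoke \Cref{asymptoticK1TR} with the given hypotheses on $R$ to conclude that the comparison map $\TR(R;\mathbb{F}_p) \to L_{K(1)}\TR(R;\mathbb{F}_p)$ is $(d-1)$-truncated.

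Applying \Cref{trunctosegal} with $d$ replaced by $d-1$ now gives that the cyclotomic Frobenius $\varphi\colon \THH(R;\mathbb{F}_p) \to \THH(R;\mathbb{F}_p)^{tC_p}$ is $(d-1)$-truncated, which is the statement of the corollary. There is no real obstacle here: the bulk of the work is already embedded in \Cref{asymptoticK1TR} (which uses the Beilinson--Lichtenbaum input together with Hesselholt's curves identification) and in the topological Cartier module mechanism of \Cref{trunctosegal}. The only point deserving a sentence of verification is the $K(1)$-acyclicity of $\THH(R;\mathbb{F}_p)$, which follows once one remembers that $R$ is implicitly an $H\mathbb{Z}$-algebra and hence $\THH(R;\mathbb{F}_p)$ is an $H\mathbb{F}_p$-module spectrum.
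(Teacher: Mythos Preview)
Your proposal is correct and matches the paper's own proof, which is simply the sentence ``Combining \Cref{trunctosegal} and \Cref{asymptoticK1TR}, we obtain the following result'' followed by a \qed. Your additional verification that $\THH(R;\mathbb{F}_p)$ is connective, $p$-complete, and $K(1)$-acyclic (as an $H\mathbb{F}_p$-module) is a useful unpacking of what the paper leaves implicit.
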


\begin{question} 
Is it  possible to prove a filtered version of this
result, with respect to the motivic filtrations \cite{BMS2} on both sides? 
\end{question} 

We next discuss the converse direction. 
Here we only prove the result under a more restrictive hypothesis, namely for
cyclotomic spectra which are $ku$-modules. 

\begin{example} 
\label{ppowerrootsku}
Let $C = \widehat{\overline{\mathbb{Q}_p}}$. 
If $R$ is a $\mathcal{O}_C$-algebra, then the cyclotomic trace
makes $\THH(R; \mathbb{Z}_p)$ into a $ku$-module in view of the equivalence
$ku_{\hat{p}} \simeq K(\mathcal{O}_C; \mathbb{Z}_p)$,
cf.~\cite[Lem.~3.1]{Niz98}, \cite{Suslin}, and \cite{Hes06}. 
One can improve this slightly: $K(
\widehat{\mathbb{Z}_p[\zeta_{p^\infty}]};
\mathbb{Z}_p)$ has the structure of an $E_\infty$-algebra under $ku$, so the
same applies to any 
$\widehat{\mathbb{Z}_p[\zeta_{p^\infty}]}$-algebra  $R$. 
 This
follows from three facts. 
First, 
$K( \widehat{\mathbb{Z}_p[\zeta_{p^\infty}]}; \mathbb{Z}_p) \simeq K(
\widehat{\mathbb{Q}_p(\zeta_{p^\infty})}; \mathbb{Z}_p)$,
cf.~\cite[Lem.~1.3.7]{HN} for this argument (which only uses that the field 
$\widehat{\mathbb{Q}_p(\zeta_{p^\infty})}$ is perfectoid). 
Second, since 
$\widehat{\mathbb{Q}_p(\zeta_{p^\infty})}$  is perfectoid, it has
$p$-cohomological dimension $\leq 1$ by the tilting equivalence, whence 
$K(\widehat{\mathbb{Q}_p(\zeta_{p^\infty})}; \mathbb{Z}_p) \to
L_{K(1)}K(\widehat{\mathbb{Q}_p(\zeta_{p^\infty})}; \mathbb{Z}_p)$  is the
connective cover map, cf.~\Cref{asymptoticKlocalityRO}. Third, 
$L_{K(1)} K(\mathbb{Z}[\zeta_{p^\infty}])$ has the structure of an
$E_\infty$-algebra under $KU$ and hence $ku$, cf.~\cite[Construction 3.7]{BCM}. 
\end{example} 

\begin{lemma} 
\label{S1fplemma}
Let $M \in \fun(BS^1, \mod(H\mathbb{F}_p))$ be an $r$-connected 
object. Suppose that there exists a map $f\colon M \to M'$ in 
$\fun(BS^1, \mod(H\mathbb{F}_p))$ such that $M'$ is induced (as an object with
$S^1$-action) and such that 
$f$ is an equivalence on $\tau_{\geq r}$. 
Then there exists a map $\widetilde{M} \to M$ of $r$-connected objects in
$\fun(BS^1, \mod(H \mathbb{F}_p))$ which induces an equivalence on $\tau_{\geq
r+1}$ and such that $\widetilde{M}$ is induced. 
\end{lemma}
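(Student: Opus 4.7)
The plan is to construct $\widetilde{M}$ as the induction of an $r$-connective truncation of the underlying $H\mathbb{F}_p$-module of $M'$, and to realize the desired map via the induction--restriction adjunction. The key facts I will exploit are that $\mod(H\mathbb{F}_p)$ is semisimple (equivalent to its homotopy category) and that the $\sigma = [S^1]$-action on any induced object has a simple explicit description.

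First, since $M'$ is induced, I fix an identification $M' \simeq Y \otimes S^1_+$ with $Y \in \mod(H\mathbb{F}_p)$ (one may take $Y = M'_{hS^1}$). This gives canonical splittings $\pi_i M' \cong \pi_i Y \oplus \pi_{i-1} Y$, and a direct calculation using $\sigma^2 = 0$ in $\pi_* S^1_+$ shows that $\sigma \in \pi_1 S^1_+$ acts on $\pi_* M'$ by sending the first summand $\pi_i Y$ of $\pi_i M'$ isomorphically onto the second summand $\pi_i Y$ of $\pi_{i+1} M'$ and annihilating the second summand. Because $f$ is $S^1$-equivariant and a $\tau_{\geq r}$-equivalence, the splittings and the $\sigma$-action description transfer to $\pi_* M$ in degrees $\geq r$.

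Next, set $\widetilde{M} := (\tau_{\geq r} Y) \otimes S^1_+$; this is induced by construction and $r$-connective. To produce the map $\widetilde{M} \to M$, I use that induction is left adjoint to the forgetful functor, so it suffices to construct a map $\tau_{\geq r} Y \to M$ in $\mod(H\mathbb{F}_p)$ (ignoring the $S^1$-action on $M$). Since $\mod(H\mathbb{F}_p) \simeq D(\mathbb{F}_p)$ and every object there splits canonically as a sum of shifts of $H\mathbb{F}_p$, any such map is freely determined by a collection of maps on homotopy groups. I choose the map to equal, in each degree $i \geq r$, the inclusion of $\pi_i Y$ into $\pi_i M \cong \pi_i Y \oplus \pi_{i-1} Y$ as the first summand.

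The main and final step is to verify that the resulting equivariant map $\widetilde{M} \to M$ is a $\tau_{\geq r+1}$-equivalence. On $\pi_i$ with $i \geq r+1$, this map
\[
\pi_i Y \oplus \pi_{i-1} Y = \pi_i \widetilde{M} \to \pi_i M \cong \pi_i Y \oplus \pi_{i-1} Y
\]
is given by $(a,b) \mapsto (a,0) + \sigma \cdot (b,0)$. The explicit form of $\sigma$ from the first paragraph yields $\sigma \cdot (b,0) = (0,b)$, so the total map is the identity. The main obstacle is this $\sigma$-action computation, which depends essentially on both $M'$ being induced (to obtain the description on $M'$) and on the $S^1$-equivariance of the $\tau_{\geq r}$-equivalence $f$ (to transfer the description to $M$).
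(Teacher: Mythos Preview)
Your proof is correct and follows essentially the same underlying idea as the paper's, though the presentation differs. The paper phrases everything in terms of graded $\mathbb{F}_p[\epsilon]/\epsilon^2$-modules: it observes that $\pi_*(M')$ is free over this ring, that $\pi_*(M)$ is its truncation to degrees $\geq r$, and then chooses a complement $V$ to $\epsilon\,\pi_{r-1}(M')$ inside $\pi_r(M)$ and ``modifies $M$'' to obtain $\widetilde{M}$ with $\pi_r(\widetilde{M}) = V$; induced-ness then follows from freeness of $\pi_*(\widetilde{M})$. Your argument makes this construction concrete: you take $\widetilde{M} = \mathrm{Ind}(\tau_{\geq r} Y)$ (so the induced structure is manifest rather than deduced), and you produce the map via the induction--restriction adjunction and an explicit choice in the semisimple category $\mod(H\mathbb{F}_p)$. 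Your choice of the summand $\pi_r Y \subset \pi_r M$ is exactly a specific choice of the paper's complement $V$, so the two constructions yield the same $\widetilde{M}$.
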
 
\begin{proof} 
The homotopy groups $\pi_*(M), \pi_*(M')$ form graded modules over the ring
$\pi_*( \mathbb{F}_p[S^1]) = \mathbb{F}_p[\epsilon]/\epsilon^2, \ |\epsilon| = 1$. 
By assumption, $\pi_*(M')$ is a free graded
$\mathbb{F}_p[\epsilon]/\epsilon^2$-module and $\pi_*(M)$ is the submodule of
those elements in degree $\geq r$. 
Choosing an $\mathbb{F}_p$-subspace $V$ of $\pi_r(M) = \pi_r(M')$ which is complementary to
$\epsilon \pi_{r-1}(M') \subset \pi_r(M')$, we can modify $M$ to form
$\widetilde{M}$ with $\pi_r(\widetilde{M}) = V$; it is now easy to see that
$\pi_*(\widetilde{M})$ is a free graded
$\mathbb{F}_p[\epsilon]/\epsilon^2$-module, so we can conclude.
\end{proof} 

\begin{proposition} 
Let $X$ be a connective, $p$-complete $ku$-module in $\cycsp$. 
Suppose that 
$\varphi\colon  X \to X^{tC_p}$ is $d$-truncated. 
Then the fiber of $\TR(X)/p \to L_{K(1)} \TR(X)/p$ is $ d + 3$-truncated. 
\label{K1locfromcycl}
\end{proposition}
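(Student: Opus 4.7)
The starting point is the identification established in the proof of Proposition~\ref{trunctosegal}:
\[
\mathrm{fib}(F\colon \TR(X) \to \TR(X)^{hC_p}) \;\simeq\; \mathrm{fib}(\varphi\colon X \to X^{tC_p}),
\]
so our hypothesis translates to the statement that $F$ has $d$-truncated fiber. Passing to mod $p$, $\mathrm{fib}(F/p) \simeq \mathrm{fib}(F)/p$ is $(d+1)$-truncated; moreover, as a bounded-above spectrum on which $p$ acts as zero, it admits a finite Postnikov tower whose layers are $H\mathbb{F}_p$-modules, and is therefore $K(1)$-acyclic. Consequently, $L_{K(1)}(F/p)$ is an equivalence, identifying $L_{K(1)}\TR(X)/p \simeq L_{K(1)}\TR(X)^{hC_p}/p$.

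The plan is now to use the factorization
\[
\TR(X)/p \xrightarrow{F/p} \TR(X)^{hC_p}/p \longrightarrow L_{K(1)}\TR(X)^{hC_p}/p \simeq L_{K(1)}\TR(X)/p
\]
and combine the two truncation estimates. Since a composition of maps with $k_1$- and $k_2$-truncated fibers has $\max(k_1,k_2)$-truncated fiber, and the first map already contributes $(d+1)$, it suffices to show that for the $ku$-module $Z := \TR(X)^{hC_p}/p$, the $K(1)$-localization map $Z \to Z[\beta^{-1}]$ has $(d+3)$-truncated fiber.

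The key technical input is that $Z$ is naturally a module over $ku^{BC_p}/p$, whose homotopy ring is $\mathbb{F}_p[\beta][[x]]/(\beta^{p-1}x^p)$ by \eqref{kuBCp}. In particular, $\beta$-torsion on $Z$ is controlled by $x$-torsion. To exploit this, I would combine the norm fiber sequence
\[
\TR(X)_{hC_p} \xrightarrow{\mathrm{Nm}} \TR(X)^{hC_p} \to \TR(X)^{tC_p},
\]
with the Tate orbit lemma (so that $\TR(X)^{tC_p} \simeq X^{tC_p}$) and Lemma~\ref{S1fplemma}, which allows one to replace the $S^1$-equivariant structure on $Z$ by an induced model up to a bounded error in a prescribed range of degrees. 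This converts the $\beta$-torsion estimate on $Z$ into an estimate on the fiber of $\varphi/p$, which by hypothesis is $(d+1)$-truncated, and the two extra degrees of slack come from the $\Sigma^2$-shift intrinsic to the norm sequence and the relation $\beta^{p-1}x^p=0$.

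The principal obstacle is exactly this last step: showing that passage from $\TR(X)^{hC_p}/p$ to its Bott-inverted version increases the truncation degree of the fiber by at most $2$. This requires a careful analysis of how $\beta$ acts via the $ku^{BC_p}/p$-module structure and how the $x$-adic filtration interacts with the Tate cofiber sequence; Lemma~\ref{S1fplemma} is the natural tool to unwind the $S^1$-equivariance one homotopy degree at a time, and is what ultimately produces the sharp bound $(d+3)$.
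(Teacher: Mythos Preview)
Your reduction does not actually simplify the problem: passing from $\TR(X)/p$ to $Z = \TR(X)^{hC_p}/p$ replaces one $K(1)$-localization question by another of the same kind, and you explicitly flag the remaining step as ``the principal obstacle'' without resolving it. The claim that the $ku^{BC_p}/p$-module structure makes ``$\beta$-torsion on $Z$ controlled by $x$-torsion'' is not justified---the relation $\beta^{p-1}x^p = 0$ in the coefficient ring places no a priori bound on the $\beta$-torsion of an arbitrary module---and your subsequent invocation of the norm sequence and \Cref{S1fplemma} never assembles into an argument. Crucially, you never iterate over the finite fixed points $(-)^{C_{p^n}}$, which is where the work takes place.

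The paper proceeds by a different decomposition. Rather than staying with $\TR(X)/p$, one first kills $\beta$: set $Y = X/(p,\beta) = X \otimes_{ku} H\mathbb{F}_p$ as a cyclotomic spectrum and prove $\TR(Y) \in \sp_{\leq d+6}$; \Cref{K1locallemma} then translates this into the asserted $(d+3)$-truncation. To bound $\TR(Y)$, one notes that the Frobenius on $Y$ is $(d+4)$-truncated, so by the Tsalidis-type result \cite[Cor.~II.4.9]{NS18} the comparison maps $Y^{C_{p^{n-1}}} \to Y^{hC_{p^{n-1}}}$ are equivalences in degrees $\geq d+6$ for all $n$, and hence so are the horizontal maps in the square \eqref{ntatesquare}. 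The decisive observation---absent from your sketch---is that $Y^{tC_p}$, being a module over $\mathbb{F}_p^{tC_p}$ in $\fun(BS^1, \mod(H\mathbb{F}_p))$, has \emph{induced} $S^1$-action; via \Cref{S1fplemma} this lets one replace $Y$ in high degrees by an induced object $Y'$ with $Y'^{tC_{p^n}} = 0$, forcing every restriction map $R\colon Y^{C_{p^n}} \to Y^{C_{p^{n-1}}}$ to vanish in degrees $\geq d+7$. The inverse limit $\TR(Y)$ is therefore $(d+6)$-truncated.
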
 
\begin{proof} 
It follows that the cyclotomic spectrum $Y = X/(p, \beta) = X \otimes_{ku} H
\mathbb{F}_p
\in \cycsp_{\geq 0}$ has the property that
$\varphi\colon  Y \to Y^{tC_p}$ is $d+4$-truncated. 
It follows that the comparison map $Y^{C_p} \to Y^{hC_p}$ is 
$d+4$-truncated as well, via the fiber square
\[ \xymatrix{
Y^{C_p} \ar[d]  \ar[r] & Y^{hC_p} \ar[d]  \\
Y \ar[r]^{\varphi} & Y^{tC_p}
}.\]

Now for each $n \geq 1$, 
we have a pullback diagram
\cite[Lem.~II.4.5]{NS18},
\begin{equation}  \label{ntatesquare} \xymatrix{
Y^{C_{p^n}} \ar[d]^R \ar[r] &  Y^{hC_{p^n}} \ar[d]  \\
Y^{C_{p^{n-1}}} \ar[r] &  Y^{tC_{p^n}}
}. \end{equation}
The bottom horizontal map is $Y^{C_{p^{n-1}}} \to Y^{hC_{p^{n-1}}}
\xrightarrow{\varphi^{hC_{p^{n-1}}}} (Y^{tC_p})^{hC_{p^{n-1}}}$, and the last
term is identified with $Y^{tC_{p^n}}$ using the Tate orbit lemma,
cf.~\cite[Lem.~II.4.1]{NS18}. 

Now $Y^{C_p} \to Y^{hC_p}$ is an equivalence in degrees $\geq d+ 6$, hence 
$Y^{C_{p^{n-1}}} \to Y^{hC_{p^{n-1}}}$ is an equivalence in degrees $\geq d+ 6$
by \cite[Cor.~II.4.9]{NS18} (a generalization of results of
Tsalidis \cite{Tsalidis} and
B\"okstedt--Bruner--Lun\o e-Nielsen--Rognes \cite{BBLR}). 
Since $\varphi \colon  Y \to Y^{tC_p}$ is an equivalence in degrees $\geq d+6$, it
follows that 
the bottom horizontal map in \eqref{ntatesquare} is an equivalence in degrees
$\geq d+ 6$. Note also that $Y^{tC_p}$ is a module over $\mathbb{F}_p^{tC_p}$ 
in $\fun(BS^1, \mod(H\mathbb{F}_p))$; since the latter has induced $S^1$-action,
it follows that the former does too. 

By \Cref{S1fplemma}, we can find a $\geq d+6$-connected $Y'$ with an $S^1$-equivariant map 
$Y' \to 
Y$ which induces an equivalence in degrees 
$\geq d + 7$ and such that $Y'$ is induced as an object of $\fun(BS^1, \mod(H
\mathbb{F}_p))$. 
It follows that the map $Y'^{hC_{p^n}} \to Y^{hC_{p^n}}$ is an equivalence in
degrees $\geq d + 7$. But the commutative square
\[ \xymatrix{
Y'^{hC_{p^n}} \ar[d]  \ar[r] &  Y^{hC_{p^n}} \ar[d]  \\
Y'^{tC_{p^n}} = 0 \ar[r] &  Y^{tC_{p^n}}
}\]
now shows that any $\alpha \in \pi_r( Y^{hC_{p^n}})$ for $r \geq d + 7$ has
vanishing image in $\pi_r (Y^{tC_{p^n}})$. 
Using the commutative square 
\eqref{ntatesquare} again (since the horizontal maps are equivalences in
degrees $\geq d+6$), it follows that the restriction map 
$Y^{C_{p^n}} \to Y^{C_{p^{n-1}}}$ induces the zero map in degrees $\geq d + 7$. 
Consequently, taking the inverse limit over $R$, we find that $\TR(Y) \in
\sp_{\leq d + 6}$, whence the fiber of $\TR(X)/p \to L_{K(1)} \TR(X)/p$ belongs to $\sp_{\leq d + 3}$
by \Cref{K1locallemma}. 
\end{proof} 
\begin{lemma} 
\label{K1locallemma}
Let $M$ be a $ku$-module spectrum. 
Then the following are equivalent: 
\begin{enumerate}
\item  
$M/(p, \beta)$ is concentrated in degrees $\leq d + 3$. 

\item

The fiber of $M/p \to L_{K(1)} M/p$ is concentrated in degrees $\leq d$. 
\end{enumerate}
\end{lemma}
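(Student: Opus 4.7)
\medskip

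The plan is to reduce the lemma to the identification $L_{K(1)}(M/p) \simeq (M/p)[\beta^{-1}]$ for a $ku$-module spectrum $M$, and then do an elementary homotopy group analysis.

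\textbf{First step (identification).} I would begin by observing that for any $ku$-module $N$ which is $p$-torsion, $L_{K(1)}(N) \simeq N[\beta^{-1}]$. The point is that $ku/p[\beta^{-1}] \simeq KU/p$, so $N[\beta^{-1}]$ is a $KU/p$-module, hence $K(1)$-local (and automatically derived $p$-complete, being $p$-torsion); meanwhile the fiber $\mathrm{fib}(N \to N[\beta^{-1}])$ is $\beta$-power torsion, in particular $K(1)$-acyclic. In particular, multiplication by $\beta$ is an equivalence on $L_{K(1)}(M/p)$, so $L_{K(1)}(M/p)/\beta = 0$. Let $F = \mathrm{fib}(M/p \to L_{K(1)}(M/p))$ and $N = M/p$.

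\textbf{Direction $(2) \Rightarrow (1)$.} Applying the exact functor $-/\beta$ to the fiber sequence $F \to N \to L_{K(1)}(N)$ and using $L_{K(1)}(N)/\beta = 0$, one obtains $F/\beta \xrightarrow{\sim} N/\beta = M/(p,\beta)$. Since $\beta \in \pi_2$, the cofiber sequence $\Sigma^2 F \to F \to F/\beta$ and the long exact sequence show that if $F$ is in degrees $\leq d$, then $F/\beta$ is in degrees $\leq d+3$.

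\textbf{Direction $(1) \Rightarrow (2)$.} Now assume $N/\beta$ is in degrees $\leq d+3$. Reading off the long exact sequence associated with $\Sigma^2 N \xrightarrow{\beta} N \to N/\beta$, namely
\[
\pi_{j-2}(N) \xrightarrow{\beta} \pi_j(N) \to \pi_j(N/\beta) \to \pi_{j-3}(N) \xrightarrow{\beta} \pi_{j-1}(N),
\]
the vanishing of $\pi_j(N/\beta)$ for $j > d+3$ forces $\beta \colon \pi_l(N) \to \pi_{l+2}(N)$ to be surjective for $l \geq d+2$ and injective for $l \geq d+1$, so in particular an isomorphism for $l \geq d+2$. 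Consequently the map $\pi_l(N) \to \pi_l(N[\beta^{-1}]) = \mathrm{colim}_n \pi_{l+2n}(N)$ is an isomorphism for $l \geq d+2$ and injective for $l = d+1$. The long exact sequence for the fiber sequence $F \to N \to N[\beta^{-1}]$ then yields $\pi_l(F) = 0$ for $l \geq d+1$, i.e., $F$ lies in degrees $\leq d$.

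\textbf{Main obstacle.} The proof itself is essentially a diagram chase; the only content is the identification $L_{K(1)}(N) \simeq N[\beta^{-1}]$ for $p$-torsion $ku$-modules $N$, which is standard. The remaining work is just careful bookkeeping of the shift by $|\beta| = 2$ (plus the extra $+1$ coming from the boundary map), which accounts for the gap $d \leftrightarrow d+3$ between the two conditions.
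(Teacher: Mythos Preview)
Your proof is correct and follows essentially the same approach as the paper: both identify $L_{K(1)}(M/p)$ with $(M/p)[\beta^{-1}]$, observe that $F/\beta \simeq N/\beta$ since $N[\beta^{-1}]/\beta = 0$, and then carry out an elementary long exact sequence analysis. The only cosmetic difference is that for the direction $(1)\Rightarrow(2)$ the paper first replaces $N$ by the $\beta$-power torsion fiber $F$ and argues that a nonzero class in $\pi_i(F)$ with $i>d$ can be pushed up by $\beta$-torsion to give a nonzero class in $\pi_{i+3}(F/\beta)$, whereas you work directly with $N$ and show $\beta$ is eventually an isomorphism on $\pi_*(N)$; these are two phrasings of the same computation.
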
 
\begin{proof} 
Given a $ku$-module $N$, it suffices to show that the fiber of $N \to 
N[1/\beta]$ (which after $p$-completion is $L_{K(1)} N$) is
concentrated in degrees $\leq d$ if and only if $N/\beta$ is concentrated in
degrees $\leq d+3$; then the result will follow by taking $N = M/p$.
To prove the claim, first 
replace $N$ by $\mathrm{fib}( N \to N[1/\beta])$; thus we may assume that $N$ is
actually $\beta$-power torsion. 

Then, the claim follows from the observation that a $\beta$-power torsion
$ku$-module $N$
is concentrated in degrees $\leq d$ if and only if $N/\beta$ is concentrated in
degrees $\leq d+3$. The ``only if'' direction is evident, so we verify the
``if'' direction. 
In fact, if there exists a nonzero $x \in \pi_i(N)$ for $i > d$, then 
by multiplying by a power of $\beta$ we may assume $\beta x = 0$, whence
there exists a nonzero class in $\pi_{i+3}(N/\beta)$ from the long exact
sequence. 
\end{proof}

\begin{remark} 
Let $X = \THH(\mathbb{F}_p)$; in this case, we have that $\THH(\mathbb{F}_p) \to
\THH(\mathbb{F}_p)^{tC_p}$ is $(-3)$-truncated. 
This computation is due to Hesselholt--Madsen \cite[Sec.~5]{HM97}, and is
refined in \cite[App.~IV-4]{NS18}. 
Meanwhile $\TR(\mathbb{F}_p) =
H\mathbb{Z}_p\to
L_{K(1)} \TR(\mathbb{F}_p)
= 0$ is $0$-truncated modulo $p$. 
Thus, the bound of \Cref{K1locfromcycl} is the best possible. 
\end{remark}

\begin{proposition} 
\label{perfectoidtrunc}
Let $R_0$ be a $p$-torsionfree perfectoid ring. 
Let $R$ be a formally smooth $R_0$-algebra
(with respect to the $p$-adic topology) of relative dimension $d$. 
Then 
the map $\varphi \colon \THH(R; \mathbb{F}_p) \to \THH(R; \mathbb{F}_p)^{tC_p}$
is $(d-3)$-truncated, and  
the map $\TR(R; \mathbb{F}_p) \to L_{K(1)} \TR(R; \mathbb{F}_p)$
is $(d-1)$-truncated. 
\end{proposition}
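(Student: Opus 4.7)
The plan is to combine the motivic filtration of \cite{BMS2} with \Cref{K1locfromcycl}, deducing both statements from the perfectoid base case via a descent argument. I would first observe that both conclusions are local in the Nisnevich topology on $R$ (using \Cref{truncatingthm} and \Cref{hdescthm} to ensure descent for $L_{K(1)}\TR$; Nisnevich descent for $\THH(-;\mathbb{F}_p)$ and its Tate construction is standard). One can thereby reduce to the case $R = R_0 \langle t_1, \ldots, t_d \rangle$, the $p$-adic completion of a polynomial ring.

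For the Frobenius bound, the key input is the motivic filtration on $\THH(R;\mathbb{Z}_p)$ and on $\THH(R;\mathbb{Z}_p)^{tC_p}$ from \cite{BMS2}, whose graded pieces are shifts of the derived Hodge--Tate and prismatic complexes of $R$. For $R$ formally smooth of relative dimension $d$ over the perfectoid $R_0$ (whose residue fields are perfect, so $[k:k^p]<\infty$ trivially), these complexes have cohomological amplitude bounded by $d$, coming from the rank of the relative cotangent complex $L_{R/R_0}$. The cyclotomic Frobenius is filtered and induces the Nygaard inclusion on graded pieces, and tracking the filtration shifts together with the $\mathbb{Z}_p \to \mathbb{F}_p$ reduction yields the claimed $(d-3)$-truncation of the fiber of $\varphi$. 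For the $L_{K(1)}\TR$ bound, I would first arrange a $ku$-module structure on $\THH(R;\mathbb{F}_p)$ by base-changing $R_0$ along the perfectoid extension $R_0 \widehat\otimes_{\mathbb{Z}_p} \widehat{\mathbb{Z}_p[\zeta_{p^\infty}]}$ and invoking \Cref{ppowerrootsku}; this extension is pro-cyclotomic with Galois cohomological dimension at most one, so descent affects truncation bounds by at most one degree. Applying \Cref{K1locfromcycl} to the $(d-3)$-truncated Frobenius then yields a fiber for $\TR(R;\mathbb{F}_p) \to L_{K(1)}\TR(R;\mathbb{F}_p)$ of truncation bound $d$.

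The main obstacle is then closing the remaining off-by-one gap between this $d$-bound and the claimed $(d-1)$-truncation. The cleanest route is likely to bypass \Cref{K1locfromcycl} and compute the map $\TR(R;\mathbb{F}_p) \to L_{K(1)}\TR(R;\mathbb{F}_p)$ directly via the induced filtration on $\TR$ (constructible from that on $\THH$ through the Antieau--Nikolaus equivalence \cite{AN} between bounded-below cyclotomic spectra and $V$-complete topological Cartier modules), exploiting the refined $\THH(R_0;\mathbb{F}_p) = (R_0/p)[u]$-module structure on $\THH(R;\mathbb{F}_p)$, which is a polynomial ring on a single degree-$2$ generator; this should sharpen the Tsalidis step of the argument by one degree and deliver the sharp $(d-1)$-truncation.
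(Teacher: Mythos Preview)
Your overall strategy---prove the Segal conjecture via the BMS2 motivic filtration, then deduce the $\TR$ statement via \Cref{K1locfromcycl}---is exactly what the paper does. Two aspects of your execution diverge from the paper and cause the gap you identify.

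\textbf{The reduction step.} You reduce first by Nisnevich descent to a polynomial algebra and then propose a cyclotomic base change $R_0 \to R_0 \widehat{\otimes}_{\mathbb{Z}_p} \widehat{\mathbb{Z}_p[\zeta_{p^\infty}]}$ with Galois descent costing ``at most one degree.'' The paper instead uses Andr\'e's lemma to produce a $p$-completely faithfully flat perfectoid cover $R_0 \to R_0'$ containing all $p$-power roots of unity, and then applies \emph{flat} descent for $\THH(-;\mathbb{Z}_p)$, $\THH(-;\mathbb{Z}_p)^{tC_p}$, and $\TR(-;\mathbb{Z}_p)$. The key observation is that this descent is lossless: if $X_i \to L_{K(1)} X_i$ is $m$-truncated for each term $X_i$ in a limit diagram, then the same holds for the limit, since $L_{K(1)}$ annihilates bounded-above spectra. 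So after this reduction one may assume $\THH(R;\mathbb{Z}_p)$ is a $ku_{\hat p}$-module with no loss in the truncation bound. Your Nisnevich step is unnecessary, and your Galois descent step introduces a spurious degree.

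\textbf{The off-by-one.} There is no need to bypass \Cref{K1locfromcycl}. Apply it directly to $X = \THH(R;\mathbb{F}_p)$ (a $p$-complete $ku$-module in $\cycsp$ after the reduction above): since $\varphi \colon X \to X^{tC_p}$ is $(d-3)$-truncated, the fiber of $\TR(X)/p \to L_{K(1)}\TR(X)/p$ is $d$-truncated. Now $\TR(X) = \TR(R;\mathbb{F}_p)$, so this says the fiber $F$ of $\TR(R;\mathbb{F}_p) \to L_{K(1)}\TR(R;\mathbb{F}_p)$ has $F/p$ concentrated in degrees $\leq d$. But $F$ is an $\mathbb{S}/p$-module, on which multiplication by $p$ is null, whence $F/p \simeq F \oplus \Sigma F$; so $F/p$ being $d$-truncated forces $F$ itself to be $(d-1)$-truncated. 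This elementary splitting is the missing step, not a refinement of the Tsalidis argument.
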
 

\begin{proof} 
Let $X_i, i \in \mathcal{C}$ be a diagram of spectra. 
Suppose that for each $i \in \mathcal{C}$, the map $X_i \to L_{K(1)} X_i$ is
$m$-truncated for some $m$. Then the map $\varprojlim_i X_i \to L_{K(1)} ( \varprojlim_i
X_i)$ is $m$-truncated, and $L_{K(1)} ( \varprojlim_i X_i) \to \varprojlim_i
L_{K(1)} X_i$ is an equivalence. This follows because $L_{K(1)}(-)$ annihilates
bounded-above spectra. 
Applying the above observation, 
and using flat descent of $\TR(-; \mathbb{Z}_p)$ (which follows from
\cite[Sec.~3]{BMS2}) and $\THH(-; \mathbb{Z}_p), \THH(-; \mathbb{Z}_p)^{tC_p}$,
we 
reduce to the case where $R$ is a $\widehat{\mathbb{Z}_p[\zeta_{p^\infty}]}$-algebra (e.g., using Andr\'e's lemma,
cf.~\cite[Th.~7.12]{Prisms}), so 
$\THH(R; \mathbb{Z}_p)$ is a $ku_{\hat{p}}  $-module in
$\cycsp_{\geq 0}$ (\Cref{ppowerrootsku}). 
With this reduction in mind, the first claim implies the second 
in view of \Cref{K1locfromcycl} (applied to the cyclotomic spectrum $\THH(R;
\mathbb{F}_p)$). 

Thus, we prove the first claim (i.e., the Segal conjecture for $\THH(R;
\mathbb{F}_p)$); this result appears as \cite[Cor.~9.12]{BMS2} in
the case where $R_0  $ is the ring of integers in a complete, algebraically
closed nonarchimedean field of mixed characteristic, and in \cite[Sec.~6]{BMS2}
when $R  = R_0$. 

Let $(A, I)$ be the perfect prism associated to 
the perfectoid ring $R_0$, and let $\widetilde{\xi} \in I$ be a generator. 
We use the prismatic cohomology $\Prism_{R/A}$ of \cite{Prisms} and its Nygaard completion $\Prismh_{R/A}$. 
For a quasisyntomic $R_0$-algebra $S$, 
there are \cite{BMS2} complete, exhaustive descending $\mathbb{Z}$-indexed
filtrations on $\TC^-(S; \mathbb{Z}_p), \TP(S; \mathbb{Z}_p)$
with 
$$\gr^i \TC^-( S; \mathbb{Z}_p) \simeq \mathcal{N}^{\geq i} \Prismh_{S/A}[2i], 
\quad \gr^i \TP(S; \mathbb{Z}_p) \simeq \Prismh_{S/A}[2i],
$$ 
and the map $\varphi \colon \TC^-(S; \mathbb{Z}_p) \to \TP(S;
\mathbb{Z}_p)$ 
on graded pieces is given by 
the prismatic divided Frobenius \begin{equation} \label{varphigr}
\varphi/\widetilde{\xi}^i
\colon \mathcal{N}^{\geq i} \Prismh_{S/A}[2i] \to
\Prismh_{S/A}[2i],  \end{equation}
cf.~\cite[Sec.~13]{Prisms} for the comparison between prismatic cohomology and
the objects of \cite{BMS2}. 
Here we have trivialized the Breuil--Kisin twists involved since we are over the
base perfectoid ring $R_0$, and we can compute the Nygaard-completed absolute
prismatic cohomology as the Nygaard completed relative prismatic cohomology
over $R_0$.

Now the map $\varphi \colon \TC^-(S; \mathbb{Z}_p) \to \TP(S;
\mathbb{Z}_p)$ arises
by taking $S^1$-invariants on the cyclotomic Frobenius $\varphi \colon \THH(S; \mathbb{Z}_p) \to
\THH(S; \mathbb{Z}_p)^{tC_p}$. Both sides of this map are filtered (again, as
in \cite{BMS2}) and the
associated graded pieces are 
\[ \gr^i \THH(S; \mathbb{Z}_p) \simeq \mathcal{N}^{i} \Prismh_{S/A}[2i], \quad \gr^i
\THH(S; \mathbb{Z}_p)^{tC_p} = (\Prismh_{S/A})/\widetilde{\xi} [2i],\]
and the cyclotomic Frobenius
is induced from \eqref{varphigr}. 
But since $R$ is formally smooth over $R_0$, the Nygaard filtration is complete  and 
$\varphi/\widetilde{\xi}^i$ induces an equivalence
$\mathcal{N}^i \Prism_{R/A} \xrightarrow{\sim} \tau^{\leq i} ( \Prism_{R/A}/
\widetilde{\xi})$, cf.~\cite[Sec.~12.4]{Prisms}, where the right-hand-side is the $i$th stage
of the conjugate filtration on $\Prism_{R/A}/\widetilde{\xi}$. 
Using the Hodge--Tate comparison, \cite[Th.~4.10]{Prisms}, it follows that 
the cohomology groups of $\Prism_{R/A}/\widetilde{\xi}$ are $p$-torsion-free. 
It follows that 
$$ \gr^i \varphi \colon  \gr^i \THH(R; \mathbb{F}_p) \to  \gr^i \THH(R;
\mathbb{F}_p)^{tC_p}$$
exhibits the domain as the $i$-connective cover of the codomain, and therefore
has $(i -2)$-truncated homotopy fiber for all $i$. The codomain is
$(2i-d)$-connective 
by the Hodge--Tate comparison, since $R/R_0$ is formally smooth of
relative dimension $d$; therefore, $\gr^i \varphi$ is an equivalence for $i \geq
d$.
Therefore, the fiber of $\varphi \colon \THH(R; \mathbb{F}_p) \to \THH(R;
\mathbb{F}_p)^{tC_p}$  
has a complete filtration such that $\gr^i $ is $(i-2)$-truncated for all $i$
and contractible for $i \geq d$. This proves that 
$\varphi \colon \THH(R; \mathbb{F}_p) \to \THH(R;
\mathbb{F}_p)^{tC_p}$
is $(d-3)$-truncated, whence the result. 
\end{proof}

\section{Pro-Galois descent}

In this section, we prove a type of pro-Galois descent for $\TR$ in the generic
fiber, which is related to the conjecture in \cite{HesICM}. The basic example is when $K$ is a characteristic zero local field, 
and one tries to relate $\TR(\mathcal{O}_K; \mathbb{Z}_p)$ (computed by \cite{HM03}) with the ``continuous'' homotopy fixed points
for the Galois group $\mathrm{Gal}(\overline{K}/K)$ on $\TR(
\mathcal{O}_{\overline{K}}; \mathbb{Z}_p)$, before or after $K(1)$-localization. 
The advantage is that the latter is much more tractable, 
cf.~\cite{Hes06} for the calculation of $\TR( \mathcal{O}_{\overline{K}};
\mathbb{Z}_p)$. 
For finite Galois extensions in the generic fiber, these claims follow from
section~\ref{generalK1loctrunc}. However, there are some additional subtleties
to extend to pro-Galois descent because $\TR$ fails to commute with filtered
colimits.

\subsection{An auxiliary construction}
Let $B$ be a base ring. 
Let $E$ be a $K(1)$-local, localizing invariant on $B$-linear
$\infty$-categories which is truncating. 
Let $R_0$ be a $p$-adically complete $B$-algebra. 
Given a $K(1)$-local truncating invariant $E$, 
we now describe a construction of a sheaf on the finite \'etale site of
$R_0[1/p]$. 
For every finite \'etale $R_0[1/p]$-algebra $S$, we can choose a ``ring of
integers'' $S_0 $ which is finite and finitely presented over $R_0$ with $S_0[1/p] =  S$  and consider $E(S_0)$. 
It is not difficult to see that this only depends on $S$ and that it defines the
desired sheaf; to make the functoriality precise, we use left Kan extension.

\begin{construction}[$ E$ as a sheaf on the finite \'etale site of the generic fiber]
Let $R_0$ be a $p$-adically complete $B$-algebra and let $R = R_0[1/p]$. 
Using the $K(1)$-local, localizing invariant $E$ which is assumed to
be truncating, we define a sheaf $\sF_E$ of spectra on the finite \'etale 
site of $\spec(R)$ as follows. 
Let
$\mathcal{C}_0$ denote the category of finite, finitely presented
$R_0$-algebras $S_0$ with $S_0[1/p]$ \'etale over $R$, and let $\mathcal{C}$
denote the category of finite \'etale $R$-algebras. 
Consider the functor $F\colon  \mathcal{C}_0 \to \mathcal{C}$ given by inverting $p$.
Note that: 
\begin{enumerate}
\item $ F$ is essentially surjective. 
That is, given any finite \'etale $R$-algebra $S$, there exists a finite,
finitely presented $R_0$-algebra $S_0$ with $S = S_0[1/p]$. 
In fact, consider 
any finite $R_0$-subalgebra $S_0' \subset S$ with $S_0'[1/p] = S$.
The algebra $S_0'$ is not necessarily finitely presented, but it is a directed
colimit of finite, finitely presented $R_0$-algebras $S_0^{(\alpha)}$ under surjective maps; one of
them will have $S_0^{(\alpha)}[1/p] = S$, and can be taken for $S_0$. 
\item If $S \in \mathcal{C}$, then $\mathcal{C}_0
\times_{\mathcal{C}}\mathcal{C}_{/S}$ is a filtered category. 
In fact, 
the subcategory of 
$\mathcal{C}_0
\times_{\mathcal{C}}\mathcal{C}_{/S}$ spanned by those $S_0$ such that the
structure map $S_0[1/p] \to S$ is an isomorphism is itself filtered and cofinal. 
This follows similarly by comparing $S_0$ with its image in $S$.
\end{enumerate}

We consider the functor 
$S_0 \mapsto E(S_0)$ on the category $\mathcal{C}_0$ of finite, finitely presented
$R_0$-algebras $S_0$ with $S_0[1/p]$ finite \'etale over $R$. 
Then, to define $\sF_E$ on 
finite \'etale $R$-algebras, 
we left Kan extend $E(-)$ along the functor $\mathcal{C}_0 \to \mathcal{C}$. 
Explicitly, it follows 
from \Cref{rigidgenericfiberproper}
that if $S$ is a finite \'etale $R$-algebra and 
$S_0$ is a finite, finitely presented $R_0$-algebra with $S_0[1/p] \simeq S$,
then 
we have a canonical equivalence
$\sF_E(S) \simeq E(S_0)$. 
It also follows from $h$-descent that 
$\sF_E$ is indeed a sheaf of $K(1)$-local spectra on the finite \'etale site of
$\spec(R)$. 
Note that it is a sheaf of modules over the sheaf $S \mapsto L_{K(1)} K(S)$,
which is also the sheaf $\sF_{L_{K(1)} K}$  because $L_{K(1)} K(-)$ is insensitive
to inverting $p$ on $H\mathbb{Z}$-algebras as in 
\eqref{k1invertp}. 

\end{construction}

\begin{proposition}[Hypercompleteness of $\sF_E$] 

\label{FEhypcomplete}
Notation as above, suppose there is a uniform bound on the mod $p$ cohomological dimensions of
the residue fields of $R$ and $R$ has finite Krull dimension. 
Then $\sF_E$ defines a hypercomplete sheaf on the finite \'etale site of
$\spec(R)$. 
\end{proposition}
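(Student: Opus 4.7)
The plan is to verify hypercompleteness by combining the $K(1)$-local (hence $p$-complete) nature of $\sF_E$ with a uniform bound on the mod $p$ cohomological dimension of the finite \'etale site of $\spec(R)$. My strategy would be to first establish such a cohomological dimension bound, and then invoke the standard principle (cf.~\cite[Sec.~2]{CM19}) that a sheaf of $p$-complete spectra on a site of uniformly bounded mod $p$ cohomological dimension is Postnikov-complete, hence hypercomplete.

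The substantive step is the cohomological dimension estimate. Given any finite \'etale $R$-algebra $S$, I would bound the mod $p$ \'etale cohomological dimension of $\spec(S)$ by $\dim(R) + d$, where $d$ is the given uniform bound on the mod $p$ cohomological dimension of residue fields of $R$. This uses three ingredients: (i) $\dim S = \dim R$ since $S$ is finite over $R$; (ii) each residue field of $S$ is a finite separable extension of a residue field of $R$, and such extensions do not increase mod $p$ cohomological dimension; and (iii) the Grothendieck--Serre-type bound, used already in \cite{RO06, CM19}, that the $p$-\'etale cohomological dimension of a noetherian scheme is at most its Krull dimension plus the supremum of the mod $p$ cohomological dimensions of its residue fields.

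With this uniform bound in hand, the descent spectral sequence for $\sF_E$ along any finite \'etale hypercover acquires a horizontal vanishing line and therefore converges strongly; equivalently, the Postnikov tower of $\sF_E$ converges, giving hypercompleteness. An alternative (essentially equivalent) formulation would be to use the module structure of $\sF_E$ over $\sF_{L_{K(1)}K}$ and invoke hypercompleteness of the latter, obtained by restricting the $K(1)$-local $K$-theory sheaf on the full \'etale site of $\spec(R)$ (which is hypercomplete under the same hypothesis by \cite{CM19}) to the finite \'etale subsite.

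The main obstacle is expository rather than technical: one should be careful about the virtual versus ordinary cohomological dimension distinction at $p = 2$ (handled as in \cite{CM19} by passing to the finite \'etale cover $R[\sqrt{-1}]$), and verify that the Postnikov convergence framework of \cite[Sec.~2]{CM19}, which is developed for the full \'etale site, transfers cleanly to hypercovers in the finite \'etale site.
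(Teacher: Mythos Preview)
Your overall shape is right, and your alternative approach (module over $\sF_{L_{K(1)}K}$) is exactly the one the paper uses. But there is a real gap, not an expository one, in what you flag as the ``main obstacle.''

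The cohomological dimension bound you establish is for the full \'etale site of $\spec(S)$: Krull dimension plus the residue-field bound. What you need is a bound on the cohomological dimension of the \emph{finite} \'etale site of $\spec(R)$, i.e., on the $p$-cohomological dimension of $\pi_1^{\mathrm{\acute{e}t}}(\spec(R))$. These are genuinely different in general: a scheme can have bounded \'etale cohomological dimension while its \'etale fundamental group has unbounded $p$-cohomological dimension (exactly when the $K(\pi,1)$ property fails). All of your convergence arguments for the descent spectral sequence, and the ``hypercompleteness is smashing'' machinery from \cite[Sec.~4.1]{CM19} that makes the module argument go through via \cite[Cor.~4.26]{CM19}, require the bound on $\pi_1^{\mathrm{\acute{e}t}}$, not on the \'etale site.

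The missing ingredient is Scholze's $K(\pi,1)$ theorem \cite[Th.~4.9]{Sch13}: for a $p$-henselian ring $A$, cohomology of $\mathbb{F}_p$-local systems on $\spec(A[1/p])$ agrees on the \'etale and finite \'etale sites. Since here $R = R_0[1/p]$ with $R_0$ $p$-complete, this applies and transfers your \'etale bound to $\pi_1^{\mathrm{\acute{e}t}}$. The paper's proof invokes exactly this (extended to the $p$-henselian case via Fujiwara--Gabber), and it is the one substantive step; once it is in place, your module argument over the hypercomplete sheaf $S \mapsto L_{K(1)}K(S)$ finishes the proof just as the paper does.
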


\begin{proof} 
Our hypotheses imply that $\spec(R)$ has finite mod $p$ \'etale
cohomological dimension, cf.~\cite[Cor.~3.29]{CM19}. 

Now we use 
the $K(\pi, 1)$ property, which states that for any $p$-henselian ring $A$ and
any $\mathbb{F}_p$-local system $\mathcal{L}$ on $\spec(A[1/p])$, the 
cohomology of $\mathcal{L}$ on the \'etale sites and the finite \'etale sites of
$\spec(A[1/p])$ coincide. 
For $A$ noetherian and $p$-complete, this is proved in \cite[Th.~4.9]{Sch13}. 
The case of $A$ noetherian and $p$-henselian then follows 
 using the Fujiwara--Gabber theorem
(cf.~\cite[Th.~6.11]{BM18} for an account), and then one can pass to the limit
to obtain the arbitrary $p$-henselian case using
 the commutation of cohomology (on either the \'etale or finite \'etale
 sites) and filtered colimits.

From the above two paragraphs, 
it follows that the \'etale fundamental group $\pi_1^{\mathrm{et}}( \spec( R))$ has
finite mod $p$ cohomological dimension, which implies that the notion of
hypercompleteness for sheaves of $p$-complete spectra on the finite \'etale site
of $\spec(R)$ can be made explicit in terms of exponents of nilpotence
\cite[Sec.~4.1]{CM19}. 
Now
$\sF_E$ is a module over
the hypercomplete sheaf $S \mapsto L_{K(1)}K(S)$ (cf.~\cite[Th.~7.14]{CM19} for
hypercompleteness),
hence a hypercomplete sheaf itself, thanks to \cite[Cor.~4.26]{CM19}. 
\end{proof}

We also need the following variant of the above with respect to a fixed
profinite group. 
\begin{construction}[$\sF_E$ relative to a profinite group] 
\label{relativeFE}
Let $S_0$ be an $R_0$-algebra equipped with the action of a profinite group $\Gamma$
which is continuous (with respect to the discrete topology on $S_0$), and such
that $R \to S := S_0[1/p]$ is $\Gamma$-Galois. 
Suppose that there exist a cofinal collection of open normal subgroups
$N_i \subset \Gamma, i \in I$ for which the fixed points $S_0^{N_i}$ form a
finite, finitely presented $R_0$-algebra. 
It follows from the above that we obtain a sheaf on the category of finite
continuous $\Gamma$-sets 
which carries $\Gamma/N_i \mapsto E( S_0^{N_i})$. 
This is just the restriction of $\sF_E$ to the site of finite continuous
$\Gamma$-sets (which maps to the finite \'etale site of $R$). 
\end{construction} 

When is the sheaf of \Cref{relativeFE} hypercomplete? 
When $\Gamma$ has finite cohomological dimension, 
then 
hypercompleteness is smashing \cite[Sec.~4.1]{CM19}, so hypercompleteness holds if 
the sheaf 
of spectra which sends
$\Gamma/H  \mapsto L_{K(1)} K( S^H)$ (for any cofinal collection of open
normal subgroups $H$) is
hypercomplete.

\begin{example} 
\label{ppowerrootex}
Suppose $\Gamma = \mathbb{Z}_p^n$ and the $\Gamma$-extension of $R_0$ is obtained
by adding compatible systems of $p$-power roots. 
Explicitly, suppose $R_0$ is a $\mathbb{Z}[ \zeta_{p^\infty}, t_1^{\pm 1}, \dots, t_n^{\pm
1}]$-algebra and $S_0 =R_0 \otimes_{\mathbb{Z}[ \zeta_{p^\infty}, t_1^{\pm 1}, \dots, t_n^{\pm
1}]}
\mathbb{Z}[ \zeta_{p^\infty}, t_1^{\pm 1/p^\infty}, \dots, t_n^{\pm
1/p^\infty}]
,$ with the evident $\Gamma$-action.  
In this case, the 
sheaf of \Cref{relativeFE} is hypercomplete. 
Again since hypercompleteness is smashing, this follows because
any $K(1)$-local localizing invariant 
yields a hypercomplete \'etale sheaf on the site 
of finite \'etale (or even all \'etale) $\mathbb{Z}[1/p, \zeta_{p^\infty},  t_1^{\pm 1}, \dots,
t_n^{\pm 1}]$-algebras, cf.~\cite[Th.~7.14]{CM19}. 
In particular, we take as the localizing invariant
$A \mapsto L_{K(1)} K(A \otimes_{\mathbb{Z}[1/p, \zeta_{p^\infty}, t_1^{\pm 1},
\dots, t_n^{\pm 1}]}
R)$. 
\end{example}

\subsection{Completion of topological Cartier modules}
Here again we use the ``decompletion'' of the theory of
cyclotomic spectra given by the topological Cartier modules of
Antieau--Nikolaus \cite{AN} and an
amplitude property of the completion. 
We recall that $\TR(-)$ gives a fully faithful right adjoint embedding from
bounded-below cyclotomic spectra into bounded-below topological Cartier modules, with 
image the $V$-complete objects, cf.~\cite[Th.~3.21]{AN}. 

\begin{proposition} 
\label{completionCart}
Let $M$ be a   topological Cartier module which is $d$-truncated and bounded
below. 
Then the $V$-completion of $M$ is $d+3$-truncated. 
If $M$ is $p$-complete, then the $V$-completion of $M$ is $d+2$-truncated. 
\end{proposition}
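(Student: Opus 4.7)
My plan is to identify the $V$-completion with $\TR$ of the underlying cyclotomic spectrum and then bound the truncation level via the Nikolaus--Scholze formula. By \cite[Th.~3.21]{AN}, the $V$-completion realizes the unit of the adjunction between bounded-below cyclotomic spectra and bounded-below topological Cartier modules; concretely, the $V$-completion of $M$ is equivalent to $\TR(X)$ where $X$ is the underlying cyclotomic spectrum of $M$, whose underlying $S^1$-spectrum is $\mathrm{cofib}(V \colon M_{hC_p} \to M)$ and whose cyclotomic Frobenius $\varphi$ is induced by $F$ via the compatibility $FV \simeq \mathrm{Nm}$.

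Although the cofiber $\mathrm{cofib}(V)$ may have homotopy in unbounded positive degrees --- the homotopy orbits $M_{hC_p}$ contribute via the homotopy orbits spectral sequence $H_p(BC_p, \pi_q M) \Rightarrow \pi_{p+q}(M_{hC_p})$ even when $M$ is $d$-truncated --- this does not obstruct the truncatedness of $\TR(X)$. I would use the iterated pullback description
\[
X^{C_{p^{n+1}}} \simeq X^{C_{p^n}} \times_{X^{tC_{p^{n+1}}}} X^{hC_{p^{n+1}}}
\]
of \cite[Lem.~II.4.5]{NS18}. In this pullback, the unbounded positive-degree part of $X$ and that of $X^{tC_p}$ both arise from the same shifted copy of $M_{hC_p}$, and the Frobenius $\varphi \colon X \to X^{tC_p}$ is an equivalence in sufficiently high degrees precisely because of $FV \simeq \mathrm{Nm}$. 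This cancellation, iterated over $n$, should yield a uniform truncation bound of $(d + c)$ on $X^{C_{p^n}}$ for some small fixed constant $c$, since $X^{hC_{p^{n+1}}}$ itself preserves the $d$-truncation of $M$ (from the homotopy fixed point spectral sequence $H^p(G, \pi_q X) \Rightarrow \pi_{q-p}(X^{hG})$).

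Finally, the inverse limit $\TR(X) = \varprojlim_n X^{C_{p^n}}$ can add at most one degree through the $\lim^1$ term, giving the $(d+3)$ bound. For $p$-complete $M$, the bound improves to $(d+2)$ because $p$-completeness eliminates the $\lim^1$ contribution (via Mittag--Leffler on $p$-complete towers) or, equivalently, sharpens the per-term bound using properties of the $p$-complete Tate spectral sequence for $C_{p^n}$.

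The main obstacle will be establishing the uniform truncation bound on $X^{C_{p^n}}$: this requires careful induction tracking the Frobenius compatibility $FV \simeq \mathrm{Nm}$ and verifying that the high-degree cancellation between $X$ and $X^{tC_p}$ persists through the iterated pullback, so that only the bounded part contributes at each stage.
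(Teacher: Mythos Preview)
Your strategy has a genuine gap: the claim that the finite-level fixed points $X^{C_{p^n}}$ are uniformly $(d+c)$-truncated is false. Recall (e.g.\ from \cite[Prop.~3.22]{AN}) that the $V$-completion of $M$ is $\mathrm{cofib}\bigl(\varprojlim_n M_{hC_{p^n}} \to M\bigr)$, with transition maps the iterated Verschiebungs; correspondingly $X^{C_{p^{n-1}}} \simeq \mathrm{cofib}\bigl(M_{hC_{p^n}} \to M\bigr)$. Since $M_{hC_{p^n}}$ has unbounded positive homotopy when $M$ is merely $d$-truncated (already for $M = H\mathbb{Z}$, $\pi_{2k-1}(M_{hC_{p^n}}) = \mathbb{Z}/p^n$ for all $k \geq 1$), each $X^{C_{p^n}}$ is genuinely unbounded above. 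The truncatedness only appears \emph{after} passing to the inverse limit, because the pro-system $\{H_*(BC_{p^n};\mathbb{Z})\}_n$ is pro-zero in degrees $\geq 2$. Your parenthetical that ``$X^{hC_{p^{n+1}}}$ preserves the $d$-truncation of $M$'' conflates $X$ with $M$; the homotopy fixed point spectral sequence for $X^{hC_{p^{n+1}}}$ has input $\pi_* X$, not $\pi_* M$, and this is unbounded.

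The paper's proof avoids $X$ entirely: it reduces by d\'evissage to $M$ discrete, uses the explicit formula for the completion above, and then makes the elementary observation that for any abelian group $A$ the tower $\{HA \otimes BC_{p^j+}\}_j$ has inverse limit concentrated in degrees $\leq 2$ (respectively $\leq 1$ if $A$ is derived $p$-complete), precisely because of the pro-vanishing of $H_{\geq 2}(BC_{p^j};\mathbb{Z})$ and the shape of the $H_1$-tower. There is also a remark in the paper sketching a route closer in spirit to yours, but it only works after reducing to $H\mathbb{F}_p$-modules: one then has a $ku$-module structure (via $H\mathbb{Z}_p$) and can invoke \Cref{K1locfromcycl}, whose proof does \emph{not} bound $X^{C_{p^n}}$ but instead shows the restriction maps are zero in high degrees. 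Your ``cancellation'' intuition is pointing at this phenomenon, but making it precise requires the extra structure; the bare iterated-pullback induction you outline does not suffice.
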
 

\begin{proof} 
We reduce by d\'evissage to the case where $M$ is  concentrated in degree zero, 
and $d = 0$. 
The completion of $M$  is given by 
$\mathrm{cofib}( \varprojlim M_{hC_{p^n}} \to M)$ where the maps 
$M_{hC_{p^n}} \to M_{hC_{p^{n-1}}}$ are given by $V_{hC_{p^{n-1}}}$,
cf.~\cite[Prop.~3.22]{AN}. 

Now since $M$ is discrete, the Verschiebung is simply given by
a map $V\colon  M \to M$ of abelian groups, and the $S^1$-action is trivial. 
We can (as in the proof of \cite[Lem.~3.25]{AN}) 
form a $\mathbb{Z}_{\geq 0} \times \mathbb{Z}_{\geq 0}$-indexed
diagram
$Y_{i, j} = M \otimes BC_{p^j + }$ such that the transition maps in the 
$i$-direction are given by $V$ and in the $j$-direction are given by the
canonical projections. 
By the above, the completion of $M$ is given by the cofiber of the
map $\varprojlim_{i, j} Y_{i, j} \to M$. 

Now a simple computation shows that for any abelian group $A$, $\varprojlim_{j}
HA\otimes
BC_{p^j + }$ are concentrated in degrees $\leq 2$, and degrees $\leq  1$ if $A$
is derived $p$-complete. This claim will imply the result.  
Indeed, for the first part, it suffices to show that 
$\varprojlim_{j}
HA\otimes
BC_{p^j + }$
is concentrated in degrees $\leq 1$ when $A$ is torsion-free; this 
follows  because the pro abelian group $\{H_*(BC_{p^j};
\mathbb{Z}) \}_{j \geq 0} $ is pro-zero for $\ast \geq 2$. 
For the second part, we also observe that the pro-abelian group
$\left\{H_1( BC_{p^j}; \mathbb{Z})\right\}_{j \geq 0}$ is simply the tower
$\dots \to \mathbb{Z}/p^2  \to \mathbb{Z}/p \to 0$, 
and 
our assumption that $A$ is derived $p$-complete gives
$A \simeq \varprojlim A \otimes^{\mathbb{L}}_{\mathbb{Z}} \mathbb{Z}/p^j$
is in particular discrete. 
\end{proof}

\begin{remark} 
We can give another proof of \Cref{completionCart} using 
the results from the previous section in the case where $M$ is annihilated by a
power of $p$. By d\'evissage, we can reduce to the case where $M$ is an
$H\mathbb{F}_p$-module (in topological Cartier modules). 
Let $X = M/M_{hC_p} = \mathrm{cofib}(V)$ be the associated cyclotomic spectrum, so $\TR(X)$ is the derived
$V$-completion of $M$ by the correspondence between bounded-below $V$-complete
Cartier modules and bounded-below cyclotomic spectra, cf.~\cite[Th.~3.21]{AN}. 
Then the proof of \Cref{trunctosegal} shows that 
the cyclotomic Frobenius
$\varphi\colon  X \to X^{tC_p}$ is $d$-truncated, since $\mathrm{fib}(\varphi) =
\mathrm{fib}(F\colon  M \to M^{hC_p})$. 
Using \Cref{K1locfromcycl}, 
we find that $\TR(X)/p$ is $d+3$-truncated, whence $\TR(X)$ is $d+2$-truncated. 

\end{remark}

\begin{corollary} 
\label{colimtruncatedTR}
Let $R_i, i \in I$ be a filtered system of rings and let $R = \varinjlim R_i$. 
Suppose that 
the map $\TR(R_i; \mathbb{F}_p) \to L_{K(1)} \TR(R_i ; \mathbb{F}_p)$ is
$d$-truncated for all $i \in I$. 
Then $\TR(R; \mathbb{F}_p) \to L_{K(1)} \TR(R; \mathbb{F}_p)$ is $
d+2$-truncated. 
\end{corollary}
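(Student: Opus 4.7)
The plan is to work in the $\infty$-category of topological Cartier modules \cite{AN} and leverage \Cref{completionCart}. First, I would form $M := \varinjlim_i \TR(R_i; \mathbb{F}_p)$ as a filtered colimit in topological Cartier modules; since the underlying-spectrum functor preserves filtered colimits, $M$ is connective. The functor $X \mapsto \mathrm{cofib}(V\colon X_{hC_p} \to X)$ from topological Cartier modules to cyclotomic spectra commutes with filtered colimits, and $\THH(-; \mathbb{F}_p)$ commutes with filtered colimits of rings, so the cyclotomic spectrum of $M$ is $\THH(R; \mathbb{F}_p)$; under the Antieau--Nikolaus correspondence its $V$-completion is identified with $\TR(R; \mathbb{F}_p)$.

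Next, form the fiber $G := \mathrm{fib}(M \to L_{K(1)} M)$ in topological Cartier modules (with $L_{K(1)} M$ inheriting its structure by applying $L_{K(1)}$ to the Verschiebung and Frobenius of $M$). Since $\mathrm{fib}$ and $L_{K(1)}$ both commute with filtered colimits, $G \simeq \varinjlim_i G_i$ where $G_i := \mathrm{fib}(\TR(R_i; \mathbb{F}_p) \to L_{K(1)} \TR(R_i; \mathbb{F}_p))$ is $d$-truncated by hypothesis; therefore $G$ is $d$-truncated.

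To relate $G$ to the desired fiber $H := \mathrm{fib}(\TR(R; \mathbb{F}_p) \to L_{K(1)} \TR(R; \mathbb{F}_p))$, I would analyze the commutative square
\[ \xymatrix@C=3em{
M \ar[r] \ar[d] & \TR(R; \mathbb{F}_p) \ar[d] \\
L_{K(1)} M \ar[r] & L_{K(1)} \TR(R; \mathbb{F}_p).
} \]
The top horizontal map's fiber is $F_1 := \varprojlim_n M_{hC_{p^n}}$ by the formula for $V$-completion recalled in the proof of \Cref{completionCart}, while the bottom horizontal map's fiber is $L_{K(1)} F_1$ since $L_{K(1)}$ is exact. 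Computing the total fiber of the square in two ways yields a fiber sequence $\mathrm{fib}(F_1 \to L_{K(1)} F_1) \to G \to H$, so it suffices to show $\mathrm{fib}(F_1 \to L_{K(1)} F_1)$ is $(d+1)$-truncated.

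For this last estimate, I would observe that $(-)_{hC_{p^n}}$ preserves truncation from above, so $\mathrm{fib}(M_{hC_{p^n}} \to L_{K(1)} M_{hC_{p^n}}) \simeq G_{hC_{p^n}}$ is $d$-truncated; the Milnor sequence then bounds the fiber of $F_1 \to \varprojlim_n L_{K(1)} M_{hC_{p^n}}$ by $(d+1)$-truncation. \emph{The main obstacle} is the comparison between $\varprojlim_n L_{K(1)} M_{hC_{p^n}}$ and $L_{K(1)} F_1$, which need not be an equivalence because $L_{K(1)}$ does not commute with inverse limits in general. I would handle this using that each $\TR(R_i; \mathbb{F}_p)$ is $V$-complete---so the tower $\{(M_i)_{hC_{p^n}}\}_n$ is pro-null for every fixed $i$---together with a direct application of the proof technique of \Cref{completionCart} to control the error in the exchange of $L_{K(1)}$ and $\varprojlim_n$ by an additional truncation, yielding the desired bound on $H$.
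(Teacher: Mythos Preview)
Your overall architecture is right: form $M = \varinjlim_i \TR(R_i; \mathbb{F}_p)$ as a topological Cartier module, identify its $V$-completion with $\TR(R; \mathbb{F}_p)$, and invoke \Cref{completionCart}. But the argument stalls precisely where you say it does. The ``main obstacle'' is real, and your proposed resolution is not a proof: knowing that each tower $\{(\TR(R_i;\mathbb{F}_p))_{hC_{p^n}}\}_n$ has vanishing inverse limit gives you no control over $F_1 = \varprojlim_n (\varinjlim_i \TR(R_i;\mathbb{F}_p))_{hC_{p^n}}$, because you cannot swap the filtered colimit and the inverse limit. Nor does the proof of \Cref{completionCart} give a mechanism for bounding the discrepancy between $L_{K(1)}\varprojlim_n$ and $\varprojlim_n L_{K(1)}$; that proposition is about $V$-completing a \emph{bounded} topological Cartier module, and your $G$ is not known to be bounded below. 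So the fiber sequence $\mathrm{fib}(F_1 \to L_{K(1)} F_1) \to G \to H$ is correct but you have not bounded its first term.

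The paper avoids this entirely by a reformulation you are missing. Choose a $v_1$-self map $v\colon \Sigma^u(S^0/p) \to S^0/p$; then, using the height-one telescope conjecture, the condition ``$X/p \to L_{K(1)}(X/p)$ is $d$-truncated'' is equivalent to ``$X \otimes S^0/(p,v)$ is $(d+u+1)$-truncated''. This converts the $K(1)$-localization into smashing with a \emph{finite} spectrum, which commutes with filtered colimits, with $(-)_{hC_{p^n}}$, and with the inverse limit defining $V$-completion. Now set $M = \varinjlim_i \TR(R_i;\mathbb{Z}_p)$: the topological Cartier module $M \otimes S^0/(p,v)$ is connective, $p$-complete, and $(d+u+1)$-truncated (as a filtered colimit of such), so \Cref{completionCart} gives that its $V$-completion $\TR(R;\mathbb{Z}_p)/(p,v)$ is $(d+u+3)$-truncated, and translating back yields the desired $(d+2)$-truncation. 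The point is that once you work modulo the finite complex $S^0/(p,v)$ rather than with $L_{K(1)}$ directly, there is no exchange-of-limits problem at all.
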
 
\begin{proof} 
Let $v\colon  \Sigma^u (S^0/p) \to (S^0/p)$ be a $v_1$-self map (so we can take $u  =
2p-2$ for $p$ odd and $u = 8 $ for $p = 2$). 
Let $X$ be any spectrum. Then we 
observe that the following are equivalent: 
\begin{enumerate}
\item The map $X/p \to L_{K(1)} X/p$ is $d$-truncated.  
\item The spectrum $X \otimes S^0/(p, v)$ is $d + u+ 1$-truncated. 
\end{enumerate}
The equivalence is proved analogously to \Cref{K1locallemma}, noting that $L_{K(1)} (X/p) =
X \otimes (S^0/p)[v^{-1}]$ by the telescope conjecture at height one
\cite{Ma81, Mi81}. 

Therefore, it suffices to show that 
$\TR(R; \mathbb{Z}_p) \otimes S^0/(p, v)$ is $d + u+3$-truncated. 
To this end, let $M = \varinjlim \TR(R_i; \mathbb{Z}_p)$, so $M$ is a
connective topological Cartier module which is not necessarily derived
$V$-complete; the 
$V$-completion
of its $p$-completion
is $\TR(R; \mathbb{Z}_p)$ since $\THH(-)$ commutes with filtered colimits (as a
functor from rings to $\cycsp_{\geq 0}$). 
Now the topological Cartier module $M \otimes S^0/(p, v)$ is $d + u +
1$-truncated as a filtered colimit of $d + u + 1$-truncated objects. 
Taking the $V$-completion and using 
\Cref{completionCart}, 
we find that $\TR(R; \mathbb{Z}_p)/(p, v)$ is $d + u + 3$-truncated. 
Therefore, $\TR(R; \mathbb{F}_p) \to L_{K(1)}
\TR(R; \mathbb{F}_p)$ is $d+2$-truncated. 
\end{proof}

\subsection{The main pro-Galois result}
In this subsection, we prove the following pro-Galois descent result. 

\begin{theorem}[Pro-Galois descent in the generic fiber] 
\label{progaldesc}
Fix $d \geq 0$. 
Let $R$ be a $p$-complete ring such that $\TR(R; \mathbb{F}_p) \to L_{K(1)} \TR(R;
\mathbb{F}_p)$ is $d$-truncated. 
Let $S$ be an $R$-algebra.

Let $G$ be a profinite group of finite $p$-cohomological dimension which acts
continuously on
the $R$-algebra $S$  (given the discrete topology). 
Suppose that: 
\begin{enumerate}
\item $R[1/p] \to S[1/p]$ is a $G$-Galois extension. 
\item There is a cofinal collection of open normal subgroups $N_i \subset G, i
\in I$ such that $S_i := S^{N_i}$ is 
a finite, finitely presented $R$-algebra
and such that 
$\TR(S_i; \mathbb{F}_p) \to L_{K(1)} \TR(S_i; \mathbb{F}_p)$ is $d$-truncated. 
\item 
The induced 
sheaf of spectra
on 
finite continuous $G$-sets given by $T \mapsto L_{K(1)} ( K( \mathrm{Fun}_G(T,
S[1/p])))$ 
(for $\mathrm{Fun}_G(T, S[1/p])$ denoting $G$-equivariant functions $T \to S$)
is hypercomplete. 
For example, this holds if $R[1/p]$ has finite Krull dimension
and there is a uniform bound on the mod $p$ cohomological dimensions of the
residue fields, but also in cases such as \Cref{ppowerrootex}. 
\end{enumerate}
Then
the map 
\begin{equation} L_{K(1)}\TR(R) \to (L_{K(1)}\TR(S))^{h
G_{\mathrm{cts}}} := \mathrm{Tot}( 
L_{K(1)} \TR(S) \rightrightarrows L_{K(1)} \TR( \mathrm{Fun}_{\mathrm{cts}}(G,
S)) \triplearrows \dots ). \label{RSequiv} \end{equation}
is an equivalence
and the map 
$$ \TR(R; \mathbb{F}_p) \to \mathrm{Tot}( \TR(S; \mathbb{F}_p) \rightrightarrows \TR(
\mathrm{Fun}_{\mathrm{cts}}(G, S); \mathbb{F}_p) \triplearrows \dots ) $$
is $d+2$-truncated. 

\end{theorem}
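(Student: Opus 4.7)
The plan is to prove the two claims in sequence: first establish the $K(1)$-local descent equivalence by combining finite Galois descent with the hypercompleteness hypothesis, then deduce the mod-$p$ truncation bound by comparing both sides with their $K(1)$-localizations.

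For the $K(1)$-local statement, I would proceed by first applying \Cref{galoisdescgeneric} to each open normal subgroup $N_i\subset G$: the map $R \to S_i:=S^{N_i}$ is finite and finitely presented, and $R[1/p]\to S_i[1/p]$ is $(G/N_i)$-Galois by the Galois correspondence, so
$$L_{K(1)}\TR(R)\xrightarrow{\sim} L_{K(1)}\TR(S_i)^{h(G/N_i)}.$$
These equivalences assemble into the finite-level values of the sheaf $\sF_{L_{K(1)}\TR}$ on the site of finite continuous $G$-sets, as in \Cref{relativeFE}. Since hypercompleteness is smashing and $\sF_{L_{K(1)}\TR}$ is a module over $\sF_{L_{K(1)} K}$ (the latter insensitive to inverting $p$ by \eqref{k1invertp}), hypothesis (3) together with $G$ having finite $p$-cohomological dimension implies that $\sF_{L_{K(1)}\TR}$ is hypercomplete on finite continuous $G$-sets. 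Taking the limit over a cofinal system of $N_i$ then identifies $(L_{K(1)}\TR(S))^{hG_{\mathrm{cts}}}$ with the desired Tot in \eqref{RSequiv}, and combining with the finite-level equivalences yields the claim.

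For the mod-$p$ truncation statement, the key observation is that each $n$th cosimplicial term $\mathrm{Fun}_{\mathrm{cts}}(G^n,S)$ is the filtered colimit over $i$ of $\mathrm{Fun}((G/N_i)^n, S_i)$, itself a finite product of copies of $S_i$. Since finite products preserve the $d$-truncation of the map $\TR(-;\mathbb{F}_p)\to L_{K(1)}\TR(-;\mathbb{F}_p)$ and the hypothesis holds on each $S_i$, \Cref{colimtruncatedTR} implies that the corresponding map at each cosimplicial level $\mathrm{Fun}_{\mathrm{cts}}(G^n, S)$ is $(d+2)$-truncated. Taking $\mathrm{Tot}$ preserves this bound on fibers (via the Bousfield--Kan spectral sequence: if each fiber has $\pi_t=0$ for $t>d+2$, then so does the Tot of the fibers, since the $E_2^{s,t}$ vanishes in that range). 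Combined with the $d$-truncation of $\TR(R;\mathbb{F}_p)\to L_{K(1)}\TR(R;\mathbb{F}_p)$ assumed on $R$, and the equivalence established in the first part, a short diagram chase shows that the fiber of the target map fits in a fiber sequence between $d$- and $(d+2)$-truncated spectra, hence is $(d+2)$-truncated.

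The main obstacle I expect is the hypercompleteness step: one must carefully argue that hypothesis (3) forces the $\sF_{L_{K(1)}\TR}$-valued sheaf on finite continuous $G$-sets to be hypercomplete, and that its ``value on the pro-object'' $\{S_i\}$ is correctly identified with the Tot appearing in \eqref{RSequiv}. This is where the smashing-over-$\sF_{L_{K(1)} K}$ trick from the discussion after \Cref{relativeFE} enters, and where finite $p$-cohomological dimension of $G$ is essential so that the descent spectral sequence collapses under control. A secondary but routine point is justifying the decomposition $\mathrm{Fun}_{\mathrm{cts}}(G^n,S)=\varinjlim_i \mathrm{Fun}((G/N_i)^n,S_i)$ of rings, so that \Cref{colimtruncatedTR} applies in each cosimplicial degree.
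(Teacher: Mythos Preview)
Your proposal has a genuine gap, and you have half-identified it in your final paragraph. The hypercompleteness argument in your first part only yields
\[
L_{K(1)}\TR(R)\;\simeq\;\mathrm{Tot}\Bigl(\varinjlim_i L_{K(1)}\TR(S_i)\rightrightarrows \varinjlim_i L_{K(1)}\TR\bigl(\textstyle\prod_{G/N_i} S_i\bigr)\triplearrows\cdots\Bigr),
\]
i.e.\ descent to the left Kan extension $(L_{K(1)}\sF)^{\mathrm{disc}}$. This is \emph{not} the totalization in \eqref{RSequiv}: the latter has entries $L_{K(1)}\TR(S)$, $L_{K(1)}\TR(\mathrm{Fun}_{\mathrm{cts}}(G,S))$, and so on, and since $\TR$ does not commute with filtered colimits one cannot identify $\varinjlim_i L_{K(1)}\TR(S_i)$ with $L_{K(1)}\TR(S)$. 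The smashing-over-$\sF_{L_{K(1)}K}$ trick settles hypercompleteness of the sheaf on finite $G$-sets but says nothing about this discrepancy at the pro-object. Because your second part depends on the first, the entire argument is incomplete.

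The paper closes this gap by reversing the order of the two conclusions and inserting a new step based on topological Cartier modules. From your hypercompleteness input one first obtains, by 2-out-of-3, that the map of topological Cartier modules
\[
\TR(R;\mathbb{F}_p)=\sF(\ast)\;\longrightarrow\;R\Gamma(\ast,\sF)=\mathrm{Tot}\bigl(\sF^{\mathrm{disc}}(G)\rightrightarrows\cdots\bigr)
\]
is $d$-truncated. Now $V$-complete both sides. The left side is already $V$-complete; on the right, finite $p$-cohomological dimension makes $R\Gamma(\ast,-)$ commute with colimits, so $V$-completion passes inside the totalization and converts each $\sF^{\mathrm{disc}}(G^n)=\varinjlim_i\TR(\prod S_i;\mathbb{F}_p)$ into $\TR(\mathrm{Fun}_{\mathrm{cts}}(G^n,S);\mathbb{F}_p)$. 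The amplitude bound of \Cref{completionCart} then upgrades $d$-truncated to $(d+2)$-truncated, proving the mod-$p$ claim. Only afterwards does one invoke \Cref{colimtruncatedTR} levelwise (as in your second part) and observe that the fiber of the bottom map in the resulting square is a truncated $K(1)$-local spectrum, hence zero, which gives \eqref{RSequiv}. In short, the $V$-completion step is the missing idea, and it forces the two conclusions to be proved in the opposite order from what you propose.
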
 
\begin{proof} 

Let $\sF$ be any product-preserving presheaf from finite continuous $G$-sets 
to $p$-torsion spectra. 
We let $\sF^{\mathrm{disc}}$ denote the left Kan extension to 
profinite $G$-sets, so 
if $S$ is a profinite $G$-set which can be written $S \simeq \varprojlim S_j$
for some finite continuous $G$-sets $S_j$, 
then
$\sF^{\mathrm{disc}}(S) \simeq 
\varinjlim \sF(S_j)$. 
We let 
\[ R \Gamma( \ast, \sF) = \mathrm{Tot}( 
\sF^{\mathrm{disc}}(G) \rightrightarrows \sF^{\mathrm{disc}}(G \times G) \triplearrows
\dots ) . \]
As in \cite[Sec.~4.1]{CM19}, $R \Gamma( \ast, \sF)$ is the 
value of the hypersheafification or Postnikov sheafification of $\sF$ (with respect to the topology on
finite continuous $G$-sets where covering families are jointly
surjective ones) at $\ast$. 
When we work with $p$-torsion spectra, our assumption that $G$ 
has finite cohomological dimension implies that $R \Gamma(\ast, -)$ commutes
with all colimits. 

Now we take $\sF$ to  be the presheaf 
which sends 
a finite continuous $G$-set $T$ to $\TR( \mathrm{Fun}_G(T, S); \mathbb{F}_p)$, where
$\mathrm{Fun}_G(T, S)$ is (as in the statement) the ring of $G$-equivariant functions $T \to S$.  Unwinding the definitions and hypotheses, we find 
from \Cref{galoisdescgeneric}
that if $T = G/N_j$ for one of
the distinguished normal subgroups $N_j$, we have that
\[ L_{K(1)}\sF(\ast)  \xrightarrow{\sim} \mathrm{Tot}( L_{K(1)}\sF(T) \rightrightarrows
L_{K(1)}\sF(T \times T)
\triplearrows \dots )  \]
is an equivalence. 
Note $L_{K(1)} \sF$ is an example of 
\Cref{relativeFE} for the profinite group $G$ (though we denote by $R$ the
$p$-complete ring). 
In particular, our assumptions imply hypercompleteness of $L_{K(1)} \sF$, 
so passing to the limit
we find 
\begin{equation} \label{LK1Fast} L_{K(1)} \sF(\ast)  
\simeq R \Gamma( \ast, L_{K(1)}\sF)
. \end{equation}
Note here that $\sF^{\mathrm{disc}}$ is not given by $\TR(-; \mathbb{F}_p)$ because $\TR$ does
not commute with filtered colimits; 
instead, for instance, $\sF^{\mathrm{disc}}( G) = \varinjlim_{i \in I} \TR(S_i;
\mathbb{F}_p)$. 
By assumption (2), $\sF^{\mathrm{disc}}(G), 
\sF^{\mathrm{disc}}(G \times G) , \dots $ are spectra with the property that the map to
their $K(1)$-localization is $d$-truncated; therefore, 
$R \Gamma(\ast, \sF) \to L_{K(1)}R \Gamma(\ast, \sF)=  R \Gamma(\ast, L_{K(1)} \sF)$ is
$d$-truncated.

Now we 
apply 2-out-of-3 to the sequence of maps $\sF(\ast) \to R \Gamma(\ast, \sF)
\to R \Gamma(\ast, L_{K(1)} \sF)$.
We just showed that the second map is $d$-truncated, while the composite map is 
by \eqref{LK1Fast} identified with $\TR(R; \mathbb{F}_p) \to L_{K(1)} \TR(R;
\mathbb{F}_p)$, which is $d$-truncated by assumption. 
Therefore, by 2-out-of-3, 
we find that $\sF(\ast) \to R \Gamma(\ast, \sF)$
is $d$-truncated. 
In other words,
\begin{equation} \label{TruncTot} \sF(\ast) \to \mathrm{Tot}( \sF^{\mathrm{disc}}(G) \rightrightarrows
\sF^{\mathrm{disc}}(G \times G)
\triplearrows \dots ) , \end{equation}
is $d$-truncated. 
Here both sides of \eqref{TruncTot} have the structure of topological Cartier
modules since the
forgetful functor from topological Cartier modules to spectra commutes with
limits and colimits \cite[Prop.~3.11]{AN}. 

Now we take the $V$-completion of both sides in \eqref{TruncTot} (considered as
topological Cartier modules). 
The left-hand-side 
of \eqref{TruncTot}
is already $V$-complete. 
To analyze the right-hand-side, observe that 
the totalization in \eqref{TruncTot} commutes with $(-)_{hC_{p^{n-1}}}$ because
$R\Gamma( \ast, -)$ commutes with colimits. 
The limit over $n$ in computing the $V$-completion clearly commutes with
the totalization. Thus, the $V$-completion of the right-hand-side of
\eqref{TruncTot} is given by 
$$\mathrm{Tot}\left( \TR(S; \mathbb{F}_p) \rightrightarrows \TR(
\mathrm{Fun}_{\mathrm{cts}}(G, S); \mathbb{F}_p) \triplearrows \dots \right).$$

Note finally that the right-hand-side of \eqref{TruncTot} is bounded-below by
the finiteness of the $p$-cohomological dimension. 
Taking the $V$-completion in \eqref{TruncTot}, we find from \Cref{completionCart} that 
\[ \TR(R; \mathbb{F}_p) \to \mathrm{Tot}\left( \TR(S; \mathbb{F}_p) \rightrightarrows \TR(
\mathrm{Fun}_{\mathrm{cts}}(G, S); \mathbb{F}_p) \triplearrows \dots \right)  \]
is $(d+2)$-truncated, whence the last claim of the theorem. 
Finally, $\TR(S; \mathbb{F}_p), \TR( \mathrm{Fun}_{\mathrm{cts}}(G, S);
\mathbb{F}_p)$ map via $(d+2)$-truncated maps to their $K(1)$-localizations,
via \Cref{colimtruncatedTR}, so the $K(1)$-local descent statement follows. 
\end{proof}

\begin{example}[Discrete valuation rings] 
\label{DVRex}
Let $K$ be a complete, discretely valued field of characteristic zero whose residue field $k$
is of characteristic $p$ with $[k: k^p] \leq p^d$. 
It follows that 
if  $k'/k$ is any finite extension, then $[k':k'^p] \leq p^d$,
cf.~\cite[Lem.~2.1.1]{GO08}.

By the main result of \cite{GO08} (which is due to 
\cite{Kat82} in this case), it follows that the Galois group
$\mathrm{Gal}(\overline{K}/K)$ has $p$-cohomological dimension $\leq d + 2$. 
Moreover, by \Cref{asymptoticK1TR}, it follows that 
if $L$ is any finite extension of $K$ and $\mathcal{O}_L \subset L$ the ring of
integers (which is excellent as a complete local ring), then 
$\TR( \mathcal{O}_L; \mathbb{F}_p) \to L_{K(1)} \TR(\mathcal{O}_L;
\mathbb{F}_p)$ is 
$d$-truncated.

We can now apply \Cref{progaldesc} to conclude that
\[ \TR( \mathcal{O}_K; \mathbb{F}_p) \to
\TR(\mathcal{O}_{\overline{K}}; \mathbb{F}_p)^{h
\mathrm{Gal}_K^{\mathrm{cts}}} := \mathrm{Tot}(
\TR(\mathcal{O}_{\overline{K}}; \mathbb{F}_p) \rightrightarrows
\TR(\mathcal{O}_{\overline{K} \otimes_K \overline{K}}; \mathbb{F}_p)
\triplearrows \dots )
\]
is $d$-truncated. 
Note that the theorem gives a priori that the map is $(d+2)$-truncated, but we can
upgrade the conclusion to $d$-truncated as follows: 
the analogous comparison map with $L_{K(1)} \TR(-)$ everywhere is an equivalence; 
the maps  $\TR( \mathcal{O}_{\overline{K} \otimes_K \dots
\otimes_K\overline{K}}; \mathbb{F}_p) \to \TR( \mathcal{O}_{\overline{K} \otimes_K \dots
\otimes_K\overline{K}}; \mathbb{F}_p)$ are $(-1)$-truncated 
by \Cref{perfectoidtrunc} since these are $p$-torsionfree rings whose
completions are perfectoid; 
and the map $\TR(\mathcal{O}_K; \mathbb{F}_p) \to L_{K(1)} \TR(\mathcal{O}_K;
\mathbb{F}_p)$ is $d$-truncated (\Cref{asymptoticK1TR}).

Suppose in particular that $k$ is perfect and of characteristic
$>2$.\footnote{We expect that this works when $k$ has characteristic $2$ as
well.} 
In this case, the results of \cite{HM03} (recalled in 
the proof of \Cref{HMasymptotic}) 
show that 
$\TR(\mathcal{O}_K|K; \mathbb{F}_p)$ is the connective cover of its
$K(1)$-localization, which is $L_{K(1)} \TR(\mathcal{O}_K; \mathbb{F}_p)$. 
Again, $\mathcal{O}_{\overline{K}}, \mathcal{O}_{\overline{K} \otimes_K
\overline{K}}, \dots$ have $p$-completions which are perfectoid rings, so for them 
$\TR(-; \mathbb{F}_p)$ agrees with the connective cover of its
$K(1)$-localization (\Cref{perfectoidtrunc}). 
It follows that
\( \TR(\mathcal{O}_K|K; \mathbb{F}_p) \simeq \tau_{\geq 0}
\TR(\mathcal{O}_{\overline{K}} ; \mathbb{F}_p)^{h \mathrm{Gal}_K^{\mathrm{cts}}}
.\)
\end{example} 

\begin{example} 
\label{smoothoverOC}
Let $R_0$ be a 
$p$-torsionfree perfectoid ring containing a system of primitive $p$-power roots
of unity. 
Let $R$ be a formally smooth $R_0$-algebra, which is formally \'etale over the
formal torus  (i.e., $p$-completed Laurent polynomial algebra) $R_0 \left \langle  t_1^{\pm 1}, \dots,
t_n^{\pm 1} \right\rangle$. 

Let $ G =\mathbb{Z}_p(1)^n$ and let 
$S = R \otimes_{R_0\left \langle  t_1^{\pm 1}, \dots, t_n^{\pm 1}
\right \rangle
}
\varinjlim_r
R_0 \left \langle  t_1^{\pm 1/p^r}, \dots, t_n^{\pm
1/p^r} \right \rangle $ with the evident $G$-action by roots of unity. 
Using \Cref{perfectoidtrunc}, one sees that the 
hypothesis (2) applies. 
Hypothesis (3) applies thanks to \Cref{ppowerrootex}. 
It follows that 
the comparison map 
\eqref{RSequiv} is an equivalence. 
In particular, the $K(1)$-local $\TR$ of $R$ is expressed as the inverse limit
of a diagram of the $K(1)$-local $\TR$ of various rings whose $p$-completion is
perfectoid.  
\end{example} 

\section{The analog of Thomason's spectral sequence}

\newcommand{\perfd}{\mathrm{Perfd}}
\newcommand{\pf}{\mathrm{perfd}}

In this section, we construct in certain cases an analog of Thomason's \'etale
descent 
spectral sequence for $L_{K(1)} K(-)$ in terms of \'etale cohomology,
cf.~\cite{Tho85, TT90}, for $L_{K(1)} \TR(-)$. Our construction splits into two
parts. First, we give a formula for $\TR$ and its $K(1)$-localization on the
 category of perfectoid rings. 
Second, we hypersheafify $L_{K(1)} \TR(-)$ on all rings in the $\arc$-topology and
compare 
$L_{K(1)} \TR(-)$ to this hypersheafification.

\subsection{$\arc$-cohomology}
In this section, we discuss the cohomology with respect to the $\arc$-topology,
mentioned briefly in the introduction (\Cref{arcpdefinitionintro}). 
This is a variant of the following topology,
cf.~\cite[Sec.~2.2.1]{CS}.\footnote{Throughout, for set-theoretic reasons one
should impose cardinality bounds, i.e., assume all the rings one allows into the
site to be of cardinality less than some uncountable strong limit
cardinal $\kappa$, so that the sites will be small.
However, the choice of $\kappa$ will not matter in all the constructions we
consider and we will consequently suppress it. } 

\newcommand{\arcph}{\mathrm{arc}_{\hat{p}}}
\begin{definition}[The {$p$-complete $\mathrm{arc}$-topology}] 
The \emph{$p$-complete $\mathrm{arc}$-topology} or  \emph{$\arcph$-topology}
is the finitary Grothendieck topology (cf.~\Cref{convsite}) on the opposite of the category of derived $p$-complete
commutative rings 
such that a map $R \to R'$ is a cover if for every $p$-complete rank $\leq
1$-valuation ring $V$ and map $R \to V$, there is an extension $V \to W$ of
$p$-complete rank $\leq 1$ valuation rings and a map $R' \to W$ fitting into a
commutative diagram,
\[ \xymatrix{
R \ar[d]  \ar[r] &  R' \ar[d]  \\
V \ar[r] &  W 
}.\]
We will also consider restrictions of the $\arcph$-topology to 
appropriate subcategories of the category of derived $p$-complete rings, such as the category 
$\perfd$ of perfectoid rings \cite[Sec.~3]{BMS1}.  
The \emph{$\arc$-topology} is defined similarly but we only consider rank $1$
$p$-complete valuation rings where $p \neq 0$. 
\end{definition} 

The $\arcph$-topology behaves well with respect to perfectoid rings; one knows
that 
the structure presheaf is a sheaf of rings with respect to this topology, and one even
has no higher cohomology \cite[Sec.~8]{Prisms}. 
The $\arc$-topology is a variant where in some sense we also impose derived
saturatedness conditions. 
Note that a functor is an $\arc$-sheaf if and only if it is an $\arcph$-sheaf
and annihilates any $\mathbb{F}_p$-algebra. 
Any derived $p$-complete ring can be covered in the $\arcph$-topology
(resp.~the $\arc$-topology) by a product of $p$-complete absolutely integrally
closed rank $1$ valuation rings (resp.~$p$-complete absolutely integrally
closed rank $1$ valuation rings where $p \neq 0$). 

We now give some examples of $\arc$-cohomology. To begin with, we consider the
simplest case of constant sheaves (or $p$-adically constant ones); the result is
that one essentially recovers the $p$-adic \'etale cohomology of the generic
fiber. 

\begin{construction}[Constant sheaves in the $\arc$-topology] 
Given an abelian group $M$, 
we can consider the associated constant sheaf of abelian groups in the
$\arc$-topology; its value on a derived $p$-complete ring $R$ is given by  $ H^0_{\et}( \spec(R[1/p]), M)$. 
To see this, we observe that the presheaf of abelian groups $R \mapsto 
 H^0_{\et}( \spec(R[1/p]), M)$ is a sheaf in the $\arc$-topology on derived
 $p$-complete rings 
\cite[Cor.~6.17]{BM18} (which assumes $M$ torsion; however, this is not
necessary since we are only working with $H^0$). 
It admits a map from the constant presheaf $M$, and the kernel and cokernel
vanish locally in the $\arc$-topology. 
\end{construction}

\begin{construction}[$p$-adically constant sheaves and Tate twists in the $\arc$-topology]
Consider the sheaf of rings $\mathbb{Z}_p$ in the $\arc$-topology, defined as the
$p$-completion of the constant sheaf associated to $\mathbb{Z}_p$, or
equivalently as the inverse limit of the constant sheaves 
$\mathbb{Z}/p^n \mathbb{Z}$; 
explicitly, $\mathbb{Z}_p(R) = H^0_{\arc}( \spec(R), \mathbb{Z}_p) = 
H^0_{\proet}( \spec(R[1/p]), \mathbb{Z}_p)$. 
We construct an invertible $\mathbb{Z}_p$-module $\mathbb{Z}_p(1)$ as the
$\arc$-sheafification of the presheaf 
$R \mapsto \mathbb{Z}_p(1)^{\mathrm{pre}}(R) := T_p(R^{\times})$.  
To check that this is an invertible module, we observe
(cf.~\cite[Prop.~3.30]{BM18}) that 
$\mathbb{Z}_p(1)^{\mathrm{pre}}(R)$ is an invertible
$\mathbb{Z}_p(R)$-module whenever $R$ is a product of absolutely
integrally closed valuation rings of mixed characteristic $(0, p)$.  
Note that $\mathbb{Z}_p(1)/p$ can equally be described as the $\arc$-sheaf 
associated to the presheaf $R \mapsto \mu_p(R) = R^{\times}[p]$, since $\arc$-locally all
elements have $p$th roots. 
\end{construction}

\begin{proposition}[$\arc$-cohomology as $p$-adic vanishing cycles] 
Let $R$ be any derived $p$-complete ring. Then there is a natural equivalence
\begin{equation} \label{arcasvanishingcycles} R \Gamma_{\arc}( \spec(R), \mathbb{Z}_p(i)) \simeq R \Gamma_{\proet}(
\spec(R[1/p]), \mathbb{Z}_p(i)), \end{equation}
where $\mathbb{Z}_p(i)$ on the right-hand-side refers to the usual Tate twist on
the pro-\'etale site of $\spec(R[1/p])$. 
\label{padicvanishingprop}
\end{proposition}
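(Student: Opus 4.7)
The plan is to exhibit both sides as hypercomplete $\arc$-sheaves of spectra (or derived $p$-complete complexes) on the opposite of the category of derived $p$-complete rings, and then to check the comparison on a generating family.

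First, I would reduce to torsion coefficients. Both sides are derived $p$-complete (the left by definition, the right because pro-\'etale cohomology with $\mathbb{Z}_p(i)$ coefficients on $\spec(R[1/p])$ is the derived limit of the mod $p^n$ versions, and $R\Gamma_{\arc}$ and $R\Gamma_{\proet}$ commute with homotopy limits). It thus suffices to prove the statement with $\mathbb{Z}_p(i)$ replaced by $\mu_{p^n}^{\otimes i}$ for each $n \geq 1$.

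Second, I would invoke the main $\arc$-descent result of \cite{BM18}: the functor $R \mapsto R\Gamma_{\proet}(\spec(R[1/p]), \mu_{p^n}^{\otimes i})$ on derived $p$-complete rings is an $\arc$-sheaf (this is essentially \cite[Cor.~6.17]{BM18}, extended to Tate twists). By the bounded $p$-cohomological dimension of the residue fields arising on a generating family and the usual Postnikov tower argument (cf.~the approach of Section~\ref{generalK1loctrunc} for converting descent to hyperdescent), it is in fact a hypercomplete $\arc$-sheaf. Meanwhile the left-hand side $R \mapsto R\Gamma_{\arc}(\spec(R), \mu_{p^n}^{\otimes i})$ is hypercomplete $\arc$-sheaf cohomology by construction.

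Third, I would construct a map between the two. At the presheaf level, on a derived $p$-complete ring $R$ an element of $R^\times[p^n]$ gives a section of $\mu_{p^n}$ on $\spec(R[1/p])$, hence a class in $H^0_{\proet}(\spec(R[1/p]), \mu_{p^n})$; tensoring $i$-fold and sheafifying yields a map of $\arc$-sheaves from $\mu_{p^n}^{\otimes i}$ (the sheafification of $R \mapsto (R^\times[p^n])^{\otimes i}$) to the $\arc$-sheaf $R \mapsto R\Gamma_{\proet}(\spec(R[1/p]), \mu_{p^n}^{\otimes i})$. Applying $R\Gamma_{\arc}(\spec(R),-)$ gives the desired comparison map \eqref{arcasvanishingcycles}.

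Finally, to check this map is an equivalence, I would evaluate on a generating family for the $\arc$-topology, namely products of $p$-complete absolutely integrally closed rank $1$ valuation rings $V$ in which $p \neq 0$. For such $V$, the fraction field $V[1/p]$ is algebraically closed, so the pro-\'etale site of $\spec(V[1/p])$ is a point and the right-hand side is $\mathbb{Z}/p^n$ concentrated in degree $0$. On the left, the presheaf $(V^\times[p^n])^{\otimes i}$ already equals $\mathbb{Z}/p^n$, and $V$ is $\arc$-local in the sense that it has no higher $\arc$-cohomology with such coefficients (this reduces again to \cite[Sec.~3--6]{BM18}, using that products of such $V$ are handled by the finitary nature of the topology). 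The main obstacle will be the bookkeeping around hypercompleteness and the arc-local vanishing of higher cohomology on these valuation rings; once this is in place, the equivalence on a generating family propagates formally to all derived $p$-complete $R$.
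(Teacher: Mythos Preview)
Your approach is essentially the same as the paper's: construct the comparison map by sheafifying the presheaf of $p$-power roots of unity, note (via \cite[Cor.~6.17]{BM18}) that the right-hand side is already an $\arc$-sheaf, and check the map is an equivalence on a basis of products of absolutely integrally closed $p$-complete rank $\leq 1$ valuation rings of mixed characteristic.

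Two points where your write-up should be tightened. First, the hypercompleteness of both sides is immediate from the fact that they are $D(\mathbb{Z}_p)^{\geq 0}$-valued (coconnective); you do not need a bounded-cohomological-dimension argument. Second, and more substantively, your claim that ``products of such $V$ are handled by the finitary nature of the topology'' is not the right justification: the basis consists of \emph{infinite} products $\prod_{t \in T} V_t$, and the finitary property of covers does not help you compute $H^j_{\et}(\spec((\prod_t V_t)[1/p]), \mu_{p^n}^{\otimes i})$. The paper isolates this as a separate lemma, proved by showing that the functor $A \mapsto H^j_{\et}(\spec(\widehat{A}_p[1/p]), \mathbb{F}_p(i))$ commutes with finite products and filtered colimits (via the Fujiwara--Gabber theorem) and then applying the ultraproduct criterion of \cite[Cor.~3.17]{BM18}. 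You should make this step explicit.
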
 
\begin{proof} 
We know that the right-hand-side is a $D(\mathbb{Z}_p)^{\geq 0}$-valued sheaf for the $\arc$-topology by
\cite[Cor.~6.17]{BM18}, and
we obtain a map 
from the 
presheaf $ R \mapsto T_p( R^{\times})^{\otimes i}$
to the right-hand-side. 
Sheafifying in the $\arc$-topology and $p$-completing, we obtain a
map from the left-hand-side to the right-hand-side as in
\eqref{arcasvanishingcycles}. 
To see that \eqref{arcasvanishingcycles} is an equivalence of $\arc$-sheaves, 
it suffices to check on 
products of rank $1$ absolutely integrally closed, $p$-complete valuation rings of mixed
characteristic as these form a basis
(cf.~\cite[Prop.~3.29]{BM18}), which is handled by the next lemma.  
\end{proof}

\begin{lemma} 
Let $R$ be a product $\prod_{t \in T} V_t $ of  absolutely integrally
closed, $p$-complete valuation
rings. 
Then: 
\begin{enumerate}
\item $H^j_{\proet}( \spec(R[1/p]), \mathbb{Z}_p(i)) = 0$ for $ j > 0$. 
\item
The map 
$T_p(R^{\times}) \to 
H^0_{\proet}( \spec(R[1/p]), \mathbb{Z}_p(1)) $ is an isomorphism. 
\end{enumerate}
\label{padicnearbyproductval}
\end{lemma}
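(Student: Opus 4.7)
My plan is to reduce both statements to the mod-$p$ case and then exploit the fact that every residue field of $\spec(R[1/p])$ is algebraically closed.

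First, I would reduce to coefficients $\mu_p^{\otimes i}$. Writing $\mathbb{Z}_p(i) = R\varprojlim_n \mu_{p^n}^{\otimes i}$ on the pro-\'etale site, the Milnor sequence and the short exact sequences $0 \to \mu_{p^n}^{\otimes i} \to \mu_{p^{n+1}}^{\otimes i} \to \mu_p^{\otimes i} \to 0$ let us reduce (1) to showing $H^j(\spec(R[1/p]), \mu_p^{\otimes i}) = 0$ for $j > 0$, and (2) to identifying $H^0$ correctly.

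For (2), the plan is direct: we have $H^0_{\proet}(\spec(A), \mathbb{Z}_p(1)) = T_p(A^\times)$ in general, so it suffices to show the natural map $T_p(R^\times) \to T_p(R[1/p]^\times)$ is an isomorphism. Since $R^\times = \prod_t V_t^\times$ and $T_p$ commutes with products, this reduces to a statement about each $V_t$. For a single absolutely integrally closed, $p$-complete rank-$\leq 1$ valuation ring $V_t$ with $p\neq 0$, the quotient $V_t[1/p]^\times/V_t^\times$ is identified via the valuation with a subgroup of $\mathbb{Q}$, which is torsion-free; for general rank, the same holds since the value group is torsion-free. Because $T_p$ annihilates torsion-free abelian groups, the map $T_p(V_t^\times) \to T_p(V_t[1/p]^\times)$ is an isomorphism. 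Taking products yields the claim for $R$.

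For (1), the key observation is that each $V_t[1/p]$ has algebraically closed residue fields (since $V_t$ is absolutely integrally closed, so the fraction field is algebraically closed, and for absolute integral closure the residue fields at all primes remain algebraically closed). Moreover, the connected components of $\spec(R[1/p])$ for $R = \prod_t V_t$ correspond to ultrafilters on $T$, and the associated residue fields are ultraproducts of algebraically closed fields, which are themselves algebraically closed by \L{}o\'s's theorem. Thus $\spec(R[1/p])$ is a spectral space every one of whose residue fields is separably closed. I would then invoke the theory of w-contractible/w-strictly local rings of Bhatt--Scholze: a product of (absolutely integrally closed, $p$-henselian) valuation rings is w-strictly local, and the same property passes to $\spec(R[1/p])$ as an open/ind-pro-\'etale-cover subscheme. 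For such a scheme, the pro-\'etale cohomology of any $p$-torsion abelian sheaf vanishes in positive degrees.

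The main obstacle is the globalization argument in the last step: pointwise triviality of $\mu_p^{\otimes i}$ at each (algebraically closed) residue field is immediate, but showing that finite \'etale covers of $\spec(R[1/p])$ admit global sections (so that $H^1$ and higher vanish) requires the w-strictly-local structure of $R$ to be robust under localization at $p$. In the rank-$\leq 1$ case that actually arises in the application (\Cref{padicvanishingprop}), this is most transparent, since then each $V_t[1/p]$ is simply an algebraically closed field, and the argument reduces to computing pro-\'etale cohomology of $\spec$ of a product of algebraically closed fields, which is a profinite set of points with trivial higher cohomology.
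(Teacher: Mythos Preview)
Your reduction to mod-$p$ coefficients is the right first move, and your argument for (2) is essentially sound: one can check directly that $\mu_{p^n}(R) \to \mu_{p^n}(R[1/p])$ is bijective by a valuation argument in each coordinate. Be careful, though, with the phrase ``this reduces to a statement about each $V_t$'': localization at $p$ does \emph{not} commute with the infinite product, so $R[1/p]^\times$ is not $\prod_t V_t[1/p]^\times$, and you cannot literally pass to individual factors. The computation still goes through, but it must be done for $R$ itself.

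For (1) there is a genuine gap, which you already flag: the claim that w-strict locality of $R$ passes to $\spec(R[1/p])$ is not justified, and in fact open localization does not preserve w-strict locality. Knowing that every residue field of $\spec(R[1/p])$ is algebraically closed is not enough to kill higher \'etale cohomology; you would need strict henselianity at every point together with a suitable topological condition on the space, and neither is supplied by your argument. Your fallback to the rank-$\leq 1$ case does not save you either, again because $(\prod_t V_t)[1/p]$ is not a product of fields.

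The paper closes this gap by a different mechanism. The key input is the Fujiwara--Gabber theorem, which implies that the functor $A \mapsto H^j_{\et}(\spec(\widehat{A}_p[1/p]), \mathbb{F}_p(i))$ commutes with filtered colimits of rings (and it visibly commutes with finite products). One then applies the general principle of \cite[Cor.~3.17]{BM18}: for such a functor, vanishing on $\prod_{t\in T} V_t$ follows from vanishing on every ultraproduct of the $V_t$ along an ultrafilter on $T$. Each such ultraproduct is again an absolutely integrally closed $p$-henselian valuation ring, so its $p$-inverted $p$-completion has trivial higher \'etale cohomology, and (1) follows. The same device handles (2) uniformly. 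This is the missing bridge between the pointwise/factor-wise statements you establish and the global statement for $R$.
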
 
\begin{proof} 
It suffices to prove both claims after reducing modulo $p$. 
Consider the functor 
$A \mapsto F_j(A) := H^j( \spec(\hat{A}_p[1/p]; \mathbb{F}_p(i))$. 
This functor commutes with finite products and filtered colimits by the
Gabber--Fujiwara theorem, cf.~\cite[Th.~6.11]{BM18} for an account. 
To prove 
that $F_j(R) = 0$ for $j > 0$
which implies 
(1), 
it suffices 
as in \cite[Cor.~3.17]{BM18}
to show that $F$
vanishes on every ultraproduct of the $\{V_t\}$ for each ultrafilter on $T$. 
But these ultraproducts are all absolutely integrally closed,
$p$-henselian valuation rings, whence (1). The claim (2) is proved similarly
using the map $A^{\times}[p]  \to 
H^0_{\proet}( \spec(A[1/p]), \mathbb{F}_p(1))$. 
\end{proof} 

Next we consider the cohomology of the structure presheaf.

\begin{construction}[$\arc$-cohomology of the structure presheaf] 
For a derived $p$-complete commutative ring $R$, we let $R \Gamma_{\arc}( \spec(R), \mathcal{O})$
denote the 
$\arc$-cohomology of the structure presheaf. 
\end{construction}

The $\arc$-topology restricts to a topology on the opposite of $\perfd$. 
Our first goal is to identify concretely $\arc$-cohomology on $\perfd$. 

\begin{construction}[Saturation] 
Let $R$ be a perfectoid ring. We have the natural quotient $R \to
R' := R/\mathrm{rad}((p))$, which is the universal map from $R$ to a perfect
$\mathbb{F}_p$-algebra. 
Note that $R' \otimes^L_R R' $ is discrete (and equivalent to $R'$) since
relative tensor products of perfectoid rings are $p$-completely discrete (and
perfectoid).

Let $J = \mathrm{rad}(p)$. 
We have $J \otimes^L_R J \xrightarrow{\sim} J$. 
It follows that one is in the setup of almost mathematics, 
and we have a derived almostification functor $(-)_* \colon D(R) \to D(R)$,
which is also given by $R \hom_R( J, -)$. 
We claim that in fact $J$ can be made explicit, and in particular that it has
projective dimension $\leq 1$, so that for any discrete $R$-module $M$, $M_* \in D(R)^{[0, 1]}$. 
Indeed, there exists an element $\omega \in R$ such that 
$\omega$ is a unit multiple of $p$
and such
that $\omega$ admits a compatible system of $p$-power roots $\{\omega^{1/p^n}\}_{n
\geq 0}$, cf.~\cite[Lem.~3.9]{BMS1}. We claim that as an $R$-module, there is an
equivalence
\begin{equation}  J \simeq \varinjlim \left( R \stackrel{\omega^{1-1/p}}{\to} R
\stackrel{\omega^{1/p -
1/p^2}}{\to} \dots \right).  \end{equation}
To see this, 
we observe first that the filtered colimit on the right-hand-side is a submodule
of 
$R$
given by the ideal $J' := \bigcup_{n} (\omega^{1/p^n})$ (i.e., given by multiplication
by $\omega^{1/p^n}$ on the $n$th term); we can see this by $\arcph$-descent to reduce to the case where 
$R$ is a product of rank $\leq 1$-valuation rings, in which case the claim is
clear. Now clearly $J' \subset J$ and 
$R/ J'$ is a ring of characteristic $p$ on which the Frobenius
is surjective.  To obtain $J' = J$, 
we use \cite[Lem.~3.10]{BMS1} to see that the Frobenius induces an isomorphism
$R/\omega^{1/p^n} \xrightarrow{\sim} R/\omega^{1/p^{n-1}}$ for $n \geq 1$. 
This implies that $R/J'$ is perfect, whence $J = J'$ as desired. 
\end{construction} 

\begin{definition}[Spherically complete fields] 
We recall (cf.~\cite[Ch.~4]{vanRooij78} for an account) that a nonarchimedean field is called \emph{spherically complete} if
every decreasing sequence of closed disks 
has nonempty intersection; in particular, any such field is complete. Any
nonarchimedean field admits an extension which is spherically complete and
algebraically closed.  
\end{definition}

\begin{lemma} 
\label{sphcompletelimvanishlem}
Let $C$ be spherically complete with ring of integers $\mathcal{O}_C$. 
Given any tower $\left\{M_i\right\}_{i \geq 1}$ of cyclic
$\mathcal{O}_C$-modules, we have $\varprojlim^1 M_i = 0$. 
\end{lemma}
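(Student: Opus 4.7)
The plan is to reduce $\varprojlim^1 M_i = 0$ to a solvability question for a system of congruences in $\mathcal{O}_C$, and to solve that system by an inductive construction in which each step invokes spherical completeness to intersect a nested family of disks.

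First, I would write $M_i = \mathcal{O}_C/I_i$ for an ideal $I_i \subseteq \mathcal{O}_C$ and lift the transition $\phi_i \colon M_{i+1} \to M_i$ to multiplication by an element $b_i \in \mathcal{O}_C$ satisfying $b_i I_{i+1} \subseteq I_i$. Via the standard exact sequence $0 \to \varprojlim M_i \to \prod M_i \xrightarrow{1-\mathrm{sh}} \prod M_i \to \varprojlim^1 M_i \to 0$, the vanishing we want is equivalent to the following lifting problem: given any $(y_i)$ with lifts $\tilde y_i \in \mathcal{O}_C$, produce $\tilde x_i \in \mathcal{O}_C$ satisfying $\tilde x_i \equiv \tilde y_i + b_i \tilde x_{i+1} \pmod{I_i}$ for every $i \geq 1$. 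The key algebraic observation is that for each $k \geq 1$ and $N \geq 0$, unwinding backward substitution identifies the set of ``extendable-to-depth-$N$'' choices of $\tilde x_k$ with the explicit coset of an ideal
\[ U_N^{(k)} \;=\; \sum_{j=0}^{N-1} b_k\cdots b_{k+j-1}\,\tilde y_{k+j} \;+\; \Bigl( b_k\cdots b_{k+N-1}\,\mathcal{O}_C \;+\; \sum_{j=0}^{N-1} b_k\cdots b_{k+j-1}\, I_{k+j} \Bigr). \]
As $N$ grows these form a decreasing, nested family of disks in $C$, all contained in $\mathcal{O}_C$. Spherical completeness of $C$ says that every nested sequence of closed balls in $C$ has nonempty intersection, and a short approximation argument (passing to nested closed subdisks through a chosen element of each successive term) extends this to nested families of disks whose radii may be of either ``open'' or ``closed'' type, as can arise when the $I_i$ are non-principal. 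Hence $\bigcap_N U_N^{(k)} \neq \emptyset$ for each $k$.

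I would then build the $\tilde x_i$ inductively. Pick $\tilde x_1 \in \bigcap_N U_N^{(1)}$; having chosen $\tilde x_1,\dots,\tilde x_{k-1}$ with the property that they extend to valid solutions of arbitrarily long truncations, intersect the disks $U_N^{(k)}$ with the single disk of $\tilde x_k \in \mathcal{O}_C$ compatible with $\tilde x_{k-1}$ modulo $I_{k-1}$. This intersection is again a nested family of disks, nonempty by the inductive hypothesis, so spherical completeness supplies an admissible $\tilde x_k$ and the induction continues. The resulting sequence $(\tilde x_i)$ solves the full system of congruences, proving $\varprojlim^1 M_i = 0$.

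The main obstacle I expect is the bookkeeping verifying that $U_N^{(k)}$ really is a coset of an ideal under iterated backward substitution when some $b_i$ fail to be units (or vanish), and that the application of spherical completeness is uniform in the open/closed type of the ideals $I_i$ — the disks $U_N^{(k)}$ need not be of a single type, and one must check that the standard ``every nested family of closed balls has nonempty intersection'' characterization of spherical completeness can be invoked regardless. Once those points are settled, the inductive construction is routine.
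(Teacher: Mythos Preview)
Your approach is correct but takes a substantially different route from the paper's. The paper argues by d\'evissage: writing the tower $\{M_i\}$ as the cokernel of an injection $\{N_i\} \hookrightarrow \{N_i'\}$ with each $N_i' \cong \mathcal{O}_C$, the long exact sequence reduces to the case where each $M_i$ is itself isomorphic to $\mathcal{O}_C$. After passing to a cofinal subsystem one may assume the tower is a descending chain of cyclic ideals $I_i \subset \mathcal{O}_C$; then the short exact sequence $0 \to \{I_i\} \to \{\mathcal{O}_C\} \to \{\mathcal{O}_C/I_i\} \to 0$ with constant middle term reduces the $\varprojlim^1$ vanishing to the surjectivity of $\mathcal{O}_C \to \varprojlim_i \mathcal{O}_C/I_i$, which is precisely the definition of spherical completeness.

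By contrast, you attack the lifting problem for $1 - \mathrm{sh}$ head-on, tracking explicit cosets $U_N^{(k)}$ and invoking spherical completeness at each inductive step to intersect nested disks. This works, and your identification of the main technical points (that $U_N^{(k)}$ is genuinely a coset of an ideal even when some $b_i$ vanish, and that spherical completeness extends from closed disks to mixed open/closed nested families) is accurate---both are routine once noticed, the latter by interleaving closed subdisks. The trade-off is clear: the paper's argument is shorter and makes the connection to the definition of spherical completeness transparent, while yours is self-contained and avoids the bookkeeping of passing to cofinal subsystems and embedding the tower into a constant one, at the cost of explicit formulas and a recursive construction. Either is fine for this lemma; the paper's route is the one you would want if you were generalizing to other module categories where the structure of ideals is less explicit.
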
 
\begin{proof} 
Writing $\left\{M_i\right\}$ as the cokernel of an injective
map $\left\{N_i\right\}_{i \geq 1} \to \left\{N_i'\right\}_{i \geq 1}$ of
inverse systems with the
$ N_i'$ levelwise isomorphic to
$\mathcal{O}_{C}$ and using the long exact sequence, 
we reduce to the case that the $M_i$ are individually isomorphic to
$\mathcal{O}_C$ (in particular torsion-free). 
Moreover, 
upon passing to a cofinal subfamily, 
 we may assume the $\left\{M_i\right\}_{i \geq 1}$ form a descending
sequence of cyclic ideals $\left\{I_i\right\}_{i \geq 1} \subset \mathcal{O}_C$; using
the short exact sequence of inverse systems
$0 \to \left\{I_i\right\}_{i \geq 1} \to \left\{\mathcal{O}_C\right\}_{i \geq 1}
\to \left\{\mathcal{O}_C/I_i\right\}_{i \geq 1} \to 0$ (with the middle sequence
constant), we see that it suffices to show that 
$\mathcal{O}_C \to \varprojlim_i \mathcal{O}_C/I_i$ is surjective. But this is
precisely the definition of spherical completeness. 
\end{proof}

\begin{proposition}[$\arc$-cohomology of perfectoids] 
\label{arccohperfectoid}
The functor $R \mapsto R \Gamma_{\arc}( \spec(R), \mathcal{O})$ restricted on
$\perfd$ agrees with $R \mapsto R_*$. 
In particular, for $R \in \perfd$, 
$R \Gamma_{\arc}( \spec(R), \mathcal{O}) \in D(R)^{[0, 1]}$. 
\end{proposition}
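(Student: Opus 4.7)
The plan is to identify $R \mapsto R_*$ as an $\arc$-sheaf on $\perfd$ receiving a map from the structure presheaf $\mathcal{O}$ which becomes an equivalence after $\arc$-sheafification. The bound $R_* \in D(R)^{[0,1]}$ is immediate from the fact (recalled in the construction of saturation) that $J$ has projective dimension $\leq 1$, so it suffices to identify $R\Gamma_{\arc}(\spec(R), \mathcal{O}) \simeq R_*$; the natural map $R \to R_*$ arises by dualizing the action $J \otimes_R R \to R$ against $R$.

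To see $R \mapsto R_*$ is an $\arc$-sheaf on $\perfd$, I proceed in two substeps. First, it is an $\arcph$-sheaf: dualizing the presentation $J \simeq \varinjlim_n (\omega^{1/p^n} R)$ yields $R_* \simeq \varprojlim_n R$ with transition maps given by multiplication by $\omega^{1/p^n - 1/p^{n+1}}$, presenting $R_*$ as a countable limit of copies of $\mathcal{O}$; since $\mathcal{O}$ is an $\arcph$-sheaf on $\perfd$ with vanishing higher cohomology by \cite[Sec.~8]{Prisms}, so is any such limit. Second, $R_* = 0$ on perfect characteristic-$p$ perfectoid rings: for such $R$ we have $p = 0$ and $R$ is reduced, so $J = \mathrm{rad}(0) = 0$ and $R_* = R\hom_R(0, R) = 0$. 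The only additional $\arc$-covers in $\perfd$ beyond $\arcph$-covers are those isolating a characteristic-$p$ factor, namely $R = R_1 \times R_2 \to R_1$ with $p$ nilpotent on $R_2$; descent for these covers is automatic for an $\arcph$-sheaf which vanishes in characteristic $p$, so $R \mapsto R_*$ is an $\arc$-sheaf on $\perfd$.

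For the identification $R_* \simeq R\Gamma_{\arc}(\spec(R), \mathcal{O})$, it remains to show $R \to R_*$ is an $\arc$-local equivalence. Any $R \in \perfd$ admits an $\arc$-cover by a product $\prod_t V_t$ of absolutely integrally closed rank-$1$ perfectoid valuation rings of mixed characteristic, and these form a basis of the $\arc$-topology stable (up to further $\arc$-covers) under the Cech nerve. On each $V_t$ the ideal $J$ is the maximal ideal $\mathfrak{m} = \bigcup_n (\omega^{1/p^n})$; a direct calculation in the fraction field, using that $v(\omega^{1/p^n}) \to 0$, yields $\hom_{V_t}(\mathfrak{m}, V_t) = V_t$, so $V_t \to (V_t)_*$ is an isomorphism. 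Thus $R \to R_*$ is an isomorphism on a basis of the $\arc$-topology on $\perfd$, hence an $\arc$-local equivalence, which combined with the previous paragraph yields the proposition.

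The main obstacle is functoriality in the first substep of Step~2: the description $J = \bigcup_n (\omega^{1/p^n})$ uses a noncanonical $\omega$, and one must verify that $J_R \otimes_R S \xrightarrow{\sim} J_S$ for maps $R \to S$ in $\perfd$ to extract a genuinely functorial equivalence $R_* \simeq \varprojlim_n R$ compatible with base change. A secondary subtlety in Step~3 is verifying that higher terms of the Cech nerve of a cover by products of valuation rings remain $\arc$-covered by such products, so the basis argument truly produces an $\arc$-local iso; this uses that completed tensor products of perfectoids are perfectoid and that every perfectoid admits an $\arc$-cover by products of absolutely integrally closed perfectoid valuation rings.
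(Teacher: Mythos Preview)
Your overall strategy matches the paper's: show $R\mapsto R_*$ is an $\arc$-sheaf on $\perfd$, then show $R\to R_*$ is an $\arc$-local equivalence by checking on a basis of valuation rings. There is, however, a genuine gap in your local check.

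You verify only that $\hom_{V}(\mathfrak m,V)=V$ and conclude $V\to V_*$ is an isomorphism. But $V_* = R\hom_V(\mathfrak m,V)$ lives in $D(V)^{[0,1]}$, and you have not addressed $H^1(V_*)=\mathrm{Ext}^1_V(\mathfrak m,V)$. Writing $\mathfrak m=\varinjlim_n(\omega^{1/p^n})$, one finds $V_*\simeq R\varprojlim_n V$ along multiplication by $\omega^{1/p^n-1/p^{n+1}}$, so $H^1(V_*)$ is the $\varprojlim^1$ of this tower. The transition maps are injective with images $\omega^{1-1/p^n}V$, a strictly decreasing chain that does not satisfy Mittag--Leffler; the vanishing of $\varprojlim^1$ is therefore not automatic and is in fact a completeness condition on $V$. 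For instance, your basis includes $\mathcal O_{\mathbb C_p}$, which is not spherically complete, and for such rings the relevant $\varprojlim^1$ need not vanish.

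The paper closes this gap by using, as its basis, products of $\mathcal O_C$ for $C$ \emph{spherically complete} and algebraically closed of mixed characteristic. For such $C$, every tower of cyclic $\mathcal O_C$-modules has vanishing $\varprojlim^1$ (this is exactly the content of the spherical completeness hypothesis, cf.\ \Cref{sphcompletelimvanishlem}), and hence $(\mathcal O_C)_*=\mathcal O_C$ on the nose. Your argument becomes correct if you restrict your basis in the same way; note that products of such $\mathcal O_C$ still form a basis for the $\arc$-topology on $\perfd$, since any algebraically closed nonarchimedean field embeds into a spherically complete one.

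On the ``main obstacle'' you flag about functoriality of $J$: this is not a serious issue. For checking the sheaf condition along a fixed $\arcph$-cover $R\to R'$ in $\perfd$, choose $\omega$ (with its $p$-power roots) once in $R$; its images in every term of the \v Cech nerve give compatible presentations $J_S=\bigcup\omega^{1/p^n}S$, so the \v Cech complex of $(-)_*$ is the inverse limit of copies of the \v Cech complex of $\mathcal O$, which is acyclic. Your secondary subtlety about the \v Cech nerve is also harmless: you only need that the cofiber of $R\to R_*$ has trivial $\arc$-sheafified homotopy, which follows once it vanishes on any basis.
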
 
\begin{proof} 
Let $R$ be a perfectoid ring. 
First, we claim that $\arc$-cohomology of $R$ with
$\mathcal{O}$-coefficients can be calculated either on 
all derived $p$-complete rings 
or the subcategory of perfectoid rings (endowed with the $\arc$-cohomology).
To see this, we use that perfectoid rings form a basis for the $\arc$-topology. 
In fact, any ring admits an $\arc$-cover 
by a semiperfectoid ring $S$, and then one can take the perfectoidization 
\cite[Sec.~8]{Prisms} of $S$, which is an $\arc$-cover of $S$ (e.g., maps of
$S$ or its perfectoidization into
$p$-complete absolutely integrally closed valuation rings are the same). 
The rest of the claim is a general argument following from the fact that perfectoid rings form a
basis for the $\arc$-topology, cf.~\Cref{commutewithhypersheaf} below.

Now it suffices to prove that $R \mapsto R_*$ is a $D(\mathbb{Z})^{\geq
0}$-valued $\arc$-sheaf on $\perfd$ and
that the map $R \to R_*$ is locally an equivalence. The fact that it is an $\arc$-sheaf follows because
the structure presheaf is a $D(\mathbb{Z})^{ \geq 0}$-valued sheaf on the
$\arcph$-topology on $\perfd$ and 
$R \mapsto R_*$ annihilates perfect $\mathbb{F}_p$-algebras. The fact that $R
\to R_*$ is locally an equivalence follows by taking an $\arc$-cover by a product of
rings of integers in various spherically
complete, algebraically closed nonarchimedean fields of mixed characteristic;
for such rings, $R = R_*$. 
\end{proof}

Let $R_0$ be a perfectoid base ring, and let $R$ be a derived $p$-complete
$R_0$-algebra. 
One has the construction of the \emph{perfectoidization}
$R_{\pf} $ of $R$, a coconnective $E_\infty$-algebra under $R$ 
which has the property that if $R$ is semiperfectoid, then $R_{\pf}$ is discrete
and is the universal perfectoid ring that $R$ maps to. 
As shown in \cite[Sec.~8]{Prisms}, $R \mapsto R_{\pf}$ is the
$\arcph$-cohomology
of the structure presheaf on the category of derived $p$-complete $R_0$-algebras. 

\begin{proposition}[$\arc$-cohomology as saturated perfectoidization] 
For $R$ an $R_0$-algebra which is derived $p$-complete, we have a natural
equivalence 
of $E_\infty$-algebras
in $D(R_0)$,
$R \Gamma_{\arc}( \spec(R), \mathcal{O}) \simeq  (R_{\pf})_*$ (where almost
mathematics is taken relative to $R_0$ and the ideal $\mathrm{rad}(p)$). 
\end{proposition}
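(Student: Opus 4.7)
The plan is to reduce the computation to the case of perfectoid rings, where \Cref{arccohperfectoid} applies, by choosing a suitable hypercover. The key observation is that every $\arcph$-cover is automatically an $\arc$-cover, since the defining lifting condition for $\arcph$-covers is strictly stronger (one must lift against all $p$-complete rank $\leq 1$ valuation rings, including those in which $p = 0$, whereas $\arc$-covers only require lifting against those with $p \neq 0$). Thus $\arc$ is a finer topology than $\arcph$, and $\arcph$-hypercovers are in particular $\arc$-hypercovers.

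First, I would construct an $\arcph$-hypercover $P^\bullet$ of $R$ with each term in $\perfd$. To do this, one begins with any $\arcph$-cover of $R$ by a semiperfectoid ring and passes to its perfectoidization (which remains an $\arcph$-cover by \cite[Sec.~8]{Prisms}) to obtain an $\arcph$-cover $R \to P^0$ with $P^0 \in \perfd$; one then inductively perfectoidizes the terms of the \v{C}ech nerve to obtain $P^\bullet$. Because $R \mapsto R_{\pf}$ is by construction the $\arcph$-cohomology of the structure presheaf, totalizing the structure presheaf along this hypercover recovers the perfectoidization:
\[ R_{\pf} \simeq \mathrm{Tot}\bigl( P^0 \rightrightarrows P^1 \triplearrows \cdots \bigr). \]

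Second, viewing the same $P^\bullet$ as an $\arc$-hypercover landing in $\perfd$, \Cref{arccohperfectoid} identifies the $\arc$-cohomology of $\mathcal{O}$ on each $P^n$ with the saturation $(P^n)_*$. Descent along the hypercover therefore yields
\[ R \Gamma_{\arc}(\spec(R), \mathcal{O}) \simeq \mathrm{Tot}\bigl( (P^0)_* \rightrightarrows (P^1)_* \triplearrows \cdots \bigr). \]
Third, the saturation functor $(-)_* = R\hom_{R_0}(J, -)$ is a right adjoint and so commutes with the totalization. Combining the two displays gives
\[ R \Gamma_{\arc}(\spec(R), \mathcal{O}) \simeq \bigl( \mathrm{Tot}(P^\bullet) \bigr)_* \simeq (R_{\pf})_*, \]
as desired.

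The main technical obstacle will be justifying the reduction to the perfectoid basis at the level of (hyper)sheaves: one needs \Cref{arccohperfectoid} to produce an equivalence of $\arc$-sheaves on $\perfd$ (not merely pointwise), which is what legitimizes the descent step along $P^\bullet$. This should however follow from the naturality of the identification $R \Gamma_{\arc}(\spec(-), \mathcal{O}) \simeq (-)_*$ established in the proof of \Cref{arccohperfectoid}, combined with the bounded cohomological amplitude of both sides there.
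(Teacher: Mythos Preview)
Your approach is correct but takes a genuinely different route from the paper. The paper argues directly that $R \mapsto (R_{\pf})_*$ is an $\arc$-sheaf: it is an $\arcph$-sheaf (since perfectoidization is $\arcph$-cohomology and $(-)_*$ is a right adjoint) which annihilates $\mathbb{F}_p$-algebras, hence an $\arc$-sheaf. Then one checks that the comparison map $\mathcal{O} \to ((-){\pf})_*$ has cofiber vanishing $\arc$-locally, by reducing to perfectoids and invoking \Cref{arccohperfectoid}. This is a ``sheafify the target'' argument; your argument is instead a ``compute both sides via the same resolution'' argument. Both are valid, and yours has the virtue of making the role of the perfectoid basis explicit.

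There is, however, one point in your outline that needs sharpening. As written, your $P^\bullet$ is the cosimplicial object obtained by applying $(-)_{\pf}$ to the \v{C}ech nerve $C^\bullet$ of $R \to P^0$; it is not obvious that this is literally an $\arc$-hypercover (the matching maps would need to be checked), and your final paragraph does not quite supply the missing justification. The clean fix is to avoid hyperdescent along $P^\bullet$ altogether and instead use \v{C}ech descent along $R \to P^0$: for the $\arc$-hypersheaf $\mathcal{G} = R\Gamma_{\arc}(-, \mathcal{O})$ one has $\mathcal{G}(R) \simeq \mathrm{Tot}(\mathcal{G}(C^\bullet))$, and then one observes that $\mathcal{G}(C^n) \simeq \mathcal{G}(P^n)$ because the semiperfectoid ring $C^n$ and its perfectoidization $P^n$ corepresent the same presheaf on $\perfd$ (every map from $C^n$ to a perfectoid factors uniquely through $P^n$), and $\perfd$ determines $\arc$-hypersheaves by \Cref{arcunfolding}. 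With this in hand, your three steps go through: $R_{\pf} = \mathrm{Tot}(P^\bullet)$ by \v{C}ech descent for the $\arcph$-sheaf $(-)_{\pf}$, $\mathcal{G}(R) = \mathrm{Tot}((P^\bullet)_*)$ by the above, and $(-)_*$ commutes with totalization.
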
 
\begin{proof} 
This is proved similarly as in 
\Cref{arccohperfectoid}. 
We have a natural (in $R$) map 
$R \to (R_{\pf})_*$. To show that it is the $\arc$-sheafification, it suffices
to show that 
the codomain is an $\arc$-sheaf, and that 
the map $R \to (R_{\pf})_*$ has cofiber which vanishes locally in the
$\arc$-topology; note also that we can compute the $\arc$-cohomology either
over all derived $p$-complete rings or over $R_0$-algebras (\Cref{overcat:ex}). 
The codomain is an $\arcph$-sheaf because of 
the identification of perfectoidization with $\arcph$-cohomology
\cite[Sec.~8]{Prisms}, and hence is an
$\arc$-sheaf since it annihilates $\mathbb{F}_p$-algebras. The 
cofiber of $R \to (R_{\pf})_*$ vanishes locally in the $\arc$-topology, as one
sees by  
working with perfectoid $R$ and using the argument of \Cref{arccohperfectoid}. 
\end{proof} 

\begin{example} 
\label{integralexample}
Suppose $R$ is  the $p$-completion of a ring
which is integral over the perfectoid ring $R_0$. 
Then the perfectoidization $R_{\pf}$ is discrete \cite[Th.~10.11]{Prisms}. 
It thus follows that 
$R\Gamma_{\arc}( \spec(R), \mathcal{O}) \in D(R)^{[0, 1]}$. 
\end{example}

\begin{construction}[Witt vector cohomology in the $\arc$-topology] 
We consider the presheaf $W(\mathcal{O})$ given by $R \mapsto W(R)$ 
and its $\arc$-cohomology (with Tate twists) $R \Gamma_{\arc}(\spec(R), W(\mathcal{O})(i))$. 
Since the Witt vector functor is 
endowed with Frobenius and Verschiebung operators, so is 
the construction
$R \mapsto R \Gamma_{\arc}(\spec(R), W(\mathcal{O})(i))$. 
\end{construction} 

In our setting, we can think of the Witt vector cohomology considered above as a one-parameter
(along $V$) deformation
of the structure presheaf cohomology, especially in light of the next result.

\begin{proposition} 
\label{Vcompleteness}
For any ring $R$, $R \Gamma_{\arc}( \spec(R), W(\mathcal{O})(i))$ is $p$-complete
and complete with respect to the Verschiebung. 
\end{proposition}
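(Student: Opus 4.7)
The plan is to observe that both completeness properties are inherited from the presheaf level, using that $R\Gamma_{\arc}(R,-)$ preserves limits and that the $p$-complete (resp.~$V$-complete) objects of $D(\mathbb{Z})$ are closed under limits. First I would show that for any derived $p$-complete ring $R$, the abelian group $W(R)$ is itself derived $p$-complete and $V$-adically complete. The $V$-completeness is immediate from the presentation $W(R) = \varprojlim_n W_n(R)$ with $W_n = W/V^n W$. For $p$-completeness, I would argue by induction on $n$ using the short exact sequences of abelian groups
\[ 0 \to W_{n-1}(R) \xrightarrow{V} W_n(R) \to R \to 0, \]
with base case $W_1(R) = R$ derived $p$-complete by assumption, and using that extensions of derived $p$-complete objects are derived $p$-complete. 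Since the restriction maps $W_n(R) \twoheadrightarrow W_{n-1}(R)$ are surjective there is no $\varprojlim^1$, so $W(R) = \varprojlim_n W_n(R)$ agrees in $D(\mathbb{Z})$ with the classical limit and is derived $p$-complete as a limit of such objects.

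Next I would compute $R\Gamma_{\arc}(R, W(\mathcal{O})(i))$ as a totalization over an $\arc$-hypercover of $R$, so that each term is a value $W(R')(i)$ for some derived $p$-complete $R'$. By the previous step, and using that $\mathbb{Z}_p(i)$ is itself a derived $p$-complete invertible $\mathbb{Z}_p$-module, each such value is derived $p$-complete; since the Verschiebung acts only on the Witt-vector factor and $V$-completeness is preserved by tensoring with a fixed $\mathbb{Z}_p$-module, each value is also $V$-complete. The totalization is then a limit of $p$-complete and $V$-complete objects in $D(\mathbb{Z})$, hence $p$-complete and $V$-complete as desired.

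The main subtlety I anticipate is the handling of the Tate twist: strictly, $\mathbb{Z}_p(i)$ is an $\arc$-sheaf rather than a presheaf, so $W(\mathcal{O})(i)$ is not literally the levelwise tensor product of $W(\mathcal{O})$ with $\mathbb{Z}_p(i)$. To address this, I would pass to an $\arc$-cover by $p$-complete absolutely integrally closed valuation rings containing $\mu_{p^\infty}$; over such rings $\mathbb{Z}_p(i)$ trivializes as a $\mathbb{Z}_p$-module, and the argument reduces to the untwisted case. Since $p$- and $V$-completeness are intrinsic properties in $D(\mathbb{Z})$ preserved by the totalization computing the $\arc$-cohomology, checking them after passing to such a cover suffices.
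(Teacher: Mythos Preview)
Your reduction of the Tate twist to $i=0$ and your verification that $W(R)$ is derived $p$-complete and $V$-complete for derived $p$-complete $R$ are both correct and match what the paper needs. The gap is in the second step: you assert that $R\Gamma_{\arc}(\spec(R), W(\mathcal{O}))$ can be computed as the totalization of the presheaf values $W(R^\bullet)$ over a single $\arc$-hypercover $R \to R^\bullet$. This is not automatic. Hypersheafification is a left adjoint and in general involves a filtered colimit over all hypercovers, not a single limit; the totalization $\mathrm{Tot}(W(R^\bullet))$ agrees with the hypersheafification only if one already knows that $W(-)$ agrees with its $\arc$-hypersheafification on each $R^n$. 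You have not established this, and attempting to do so (say by choosing each $R^n$ to be a product of rings of integers in spherically complete algebraically closed fields) runs into circularity: identifying $W(R^n)$ with $R\Gamma_{\arc}(\spec(R^n), W(\mathcal{O}))$ essentially requires the $V$-completeness you are trying to prove.

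The paper closes this gap by a different mechanism. It proves a general fact (\Cref{commutehypproducts}) that on sites like the $\arc$-site, where covers are stable under countable cofiltered limits, the hypersheafification functor $(-)^\sharp$ on product-preserving presheaves commutes with countable products and with limits of $\mathbb{Z}_{\geq 0}$-indexed towers. The key input is repleteness: given classes $\alpha_i$ in a countable family of presheaves with trivial sheafified homotopy, one kills each $\alpha_i$ by a cover $X_i \to X$ and then takes the infinite fiber product $X_1 \times_X X_2 \times_X \cdots$, which is still a cover. Once this is known, $(p,V)$-completeness of the presheaf $W(\mathcal{O})$ (which you proved) passes directly to its hypersheafification, since $(p,V)$-completeness is exactly the statement that certain countable tower limits recover the object.
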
 
\begin{proof}

By base-changing to $\mathbb{Z}_p[\zeta_{p^\infty}]$, we may assume without loss 
of generality that $i = 0$.  
Then this follows from \Cref{commutehypproducts} below, since the presheaf 
$W(\mathcal{O})$ commutes with finite products is complete with respect to $(p, V)$. 
\end{proof}

\begin{corollary} 
Let $R$ be finite and finitely presented over a perfectoid ring. 
Then $R \Gamma_{\arc}( \spec(R), W(\mathcal{O})(i)) \in D(\mathbb{Z}_p)^{[0,
1]}$ for any $i$. 
\label{cohamplitudeWOi}
\end{corollary}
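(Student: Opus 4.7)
The strategy is to reduce to Example \ref{integralexample} via the Verschiebung filtration on the Witt vector presheaf.

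First I would reduce to the case $i = 0$ exactly as in the proof of Proposition \ref{Vcompleteness}: base-changing along $\mathbb{Z}_p \to \widehat{\mathbb{Z}_p[\zeta_{p^\infty}]}$ trivializes the Tate twist $\mathbb{Z}_p(i)$, since the cyclotomic base is perfectoid and $\mathbb{Z}_p(i)$ is already $\arc$-locally trivial. After this base change $R$ remains finite and finitely presented over a perfectoid ring, so the hypotheses are preserved and in particular Example \ref{integralexample} continues to apply. Thus one may assume $i = 0$.

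Next, for every $n \geq 1$ I would exploit the short exact sequence of presheaves of abelian groups
\[ 0 \to \mathcal{O} \xrightarrow{V^n} W_{n+1}(\mathcal{O}) \to W_n(\mathcal{O}) \to 0, \]
where the injection is $a \mapsto (0,\dots,0,a)$ (with $a$ in the last slot of length $n+1$) and the surjection is truncation. Combined with the base case $W_1(\mathcal{O}) = \mathcal{O}$, this gives by induction on $n$ that $R\Gamma_{\arc}(\spec R, W_n(\mathcal{O})) \in D(\mathbb{Z}_p)^{[0,1]}$ for every $n$: the base case is Example \ref{integralexample}, and the inductive step uses the elementary observation that the middle term of a cofiber sequence whose outer terms lie in $D^{[0,1]}$ is again in $D^{[0,1]}$ (the long exact sequence gives $H^i = 0$ for $i<0$ and $i>1$).

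Finally I would pass to the limit. The Verschiebung completeness of Proposition \ref{Vcompleteness}, together with the identification $W(\mathcal{O})/V^n = W_n(\mathcal{O})$, gives
\[ R\Gamma_{\arc}(\spec R, W(\mathcal{O})) \simeq \varprojlim_n R\Gamma_{\arc}(\spec R, W_n(\mathcal{O})). \]
A priori the right-hand side sits only in $D^{[0,2]}$ by the Milnor exact sequence; the obstruction to improving this to $D^{[0,1]}$ is $H^2(\varprojlim) = \varprojlim{}^1 H^1(R\Gamma_{\arc}(\spec R, W_n(\mathcal{O})))$. But the long exact sequence attached to the short exact sequence above ends in $H^2(R\Gamma_{\arc}(\spec R, \mathcal{O})) = 0$ by Example \ref{integralexample}, so each transition map $H^1(W_{n+1}) \to H^1(W_n)$ is surjective; the tower is Mittag--Leffler and the $\varprojlim{}^1$ vanishes.

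The only nontrivial point is the Mittag--Leffler step, and it falls out immediately once the Verschiebung short exact sequence is in place; the reduction to $i=0$ is routine and Example \ref{integralexample} supplies all the real content by starting the induction.
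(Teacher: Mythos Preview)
Your induction on $n$ via the Verschiebung filtration and the Mittag--Leffler argument for passing to the limit are correct and in fact spell out details the paper's proof leaves implicit. The gap is in your reduction to $i=0$.

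You invoke the reduction ``exactly as in the proof of Proposition~\ref{Vcompleteness},'' but that argument only shows that completeness is inherited from the base change: completeness is preserved under totalizations, so checking it after an $\arc$-cover such as $R \to R\widehat{\otimes}_{\mathbb{Z}_p}\widehat{\mathbb{Z}_p[\zeta_{p^\infty}]}$ is harmless. Cohomological amplitude, however, is \emph{not} stable under totalization. Concretely, the extension you use is pro-$\mathbb{Z}_p^\times$-Galois on generic fibers, and descending along it contributes a continuous $\mathbb{Z}_p^\times$-cohomology which a priori shifts the amplitude bound by $\mathrm{cd}_p(\mathbb{Z}_p^\times)\ge 1$; so knowing the result for the base change only yields $D^{[0,2]}$ for $R$ itself. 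You have not supplied any argument bridging this.

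The paper's fix is to first reduce modulo $p$ (your Verschiebung filtration plus a further mod-$p$ step, both controlled by Proposition~\ref{Vcompleteness}) and then adjoin only a single $\zeta_p$: the resulting $(\mathbb{Z}/p)^\times$-Galois descent has group order prime to $p$, so taking invariants is exact on $\mathbb{F}_p$-modules and the amplitude bound $D^{[0,1]}$ genuinely descends. With that modification in place your argument goes through and is essentially the paper's, only with the completeness-and-Mittag--Leffler reasoning made explicit.
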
 
\begin{proof} 
By $p$-completeness and $V$-completeness 
(\Cref{Vcompleteness}), 
it suffices to prove the analogous statement for 
$R \Gamma_{\arc}( \spec(R), \mathcal{O}(i)/p) $. 
Replacing $R$ with $R[\zeta_p]$ and using $(\mathbb{Z}/p)^{\times}$-Galois
descent along this extension, we may assume that $ R$ contains a primitive $p$th
root of unity. 
This lets us reduce to the case $i = 0$, whence the result follows 
from \Cref{integralexample}. 
\end{proof}

We note finally that one can recover the $p$-adic nearby cycles 
(cf.~\Cref{padicvanishingprop}) as the fixed points of Frobenius 
on Witt vector $\arc$-cohomology. 
This will not be used in the sequel. 

\begin{proposition} 
\label{arcgivesnearbycycles}
For any ring $R$, there is a natural equivalence
for each $i$,
\begin{equation} \label{nearbyisFrobfixed} R \Gamma_{\arc}( \spec(R), \mathbb{Z}_p(i)) \simeq 
R \Gamma_{\arc}( \spec(R), W(\mathcal{O})(i))^{F = 1}.
\end{equation}
\end{proposition}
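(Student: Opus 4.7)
The plan is to establish a fiber sequence of $\arc$-sheaves
\begin{equation} \label{ASWfibseq}
\mathbb{Z}_p(i) \longrightarrow W(\mathcal{O})(i) \xrightarrow{F - 1} W(\mathcal{O})(i),
\end{equation}
and then apply $R\Gamma_{\arc}(\spec R, -)$, which preserves limits (being a right adjoint) and in particular preserves the fiber defining $(-)^{F = 1}$; this yields \eqref{nearbyisFrobfixed} immediately.

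The map in \eqref{ASWfibseq} is induced from the Frobenius-equivariant inclusion $\mathbb{Z}_p = W(\mathbb{F}_p) \hookrightarrow W(R)$ at the presheaf level (with $F$ acting trivially on $\mathbb{Z}_p$), tensored with $\mathbb{Z}_p(i)$ and $\arc$-sheafified; it factors through the fiber of $F - 1$, so the content is that the induced map $\mathbb{Z}_p(i) \to W(\mathcal{O})(i)^{F = 1}$ is an equivalence of $\arc$-sheaves.

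To check this, I verify \eqref{ASWfibseq} locally, on a basis for the $\arc$-topology consisting of products of $p$-complete, absolutely integrally closed rank 1 valuation rings $V$ of mixed characteristic. The Tate twist is reduced to the trivial case $i = 0$ by adjoining a compatible system of $p$-power roots of unity (an $\arc$-cover, cf.~\Cref{ppowerrootex}). Since the middle and right terms of \eqref{ASWfibseq} are $p$-complete and $V$-complete (\Cref{Vcompleteness}) and the Verschiebung filtration on $W(V)/p$ has graded pieces isomorphic to $V/p$, I inductively reduce the $\arc$-local exactness of \eqref{ASWfibseq} to the classical Artin--Schreier short exact sequence
\[ 0 \longrightarrow \mathbb{F}_p \longrightarrow V/p \xrightarrow{x \mapsto x^p - x} V/p \longrightarrow 0, \]
which is exact already on $V$ itself because $V$ is absolutely integrally closed and thus contains roots of all monic polynomials with coefficients in $V$.

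The main obstacle is controlling how the Witt Frobenius interacts with the Verschiebung filtration in mixed characteristic, where $F$ does not act simply by $p$-th powers on Witt components (as it does for perfect $\mathbb{F}_p$-algebras). I would handle this via the ghost map, which is injective on $W(V)$ since $p$ is not a zero divisor in $V$, and which converts the Witt Frobenius into a shift operator amenable to filtration analysis; the kernel computation $W(V)^{F=1} = \mathbb{Z}_p$ then follows because $F$-fixed elements correspond to constant ghost sequences, forcing the underlying element to lie in $\mathbb{Z}_p \subset W(V)$. For the surjectivity of $F-1$, any $p$-adic denominators arising in the recursive solution of the Witt Artin--Schreier equations $(F - 1)(x) = a$ can be absorbed by passing to a further $\arc$-cover of $V$ adjoining additional $p$-power roots. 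Assembling these ingredients establishes \eqref{ASWfibseq} as an $\arc$-local fiber sequence, and applying $R\Gamma_{\arc}(\spec R, -)$ completes the proof.
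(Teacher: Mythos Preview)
Your high-level strategy—construct the comparison map, sheafify, and verify on an $\arc$-basis of products of absolutely integrally closed $p$-complete rank $1$ valuation rings, reducing to $i=0$—matches the paper's approach exactly.

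Where you diverge is in the local computation of $W(V)^{F=1}$, and here your discussion is both more complicated than necessary and slightly off. The key simplification you are missing is that modulo $p$ the Witt vector Frobenius $F$ on $W(V)/p$ \emph{is} the ring-theoretic $p$-th power map (a general identity for $p$-typical Witt vectors). Hence $(W(V)/p)^{F=1}$ is, by the Artin--Schreier sequence, the mod $p$ \'etale cohomology of the ring $W(V)/p$ itself. Since $(Vx)(Vy)=pV(xy)$, the ideal $VW(V)/p$ squares to zero, so \'etale cohomology is unchanged upon passing to the quotient $V/p$, and then to the (algebraically closed) residue field, giving $\mathbb{F}_p$. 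This is the paper's argument (\Cref{Frobfixedex}); it handles kernel and cokernel simultaneously and avoids any ghost-component bookkeeping, $p$-adic denominators, or passage to further $\arc$-covers.

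Your Verschiebung-filtration reduction is also not quite right as written: on the graded pieces in filtration degree $\geq 1$, the induced action of $F$ is \emph{zero} (again because $F$ is the $p$-th power and those elements square to zero mod $p$), not the Artin--Schreier map; so $F-1=-1$ is an isomorphism there and only the bottom piece $V/p$ contributes. The conclusion survives, but the mechanism is not an ``inductive reduction to Artin--Schreier on each graded piece.'' Once you use the observation above, the entire ``main obstacle'' paragraph becomes unnecessary.
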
 
\begin{proof} 
We have a natural map $\mathbb{Z}_p \to W(\mathcal{O})^{F=1}$ of presheaves.
Twisting and sheafifying in the $\arc$-topology, we obtain the map from left to
right in 
\eqref{nearbyisFrobfixed}. 
To see that it is an equivalence, it suffices to 
check that it is an equivalence on products (over some indexing set $T$) of 
rings of the form $\mathcal{O}_C$, for $C$ spherically complete and
algebraically closed of mixed characteristic $(0, p)$; now we can trivialize the
Tate twists, so can assume $i = 0$. 
Using \Cref{padicnearbyproductval}, we find that the left-hand-side is discrete
and simply given by $\prod_T \mathbb{Z}_p$. 
The right-hand-side is also given by $\prod_T \mathbb{Z}_p $ by 
\Cref{Frobfixedex} below. 
\end{proof}

\begin{lemma} 
\label{Frobfixedex}
Let $C$ be an algebraically closed complete nonarchimedean field. 
Then the natural map $\mathbb{Z}_p \to W(\mathcal{O}_C)^{F=1}$ is an
equivalence. 
\end{lemma}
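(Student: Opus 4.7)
The plan is to analyze the fiber sequence
$$W(\mathcal{O}_C)^{F=1} \to W(\mathcal{O}_C) \xrightarrow{F-1} W(\mathcal{O}_C)$$
and prove that $F-1$ is surjective with kernel the constant Witt vectors $\mathbb{Z}_p$. I would split into two cases depending on $\mathrm{char}(C)$.

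If $\mathrm{char}(C) = p$, then $\mathcal{O}_C$ is a perfect $\mathbb{F}_p$-algebra, $W(\mathcal{O}_C)$ is the usual $p$-adically complete, $p$-torsion free Witt ring with $W(\mathcal{O}_C)/p = \mathcal{O}_C$, and $F$ is a lift of the Frobenius. By $p$-adic d\'evissage, it suffices to prove the mod-$p$ statement: the Artin--Schreier map $F-1 \colon \mathcal{O}_C \to \mathcal{O}_C$, $x \mapsto x^p - x$, is surjective with kernel $\mathbb{F}_p$. The kernel statement is clear, and surjectivity uses algebraic closedness: for $c \in \mathcal{O}_C$ the polynomial $T^p - T - c$ splits in $C$, and a Newton-polygon check (if $|T|>1$ then $|T^p-T-c| = |T|^p > 0$) forces every root to lie in $\mathcal{O}_C$.

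If $\mathrm{char}(C) = 0$, then $\mathcal{O}_C$ is a $p$-torsion-free perfectoid ring with residue field $k$ an algebraically closed field of characteristic $p$. I would argue via the tower $W(\mathcal{O}_C) = \varprojlim_n W_n(\mathcal{O}_C)$, analyzing $F-1$ on each level. The successive kernels should be $W_n(\mathbb{F}_p) = \mathbb{Z}/p^n$, where $\mathbb{F}_p \hookrightarrow \mathcal{O}_C$ sits inside as the Teichm\"uller lifts of $\mu_{p-1}(k) \cup \{0\}$. The graded pieces of the $V$-adic filtration are isomorphic to $\mathcal{O}_C$, and on each graded piece $F$ acts (modulo lower filtration) as $x \mapsto x^p$, so $F-1$ reduces to the same Artin--Schreier map, which is surjective with kernel $\mathbb{F}_p$ by the argument in Case 1. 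Induction in $n$ then yields surjectivity of $F-1$ on each $W_n(\mathcal{O}_C)$ with the correct kernel; the surjectivity kills $\varprojlim^1$ in the inverse limit, producing $W(\mathcal{O}_C)^{F=1} = \mathbb{Z}_p$.

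The delicate point, and the step I expect to require the most care, is the filtration analysis in the mixed-characteristic case: the identity $FV = p$ means that $F$ sends $V^n W$ into $p V^{n-1} W$ rather than $V^n W$, so $F$ does not respect the $V$-adic filtration naively and the graded-piece argument has to be set up carefully (exploiting that on the completed Witt vectors the image $p V^{n-1} W$ eventually lands back inside $V^n W$). An appealing alternative route would be to work entirely with ghost components: the ghost map $w \colon W(\mathcal{O}_C) \to \mathcal{O}_C^{\mathbb{N}}$ is injective (since $\mathcal{O}_C$ is $p$-torsion free) and intertwines $F$ with the shift, so $W(\mathcal{O}_C)^{F=1}$ corresponds to constant ghost sequences lying in the image of $w$, and Dwork's congruences together with algebraic closedness of $C$ should identify these constant sequences with precisely $\mathbb{Z}_p$.
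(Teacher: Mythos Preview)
Your Case~1 is fine. Case~2, however, has a genuine gap that you yourself flag but do not resolve: the Witt vector Frobenius does \emph{not} preserve the $V$-adic filtration in mixed characteristic (from $FV = p$ one gets $F(V^i W) = pV^{i-1}W \not\subset V^iW$, and at finite level $F$ is a map $W_n \to W_{n-1}$, so ``$F-1$ on $W_n$'' is not even well-posed as stated). Hence the graded-piece Artin--Schreier analysis cannot be carried out as written. The ghost-component alternative you sketch is reasonable for identifying the kernel, but you have not explained surjectivity of $F-1$ in that language, and Dwork-type criteria are awkward here since $\mathcal{O}_C$ in mixed characteristic has no Frobenius lift.

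The paper avoids both the case split and the filtration subtlety. Since both sides are $p$-complete, it suffices to show $\mathbb{F}_p \xrightarrow{\sim} (W(\mathcal{O}_C)/p)^{F=1}$. The key observation is that the Witt vector Frobenius on $W(\mathcal{O}_C)$ reduces modulo $p$ to the ordinary $p$-th power map on the ring $W(\mathcal{O}_C)/p$, so by the Artin--Schreier sequence the Frobenius fixed points compute the mod-$p$ \'etale cohomology of $\spec(W(\mathcal{O}_C)/p)$. The identity $V(a)V(b) = pV(ab)$ shows that the image of $VW(\mathcal{O}_C)$ in $W(\mathcal{O}_C)/p$ is a square-zero ideal, so this \'etale cohomology agrees with that of the quotient $\mathcal{O}_C/p$, and hence (as $\mathcal{O}_C$ is henselian local) with that of the residue field $k$; since $C$ is algebraically closed so is $k$, giving $\mathbb{F}_p$. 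This route is shorter, uniform in the characteristic of $C$, and sidesteps your filtration issue by trading the $V$-adic filtration for nilpotent invariance of \'etale cohomology.
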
 
\begin{proof} 
It suffices to work modulo $p$, i.e., to show that the natural map 
$\mathbb{F}_p \to (W(\mathcal{O}_C)/p)^{F = 1}$ is an equivalence. But the Witt
vector Frobenius reduces modulo $p$ to the ordinary Frobenius on the ring
$W(\mathcal{O}_C)/p$. By the Artin--Schreier  sequence, 
$(W(\mathcal{O}_C)/p)^{F = 1}$
is the mod $p$ \'etale cohomology of $W(\mathcal{O}_C)/p$. 
This is unchanged by taking the quotient by the ideal $V W(\mathcal{O}_C)/p$,
which squares to zero; therefore, it is also the \'etale cohomology of
$\mathcal{O}_C/p$, or equivalently of the residue field $k$; this in turn is
$\mathbb{F}_p$ as $C$ is algebraically closed and hence so is $k$. 
\end{proof}

\subsection{$\THH^{C_{p^r}}$ of perfectoid rings}

Here we calculate the fixed points of $\THH$ of perfectoid rings, generalizing the
main result of \cite{Hes06}; these results are known to experts. 
Recall that for any $p$-complete ring $R$, we have $\pi_0 \TR(R; \mathbb{Z}_p)
\simeq W(R)$, cf.~\cite[Th.~F]{HM97}. 

Our strategy is to treat the case of a $p$-torsionfree perfectoid ring by a direct
spectral sequence argument and appeal to known results for a perfect
$\mathbb{F}_p$-algebra; we will glue both cases together using an excision
argument. 
To begin with, we consider the characteristic $p$ case. 
For perfect fields, this result appears in  \cite[Sec.~5]{HM97}. 
\begin{lemma} 
\label{TRofperfectFP}
Let $R$ be a perfect $\mathbb{F}_p$-algebra. 
Then: 
\begin{enumerate}
\item  
$\pi_*(\THH(R; \mathbb{Z}_p)^{C_{p^r}}) \simeq W_{r+1}(R)[\sigma_r]$ for $|\sigma_r| =
2$. 
\item
$\TR(R; \mathbb{Z}_p) \simeq H W(R)$. 
\end{enumerate}
\end{lemma}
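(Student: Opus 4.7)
The plan is to reduce to the known case $R = \mathbb{F}_p$, which is due to \cite[Sec.~5]{HM97}.

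Part (2) follows from part (1) by taking the inverse limit along the restriction maps $\THH(R;\mathbb{Z}_p)^{C_{p^r}} \to \THH(R;\mathbb{Z}_p)^{C_{p^{r-1}}}$. On $\pi_0$ these realize the projections $W_{r+1}(R) \to W_r(R)$, whose limit is $W(R)$. On the periodicity classes $\sigma_r \in \pi_2$, restriction carries $\sigma_r$ to a $p$-divisible multiple of $\sigma_{r-1}$ (as already happens in the case $R = \mathbb{F}_p$, where $\TR(\mathbb{F}_p;\mathbb{Z}_p) = H\mathbb{Z}_p$ forces this vanishing), so the positive-degree homotopy vanishes in the $p$-completed limit. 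Combining, $\TR(R;\mathbb{Z}_p) \simeq HW(R)$.

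For part (1), my strategy has three steps. First, since $R$ is perfect, the Frobenius identifies $R \otimes_{\mathbb{F}_p}^L R \simeq R$, whence $\THH(R/\mathbb{F}_p) \simeq R$, and consequently
\[ \THH(R;\mathbb{Z}_p) \simeq \THH(\mathbb{F}_p;\mathbb{Z}_p) \otimes_{H\mathbb{F}_p} HR \]
as $S^1$-equivariant $H\mathbb{F}_p$-algebras, with trivial $S^1$-action on $HR$; in particular $\pi_*\THH(R;\mathbb{Z}_p) = R[u]$ with $|u|=2$ and trivial $S^1$-action on homotopy. Second, I would run the Tate spectral sequence converging to $\pi_*\THH(R)^{tC_{p^r}}$ and the Hesselholt--Madsen spectral sequence converging to $\pi_*\THH(R)^{C_{p^r}}$. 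For $R = \mathbb{F}_p$ these are computed explicitly in \cite[Sec.~5]{HM97}, and for general perfect $R$ the $E_2$-pages are obtained by tensoring with $R$ over $\mathbb{F}_p$ (using flatness of $R$ as a module over the field $\mathbb{F}_p$), so the differentials and extensions base-change accordingly. This shows that $\pi_*\THH(R)^{C_{p^r}}$ is concentrated in even degrees with each even homotopy group free of rank one over $\pi_0$. Third, I would invoke the general identification $\pi_0 \THH(R)^{C_{p^r}} \simeq W_{r+1}(R)$ from \cite[Th.~F]{HM97} to pin down the ring structure on $\pi_0$, concluding $\pi_* \THH(R)^{C_{p^r}} \simeq W_{r+1}(R)[\sigma_r]$.

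The main obstacle lies in the second step: one must verify that the spectral sequences genuinely base-change from $\mathbb{F}_p$ to $R$ without producing new differentials or extensions. This holds because all structural maps in the $\mathbb{F}_p$-case are $\mathbb{F}_p$-linear and natural, and $R$ is $\mathbb{F}_p$-flat. Supplying the $\pi_0$-identification from \cite[Th.~F]{HM97} separately is essential, since $W_{r+1}(R)$ is not the naive tensor product $\mathbb{Z}/p^{r+1} \otimes_{\mathbb{F}_p} R$---the Witt vector ring structure is subtler than any flat base change could produce, and this is precisely why one needs the input of \emph{loc.\ cit.}\ rather than attempting a purely homotopical base-change argument for the genuine fixed points.
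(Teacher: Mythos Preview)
Your approach differs substantially from the paper's, and Part~(1) has a genuine gap. The base-change $\THH(R;\mathbb{Z}_p) \simeq \THH(\mathbb{F}_p;\mathbb{Z}_p) \otimes_{H\mathbb{F}_p} HR$ is valid as $S^1$-equivariant spectra, and spectral sequences for the \emph{Borel}-equivariant constructions $(-)^{hC_{p^r}}$ and $(-)^{tC_{p^r}}$ do base-change along the flat map $\mathbb{F}_p \to R$. But the \emph{genuine} fixed points $\THH(R)^{C_{p^r}}$ depend on the cyclotomic Frobenius, which is not captured by the Borel-equivariant structure and does not simply base-change from $\mathbb{F}_p$; indeed, you yourself observe that $W_{r+1}(R)$ is not a tensor product. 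You never specify what the ``Hesselholt--Madsen spectral sequence'' is, and any device that actually computes the genuine fixed points here must take the cyclotomic structure as input. To bridge this, one needs to know that $\THH(R)^{C_{p^r}} \to \THH(R)^{hC_{p^r}}$ is highly connected, which is precisely the Segal conjecture for $R$ together with \cite[Cor.~II.4.9]{NS18}---the route the paper takes.

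The paper's argument for Part~(1) avoids base-change entirely: it invokes the Segal conjecture for perfect $\mathbb{F}_p$-algebras (so $\varphi$ is $(-3)$-truncated, \cite[Sec.~6]{BMS2}), whence $\THH(R)^{C_{p^r}}$ is the connective cover of $\THH(R)^{tC_{p^{r+1}}}$; the latter is identified with $\THH(R)^{tS^1}/p^{r+1}$ via \cite[Lem.~IV.4.12]{NS18}, and $\pi_*\THH(R)^{tS^1} \simeq W(R)[\sigma^{\pm 1}]$ directly from \cite{BMS2}. This yields $W_{r+1}(R)[\sigma_r]$ without ever comparing to the $\mathbb{F}_p$ case.

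For Part~(2), your inverse-limit argument is plausible but the key step---that restriction sends $\sigma_r$ to a $p$-divisible multiple of $\sigma_{r-1}$---is asserted by naturality from $\mathbb{F}_p$, which requires knowing that the image of $\sigma_r$ from $\THH(\mathbb{F}_p)^{C_{p^r}}$ is still a $W_{r+1}(R)$-generator in degree~$2$; this is not automatic. The paper instead observes that $\THH(R;\mathbb{Z}_p)$ is a module in $\cycsp$ over $K(\mathbb{F}_p;\mathbb{Z}_p) \simeq H\mathbb{Z}_p$, hence \Cref{K1locfromcycl} shows $\TR(R;\mathbb{Z}_p) \to L_{K(1)}\TR(R;\mathbb{Z}_p) = 0$ is $0$-truncated, so $\TR(R;\mathbb{Z}_p)$ is discrete with the known $\pi_0$. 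This sidesteps the restriction-map computation entirely.
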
 
\begin{proof} 
The Segal conjecture holds for $R$: in fact, $\varphi \colon \THH(R;
\mathbb{Z}_p) \to
\THH(R ; \mathbb{Z}_p)^{tC_p}$ is $(-3)$-truncated, cf.~\cite[Sec.~6]{BMS2}. 
Therefore, we have that 
$\THH(R; \mathbb{Z}_p)^{C_{p^r}} \to \THH(R; \mathbb{Z}_p)^{hC_{p^r}}
\xrightarrow{\varphi^{hC_{p^r}}}\THH(R; \mathbb{Z}_p)^{tC_{p^{r+1}}}$ are
equivalences on connective covers, cf.~\cite[Cor.~II.4.9]{NS18}. 
Using the homotopy fixed point spectral sequence (or \cite[Sec.~6]{BMS2}), we
find 
$\pi_*( \THH(R; \mathbb{Z}_p)^{tS^1}) \simeq W(R)[ \sigma^{\pm 1}]$. 
Now $\THH(R; \mathbb{Z}_p)^{tC_{p^{r+1}}} \simeq \THH(R;
\mathbb{Z}_p)^{tS^1}/p^{r+1}$ by \cite[Lem.~IV.4.12]{NS18}. 
Combining these facts, we see that  the first claim follows as $W_{r+1}(R) = W(R)/p^{r+1}$. 

For the second claim, 
note that $\THH(R; \mathbb{Z}_p)$ is a module in $\cycsp$ over $K(\mathbb{F}_p; \mathbb{Z}_p) \simeq
H\mathbb{Z}_p$. 
By \Cref{K1locfromcycl}, we have that 
$\TR(R; \mathbb{Z}_p) \to L_{K(1)} \TR(R; \mathbb{Z}_p)$ is 
$0$-truncated; since the target vanishes, it follows that $\TR(R; \mathbb{Z}_p)$
is $0$-truncated, whence the result as we know $\pi_0\TR(R; \mathbb{Z}_p)$. 
\end{proof} 

In the following, we will say that an $E_\infty$-ring $A$ is \emph{weakly even
periodic} if the odd homotopy groups of $A$ vanish, and if for all $m, n \in
\mathbb{Z}$, the map $\pi_{2m}(A) \otimes_{\pi_0(A)} \pi_{2n}(A) \to
\pi_{2(n+m)}(A)$ is an isomorphism; in particular, $\pi_2(A)$ is an invertible
$\pi_0(A)$-module, and Zariski locally $\pi_*(A)$ is a Laurent polynomial
algebra over $\pi_0(A)$ on a degree two class.

\begin{lemma}  \label{pullbackeven}
Let $A, B, C$ be weakly even periodic $E_\infty$-rings and fix maps $A\to C, B
\to C$ such that $\pi_0(A) \to \pi_0(C)$ is surjective. Then $A \times_C B$ is
weakly even periodic. 
\end{lemma}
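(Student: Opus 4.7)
The plan is to analyze the pullback square via the long exact sequence in homotopy groups and reduce the problem to classical Milnor patching for invertible modules. The pullback $P := A \times_C B$ sits in a fiber sequence of spectra $P \to A \oplus B \to C$, which gives the long exact sequence $\cdots \to \pi_{n+1}(C) \to \pi_n(P) \to \pi_n(A) \oplus \pi_n(B) \to \pi_n(C) \to \cdots$. Using that $A$, $B$, $C$ have vanishing odd homotopy, the odd-degree portion collapses to $\pi_{2k+2}(C) \to \pi_{2k+1}(P) \to 0$, while the even-degree portion gives $\pi_{2k+1}(C) \to \pi_{2k}(P) \to \pi_{2k}(A) \oplus \pi_{2k}(B) \to \pi_{2k}(C) \to \pi_{2k-1}(P)$.

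The main obstacle, and really the only nontrivial input, will be to show that $\pi_{2k}(A) \to \pi_{2k}(C)$ is surjective for all $k \geq 0$; this will collapse the long exact sequence to a Milnor-type short exact sequence. For $k=0$ this is the hypothesis. For $k=1$, I would argue as follows: Zariski-locally on $\spec(\pi_0(A))$, weak even periodicity lets us pick a generator $u \in \pi_2(A)$ which is a unit in $\pi_*(A)$, so that $\pi_*(A) \simeq \pi_0(A)[u^{\pm 1}]$ locally. Its image $f_*(u) \in \pi_2(C)$ is then a unit in $\pi_*(C)$, hence a local generator of the invertible $\pi_0(C)$-module $\pi_2(C)$. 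Since the closed immersion $\spec(\pi_0(C)) \hookrightarrow \spec(\pi_0(A))$ implies every point of $\spec(\pi_0(C))$ is covered by such trivializing opens, the induced map $\pi_2(A) \otimes_{\pi_0(A)} \pi_0(C) \to \pi_2(C)$ is a surjection of invertible $\pi_0(C)$-modules, hence an isomorphism. Combined with $\pi_0$-surjectivity, this gives surjectivity of $\pi_2(A) \to \pi_2(C)$. The case of general $k$ then follows from the isomorphisms $\pi_{2k}(A) \cong \pi_2(A)^{\otimes_{\pi_0(A)} k}$ and $\pi_{2k}(C) \cong \pi_2(C)^{\otimes_{\pi_0(C)} k}$ granted by weak even periodicity.

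With surjectivity in hand, the long exact sequence forces $\pi_{\mathrm{odd}}(P) = 0$ and produces short exact sequences $0 \to \pi_{2k}(P) \to \pi_{2k}(A) \oplus \pi_{2k}(B) \to \pi_{2k}(C) \to 0$, i.e., $\pi_{2k}(P) = \pi_{2k}(A) \times_{\pi_{2k}(C)} \pi_{2k}(B)$; in particular, $\pi_0(P) = \pi_0(A) \times_{\pi_0(C)} \pi_0(B)$.

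It remains to show that $\pi_2(P)$ is an invertible $\pi_0(P)$-module and that multiplication induces isomorphisms $\pi_2(P)^{\otimes_{\pi_0(P)} k} \xrightarrow{\sim} \pi_{2k}(P)$. For this, I would invoke the classical Milnor patching theorem for projective modules: given a pullback of rings $R = R_1 \times_{R_0} R_2$ in which one of $R_i \to R_0$ is surjective, the symmetric monoidal category of finitely generated projective $R$-modules is equivalent to the category of triples $(M_1, M_2, \phi)$ with $M_i$ finitely generated projective over $R_i$ and $\phi \colon M_1 \otimes_{R_1} R_0 \xrightarrow{\sim} M_2 \otimes_{R_2} R_0$; in particular, $M_1 \times_{M_0} M_2$ is invertible whenever $M_1, M_2$ are, and tensor products patch. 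Applying this to the modules $\pi_{2k}(A), \pi_{2k}(B), \pi_{2k}(C)$ (whose compatibility isomorphism comes from the ring maps $A, B \to C$) yields invertibility of $\pi_2(P)$ and the required tensor-power identification, completing the verification.
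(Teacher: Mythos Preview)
Your proof is correct and follows essentially the same approach as the paper's: establish the base-change isomorphisms $\pi_{2k}(A)\otimes_{\pi_0(A)}\pi_0(C)\xrightarrow{\sim}\pi_{2k}(C)$ (and similarly for $B$), use these together with the $\pi_0$-surjectivity hypothesis to collapse the long exact sequence and obtain a Milnor square on $\pi_0$, and then invoke Milnor patching of finitely generated projectives to conclude. The paper's proof is extremely terse (the entire first step is the phrase ``it follows''), and you have correctly filled in the details it omits, including the Zariski-local argument for why a generator of $\pi_2(A)$ maps to a generator of $\pi_2(C)$.
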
 
\begin{proof} 
It follows that $ \pi_*(A) \otimes_{\pi_0(A)} \pi_0(C) \xrightarrow{\sim}
\pi_*(C)$ and similarly 
$ \pi_*(A) \otimes_{\pi_0(A)} \pi_0(B) \xrightarrow{\sim}
\pi_*(B)$. 
Now 
since $\pi_0(A \times_C B), \pi_0(A), \pi_0(B), \pi_0(C)$ form a Milnor square, 
the category of finitely generated 
projective 
$\pi_0(A \times_C B)$-modules is the homotopy pullback of the categories of
finitely generated projective $\pi_0(A)$-modules, $\pi_0(B)$-modules, and
$\pi_0(C)$-modules, cf.~\cite[Sec.~2]{Mi71}. 
The result now follows. 
\end{proof}

\begin{construction}[$p$-torsionfree quotients of perfectoid rings] 
Let $R$ be a perfectoid ring, and let $I \subset R$ be the ideal of $p$-power torsion. 
Then $p I = 0$, cf.~\cite[Prop.~4.19]{BMS2}.

We have in fact that $R/I$ is perfectoid as well. 
To see this, we use the theory of perfectoidizations 
\cite[Sec.~7--8]{Prisms}. 
Let $R' = (R/p)_{\mathrm{perf}}$. 
For every $p$-complete valuation ring $V$ with a map $R \to V$ which does not
annihilate $I \subset R$, we find that $V$ has characteristic $p$ and we obtain
a unique extension $R' \to V$. 
Therefore, applying 
\cite[Cor.~8.11]{Prisms}, we obtain a pullback square of perfectoid rings,
\begin{equation} \label{ptorsionfreesq} \xymatrix{ 
R \ar[d]  \ar[r] &  R' \ar[d]  \\
(R/I)_{\pf}   \ar[r] & (R'/I)_{\pf}
}. \end{equation}
Since the vertical arrows are surjective (cf.~\cite[Th.~7.4]{Prisms}), this is a Milnor square of rings. 
Since the terms on the right side are both $\mathbb{F}_p$-algebras, it follows
that the kernel of $R \to (R/I)_{\pf}$ is annihilated by $p$; since it contains
$I$, it must be equal to $I$ and we have $(R/I)_{\pf} = R/I$. 

In particular, this shows that any perfectoid ring naturally fits into a
Milnor square 
involving a $p$-torsionfree perfectoid ring and a map of perfect $\mathbb{F}_p$-algebras. 
The diagram \eqref{ptorsionfreesq} is in addition a 
homotopy pushout square of $E_\infty$-rings, i.e., there are no higher
$\mathrm{Tor}$ terms, since everything involved is
perfectoid. It follows by \cite{LT19} that \eqref{ptorsionfreesq} induces a
pullback after applying any localizing invariant.  
\end{construction}

\begin{proposition}
\label{TRnperfectoid}
Let $R$ be a perfectoid ring. Then for each $r \geq 1$, we have: 
\begin{enumerate}
\item $\pi_{*} \THH(R; \mathbb{Z}_p)^{C_{p^r}}$ is concentrated in even degrees.  
\item $\pi_{2} \THH(R; \mathbb{Z}_p)^{C_{p^r}} $ is an invertible  module over 
$\pi_0 \THH (R; \mathbb{Z}_p)^{C_{p^r}} \simeq W_{r+1}(R)$. 
\item  The multiplication map 
$\mathrm{Sym}^t 
(\pi_{2} \THH(R; \mathbb{Z}_p)^{C_{p^r}}) \to \pi_{2t}\THH(R;
\mathbb{Z}_p)^{C_{p^r}}$ is an
isomorphism for all $t \geq 0$. 
\end{enumerate}
\end{proposition}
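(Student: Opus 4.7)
My plan is to split into the perfect $\mathbb{F}_p$-algebra case and the $p$-torsionfree perfectoid case, and then glue via the Milnor square \eqref{ptorsionfreesq} using \Cref{pullbackeven}. The perfect $\mathbb{F}_p$-algebra case is exactly \Cref{TRofperfectFP}, which gives $\pi_*\THH(R;\mathbb{Z}_p)^{C_{p^r}} = W_{r+1}(R)[\sigma_r]$ with $|\sigma_r|=2$, so all three claims follow immediately.

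For a $p$-torsionfree perfectoid $R$, I would first deduce the Segal conjecture from the BMS2 calculation: this is the content of the proof of \Cref{perfectoidtrunc} applied with $R_0 = R$ and relative dimension $d = 0$ (the preliminary reduction to $\widehat{\mathbb{Z}_p[\zeta_{p^\infty}]}$-algebras there is used only for the $K(1)$-local truncation consequence, not for the Segal conjecture itself, which is extracted directly from the Nygaard-completed prismatic description). Iterating Tsalidis via \cite[Cor.~II.4.9]{NS18}, the natural map $\THH(R;\mathbb{Z}_p)^{C_{p^r}} \to \THH(R;\mathbb{Z}_p)^{hC_{p^r}}$ is a connective cover. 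I then compute the target via the homotopy fixed point spectral sequence: BMS2 gives $\pi_*\THH(R;\mathbb{Z}_p) = R[\tau]$ with $|\tau| = 2$, and since $R$ is $p$-torsionfree, $H^*(C_{p^r};R[\tau])$ is concentrated in bidegrees $(p,q)$ with both $p, q$ even, so all differentials vanish by parity. This forces $\pi_* \THH(R;\mathbb{Z}_p)^{hC_{p^r}}$ to be concentrated in even degrees, with $\pi_0 = W_{r+1}(R)$ (the ring-theoretic identification coming from \cite[Th.~F]{HM97}, independent of the spectral sequence presentation). A direct reading of the multiplicative $E_\infty$-page over $\pi_0$ shows that $\pi_2$ is freely generated by a lift of $\tau$ and that $\pi_{2t}$ for $t\geq 0$ is the $t$th symmetric power of $\pi_2$; the contributions from $H^{2k}(C_{p^r};R)\cdot\tau^{t+k}$ lie in deeper filtration than the leading term and reassemble into the polynomial structure.

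For a general perfectoid $R$, the Milnor square \eqref{ptorsionfreesq} has $R/I$ a $p$-torsionfree perfectoid and $R', (R'/I)_{\pf}$ perfect $\mathbb{F}_p$-algebras; as noted in the excerpt, it is simultaneously a pullback and a homotopy pushout of $E_\infty$-rings. Applying $\THH(-;\mathbb{Z}_p)^{C_{p^r}}$, which is built from the localizing invariant $\THH(-;\mathbb{Z}_p)$ by taking a limit over $BC_{p^r}$ and hence preserves Milnor pullbacks by \cite{LT19}, yields a pullback of $E_\infty$-rings. The three non-$R$ corners are weakly even periodic after $p$-completion by the preceding two cases, and the map $W_{r+1}(R') \to W_{r+1}(R'/I)$ on $\pi_0$'s is surjective because $R' \twoheadrightarrow R'/I$ is. \Cref{pullbackeven} then gives weak even periodicity of the pullback $\THH(R;\mathbb{Z}_p)^{C_{p^r}}$, which packages claims (1), (2), and (3).

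The main obstacle is the multiplicative bookkeeping in the $p$-torsionfree perfectoid case: vanishing of odd homotopy is immediate from parity, but extracting the precise identification of $\pi_{2t}$ with $\mathrm{Sym}^t(\pi_2)$ over $W_{r+1}(R)$ --- rather than just as an abstractly free graded extension of $W_{r+1}(R)$ --- requires tracking the multiplicative structure of the Tate filtration carefully. Everything else reduces to direct application of results established earlier in the paper.
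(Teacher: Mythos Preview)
Your overall strategy matches the paper's: handle perfect $\mathbb{F}_p$-algebras and $p$-torsionfree perfectoids separately, then glue via the Milnor square \eqref{ptorsionfreesq} and \Cref{pullbackeven}. However, there is a gap in the gluing step. You claim that the three non-$R$ corners, after applying $\THH(-;\mathbb{Z}_p)^{C_{p^r}}$, are weakly even periodic; but they are not. These spectra are connective (for the perfect $\mathbb{F}_p$-corners you yourself compute $\pi_* = W_{r+1}(-)[\sigma_r]$, a polynomial ring), so $\pi_{-2} = 0$ and the map $\pi_2 \otimes_{\pi_0} \pi_{-2} \to \pi_0$ is not an isomorphism. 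Thus \Cref{pullbackeven} does not apply.

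The paper fixes this by working with the Tate construction $\THH(-;\mathbb{Z}_p)^{tC_{p^{r+1}}}$ instead. One first checks, essentially as you do via the Segal conjecture and \cite[Cor.~II.4.9]{NS18}, that the composite $\THH(R;\mathbb{Z}_p)^{C_{p^r}} \to \THH(R;\mathbb{Z}_p)^{hC_{p^r}} \to \THH(R;\mathbb{Z}_p)^{tC_{p^{r+1}}}$ is an equivalence on connective covers for every perfectoid $R$. The Tate construction is genuinely weakly even periodic in both the $p$-torsionfree case (degenerate Tate spectral sequence on $R[\tau]$) and the characteristic-$p$ case (via \Cref{TRofperfectFP}); applying \cite{LT19} and then \Cref{pullbackeven} to the Milnor square shows $\THH(R;\mathbb{Z}_p)^{tC_{p^{r+1}}}$ is weakly even periodic for arbitrary perfectoid $R$, and taking connective covers yields the proposition. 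This also dissolves your acknowledged ``main obstacle'': periodicity is built into the Tate setting, so no reassembly of filtration pieces into a polynomial structure is needed.

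A smaller imprecision: you describe $\THH(-;\mathbb{Z}_p)^{C_{p^r}}$ as ``built from the localizing invariant $\THH(-;\mathbb{Z}_p)$ by taking a limit over $BC_{p^r}$,'' but that describes the homotopy fixed points $\THH^{hC_{p^r}}$, not the genuine fixed points. Your conclusion that the Milnor square becomes a pullback is still correct, since $\THH$ is localizing as a functor to $\cycsp$ and $(-)^{C_{p^r}}\colon \cycsp \to \sp$ preserves limits, but the reasoning as written is off.
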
 
\begin{proof} 
Recall \cite[Sec.~6]{BMS2} that we have a noncanonical isomorphism
$\pi_* \THH(R; \mathbb{Z}_p)^{tC_p} \simeq R[\sigma^{\pm 1}]$ and that 
$\varphi \colon \THH(R; \mathbb{Z}_p) \to \THH(R; \mathbb{Z}_p)^{tC_p}$ exhibits
the source as the connective cover of the target. 
Therefore, we have that 
$\THH(R; \mathbb{Z}_p)^{C_{p^r}} \to \THH(R; \mathbb{Z}_p)^{hC_{p^r}}
\xrightarrow{\varphi^{hC_{p^r}}}\THH(R; \mathbb{Z}_p)^{tC_{p^{r+1}}}$ are
equivalences on connective covers, cf.~\cite[Cor.~II.4.9]{NS18}.

Our next claim is that 
$ \THH(R; \mathbb{Z}_p)^{tC_{p^{r+1}}}$ is a weakly even periodic 
$E_\infty$-ring for any $r \geq 0$. 
In case $R$ is $p$-torsionfree, this follows from the (degenerate) Tate spectral
sequence applied to $\THH(R; \mathbb{Z}_p)$, whose homotopy groups form a
polynomial algebra over $R$ on a class in degree $2$. 
In case $R$ is an $\mathbb{F}_p$-algebra, the claim also follows from
\Cref{TRofperfectFP}. 
We now treat the case of a general perfectoid $R$. 
In view of the Milnor square
\eqref{ptorsionfreesq} which is also a homotopy pushout of $E_\infty$-rings and
the main result of \cite{LT19}, we find that $\THH(-;
\mathbb{Z}_p)^{tC_{p^{r+1}}}$ carries
\eqref{ptorsionfreesq} to a pullback of $E_\infty$-ring spectra.  
Moreover, on $\pi_0(-)$ this square 
yields a Milnor square since $\pi_0 \THH(-; \mathbb{Z}_p)^{tC_{p^{r+1}}} \simeq
W_{r+1}(-)$ for perfectoids via the previous paragraph and \cite[Th.~F]{HM97}. 
Via \Cref{pullbackeven}, the claim of weak even 
periodicity follows. 

Now the above paragraphs show that 
$\THH(R; \mathbb{Z}_p)^{C_{p^r}}$ is the connective cover of a weakly even
periodic $E_\infty$-ring, and we already know its $\pi_0$ 
is given by $W_{r+1}(R)$, whence the result. 
\end{proof}

\subsection{Description of $\TR$ of perfectoids}
\renewcommand{\inf}{\mathrm{inf}}

Let $C$ be spherically complete and algebraically closed of mixed
characteristic $(0, p)$. Then we recall some of
the additional rings attached to $C$. 
We have the tilt ${C}^{\flat}$, the ring of integers
$\mathcal{O}_{C^{\flat}} \subset C^{\flat}$, the 
Fontaine ring $A_{\inf} = A_{\inf}(\mathcal{O}_C)  = W(\mathcal{O}_{C^{\flat}})$. 
Choosing a compatible system $(1, \zeta_p, \zeta_{p^2}, \dots )$ of primitive $p^n$th roots
in $C$, we obtain an element $\epsilon \in \mathcal{O}_{C^{\flat}}$ 
and let $[\epsilon]$ denote the corresponding element of $A_{\inf}$. 
We have the map $\theta \colon A_{\inf} \to \mathcal{O}_C$, which exhibits the
source as the universal $p$-adically pro-nilpotent thickening of the latter; in
particular, this leads to a map $A_{\inf} \to W(\mathcal{O}_C)$, which is a
surjection in this case \cite[Lem.~3.23]{BMS1}. 

\begin{lemma}[$\TR$ in the spherically complete case] 
\label{TRsphcompl}
Let $C$ be a spherically complete, algebraically closed nonarchimedean field 
of mixed
characteristic $(0, p)$
and let $\mathcal{O}_C
\subset C$ be the ring of integers. 
Let $\beta \in \pi_2 \TR(\mathcal{O}_C; \mathbb{Z}_p)$ be the Bott element
(arising from the image of the cyclotomic trace). 
Then $\TR_*(\mathcal{O}_C; \mathbb{Z}_p) \simeq
W(\mathcal{O}_C)[\beta]$. 
\end{lemma}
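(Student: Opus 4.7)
The plan is to compute each $\pi_* \THH(\mathcal{O}_C; \mathbb{Z}_p)^{C_{p^r}}$ explicitly via \Cref{TRnperfectoid}, then pass to the inverse limit along the restriction maps, using the spherical completeness of $\mathcal{O}_C$ to control the $\varprojlim^1$ terms. By \Cref{TRnperfectoid}, each $\pi_* \THH(\mathcal{O}_C; \mathbb{Z}_p)^{C_{p^r}}$ is concentrated in even degrees and equal to the symmetric algebra on $L_r := \pi_2 \THH(\mathcal{O}_C; \mathbb{Z}_p)^{C_{p^r}}$, which is invertible over $W_{r+1}(\mathcal{O}_C)$. Since $\mathcal{O}_C$ is local (being a valuation ring) and the kernel of the ghost map $W_{r+1}(\mathcal{O}_C) \to \mathcal{O}_C$ is the nil ideal $V W_r(\mathcal{O}_C)$, the ring $W_{r+1}(\mathcal{O}_C)$ is local as well, so each $L_r^{\otimes t}$ is free of rank one. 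The Milnor sequence for $\TR = \varprojlim_r \THH^{C_{p^r}}$, combined with this evenness, then gives
\[
\pi_{2t} \TR(\mathcal{O}_C; \mathbb{Z}_p) = \varprojlim_r L_r^{\otimes t}, \qquad \pi_{2t-1} \TR(\mathcal{O}_C; \mathbb{Z}_p) = {\varprojlim_r}^{\!1} L_r^{\otimes t}.
\]

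To identify the $\varprojlim$, observe that the Bott element $\beta \in \pi_2\TR$, arising from the cyclotomic trace on the Bott class in $K_2(\mathcal{O}_C; \mathbb{Z}_p)$, maps at each level $r$ to an element $\beta_r \in L_r$. Granting that $\beta_r$ is a $W_{r+1}(\mathcal{O}_C)$-module generator of $L_r$ (discussed below), I identify the tower $\{L_r^{\otimes t}\}_r$ with $\{W_{r+1}(\mathcal{O}_C)\}_r$ via $\beta_r^t \leftrightarrow 1$ (possibly with transition maps twisted by the restriction behavior of $\beta$); the $\varprojlim$ is then $W(\mathcal{O}_C) \cdot \beta^t$, consistent with the identification $\pi_0 \TR = W(\mathcal{O}_C)$ of \cite[Th.~F]{HM97}. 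To establish $\varprojlim^1_r L_r^{\otimes t} = 0$, I filter each $W_{r+1}(\mathcal{O}_C)$ by the Verschiebung $V^{\bullet} W_{r+1}(\mathcal{O}_C)$; the successive quotients $V^i W_{r+1}/V^{i+1} W_{r+1}$ are cyclic $\mathcal{O}_C$-modules (isomorphic to $\mathcal{O}_C$ via Verschiebung), and the restriction maps preserve this filtration. Applying \Cref{sphcompletelimvanishlem} to each graded subtower gives a vanishing $\varprojlim^1$, and the long exact sequences for the finite filtration assemble to $\varprojlim^1_r L_r^{\otimes t} = 0$.

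The main obstacle is verifying the compatibility claim that the Bott element $\beta$ maps to a $W_{r+1}(\mathcal{O}_C)$-module generator of $L_r$ at each finite level $r$. Since $L_r$ is free of rank one over the local ring $W_{r+1}(\mathcal{O}_C)$, this reduces to showing that $\beta_r$ lies outside the maximal ideal times $L_r$. I expect this to follow from the fact that $\beta$ becomes invertible in $\pi_2 L_{K(1)} K(\mathcal{O}_C) \simeq \pi_2 KU^{\wedge}_p$ (using \Cref{ppowerrootsku}, since $\mathcal{O}_C$ contains $\widehat{\mathbb{Z}_p[\zeta_{p^\infty}]}$), together with the explicit construction of $\beta$ from a compatible system of $p$-power roots of unity in $\mathcal{O}_C^{\times}$, tracking its image through the cyclotomic trace into the invertible module $L_r$.
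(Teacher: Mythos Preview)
Your central claim---that the image $\beta_r$ of the Bott element generates $L_r = \pi_2 \THH(\mathcal{O}_C; \mathbb{Z}_p)^{C_{p^r}}$ over $W_{r+1}(\mathcal{O}_C)$---is false, and the argument collapses without it. Reduce to the residue field $k$: since $K(k;\mathbb{Z}_p)\simeq H\mathbb{Z}_p$ has trivial $\pi_2$, the Bott element dies in $\pi_2\THH(k;\mathbb{Z}_p)^{C_{p^r}}$, whereas the map $L_r\to\pi_2\THH(k;\mathbb{Z}_p)^{C_{p^r}}$ is the reduction modulo the maximal ideal of $W_{r+1}(\mathcal{O}_C)$ (a nonzero map of invertible modules along the local surjection $W_{r+1}(\mathcal{O}_C)\to W_{r+1}(k)$). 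Hence $\beta_r\in\mathfrak{m}\cdot L_r$. There is a related red flag in your outline: had $\beta_r$ been a generator, then since $R(\beta_{r+1})=\beta_r$ the transition maps in $\{L_r^{\otimes t}\}_r$ would be the surjective Witt restrictions and Mittag--Leffler alone would give $\varprojlim^1=0$, rendering your appeal to \Cref{sphcompletelimvanishlem} (and to spherical completeness) superfluous---yet the conclusion genuinely fails for $C=\mathbb{C}_p$, where $\TR_*$ acquires odd classes.

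The paper's route is substantially different. Working modulo $p$, each $\pi_*\THH(\mathcal{O}_C;\mathbb{F}_p)^{C_{p^n}}$ is cyclic over $W(\mathcal{O}_C)/p$, which is a quotient of $\mathcal{O}_{C^\flat}$; since $C^\flat$ is spherically complete, \Cref{sphcompletelimvanishlem} gives evenness of $\TR(\mathcal{O}_C;\mathbb{F}_p)$. Combined with \Cref{perfectoidtrunc} and \Cref{K1locallemma}, this confines $\TR(\mathcal{O}_C;\mathbb{F}_p)/\beta$ to degrees $0$ and $2$. The remaining step---killing $\pi_2$---is not carried out at finite levels but via $(-)^{tS^1}$: one uses $\TR^{tS^1}\simeq\TP$ from \cite{AN}, the identification of $\beta$ as a unit multiple of $([\epsilon]-1)\sigma$ in $\pi_2\TP$ from \cite{HN}, and $A_{\inf}/([\epsilon]-1)\cong W(\mathcal{O}_C)$ from \cite[Lem.~3.23]{BMS1}. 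That computation is the genuine substitute for the generation statement you could not supply.
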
 
\begin{proof} 
Since everything is $p$-complete, 
it suffices to see that $\TR(\mathcal{O}_C; \mathbb{F}_p)/\beta$ is $0$-truncated. 
Now for each $n$, each homotopy group in $\pi_*(\THH^{C_{p^n}}(\mathcal{O}_C; \mathbb{F}_p))$
is a cyclic module over 
$W(\mathcal{O}_C)/p$ (\Cref{TRnperfectoid}), which in turn is a quotient of $A_{\inf}(\mathcal{O}_C)/p =
\mathcal{O}_{C^{\flat}}$. Moreover, the homotopy groups are concentrated in even
degrees.

Now $C^{\flat}$ is
spherically complete, cf.~the proof of \cite[Lem.~3.23]{BMS1}. 
Using the previous paragraph, 
\Cref{sphcompletelimvanishlem},
and the Milnor exact sequence, it follows that 
$\pi_*( \TR( \mathcal{O}_C; \mathbb{F}_p))$ is 
concentrated in even degrees. 
By 
\Cref{K1locallemma} and 
\Cref{perfectoidtrunc}, we also find that 
the cofiber of $\beta$ is $2$-truncated, so 
$\pi_*( \TR( \mathcal{O}_C; \mathbb{F}_p)/\beta)$ is concentrated in degrees $0$
and $2$, and is given by $W(\mathcal{O}_C)/p$ in degree zero. 
It thus suffices to show that 
the zeroth Postnikov section
$\TR(\mathcal{O}_C; \mathbb{Z}_p)/\beta \to H W(\mathcal{O}_C)$
induces an isomorphism on $\pi_0 (-)^{tS^1}$. 
But we have by 
\cite[Cor.~10]{AN}, 
$\TR(\mathcal{O}_C; \mathbb{Z}_p)^{tS^1} = \THH(\mathcal{O}_C;
\mathbb{Z}_p)^{tS^1}$, and on homotopy groups this is 
$A_{\inf}[\sigma^{\pm 1}]$ (cf.~\cite[Sec.~6]{BMS2}). 
Moreover, 
with respect to the above identifications, we have
that $\beta$ is a $\mathbb{Z}_p^{\times}$-multiple of 
$( [\epsilon] -1 )\sigma$, cf.~\cite[Th.~1.3.6]{HN}. 
Therefore, we have
$ \pi_0 ( \TR(\mathcal{O}_C; \mathbb{Z}_p)/\beta )^{tS^1} \simeq A_{\inf}/(
[\epsilon] - 1) [\sigma^{\pm
1}]$, whence the result since 
$A_{\inf}/( [\epsilon] -1 ) \xrightarrow{\sim} W(\mathcal{O}_C)$ by
\cite[Lem.~3.23]{BMS1}. 
\end{proof}

\begin{proposition}[$\TR$ of perfectoid rings] 
\label{TRofperfectoid}
For $R \in \perfd$ and for $i > 0$, we have natural equivalences
\begin{equation} \tau_{[2i-1, 2i]} \TR(R; \mathbb{Z}_p) \simeq 
R \Gamma_{\arc}( \spec(R), W(\mathcal{O})(i))[2i].
\end{equation}
For $R \in \perfd$, we have 
for $i \in \mathbb{Z}$,
\begin{equation} \label{TRpieces}   \tau_{[2i-1, 2i]} \left( L_{K(1)}\TR(R; \mathbb{Z}_p) \right) \simeq 
R \Gamma_{\arc}( \spec(R), W(\mathcal{O})(i))[2i] . \end{equation}
\end{proposition}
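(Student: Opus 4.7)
The plan is to construct a natural comparison map from the $\arc$-cohomology side into the $\TR$-side via the Bott element, and to verify it is an equivalence by pointwise reduction to a basis of the $\arc$-topology on $\perfd$ where \Cref{TRsphcompl} provides an explicit computation.

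For the construction, any $u = (u_0,u_1,\dots) \in T_p(R^\times) = \mathbb{Z}_p(1)^{\mathrm{pre}}(R)$ produces a Bott-type class in $\pi_2 K(R;\mathbb{Z}_p)$ whose image under the cyclotomic trace lies in $\pi_2 \TR(R;\mathbb{Z}_p)$; up to $\mathbb{Z}_p^\times$ this is the element $([\epsilon]-1)\sigma$ featuring in the proof of \Cref{TRsphcompl}. Combining this with the identification $\pi_0\TR(R;\mathbb{Z}_p) = W(R)$ and taking $i$-fold tensor powers yields a natural map of presheaves
\[ W(\mathcal{O}) \otimes \mathbb{Z}_p(i)^{\mathrm{pre}} \longrightarrow \pi_{2i}\TR(-;\mathbb{Z}_p) \]
on $\perfd$; packaging this together with the $\varprojlim^1$-description of $\pi_{2i-1}\TR$ (coming from the even concentration in \Cref{TRnperfectoid}) promotes it to a map of presheaves of spectra $W(\mathcal{O})(i)^{\mathrm{pre}}[2i] \to \tau_{[2i-1,2i]}\TR(-;\mathbb{Z}_p)$; $\arc$-sheafifying the source gives the candidate comparison map $\Psi_i$.

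To verify that $\Psi_i$ is an equivalence, I would restrict attention to the basis of the $\arc$-topology on $\perfd$ consisting of products $R = \prod_{t\in T} \mathcal{O}_{C_t}$ with each $C_t$ spherically complete, algebraically closed of mixed characteristic. For a single $R = \mathcal{O}_C$, \Cref{TRsphcompl} gives $\TR_*(\mathcal{O}_C;\mathbb{Z}_p) = W(\mathcal{O}_C)[\beta]$ while \Cref{arccohperfectoid} together with trivialization of the Tate twist by $(\zeta_{p^n})$ identifies $R\Gamma_{\arc}(\spec \mathcal{O}_C, W(\mathcal{O})(i))$ with $W(\mathcal{O}_C)$ in degree zero, and $\Psi_i$ sends the Tate generator to $\beta^i$ by construction. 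For a product $\prod_t \mathcal{O}_{C_t}$, both sides respect products: Witt vectors and $\mathbb{Z}_p(i)^{\mathrm{pre}}$ commute with products, $\arc$-cohomology of such products collapses (\Cref{padicnearbyproductval}, \Cref{arccohperfectoid}), and spherical completeness combined with \Cref{sphcompletelimvanishlem} ensures the $\varprojlim^1$ terms vanish so that $\tau_{[2i-1,2i]}\TR$ is computed factorwise. The $L_{K(1)}$-local statement for arbitrary $i\in\mathbb{Z}$ then follows by inverting the Bott element: multiplication by $\beta$ identifies $\tau_{[2i-1,2i]}L_{K(1)}\TR$ with a twist of $\tau_{[2i+1,2i+2]}L_{K(1)}\TR$ by $\mathbb{Z}_p(1)$, so the case $i \gg 0$ (where the non-localized statement applies) propagates to all integers.

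The main obstacle is upgrading the pointwise identification on the basis to all perfectoid rings, i.e., showing $\tau_{[2i-1,2i]}\TR(-;\mathbb{Z}_p)$ satisfies enough $\arc$-descent on $\perfd$. Since $\TR$ itself does not commute with filtered colimits or arbitrary products, one cannot quote a formal descent principle directly; instead, I would exploit the very rigid structure supplied by \Cref{TRnperfectoid} (an invertible $W(R)$-module in $\pi_2$ together with a $\varprojlim^1$ in odd degrees) combined with the flat-descent property of $\TR(-;\mathbb{Z}_p)$ from \cite[Sec.~3]{BMS2} and the $\arc$-sheaf nature of saturated perfectoidization recorded in \Cref{arccohperfectoid}. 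Carefully tracking the interaction of the $\varprojlim$ defining $\TR$ with $\arc$-covers by products of spherically complete rings is where I expect the main technical effort to lie.
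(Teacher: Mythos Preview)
Your overall strategy coincides with the paper's: construct a comparison map using the cyclotomic trace $T_p(R^\times) \to \pi_2\TR(R;\mathbb{Z}_p)$ together with $\pi_0\TR \simeq W(R)$, and verify it on the basis of products $\prod_t \mathcal{O}_{C_t}$ with each $C_t$ spherically complete and algebraically closed, where \Cref{TRsphcompl} applies factorwise. The treatment of products and the passage to $L_{K(1)}\TR$ by Bott periodicity are also along the same lines.

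Where your proposal is less sharp than the paper is precisely at the point you flag as the obstacle. You propose to establish $\arc$-descent for $\tau_{[2i-1,2i]}\TR(-;\mathbb{Z}_p)$ on $\perfd$ by combining \Cref{TRnperfectoid} with flat descent of $\TR$ and the $\arc$-sheaf property of saturated perfectoidization, and you anticipate hard work tracking the interaction of $\varprojlim$ with $\arc$-covers. The invocation of flat descent is a red herring here: $\arc$-covers are not flat, so flat descent of $\TR$ does not directly help. The paper's argument is cleaner and avoids this. By \Cref{TRnperfectoid}, for each $r$ the functor $R \mapsto \THH(R;\mathbb{Z}_p)^{C_{p^r}}$ on $\perfd$ has even homotopy given by symmetric powers of an invertible $W_{r+1}(R)$-module; since the structure presheaf is a $D(\mathbb{Z})^{\geq 0}$-valued $\arcph$-sheaf on $\perfd$ \cite[Prop.~8.9]{Prisms}, so is $W_{r+1}(-)$ and hence each $\THH(-;\mathbb{Z}_p)^{C_{p^r}}$. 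Taking the inverse limit over $r$ (and absorbing the possible $\varprojlim^1$ into $\tau_{[2i-1,2i]}$) shows $R \mapsto \tau_{[2i-1,2i]}\TR(R;\mathbb{Z}_p)$ is an $\arcph$-sheaf on $\perfd$; for $i>0$ it annihilates perfect $\mathbb{F}_p$-algebras (\Cref{TRofperfectFP}), so it is already an $\arc$-sheaf. Once you know this, the comparison with $R\Gamma_{\arc}(\spec(R),W(\mathcal{O})(i))[2i]$ reduces immediately to checking on the basis, exactly as you propose. The same structural observation also gives commutation of $\TR(-;\mathbb{Z}_p)$ with products on $\perfd$ without any spherical-completeness input: \Cref{TRnperfectoid} shows $\THH(-;\mathbb{Z}_p)$ on $\perfd$ commutes with products, and then one climbs the fixed-point tower.
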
 
\begin{proof} 
The structure presheaf on $\perfd$ defines an $\arcph$-sheaf,
\cite[Prop.~8.9]{Prisms}.  
Taking the inverse limit in \Cref{TRnperfectoid} over the restriction maps (and accounting for possible $\lim^1$ terms),
we find in particular that 
$R \mapsto \tau_{[2i-1, 2i]} \TR(R; \mathbb{Z}_p)$ defines an
$\arcph$-sheaf of spectra on $\perfd$. 
Since for $i > 0$, this functor annihilates perfect $\mathbb{F}_p$-algebras, 
it in fact defines an $\arc$-sheaf on $\perfd$.

The cyclotomic trace gives a map 
of $\mathbb{Z}_p$-modules
$T_p( R^{\times}) \to \pi_2 \TR(R; \mathbb{Z}_p)$. 
Since $\pi_0( \TR(R; \mathbb{Z}_p) ) \simeq W(R)$, we obtain natural maps 
$f_i \colon W(R) (i)\to  \pi_{2i}( \TR(R; \mathbb{Z}_p))$ for $i > 0$. To complete the proof, it
suffices to show that: 

\begin{enumerate}[a)]
\item  
$f_i$ is an isomorphism for a cofinal (in the $\arc$-topology) collection of $R \in \perfd$. 
\item
$\pi_{2i-1}( \TR(R; \mathbb{Z}_p)) =0 $ for a cofinal (in the
$\arc$-topology) collection of $R \in
\perfd$. 
\end{enumerate}

We will take this collection to be the set of products of perfectoid rings of
the form $\mathcal{O}_C$, for $C$ a spherically complete and algebraically
closed  nonarchimedean field of mixed characteristic $(0, p)$. 
First, \Cref{TRnperfectoid} shows that the construction $R \mapsto \THH(R;
\mathbb{Z}_p)$ commutes with products for $R \in \perfd$; inductively and taking
the limit we find that $R \mapsto \TR(R; \mathbb{Z}_p)$ commutes with products
for $R \in \perfd$. 
Thus, it suffices to verify the above claims for $R = \mathcal{O}_C$ itself,
and that follows from \Cref{TRsphcompl}.

For the claim about $L_{K(1)} \TR(R; \mathbb{Z}_p)$, note first that 
the first part of the proof and \Cref{perfectoidtrunc} 
(and \eqref{ptorsionfreesq} to reduce to the $p$-torsionfree case)
shows easily that $R \mapsto L_{K(1)} \TR(R; \mathbb{Z}_p)$ defines an
$\arc$-sheaf on $\perfd$ (cf.~also the remarks at the beginning of the proof
of \Cref{perfectoidtrunc}, concerning when $L_{K(1)}(-)$ commutes with homotopy limits).
It then follows from the above (by inverting the Bott element over perfectoid rings
containing $p$-power roots of unity) that the homotopy groups of the $\arc$-sheaf
$L_{K(1)} \TR(R; \mathbb{Z}_p)$ on $\perfd$ are given by 
$\pi_{2i} \simeq W(\mathcal{O})(i)$, and we obtain a filtration with associated
gradeds the right-hand-side of 
\eqref{TRpieces}
via the Postnikov filtration as $\arc$-sheaves. 
Note that the Postnikov filtration is always exhaustive, and it is complete
because it is complete on the $\arc$-sheaf $R \mapsto \tau_{\geq 1} L_{K(1)} \TR(R) \simeq
\tau_{\geq 1} \TR(R; \mathbb{Z}_p)$. 
Since the relevant associated graded pieces are in homological degrees $[2i-1,
2i]$ by \Cref{cohamplitudeWOi}, the result follows. 
\end{proof} 

\subsection{The main results}

In this subsection, we prove \Cref{sseqthmintro} from the introduction. 
Our strategy is to compare $L_{K(1)} \TR(-)$ with its
$\arc$-hypersheafication. 
\newcommand{\shv}{\mathrm{Shv}}

\begin{construction}[The invariant $\left( L_{K(1)} \TR(-) \right)^{\sharp} $]
We define the functor $(L_{K(1)} \TR(-))^{\sharp}$ on derived $p$-complete rings
as the $\arc$-hypersheafification of the functor 
$L_{K(1)} \TR(-)$ on derived $p$-complete rings. 
We have a comparison map 
\begin{equation} L_{K(1)} \TR(-) \to 
\left( L_{K(1)} \TR(-) \right)^{\sharp}. \label{sharpcompmap} \end{equation}
For a ring which is not necessarily derived $p$-complete, we define 
$\left( L_{K(1)} \TR(-) \right)^{\sharp}$ as 
that of its derived $p$-completion. 
\end{construction}

To analyze this construction, we will use the results about 
$L_{K(1)} \TR$ of perfectoids proved in the previous section, as well as some
general tools from the appendix. 
On $\perfd$, \Cref{TRofperfectoid} implies that $L_{K(1)} \TR(-)$ defines an
$\arc$-hypersheaf. 
Using \Cref{commutewithhypersheaf}, it follows that the
restriction of $\left(L_{K(1)}
\TR(-)\right)^{\sharp}$ to $\perfd$ is just $L_{K(1)} \TR(-)$ again, i.e.,
\eqref{sharpcompmap} is an equivalence.\footnote{Alternatively, one could build 
$\left( L_{K(1)} \TR(-) \right)^{\sharp}$ using the unfolding construction of
\Cref{arcunfolding}, by starting with the $\arc$-hypersheaf $L_{K(1)} \TR(-)$ on
$\perfd$.
} 

\begin{proposition} 
For any ring $R$, 
there is a natural, complete exhaustive $\mathbb{Z}_{}$-indexed filtration on 
$\left( L_{K(1)} \TR(R) \right)^{\sharp}$, 
denoted 
$\fil^{\geq \ast}\left( L_{K(1)} \TR(R) \right)^{\sharp}$, with associated
graded terms
$\gr^i \left( L_{K(1)} \TR(R) \right)^{\sharp} \simeq R \Gamma_{\arc}(
\spec(R), W(\mathcal{O})(i))[2i]$.  
\end{proposition}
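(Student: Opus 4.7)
The plan is to construct the filtration first on the subcategory $\perfd$, where the structure of $L_{K(1)} \TR$ is already known in full detail by \Cref{TRofperfectoid}, and then transport it to all rings via the unfolding construction of \Cref{arcunfolding}. Since $L_{K(1)}\TR(-)$ restricted to $\perfd$ is an $\arc$-hypersheaf (as established in the second half of the proof of \Cref{TRofperfectoid}), the restriction of $(L_{K(1)}\TR(-))^{\sharp}$ to $\perfd$ agrees with $L_{K(1)}\TR(-)$ itself, so starting from $\perfd$ loses no information.

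On $\perfd$, I would define the filtration as the double-speed Postnikov filtration,
\[ \fil^{\geq i} L_{K(1)}\TR(R) := \tau_{\geq 2i - 1} L_{K(1)} \TR(R), \]
computed inside the $\infty$-category of $\arc$-hypersheaves on $\perfd$. By the identification \eqref{TRpieces} in \Cref{TRofperfectoid}, the associated graded pieces satisfy
\[ \gr^i L_{K(1)}\TR(R) \simeq \tau_{[2i-1,2i]} L_{K(1)}\TR(R) \simeq R\Gamma_{\arc}(\spec(R), W(\mathcal{O})(i))[2i] \]
as $\arc$-sheaves on $\perfd$. Moreover, the Postnikov filtration is automatically complete and exhaustive on $\perfd$.

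Next, I would apply unfolding to promote this filtered object on $\perfd$ to one defined on all derived $p$-complete rings, and thence to all rings by derived $p$-completion. Unfolding is a limit construction against hypercovers and therefore preserves cofiber sequences, so the cofiber sequences $\fil^{\geq i+1} \to \fil^{\geq i} \to \gr^i$ transport to cofiber sequences for $(L_{K(1)}\TR(-))^{\sharp}$. Crucially, each graded piece $R \mapsto R\Gamma_{\arc}(\spec(R), W(\mathcal{O})(i))[2i]$ is, by construction, an $\arc$-hypersheaf already defined on all rings, so unfolding it from $\perfd$ returns the same functor; this identifies the gradeds on the nose.

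What remains is completeness and exhaustiveness of the unfolded filtration. Completeness is the easier half, since unfolding commutes with the inverse limit $\varprojlim_i \fil^{\geq i}$ (both being limit constructions) and completeness already holds on $\perfd$. The main obstacle is exhaustiveness globally: unfolding need not commute with the filtered colimit $\varinjlim_i \fil^{\geq -i}$. The remedy I plan is to use the uniform cohomological amplitude bound of \Cref{cohamplitudeWOi}, which shows that each $\gr^i$ has $\arc$-cohomological amplitude $[0,1]$ (in a shift-controlled way) on sufficiently nice input; this bounds the relevant hyperdescent spectral sequences uniformly, permitting the exchange of $\varinjlim$ and the unfolding limit, and thereby reducing exhaustiveness globally to exhaustiveness on perfectoid input, which we already have.
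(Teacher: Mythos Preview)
Your approach is essentially the paper's, viewed from the $\perfd$ side. The paper defines the filtration directly as the $\arc$-Postnikov filtration on the hypersheaf $(L_{K(1)}\TR)^\sharp$ and then identifies the graded pieces by restricting to $\perfd$ via \Cref{arcunfolding}; since unfolding is a $t$-exact equivalence of $\infty$-categories, this is the same as defining the double-speed Postnikov filtration on $\perfd$ and transporting, exactly as you propose.

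Your treatment of exhaustiveness, however, contains a misconception that creates work you do not need to do and then proposes a fix that would not succeed. \Cref{arcunfolding} asserts that restriction to $\perfd$ is an \emph{equivalence} of $\infty$-categories between $\arc$-hypersheaves on all derived $p$-complete rings and on $\perfd$; an equivalence preserves all colimits, not only limits, so exhaustiveness on $\perfd$ transports for free. There is no obstacle here. Moreover, your proposed remedy via \Cref{cohamplitudeWOi} could not work as stated: that result is only proved for $R$ finite and finitely presented over a perfectoid ring, so it does not supply a uniform amplitude bound for arbitrary $R$. The paper sidesteps all of this by noting that the Postnikov filtration is always exhaustive, and handles completeness (which is the genuinely nontrivial direction in a sheaf $t$-structure) via \Cref{commutehypproducts}, i.e., the convergence of Postnikov towers for hypercomplete $\arc$-sheaves. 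You should also invoke that result rather than asserting completeness on $\perfd$ as ``automatic.''
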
 
\begin{proof} 
The filtration in question is the $\arc$-Postnikov filtration (on the derived
$p$-completion of $R$). 
Note that Postnikov towers converge for hypercomplete $\arc$-sheaves
(\Cref{commutehypproducts}). 
For the identification of the graded pieces (or equivalently, the sheafified
homotopy groups), it suffices by descent (cf.~\Cref{arcunfolding}) to work with the subcategory $\perfd$,
where the result follows from \eqref{TRpieces}. 
\end{proof}

We do not know in general for which $R$ the comparison map \eqref{sharpcompmap} is an
equivalence; for such $R$, one obtains a ``motivic'' filtration on $L_{K(1)}
\TR(R)$ with associated graded terms the $\arc$-cohomology 
complexes 
$R \Gamma_{\arc}(
\spec(R), W(\mathcal{O})(i))[2i]$.
Here we will show that the comparison map is an equivalence in certain formally
smooth cases, using the pro-Galois descent result (\Cref{progaldesc}). 
\begin{theorem} 
Let $R_0$ be a $p$-torsionfree perfectoid ring. 
Let $R$ be a formally smooth $p$-complete $R_0$-algebra. 
Then $L_{K(1)} \TR(R) \xrightarrow{\sim} (L_{K(1)} \TR(R))^{\sharp}$. 
\label{sharpcompACcase}
\end{theorem}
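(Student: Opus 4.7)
The plan is to exhibit both $L_{K(1)}\TR(R)$ and $(L_{K(1)}\TR(R))^{\sharp}$ as totalizations of the same cosimplicial object whose terms are rings with perfectoid $p$-completion, so that agreement follows from \Cref{TRofperfectoid}. First I would reduce to a convenient local form. Enlarging $R_0$ along $R_0 \to R_0 \widehat{\otimes}_{\mathbb{Z}_p} \widehat{\mathbb{Z}_p[\zeta_{p^\infty}]}$---an $\arc$-cover as well as a pro-finite-\'etale Galois extension in the generic fiber---and using descent on both sides (via \Cref{progaldesc} on the $L_{K(1)}\TR$ side, and the $\arc$-hypersheaf property on the $\sharp$ side), we may assume $R_0$ contains a compatible system of primitive $p$-power roots of unity. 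Zariski localization then lets us further assume $R$ is formally \'etale over the formal torus $R_0 \langle t_1^{\pm 1}, \ldots, t_n^{\pm 1}\rangle$.

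Under this reduction, \Cref{smoothoverOC} applies with $G = \mathbb{Z}_p(1)^n$ and the pro-Galois extension $R \to S$ obtained by extracting $p$-power roots of the coordinates, giving
\[ L_{K(1)}\TR(R) \xrightarrow{\sim} \mathrm{Tot}\bigl( L_{K(1)}\TR(S) \rightrightarrows L_{K(1)}\TR(\mathrm{Fun}_{\mathrm{cts}}(G,S)) \triplearrows \cdots \bigr), \]
with each ring appearing having perfectoid $p$-completion. The key claim is that $R \to S$ is itself an $\arc$-cover of derived $p$-complete rings: any map $R \to V$ to a mixed-characteristic $p$-complete rank one valuation ring lifts through the ind-finite-\'etale Galois cover $R[1/p] \to S[1/p]$ after a suitable field extension, and integrality of each intermediate $S_r$ over $R$ ensures the lift lands in a valuation ring extending $V$. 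Combined with the Galois-theoretic identification $S \widehat{\otimes}_R S \simeq \mathrm{Fun}_{\mathrm{cts}}(G,S)^{\wedge}_p$ (and similar for higher tensor powers), the cosimplicial object above is precisely the \v{C}ech nerve of $R \to S$ in the derived $p$-complete category. Since $(L_{K(1)}\TR(-))^\sharp$ is an $\arc$-hypersheaf by construction, the analogous totalization formula holds for $(L_{K(1)}\TR(R))^\sharp$.

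On each level of this cosimplicial object the ring has perfectoid $p$-completion; since $L_{K(1)}\TR(-)$ depends only on the $p$-completion and, by \Cref{TRofperfectoid}, is already an $\arc$-hypersheaf when restricted to $\perfd$, the comparison map $L_{K(1)}\TR(T) \to (L_{K(1)}\TR(T))^\sharp$ is a termwise equivalence. Passing to totalizations yields the theorem. The main obstacle is verifying the $\arc$-cover property of $R \to S$ and reconciling the continuous cochain complex produced by \Cref{progaldesc} with the \v{C}ech nerve to which $\arc$-hypersheaf descent applies---essentially Galois-theoretic bookkeeping combined with careful handling of $p$-adic completions.
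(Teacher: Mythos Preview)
Your overall architecture is the same as the paper's: reduce to the case where $R_0$ contains all $p$-power roots of unity, localize to a formal torus chart, invoke \Cref{smoothoverOC}, and then match totalizations termwise on perfectoids. The second half of your argument is essentially identical to the paper's.

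The difference, and the only real problem, is in your first reduction. You propose to adjoin roots of unity and descend along the $\mathbb{Z}_p^{\times}$-tower $R \to R[\zeta_{p^\infty}]$ using \Cref{progaldesc}. But hypothesis (2) of \Cref{progaldesc} requires, for each finite level $S_r = R[\zeta_{p^r}]$, that $\TR(S_r;\mathbb{F}_p) \to L_{K(1)}\TR(S_r;\mathbb{F}_p)$ be $d$-truncated. The only tool available for this is \Cref{perfectoidtrunc}, which needs $S_r$ to be formally smooth over a $p$-torsionfree perfectoid ring. Here $S_r$ is formally smooth over $R_0[\zeta_{p^r}]$, but $R_0[\zeta_{p^r}]$ is in general \emph{not} perfectoid: the extension $\mathbb{Z}_p \to \mathbb{Z}_p[\zeta_{p^r}]$ is totally ramified, so base-changing a perfectoid ring along it need not stay perfectoid. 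Thus hypothesis (2) is unverified, and your appeal to \Cref{progaldesc} for the cyclotomic tower does not go through as written.

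The paper sidesteps this by using Andr\'e's lemma to produce a $p$-completely faithfully flat map $R_0 \to R_0'$ of perfectoid rings with $R_0'$ containing all roots of unity, and then applying \emph{flat} descent for $\TR(-;\mathbb{Z}_p)$ along $R \to R \widehat{\otimes}_{R_0} R_0'$. The point is that every term $R \widehat{\otimes}_{R_0} (R_0')^{\widehat{\otimes}_{R_0}(n+1)}$ in the \v{C}ech nerve is formally smooth over the perfectoid ring $(R_0')^{\widehat{\otimes}_{R_0}(n+1)}$ (completed tensor products of perfectoids over a perfectoid are perfectoid), so \Cref{perfectoidtrunc} applies uniformly and the $\TR$-descent passes to $L_{K(1)}\TR$. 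Your approach could be repaired along similar lines---e.g., by replacing each $R_0[\zeta_{p^r}]$ with its perfectoidization and invoking \Cref{isogenyinv}---but this is exactly the extra work the paper's route avoids.
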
 
\begin{proof} 
First, we reduce to the case where $R_0$ admits a compatible system of $p$-power
roots of unity. By Andr\'e's lemma \cite[Th.~7.12]{Prisms}, we know that 
there exists a $p$-completely faithfully flat perfectoid $R_0$-algebra $R_0'$ which has
this property (e.g., is absolutely integrally closed). 
Now we have by descent
\begin{equation} \label{TRdesc}  \TR(R; \mathbb{Z}_p) \simeq \mathrm{Tot}( \TR(R \otimes_{R_0} R_0' ;
\mathbb{Z}_p) \rightrightarrows \TR( R \otimes_{R_0}R_0' \otimes_{R_0} R_0';
\mathbb{Z}_p) \triplearrows \dots ).  \end{equation}
Since in high degrees all of the terms in the above totalization agree with their
$K(1)$-localization by \Cref{perfectoidtrunc}, it follows that  
the 
descent property \eqref{TRdesc}
holds for $L_{K(1) } \TR(-)$ as well 
(cf.~the beginning of the proof of \emph{loc.~cit.}). 
Moreover, $(L_{K(1)} \TR(-))^{\sharp}$ satisfies descent for the map $R \to R
\otimes_{R_0} R_0'$ by construction.
Therefore, we reduce to the case where $R_0$ contains
$\widehat{\mathbb{Z}_p[\zeta_{p^\infty}]}$.

Working locally on $\mathrm{Spf}(R)$, we can assume that $R$ 
receives a map from 
$R_0\left \langle t_1^{\pm 1}, \dots, t_n^{\pm 1}\right\rangle$ 
which is \'etale mod $p$. 
We consider the 
extension $R_\infty = R \otimes_{\mathbb{Z}[\zeta_{p^\infty}, t_1^{\pm 1},
\dots, t_n^{\pm 1}]} 
\mathbb{Z}[\zeta_{p^\infty}, t_1^{\pm 1/p^\infty},
\dots, t_n^{\pm 1/p^\infty}]$ and the evident $\mathbb{Z}_p(1)^n$-action on $R_\infty$. 
As in \Cref{smoothoverOC}, we find from 
\Cref{progaldesc} that the natural map induces an equivalence
\[ L_{K(1)} \TR(R) \xrightarrow{\sim} 
\mathrm{Tot}\left( L_{K(1)} \TR(R_\infty) \rightrightarrows L_{K(1)} \TR(
\mathrm{Fun}_{\mathrm{cts}}( \mathbb{Z}_p(1)^n, R_\infty)) \triplearrows \dots
\right).
\]
Now this is also true for $(L_{K(1)}\TR(-))^{\sharp}$, because the above
augmented cosimplicial ring is an $\arc$-hypercover (strictly speaking, for that we replace
all rings by their derived $p$-completions). 
But now this gives $L_{K(1)} \TR(R) \xrightarrow{\sim} 
(L_{K(1)}\TR(R))^{\sharp}$, since they agree on perfectoids and every term in
the above cosimplicial resolution has perfectoid $p$-completion. 
\end{proof}

\begin{theorem} 
Suppose $R$ is a formally smooth $\mathcal{O}_K$-algebra, where $K$ is a complete
discretely valued field
of mixed 
characteristic $(0, p)$
whose residue field $k$ satisfies $[k:k^p]< \infty$. Then 
$L_{K(1)} \TR(R) \xrightarrow{\sim} 
\left( L_{K(1)} \TR(R) \right)^{\sharp}$. 
\end{theorem}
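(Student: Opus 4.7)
The plan is to reduce to the perfectoid-base case (Theorem \ref{sharpcompACcase}) via pro-Galois descent in the generic fiber. Since $[k:k^p]<\infty$, choose a $p$-basis $\bar b_1,\ldots,\bar b_d$ of $k$, lift to units $b_i\in\mathcal{O}_K^\times$, and form the tower $K_n = K(\zeta_{p^n}, b_1^{1/p^n},\ldots,b_d^{1/p^n})$ with colimit $K_\infty$. Then $\widehat{\mathcal{O}_{K_\infty}}$ is perfectoid, and $G := \mathrm{Gal}(K_\infty/K)$ is profinite of finite $p$-cohomological dimension, being an extension of a finite group by $\mathbb{Z}_p^{d+1}$. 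Set $S := R\,\widehat{\otimes}_{\mathcal{O}_K}\widehat{\mathcal{O}_{K_\infty}}$; this is a formally smooth algebra over a $p$-torsionfree perfectoid ring, so by Theorem \ref{sharpcompACcase} the comparison map $L_{K(1)}\TR(S) \to (L_{K(1)}\TR(S))^\sharp$ is an equivalence.

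Next, I would apply Theorem \ref{progaldesc} to the $G$-equivariant extension $R \to S$. The three hypotheses are verified as follows. (1) $R[1/p] \to S[1/p]$ is $G$-Galois as the base change of $K \to K_\infty$ along the formally smooth algebra $R$. (2) Each finite intermediate $R^{(n)} := R \otimes_{\mathcal{O}_K}\mathcal{O}_{K_n}$ is excellent, $p$-torsionfree, regular noetherian, with residue fields satisfying $[k_n:k_n^p] = [k:k^p]$ (by the Gabber--Orgogozo lemma cited in Example \ref{DVRex}), so Theorem \ref{asymptoticK1TR} applies to yield the required asymptotic $K(1)$-locality with a uniform bound. (3) Hypercompleteness of the auxiliary sheaf follows from Proposition \ref{pcohdimbound} (bounding $p$-cohomological dimensions of the residue fields of $R[1/p]$) together with the finite Krull dimension of $R$, placing us in the scenario covered by the discussion after Construction \ref{relativeFE}. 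The theorem then identifies $L_{K(1)}\TR(R)$ with the totalization of $L_{K(1)}\TR$ evaluated on the cosimplicial ring $\mathrm{Fun}_\mathrm{cts}(G^{\bullet+1}, S)$.

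To conclude, I would argue that the same totalization computes $(L_{K(1)}\TR(R))^\sharp$. The augmented cosimplicial diagram $R \to \{\mathrm{Fun}_\mathrm{cts}(G^{\bullet+1}, S)\}$ becomes an $\arc$-hypercover after $p$-completion: $S$ itself is an $\arc$-cover of $R$ since its $p$-completion is perfectoid, and each $\mathrm{Fun}_\mathrm{cts}(G^m, S)$ is a cofiltered limit of finite products of $S$, each of which is again formally smooth over a perfectoid. Since $(L_{K(1)}\TR(-))^\sharp$ is by construction an $\arc$-hypersheaf, it sends this hypercover to the same totalization. The comparison map is already an equivalence on each term, by \ref{sharpcompACcase} applied to $S$ and its products, combined with the topological-Cartier-module completion estimates of Proposition \ref{completionCart} and Corollary \ref{colimtruncatedTR} to handle the cofiltered-limit structure. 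The main obstacle is precisely this last step: checking that the $\arc$-hypersheafification interacts correctly with the cofiltered-limit structure of the continuous-function rings, where $\TR$ does not commute with filtered colimits and one must invoke the $V$-complete decompletion machinery of Antieau--Nikolaus to control the errors --- this is the same subtlety already navigated in the proof of Theorem \ref{progaldesc}, and I expect to reuse that argument essentially verbatim on the $\sharp$-sheafified side.
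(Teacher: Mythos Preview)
Your approach is essentially the same as the paper's: pass to a perfectoid extension in the generic fiber, apply the pro-Galois descent result (Theorem~\ref{progaldesc}) to get $L_{K(1)}\TR(R)$ as a totalization, observe that $(L_{K(1)}\TR(-))^{\sharp}$ satisfies the same descent because the augmented cosimplicial ring is an $\arc$-hypercover, and then compare termwise using Theorem~\ref{sharpcompACcase}. The paper uses $G=\mathrm{Gal}(\overline{K}/K)$ acting on $S=R\otimes_{\mathcal{O}_K}\mathcal{O}_{\overline{K}}$ rather than your explicit tower $K_\infty$; this is slightly cleaner since one does not have to verify the Galois-theoretic structure of $K_\infty/K$ or the precise shape of $G$, and Example~\ref{DVRex} already records that $G$ has finite $p$-cohomological dimension and that the hypotheses of Theorem~\ref{progaldesc} are met.

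Two remarks. First, in the setup of Theorem~\ref{progaldesc} the ring $S$ must carry a \emph{discrete} $G$-action with $S=\varinjlim S^{N_i}$; you should therefore take $S=R\otimes_{\mathcal{O}_K}\mathcal{O}_{K_\infty}$ (uncompleted) and only invoke the fact that its $p$-completion is formally smooth over a perfectoid when applying Theorem~\ref{sharpcompACcase}. Second, your final paragraph anticipates an obstacle that is not there. The rings $\mathrm{Fun}_{\mathrm{cts}}(G^m,S)$ have $p$-completions which are themselves formally smooth over perfectoid rings (namely over the $p$-completions of $\mathrm{Fun}_{\mathrm{cts}}(G^m,\mathcal{O}_{K_\infty})$, which are perfectoid as $p$-complete filtered colimits of products of perfectoids). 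Hence Theorem~\ref{sharpcompACcase} applies \emph{directly} to each term of the cosimplicial diagram; there is no need to invoke Proposition~\ref{completionCart} or Corollary~\ref{colimtruncatedTR} on the $\sharp$-side, and no ``cofiltered-limit'' subtlety to navigate. The paper's proof is correspondingly one line at this point: ``these are all (up to $p$-completions) formally smooth over perfectoids, whence the claim by Theorem~\ref{sharpcompACcase}.''
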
 
\begin{proof} 
We 
let $G = \mathrm{Gal}(\overline{K}/K)$ and consider the $G$-action on 
$S = R \otimes_{\mathcal{O}_K} \mathcal{O}_{\overline{K}}$. 
As in \Cref{DVRex}, our assumptions imply that $G$ has finite cohomological dimension and moreover
that the map 
\eqref{RSequiv} is an equivalence. 
Therefore, it suffices to show that 
$L_{K(1)} \TR( A) \xrightarrow{\sim} (L_{K(1)} \TR( A))^{\sharp} $
when $A$ is one of 
$S, \mathrm{Fun}_{\mathrm{cts}}(G, S), \dots$. 
But these are all (up to $p$-completions) formally
smooth over perfectoids,  
whence the claim by 
\Cref{sharpcompACcase} and descent. 
\end{proof}

\begin{remark} 
Suppose $L_{K(1)} \TR(R) \xrightarrow{\sim} \left(L_{K(1)} \TR(R)
\right)^{\sharp}$. 
On Frobenius fixed points, one recovers the Thomason \cite{Tho85,
TT90} filtration (i.e., the pro-\'etale Postnikov filtration) on 
$L_{K(1)} \TC(R) \simeq L_{K(1)} K(R[1/p])$ (cf.~\cite{BCM} for this
identification), whose associated gradeds are given
by 
$\gr^i \simeq R \Gamma_{\proet}( \spec(R[1/p]), \mathbb{Z}_p(i))[2i]$, cf.~
\Cref{arcgivesnearbycycles}. 
\end{remark}

\begin{remark} 
The above filtration on $L_{K(1)} \TR(-)$ 
and the calculations of $\TR$ of smooth algebras over a DVR in 
\cite{HM03, HM04, GH06}
suggest that the cohomology 
$R \Gamma_{\arc}( \spec(R), W(\mathcal{O})(i))$ should be related to 
the absolute de Rham--Witt 
complex. 
The work \cite{Mor18} also suggests that for smooth algebras over a perfectoid
base containing all $p$-power roots of unity, 
$R \Gamma_{\arc}( \spec(R), W(\mathcal{O}))$ should be related to the relative de
Rham--Witt complex (over the perfectoid base). 
\end{remark} 

\appendix

\section{Topological preliminaries}

In this appendix, we record some basic topological preliminaries about
(hyper)sheaves of spectra.

\begin{remark}[Conventions for sites] 
We will for simplicity work only with 
sites of the following nice form (cf.~\cite[Sec.~A.3.2]{SAG}). 
Let $\mathcal{C}$ be a category with pullbacks and finite coproducts, 
such that coproducts distribute over pullbacks and are disjoint. Suppose $\mathcal{C}$ is
equipped with a class of morphisms $S = S_{\mathcal{C}}$ which contains all
isomorphisms and is
stable under composition and pullback. 
We equip $\mathcal{C}$ with the Grothendieck topology where 
a collection $\left\{X_i \to X\right\}_{i \in I}$ is a covering if  there exists
a finite subset $I' \subset I$ such that $\bigsqcup_{i \in I'} X_i \to X$
can be refined by a map belonging to $S$. 
In this case, a presheaf on $\mathcal{C}$ with values in an $\infty$-category
$\mathcal{D}$ with all small limits is a sheaf if and only if it 
carries finite coproducts in $\mathcal{C}$ to finite products in $\mathcal{D}$
and if it satisfies \v{C}ech descent for maps in $S$, cf.~\cite[Sec.~A.3.3]{SAG}. 
\label{convsite}
\end{remark} 

\begin{example} 
\begin{enumerate}
\item The small or big \'etale site of a qcqs scheme (where we only allow
qcqs schemes) is an example, with $S$ the class of \'etale surjections. 
\item The $\arc$-topology or $\arcph$-topology on the opposite of the category of derived
$p$-complete rings. 
\item The $\arc$-topology or $\arcph$-topology on the opposite of the category of perfectoid
rings. 
\end{enumerate}
\end{example} 

\begin{remark}[Examples of continuous functors] 
Let $\mathcal{C}, \mathcal{C}'$ be sites as in \Cref{convsite}. 
Let $u \colon \mathcal{C}' \to \mathcal{C}$ be a 
functor which preserves finite coproducts and pullbacks, as well as morphisms
in the respective classes $S_{\mathcal{C}}, S_{\mathcal{C}'}$. 
It follows that if $\sF$ is a sheaf (with values in any $\infty$-category
$\mathcal{D}$ with all small limits) on $\mathcal{C}$, then $\sF \circ u$ is a
sheaf on $\mathcal{C}'$. 
These are examples of continuous functors; the notion can be defined for more
general sites, cf.~\cite[Exp.~III.1]{SGA4}. 
\label{exofcontfunctor}
\end{remark}

\begin{construction}[Sheaves of spectra] 
Given a site $\mathcal{C}$ (as in 
\Cref{convsite}), we let $\psh(\mathcal{C}, \sp)$ 
denote the $\infty$-category of presheaves of spectra on $\mathcal{C}$ and 
$\shv(\mathcal{C}, \sp) \subset \psh(\mathcal{C}, \sp)$ denote the 
subcategory of sheaves of spectra \cite[Sec.~1.3]{SAG}. 
We equip these both 
with their canonical $t$-structures, cf.~\cite[Sec.~1.3.2]{SAG}. 

Given $u \colon \mathcal{C}' \to \mathcal{C}$ as in \Cref{exofcontfunctor}, 
we obtain a right adjoint and left $t$-exact functor
$(-) \circ u \colon \shv(\mathcal{C}, \sp) \to \shv(\mathcal{C}', \sp)$. 
It has a left adjoint $u_! \colon \shv(\mathcal{C}', \sp) \to \shv(\mathcal{C},
\sp)$ 
given by left Kan extension along $u$ followed by sheafification. 
By adjunction, necessarily $u_!$ is right $t$-exact.  
\end{construction}

\newcommand{\nl}{\mathrm{null}}
\newcommand{\shvh}{\mathrm{Shv}_{\mathrm{hyp}}}

\begin{construction}[Hypersheaves of spectra] 
Let $\mathcal{C}$ be a site as in \Cref{convsite}. 
Any presheaf $\sF$ of spectra on $\mathcal{C}$ fits into a unique
fiber sequence
of spectra
\begin{equation}  \sF_{\nl} \to \sF \to \sF^{\sharp}
\label{hypcofib} \end{equation}
where $\sF_{\nl}$ has trivial sheafified homotopy groups 
and $\sF^{\sharp}$ is a
hypercomplete sheaf of spectra, i.e., for every presheaf $\mathcal{G}$ with
trivial sheafified homotopy groups, we have $\hom(\mathcal{G}, \sF^{\sharp}) =0
$. 
If $\sF$ is a sheaf of spectra, then the cofiber sequence shows that $\sF_{\nl}$
is also a sheaf of spectra; it is then $\infty$-connective with respect to the
$t$-structure on $\shv(\mathcal{C}, \sp)$. 
We let $\shvh( \mathcal{C}, \sp) \subset \shv(\mathcal{C}, \sp)$ denote 
the full subcategory of hypercomplete sheaves. 
We refer to \cite[Sec.~2]{CM19} for an exposition of some of these
constructions, which go back to \cite{Ja87, DHI04}. 
\end{construction} 

\begin{proposition} 
\label{commutewithhypersheaf}
Let $u \colon \mathcal{C}' \to \mathcal{C}$ be a morphism of sites as in \Cref{convsite}
preserving finite coproducts and  pullbacks and carrying the class of arrows
$S_{\mathcal{C}'}$ into $S_{\mathcal{C}}$. 
Suppose that 
for any object $X' \in \mathcal{C}'$ and a finite covering family $\{ Y_i \to
u(X')\}_{i \in I}$ in $\mathcal{C}$, 
there is a refinement which is the image under $u$ of a finite covering family 
$\left\{Y_i' \to X'\right\}_{i \in I}$ in $\mathcal{C}'$. 

Then the restriction functor 
$(-) \circ u \colon \psh(\mathcal{C}, \sp) \to \psh(\mathcal{C}', \sp)$ commutes with
hypersheafification (and in particular preserves hypercomplete sheaves). 
\end{proposition}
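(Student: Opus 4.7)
The plan is to show that $(-) \circ u$ preserves both the class of presheaves with vanishing sheafified homotopy groups and the class of hypercomplete sheaves. This suffices, because for any $\sF \in \psh(\mathcal{C}, \sp)$ the hypersheafification fits into a fiber sequence $\sF_{\nl} \to \sF \to \sF^{\sharp}$ in which $\sF_{\nl}$ has trivial sheafified homotopy groups and $\sF^{\sharp}$ is a hypercomplete sheaf. Restricting along $u$ yields a fiber sequence $\sF_{\nl} \circ u \to \sF \circ u \to \sF^{\sharp} \circ u$, and if the outer terms retain these properties on $\mathcal{C}'$ then the second map is forced by its universal property to be the hypersheafification of $\sF \circ u$. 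Note that $(-) \circ u$ preserves sheaves by \Cref{exofcontfunctor}, so everything below can be read inside $\shv(-, \sp)$ after a harmless sheafification.

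For the first preservation, let $\mathcal{G} \in \psh(\mathcal{C}, \sp)$ have vanishing sheafified homotopy groups, and consider a class $x \in \pi_n(\mathcal{G}(u(X')))$ for some $X' \in \mathcal{C}'$. By the assumption on $\mathcal{G}$, there exists a finite covering family $\{Y_j \to u(X')\}_j$ in $\mathcal{C}$ on which $x$ restricts to zero. The refinement hypothesis of the proposition then supplies a finite covering family $\{Y_i' \to X'\}_i$ in $\mathcal{C}'$ whose image $\{u(Y_i') \to u(X')\}_i$ refines $\{Y_j \to u(X')\}_j$; composing with the refinement maps shows that $x$ already restricts to zero on each $\mathcal{G}(u(Y_i'))$, so $\mathcal{G} \circ u$ has vanishing sheafified homotopy groups on $\mathcal{C}'$.

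For the second preservation, I use the adjunction $u_! \dashv (-) \circ u\colon \shv(\mathcal{C}, \sp) \rightleftarrows \shv(\mathcal{C}', \sp)$, where $u_!$ is computed by left Kan extension in presheaves followed by sheafification. Both operations are right $t$-exact in the canonical $t$-structure, so $u_!$ preserves $\infty$-connective objects, i.e., sheaves whose sheafified homotopy groups all vanish. Thus, for any hypercomplete sheaf $\sF^{\sharp}$ on $\mathcal{C}$ and any $\infty$-connective $\mathcal{G} \in \shv(\mathcal{C}', \sp)$,
\[
\mathrm{Map}_{\shv(\mathcal{C}', \sp)}(\mathcal{G}, \sF^{\sharp} \circ u) \simeq \mathrm{Map}_{\shv(\mathcal{C}, \sp)}(u_! \mathcal{G}, \sF^{\sharp}) = 0,
\]
showing that $\sF^{\sharp} \circ u$ is hypercomplete on $\mathcal{C}'$. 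The essential content of the argument is concentrated in the first preservation, where the refinement hypothesis translates local statements on $\mathcal{C}$ at objects of the form $u(X')$ into local statements on $\mathcal{C}'$; the hypercompleteness step is then a formal adjunction argument, with the only nontrivial input being the right $t$-exactness of $u_!$, which follows from standard facts about Kan extensions and sheafification.
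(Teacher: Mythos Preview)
Your proof is correct and follows essentially the same route as the paper: both reduce to showing that $(-)\circ u$ preserves presheaves with trivial sheafified homotopy groups (which you spell out explicitly via the refinement hypothesis, while the paper states it in one line) and preserves hypercomplete sheaves (via the identical adjunction argument using right $t$-exactness of $u_!$). The only difference is expository detail.
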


\begin{proof} 
Using the cofiber sequence \eqref{hypcofib}, we see that it suffices to prove
that $(-) \circ u \colon  \shv( \mathcal{C}, \sp) \to \shv(\mathcal{C}', \sp)$
preserves both the subclasses of objects with trivial homotopy groups and
hypercomplete objects.

Our hypotheses imply that 
if a presheaf of abelian groups on $\mathcal{C}$ has trivial sheafification,
then 
its pullback to $\mathcal{C}'$ has trivial sheafification. 
Therefore, $(-) \circ u $ preserves objects with trivial homotopy groups. 
 It thus suffices to show that if $\sF \in \shvh(\mathcal{C}, \sp)$,
then the sheaf $\sF \circ u \in \shv(\mathcal{C}', \sp)$ is also hypercomplete;
this will not use the assumption in the second sentence of the statement.
Equivalently,  given $\mathcal{G} \in \shv(\mathcal{C}', 
\sp)$ which is $\infty$-connective, it suffices to show that $\hom_{\shv(\mathcal{C}', \sp)}(\mathcal{G}, \sF
\circ u) =0 $.  
By adjointness, this is $\hom_{\shv(\mathcal{C}, \sp)} ( u_! \mathcal{G},
\sF)$; since $u_! \colon \shv(\mathcal{C}', \sp) \to \shv(\mathcal{C}, \sp)$ is right $t$-exact and therefore preserves $\infty$-connective objects and $\sF$ is
hypercomplete, this is contractible. 
\end{proof} 

\begin{example}[Overcategories] 
\label{overcat:ex}
Suppose $a \in \mathcal{C}$ and $\mathcal{C}' = \mathcal{C}_{/a}$ is the
overcategory of $a$, with the induced topology. Then the natural forgetful functor $\mathcal{C}' \to
\mathcal{C}$ clearly satisfies the conditions of 
\Cref{commutewithhypersheaf}. In particular, the hypersheafification of a
presheaf on $\mathcal{C}$ when restricted to $\mathcal{C}_{/a}$ is the
hypersheafification
of the restriction to $\mathcal{C}_{/a}$. 
\end{example}

A direct consequence is that given an appropriate site with a ``basis,'' 
hypersheaves of spectra can be entirely recovered from their values on the
basis. 
The result is a direct analog of \cite[Th.~4.1, Exp.~III]{SGA4}, and appears
(for sheaves of spaces) in
\cite[App.~A]{Aoki}. 

\begin{proposition} 
Let $\mathcal{C}$ be a site as in \Cref{convsite}. 
Let $\mathcal{C}' \subset \mathcal{C}$ be a full subcategory closed under finite
coproducts and fiber products, and define $S_{\mathcal{C}'}$ to be the
intersection of $S_{\mathcal{C}}$ with $\mathcal{C}'$. 
Suppose every object $X \in \mathcal{C}$ admits a map $Y \to X$ in
$S_{\mathcal{C}}$ with $Y \in \mathcal{C}'$. 
Then the restriction functor 
$\shv(\mathcal{C}, \sp) \to \shv(\mathcal{C}', \sp)$ restricts to an
equivalence on hypercomplete objects. 
\end{proposition}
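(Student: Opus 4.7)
The plan is to realize the restriction functor as one half of an adjoint equivalence on hypersheaves, where the left adjoint is the hypercompletion of the sheafified left Kan extension. The key inputs will be \Cref{commutewithhypersheaf} together with the fully faithfulness of the inclusion $u \colon \mathcal{C}' \to \mathcal{C}$.

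First I would verify that the hypothesis of \Cref{commutewithhypersheaf} is satisfied for $u$. Given $X' \in \mathcal{C}'$ and a finite covering family $\{Y_i \to X'\}_{i \in I}$ in $\mathcal{C}$, apply the standing hypothesis of the proposition to each $Y_i$ to obtain $Z_i \to Y_i$ in $S_{\mathcal{C}}$ with $Z_i \in \mathcal{C}'$; the composite $Z_i \to X'$ lies in $S_{\mathcal{C}}$ and (as all objects are in $\mathcal{C}'$) in $S_{\mathcal{C}'}$. The family $\{Z_i \to X'\}_{i \in I}$ is then the desired refinement. Consequently, the restriction functor $r \colon \shv(\mathcal{C}, \sp) \to \shv(\mathcal{C}', \sp)$ preserves hypercomplete objects and commutes with hypersheafification.

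Next I would exhibit a left adjoint on hypersheaves. Define $v \colon \shvh(\mathcal{C}', \sp) \to \shvh(\mathcal{C}, \sp)$ by $v(\mathcal{G}) = (u_!\mathcal{G})^{\sharp}$, where $u_!$ denotes the sheafified left Kan extension and $(-)^\sharp$ denotes hypersheafification. Since $\sF$ and $\mathcal{G}$ are hypercomplete, the standard adjunction chain
\[ \mathrm{Map}_{\shvh(\mathcal{C})}\!\left((u_!\mathcal{G})^{\sharp}, \sF\right) \simeq \mathrm{Map}_{\shv(\mathcal{C})}(u_!\mathcal{G}, \sF) \simeq \mathrm{Map}_{\shv(\mathcal{C}')}(\mathcal{G}, r\sF) \simeq \mathrm{Map}_{\shvh(\mathcal{C}')}(\mathcal{G}, r\sF) \]
exhibits $v \dashv r$ on hypersheaves. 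To see that the unit $\mathcal{G} \to r v \mathcal{G}$ is an equivalence, observe that since $u$ is fully faithful, the presheaf left Kan extension satisfies $u^* u_!^{\psh} \mathcal{G} \simeq \mathcal{G}$. Since hypersheafification factors through sheafification and commutes with $r$ (by Paragraph 1), we get
\[ r((u_!\mathcal{G})^{\sharp}) \simeq r((u_!^{\psh}\mathcal{G})^{\sharp}) \simeq (r\, u_!^{\psh}\mathcal{G})^{\sharp} \simeq \mathcal{G}^{\sharp} \simeq \mathcal{G}. \]

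Finally I would verify that the counit $v r \sF \to \sF$ is an equivalence for any $\sF \in \shvh(\mathcal{C}, \sp)$. It suffices to show that the natural presheaf map $u_!^{\psh} r\sF \to \sF$ has cofiber with vanishing sheafified homotopy groups. For any $Y \in \mathcal{C}'$, fully faithfulness of $u$ gives $u_!^{\psh} r\sF(Y) \simeq r\sF(Y) \simeq \sF(Y)$, so this presheaf map is an equivalence on all objects of $\mathcal{C}'$. By the standing hypothesis of the proposition, every $X \in \mathcal{C}$ admits a cover $Y \to X$ with $Y \in \mathcal{C}'$, so any homotopy class in the cofiber over $X$ is killed after restriction to a cover; hence the sheafified (and in particular hypersheafified) cofiber vanishes, giving the desired equivalence. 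I expect the main technical care point to be the interaction between sheafification, hypersheafification, and restriction in the unit argument, where one must carefully use that hypersheafification factors through sheafification and that the former commutes with $r$ (whereas the latter need not).
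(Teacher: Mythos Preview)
Your approach is essentially the paper's: both construct the left adjoint as hypersheafified left Kan extension, use \Cref{commutewithhypersheaf} together with fully faithfulness of $u$ to verify the unit (equivalently, fully faithfulness of the left adjoint), and then finish. The paper's final step is phrased slightly differently---it observes that restriction is conservative on hypercomplete sheaves (since it is conservative on abelian sheaves), and a conservative right adjoint with fully faithful left adjoint is an equivalence---but your direct counit check unwinds to the same observation.

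One small slip: in your first paragraph, the claim that ``the composite $Z_i \to X'$ lies in $S_{\mathcal{C}}$'' is unjustified, since the individual maps $Y_i \to X'$ in a covering family need not themselves belong to $S_{\mathcal{C}}$. What you actually need is that $\{Z_i \to X'\}$ is a covering family in $\mathcal{C}'$, which takes a touch more care (for instance, first refine $\bigsqcup_i Y_i \to X'$ by a single map $W \to X'$ in $S_{\mathcal{C}}$, then apply the standing hypothesis to $W$). This does not affect the main argument.
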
 
\newcommand{\sG}{\mathcal{G}}
\begin{proof} 
Let $u \colon \mathcal{C}' \subset \mathcal{C}$ be the inclusion, so we have a
restriction functor $(-) \circ u \colon \psh(\mathcal{C}, \sp) \to
\psh(\mathcal{C}', \sp)$. 
By \Cref{commutewithhypersheaf}, it restricts to a functor on hypercomplete
sheaves, so we have $(-) \circ u \colon \shvh(\mathcal{C}, \sp) \to
\shvh(\mathcal{C}', \sp)$. 
This last functor has a left adjoint $L \colon 
\shvh(\mathcal{C}', \sp) \to \shvh(\mathcal{C}, \sp)
$, given by $\sF \mapsto L\mathcal{F} := (\mathrm{Lan}_u
\mathcal{F})^{\sharp}$, i.e., $L$ is obtained by applying the left Kan extension $\mathrm{Lan}_u
\colon \psh(\mathcal{C}', \sp) \to \psh(\mathcal{C}, \sp)$ followed by
hypersheafification $(-)^{\sharp}$. 
We now show that $L$ is fully faithful. Indeed, we have
for $\sF, \sG \in \shvh(\mathcal{C}', \sp)$,
\begin{equation} 
\hom_{\shvh(\mathcal{C}, \sp) }( L \mathcal{F}, L \mathcal{G})  = 
\hom_{\shvh( \mathcal{C}', \sp)}(\mathcal{F}, (\mathrm{Lan}_u \mathcal{G})^{\sharp} \circ u
).
\label{exprleftadj}
\end{equation} 
Now since $u$ is fully faithful,  since hypersheafification commutes with $(-)
\circ u$ by \Cref{commutewithhypersheaf}, and since 
$\mathcal{G}$ is
already hypercomplete, we have
$(\mathrm{Lan}_u \mathcal{G})^{\sharp} \circ u = \mathcal{G} $. 
Therefore, the right-hand-side of \eqref{exprleftadj} simplifies to $\hom_{\shvh(\mathcal{C}',
\sp)}(\mathcal{F}, \mathcal{G})$ as desired.

Our hypotheses imply that the restriction 
functor
$(-) \circ u$ 
is conservative on hypercomplete sheaves, because it is conservative on sheaves
of abelian groups. 
Since the restriction functor is conservative and has a fully faithful 
left adjoint, the result follows. 
\end{proof} 

\begin{example}[Unfolding in the $\arc$-topology]  
\label{arcunfolding}
The 
natural forgetful functor establishes an equivalence of $\infty$-categories
between $\arc$-hypersheaves of spectra on all derived $p$-complete rings and
$\arc$-hypersheaves of
spectra on perfectoid rings. 
\end{example}

Finally, we include a basic observation about commuting hypersheafification and
certain products when the site $\mathcal{C}$ is sufficiently large (e.g., the
$\arc$-site). 
This fact is closely related to the theory of replete topoi,
cf.~\cite[Sec.~3]{BSproet}. Compare \cite[Prop.~3.3.3]{BSproet} for the second
part of the next result for the derived category of abelian sheaves, or
equivalently hypercomplete $H\mathbb{Z}$-module sheaves of spectra.  
Note in particular that it applies to the $\arc$-site. 
This follows because 
$\mathrm{arc}$-covers in the sense of \cite{BM18} are closed under filtered
colimits of rings 
\cite[Cor.~2.20]{BM18} and because a map of derived $p$-complete rings $R \to
R'$ is an $\arc$-cover if and only if $R \to R' \times R/p \times R[1/p]$ is an
$\mathrm{arc}$-cover. 

\newcommand{\pshc}{\psh_{\sqcup}}

In the following, we write $\pshc(\mathcal{C}) \subset \psh(\mathcal{C})$ for
the subcategory of presheaves which carry finite coproducts
to finite products. 
\begin{proposition} 
\label{commutehypproducts}
Let $\mathcal{C}$ be a site as in 
\Cref{convsite}. 
Suppose $\mathcal{C}$ admits countable filtered limits. 
Suppose moreover that 
if $\dots \to X_i \to X_{i-1} \to \dots \to X_0$ is a 
sequence of arrows in $S$, then $\varprojlim_i X_i \to X_0$ belongs to $S$. 
Then:  
\begin{enumerate}
\item  
The hypersheafification functor, 
$(-)^{\sharp} \colon \pshc(\mathcal{C}) \to \shvh(\mathcal{C})$ commutes 
with countable products and limits along $\mathbb{Z}_{\geq 0}$-indexed towers. 
\item
For any $\sG \in \shvh(\mathcal{C}) \subset \shv(\mathcal{C})$, the Postnikov tower
of $\sG$ (as a sheaf of spectra) converges. 
\end{enumerate}
\end{proposition}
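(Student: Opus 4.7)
Both parts reduce to a single \emph{key lemma}: a presheaf $\sG \in \pshc(\mathcal{C})$ that is a countable product, or a $\mathbb{Z}_{\geq 0}$-indexed inverse limit, of presheaves whose sheafified homotopy groups vanish (or more generally, whose sheafified $\pi_i$ vanishes for $n$ sufficiently large depending on $i$) again has vanishing sheafified homotopy groups. Granting this, part (1) is formal: since $\shvh(\mathcal{C}) \subset \psh(\mathcal{C})$ is closed under small limits, the product $\prod_n \sF_n^{\sharp}$ (and the tower limit $\varprojlim_n \sF_n^{\sharp}$) is hypercomplete, and it receives a natural map from the presheaf-level product (resp.\ limit) $P$ of $\{\sF_n\}$. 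The fiber of this map is the corresponding product/limit of the $\mathrm{fib}(\sF_n \to \sF_n^{\sharp})$, each of which has trivial sheafified homotopy groups by the defining property of $(-)^{\sharp}$; the key lemma then says this fiber is null, so $P \to \prod_n \sF_n^{\sharp}$ identifies the target with $P^{\sharp}$.

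\textbf{Proof of the key lemma.} Nullity of $\sG$ means that for every $X \in \mathcal{C}$, $i \in \mathbb{Z}$, and $x \in \pi_i \sG(X)$, there exists an arrow $U \to X$ in $S$ with $x|_U = 0$. For $\sG = \prod_n \sF_n$ and $x = (x_n) \in \pi_i \sG(X) = \prod_n \pi_i \sF_n(X)$, I construct inductively a tower $X = X_0 \leftarrow X_1 \leftarrow X_2 \leftarrow \cdots$ of arrows in $S$ such that $x_m|_{X_n} = 0$ for all $m \leq n$: given $X_n$, nullity of $\sF_{n+1}$ produces $X_{n+1} \to X_n$ in $S$ killing $x_{n+1}|_{X_n}$. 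By the tower-closure hypothesis on $S$, the inverse limit $X_\infty := \varprojlim_n X_n \to X$ lies in $S$, and all $x_n$ vanish on $X_\infty$. For $\sG = \varprojlim_n \sF_n$, I combine this refinement with the Milnor exact sequence $0 \to \varprojlim{}^1_n \pi_{i+1}\sF_n(X) \to \pi_i \sG(X) \to \varprojlim_n \pi_i \sF_n(X) \to 0$: a first round of iterative refinement in $S$ kills the image in $\varprojlim_n \pi_i \sF_n$ (exploiting the compatibility $x_n = \varphi(x_{n+1})$), and a second round along a further tower in $S$ trivializes each entry of a chosen representative of the residual $\varprojlim{}^1$-class. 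In the generalized setting where nullity only holds for $n$ large, the recursion $z_n - \varphi(z_{n+1}) = y_n$ is solved backwards from $z_n = 0$ for $n \gg 0$, handling the finitely many exceptional $y_n$ directly.

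\textbf{Deduction of (2) and main obstacle.} Given $\sG \in \shvh(\mathcal{C})$, each $\tau_{\leq n} \sG$ and $\tau_{>n} \sG$ is hypercomplete. The fiber of $\sG \to \varprojlim_n \tau_{\leq n} \sG$ (computed in $\shv$, equivalently in $\shvh$) is $\varprojlim_n \tau_{>n} \sG$; by (1) applied to $\{\tau_{>n} \sG\}$, this is the hypersheafification of the presheaf-level limit, which is null by the key lemma since $\tau_{>n} \sG$ has vanishing sheafified $\pi_i$ for $n \geq i$. Hence the fiber vanishes and $\sG \xrightarrow{\sim} \varprojlim_n \tau_{\leq n} \sG$. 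The principal technical obstacle is the $\varprojlim{}^1$ portion of the key lemma: naive iterative cover-refinement handles only the $\varprojlim$-part of the Milnor sequence, and trivializing a $\varprojlim{}^1$-class requires a second application of the tower-closure of $S$, after which either the representing sequence itself becomes zero or (in the Postnikov case) the Milnor recursion can be solved in closed form from the top down.
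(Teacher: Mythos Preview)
Your core argument is correct and the heart of it---the construction of a single cover $X_\infty \to X$ in $S$ by taking an inverse limit of a tower of refinements---matches the paper exactly. The paper phrases it as the infinite fiber product $X_1 \times_X X_2 \times_X \cdots$, which is the same tower limit.

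However, you work harder than necessary in two places, and the paper's streamlining is worth noting. First, for part (1) you treat countable products and towers separately, invoking the Milnor sequence and a two-stage refinement to handle the $\varprojlim^1$ term in the tower case. The paper avoids this entirely: since hypersheafification of spectra is exact (it is a left adjoint between stable $\infty$-categories), it commutes with finite limits, and a $\mathbb{Z}_{\geq 0}$-indexed limit is an equalizer of countable products. So once the key lemma is established for products, towers follow formally with no Milnor argument. Second, for part (2) you compute the fiber of $\sG \to \varprojlim_n \tau_{\leq n}\sG$ as $\varprojlim_n \tau_{>n}\sG$ and then need your ``generalized'' key lemma (with eventual rather than uniform nullity) to kill it. The paper instead observes that the \emph{presheaf}-level Postnikov tower $\{\tau^{\mathrm{pre}}_{\leq n}\sG\}$ always converges to $\sG$, and then simply applies $(-)^\sharp$ to both sides, invoking part (1). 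Since $(\tau^{\mathrm{pre}}_{\leq n}\sG)^\sharp = \tau_{\leq n}\sG$, this gives the conclusion in one line with no generalized key lemma and no $\varprojlim^1$ bookkeeping.

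Your route is sound, and the Milnor/backward-recursion argument you sketch for the $\varprojlim^1$ piece does go through (once all $y_n$ for $n\geq i+1$ are killed on a cover, the finitely many remaining $y_n$ are absorbed by setting $z_n=0$ for $n\gg0$ and solving downward). But the paper's two simplifications eliminate what you identify as the ``principal technical obstacle'' altogether.
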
 
\begin{proof} 
For (1), 
since 
hypersheaves are always closed (inside presheaves) under arbitrary limits and
since $\mathbb{Z}_{\geq 0}$-indexed limits can be built from countable products, it
suffices to show (via the unique cofiber sequence \eqref{hypcofib}) that if 
$\left\{\sF_i\right\}_{i \in \mathbb{N}}$ is a countable family of presheaves in
$\pshc(\mathcal{C})$
with trivial hypersheafification (that is, trivial sheafified homotopy groups),
then the product presheaf $\prod_{i \in \mathbb{N}} \sF_i$ has trivial
hypersheafification.  
But this is just a claim about the presheaves of abelian groups
$\left\{\pi_j( \sF_i)\right\}_{i \in \mathbb{N}}$ for each $j$. 
Explicitly, given a class $\alpha = (\alpha_i) \in \prod_{i \in \mathbb{N}} \pi_j(
\sF_i(X))$ for some $X \in \mathcal{C}$, 
we find for each $i$ a cover $X_i \to X$
in $S$ which annihilates\footnote{Here we use that we are working with presheaves which
carry finite coproducts to finite products, so that we can reduce to working
with covers consisting of one morphism.} $\alpha_i \in \pi_j( \sF_i(X))$ and then form the cover 
$X_1 \times_X X_2 \times_X  \dots$ of $X$, which annihilates $\alpha$.

Now let $\mathcal{G} $ be a hypercomplete sheaf of spectra on $\mathcal{C}$. 
The Postnikov tower of $\mathcal{G} \in \shv(\mathcal{C})$ is obtained by taking the presheaf
truncations $\tau^{\mathrm{pre}}_{\leq n} \mathcal{G}$ and applying the hypersheafification (or
sheafification, since these objects are truncated), i.e., one forms 
$\{ (\tau^{\mathrm{pre}}_{\leq n} \mathcal{G})^{\sharp} \}$, which is a tower in
$\pshc(\mathcal{C})$. 
Since Postnikov towers converge for presheaves of spectra, i.e., $\mathcal{G}
\simeq \varprojlim_n \tau^{\mathrm{pre}}_{\leq n} \mathcal{G}$, and 
we have just seen that $(-)^{\sharp} \colon \pshc(\mathcal{C}) \to
\shvh(\mathcal{C})$ 
commutes with limits along $\mathbb{Z}_{\geq 0}$-indexed towers, we find $\mathcal{G} \simeq \varprojlim_n
(\tau_{\leq n}^{\mathrm{pre}} \mathcal{G})^{\sharp}$ as desired. 
\end{proof}

\bibliographystyle{amsalpha}
\bibliography{galoisdesc}

\end{document}